\newcommand{\BE}{\mbox{\usefont{T2A}{\rmdefault}{m}{n}\CYRB}}
\theoremstyle{plain}
\newtheorem{thm}{Theorem}[section]
\newtheorem{prop}[thm]{Proposition}
\newtheorem{lem}[thm]{Lemma}
\newtheorem{cor}[thm]{Corollary}
\newtheorem*{ThmA}{Theorem A}
\newtheorem*{ThmB}{Theorem B}
\newtheorem*{ThmC}{Theorem C}
\theoremstyle{definition}
\newtheorem{definition}[thm]{Definition}
\newtheorem{example}[thm]{Example}
\newtheorem{remark}[thm]{Remark}
\newtheorem{noname}[thm]{}
\newtheorem{construction}[thm]{Construction}
\numberwithin{equation}{thm}
\newcommand{\A}{\mathbb{A}}
\renewcommand{\C}{\mathcal{C}}
\newcommand{\D}{\mathbb{D}}
\newcommand{\F}{\mathcal{F}}
\newcommand{\G}{\mathbb{G}}
\newcommand{\N}{\mathbb{N}}
\newcommand{\Log}{\mathcal{L}og}
\renewcommand{\Pr}{\mathcal{P}r}
\newcommand{\SH}{\mathbf{SH}}
\renewcommand{\1}{\mathbbm{1}}
\newcommand{\Q}{\mathbb{Q}}
\newcommand{\Z}{\mathbb{Z}}
\newcommand{\Dual}{\mathbb{D}}
\newcommand{\T}{\mathcal{D}}
\newcommand{\R}{\mathfrak{R}}
\renewcommand{\sp}{\text{sp}}
\renewcommand{\lim}{\text{lim}}
\DeclareMathOperator{\id}{id} 
\DeclareMathOperator{\colim}{colim}
\DeclareMathOperator{\slim}{lim}  
\DeclareMathOperator{\Sh}{Sh} 
\DeclareMathOperator{\PSh}{PSh} 
\DeclareMathOperator{\map}{map} 
\DeclareMathOperator{\Fun}{Fun} 
\DeclareMathOperator{\Spc}{Spc} 
\DeclareMathOperator{\y}{y} 
\DeclareMathOperator{\const}{const}
\DeclareMathOperator{\CAlg}{CAlg} 
\DeclareMathOperator{\Mod}{Mod} 
\DeclareMathOperator{\Fin}{Fin} 
\DeclareMathOperator{\BarConstr}{Bar}
\DeclareMathOperator{\Hom}{\underline{Hom}}
\DeclareMathOperator{\DM}{DM}
\DeclareMathOperator{\DA}{DA}
\DeclareMathOperator{\et}{\text{\'et}}
\DeclareMathOperator{\Spec}{Spec}
\DeclareMathOperator{\Sch}{Sch}
\DeclareMathOperator{\Sm}{Sm}
\DeclareMathOperator{\Et}{\text{\'Et}}
\DeclareMathOperator{\can}{\alpha}
\DeclareMathOperator{\Ex}{Ex}
\DeclareMathOperator{\comp}{comp}
\DeclareMathOperator{\cons}{cons}
\DeclareMathOperator{\qcqs}{qcqs}
\DeclareMathOperator{\ft}{ft}
\DeclareMathOperator{\Cat}{Cat}
\DeclareMathOperator{\tame}{tame}
\DeclareMathOperator{\unit}{unit}
\DeclareMathOperator{\counit}{counit}
\DeclareMathOperator{\eff}{eff}
\DeclareMathOperator{\hyp}{hyp}
\DeclareMathOperator{\st}{st}
\DeclareMathOperator{\Ab}{Ab}
\DeclareMathOperator{\DiaSch}{DiaSch}
\DeclareMathOperator{\op}{op}
\DeclareMathOperator{\sep}{sep}
\DeclareMathOperator{\Gal}{Gal}
\DeclareMathOperator{\Spt}{Spt}
\DeclareMathOperator{\Ob}{Ob}
\DeclareMathOperator{\ULA}{ULA}
\DeclareMathOperator{\Pro}{Pro}
\DeclareMathOperator{\Ind}{Ind}
\DeclareMathOperator{\CRing}{CRing}
\DeclareMathOperator{\Uni}{Uni}
\DeclareMathOperator{\Frac}{Frac}
\DeclareMathOperator{\modulo}{mod}
\DeclareMathOperator{\ex}{ex}
\DeclareMathOperator{\Sp}{Sp}
\DeclareMathOperator{\End}{End}
\DeclareMathOperator{\HOM}{Hom}
\tikzset{%
    symbol/.style={%
        draw=none,
        every to/.append style={%
            edge node={node [sloped, allow upside down, auto=false]{$#1$}}}
    }
}
\newcommand{\leftrarrows}{\mathrel{\raise.75ex\hbox{\oalign{%
  $\scriptstyle\leftarrow$\cr
  \vrule width0pt height.75ex$\hfil\scriptstyle\relbar$\cr}}}}
\newcommand{\lrightarrows}{\mathrel{\raise.75ex\hbox{\oalign{%
  $\scriptstyle\relbar$\hfil\cr
  $\scriptstyle\vrule width0pt height.75ex\smash\rightarrow$\cr}}}}
\newcommand{\Rrelbar}{\mathrel{\raise.75ex\hbox{\oalign{%
  $\scriptstyle\relbar$\cr
  \vrule width0pt height.75ex$\scriptstyle\relbar$}}}}
\newcommand{\longleftrightarrows}{\leftrarrows\joinrel\Rrelbar\joinrel\lrightarrows}
\newcommand{\MatrixForThm}{\begin{psmallmatrix} 0 & \comp_\Upsilon (-1)[-1]  \\ \comp_\Upsilon & 0  \end{psmallmatrix}}
\begin{document}
\pagenumbering{gobble}

\begin{titlepage}\begin{center}
	\vspace*{\fill}
	\huge{\textbf{Motivic nearby cycles functors, local monodromy and universal local acyclicity}} \\[0.7cm]
	
	\vspace{1cm}
	
	\includegraphics[scale=0.2]{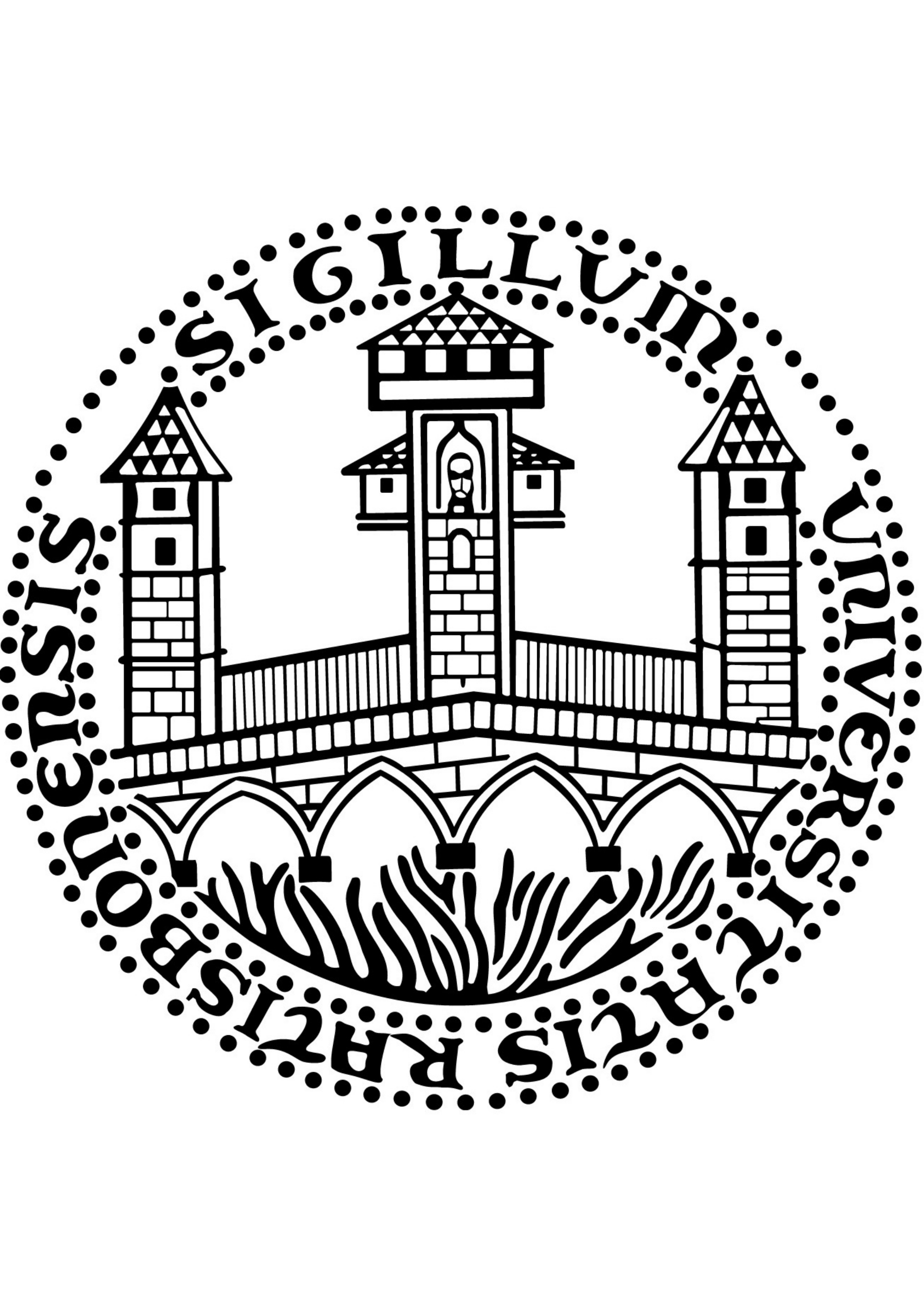} \\[0.5cm]	
	
	\huge{\textsc{Dissertation}} \\[0.1cm]
	\LARGE{\textsc{zur Erlangung des Doktorgrades \\
			der Naturwissenschaften (Dr. rer. nat.) \\
			der Fakult\"at f\"ur Mathematik \\
			der Universit\"at Regensburg}} \\[0.7cm]	
	
	\LARGE{vorgelegt von \\[0.1cm]
		\textbf{Benedikt Preis} \\[0.1cm]
		aus \\[0.1cm]
		\textbf{Roding} \\[0.1cm]
		im Jahr 2023}
	\vfill	
	\phantom{\LARGE{\textbf{\textcolor{red}{Version as of \today}}}}
\end{center}\end{titlepage}


	\vspace{2cm}

\noindent\begin{tabular}{@{}ll}

	Promotionsgesuch eingereicht am: & 01.02.2023 \\ 
	\\
	Die Arbeit wurde angeleitet von: & Prof. Dr. Denis-Charles Cisinski \\	
	\\

	Pr\"ufungsausschuss: & ~ \\
\\	
	
	Vorsitzender: & Prof. Dr. Harald Garcke \\ 
	Erstgutachter: & Prof. Dr. Denis-Charles Cisinski \\ 
	Zweitgutachter: & Prof. Dr. Alberto Vezzani \\ 
	weiterer Pr\"ufer: & Prof. Dr. Marc Hoyois \\
	Ersatzpr\"ufer: & Prof. Dr. Uli Bunke  
\end{tabular}

\newpage
\begin{center}
{\Large
\textbf{Abstract}}
\end{center}

\vspace{1cm}
In this thesis we give two applications of Ayoub's motivic nearby cycles functor: First we give a generalization of Grothendieck's classical local monodromy theorem. In the same setup we show that the inertia group acts quasi-unipotently on the \'etale cohomology of  sheaves "coming from motives". Second we study the notion of universal local acyclicity for motives and show that for \'etale motives universal local acyclicity over an excellent 1-dimensional regular base scheme is detected by the motivic nearby cycles functor. Along the way we prove properties of the motivic nearby cycles functor which might be of independent interest.


\tableofcontents
\pagenumbering{arabic}
\chapter*{Introduction}

\section*{Overview}
The theory of \'etale Voevodsky motives over a scheme $X$ was developed by Ayoub in \cite{AyoubRealizationEtale} and a slight variation thereof at the same time by Cisinksi-Deglise \cite{CisinskiDegliseEtale}. To any scheme $X$ and commutative ring $\Lambda$ one associates a triangulated category $\DA_{\et}(X, \Lambda)$ of \'etale motives. This formation admits a six functor formalism similiar to the six functor formalism of \'etale torsion sheaves as developed in SGA4 and SGA5. More precisely, for any morphism of schemes $f:X \rightarrow Y$ one gets a pair of adjoint functors
\[
f^* : \DA_{\et}(Y, \Lambda) \rightleftarrows \DA_{\et}(X, \Lambda): f_*,
\]
if $f:X \rightarrow S$ is of finite type between qcqs schemes one gets an adjunction
\[
f_! : \DA_{\et}(Y, \Lambda) \rightleftarrows \DA_{\et}(X, \Lambda): f^!
\]
and $\DA_{\et}(X, \Lambda)$ comes equipped with a tensor product $\otimes$ which is closed (i.e. $\_ \otimes M$ admits a right adjoint $\Hom(M, \_)$). These six functors satisfy various properties and compatibilities. 

Let $\ell$ be a prime number invertible in $\mathcal{O}(X)$. Then under mild assumptions on $X$ we can define the \textit{$\ell$-adic realization functor}
\[
\R_\ell: \DA^{\cons}_{\et}(X, \Q) \longrightarrow \hat{\T}^{\cons}_{\et}(X, \Q_\ell)
\]
from constructible \'etale motives to the the derived category of constructible $\ell-$adic systems of  \'etale  sheaves on $X$. Both sides of this functor admit the six functors and under mild assumptions the $\ell$-adic realization functor commutes with the six functors.

This realization functor opens up an interesting point of view on \'etale motives: Understanding a phenomenon in $\DA^{\cons}_{\et}(X, \Q)$ will make you understand the $\ell$-adic version of the phenomenon for all  primes $\ell$ invertible in $\mathcal{O}(X)$ simultaneously. In particular, these results have a built-in  "independence of $\ell$". Conversely, an $\ell$-adic phenomenon which is independent of $\ell$ and is "of geometric nature" can be expected to be the shadow of a motivic phenomenon under the $\ell$-adic realization.  

Let $S$ be the spectrum of a strictly henselian discrete valuation ring and $f: X \rightarrow S$ a morphism of finite type. Denote the closed point of $S$ by $\sigma$ and the open point by $\eta$. Then via pullback we get a decomposition
\[
\begin{tikzcd}
X_\eta \arrow[r, "j"] \arrow[d, "f_\eta"] & X \arrow[d, "f"] & X_\sigma \arrow[l, "i"'] \arrow[d, "f_\sigma"] \\
\eta \arrow[r, "j"]                       & S                & \sigma \arrow[l, "i"']                        
\end{tikzcd}
\]
of $X$ into its generic and special fiber. In this setup one can use the six operations to define the $\ell$-adic nearby cycles functor
\[
\Psi_f^\ell: \hat{\T}^{\cons}_{\et}(X_\eta, \Q_\ell) \longrightarrow \hat{\T}^{\cons}_{\et}(X_\sigma, \Q_\ell)
\]
(see \cite[Exp. XIII]{SGA7.2}, \cite[4.4]{BBD}). In \cite{AyoubRealizationEtale} Ayoub defined a motivic nearby cycles functor 
\[
\Psi_f: \DA^{\cons}_{\et}(X_\eta, \Lambda) \longrightarrow \DA^{\cons}_{\et}(X_\sigma, \Lambda),
\]
and proved that it satisfies a lot of desirable properties. In particular if $\Lambda= \Q$ it satisfies under mild assumptions that $\R_\ell \circ \Psi_f^\ell \simeq \Psi_f \circ \R_\ell$. 

In this thesis we will give two applications of the motivic nearby cycles functor for \'etale motives: We generalize Grothendieck's local monodromy theorem to "sheaves coming from motives" and we show that universal local acyclicity over a 1-dimensional excellent regular base can be detected by the nearby cycles functor. \\

Let us recall Grothendieck's famous local monodromy theorem: Let $K$ be the fraction field of a henselian discrete valuation ring $S$ and $X$ a separated $K$-scheme of finite type. Let $\bar{K}$ be a separable closure of $K$ and denote the pullback of $X$ to $\bar{K}$ by $\bar{X}$.  Then the \'etale cohomology groups $H^{i}_{\et}(\bar{X}, \Q_\ell)$ come canonically equipped with a group action
\[
\rho: \Gal(\bar{K}/K) \rightarrow \End_{\Q_\ell}(H^{i}_{\et}(\bar{X}, \Q_\ell))  
\]  
of the absolute Galois group of $K$. We say that $\lambda \in \Gal(\bar{K}/K)$ acts unipotent on $H^{i}_{\et}(\bar{X}, \Q_\ell)$ if there exists an integer $m$ such that $(\rho(\lambda)- \id )^m = 0$. Let $I \subset \Gal(\bar{K}/K)$ denote the inertia subgroup. Then the local monodromy theorem asserts that for all $i \in \Z$ there exists an open subgroup $H \subset I$ such that for all $\lambda \in H$  the action of $\lambda$ on $H^{i}_{\et}(\bar{X}, \Q_\ell)$ is unipotent. Moreover the analogue statement is true for the cohomology groups with compact support $H^{i}_{\et, c}(\bar{X}, \Q_\ell)$. 

Grothendieck proved this first in \cite[Ex. I]{SGA7.1} in a very arithmetic fashion. He later gave a more geometric proof using the theory of nearby cycles which was conditional at that time, since it relied on the absolute purity conjecture (now a theorem of Gabber) and resolution of singularities (now one can use de Jong's alterations). The local monodromy theorem has many applications. For example, it was a crucial input for Grothendieck's semi-stable reduction theorem for abelian varieties \cite[Ex. IX, 3.6]{SGA7.1}. Moreover, it implies the existence of a nilpotent operator $N: H^{i}_{\et}(\bar{X}, \Q_\ell) \rightarrow H^{i}_{\et}(\bar{X}, \Q_\ell)(-1)$ which gives rise to a filtration of $H^{i}_{\et}(\bar{X}, \Q_\ell)$ called the monodromy filtration. This filtration was introduced and studied by Deligne in \cite{Weil2} and is subject of the Monodromy-Weight Conjecture (see \cite[3.9]{IllusieAutour}).

We generalize this in the following sense: Let $\mathcal{F}$ in $\hat\T_{\et}^{\cons}(X, \Q_\ell)$ be either 
\begin{enumerate}
\item $\mathcal{F} = \R_\ell (M)$ for some constructible motive $M$ in $\DA^{\cons}_{\et}(X, \Q)$, or
\item$\mathcal{F} = {}^p\mathcal{H}^k(\R_\ell (M))$ (i.e. the $k$-th perverse cohomology sheaf) for some constructible motive $M$ in $\DA^{\cons}_{\et}(X, \Q)$ and some $k \in \Z$.
\end{enumerate}
Again the cohomology groups $H^{i}_{\et}(\bar{X}, \mathcal{F}|_{\bar{X}})$ and $H^{i}_{\et,c}(\bar{X}, \mathcal{F}|_{\bar{X}})$ come equipped with an action of $\Gal(\bar{K}/K)$. We show:

\begin{ThmA}[\ref{cor:MonodromyThm},\ref{cor:LocMonForPerverseThings}]
Assume that the henselian discrete valuation ring $S$ is excellent. Then there exists an open subgroup $H \subset I$ such that for all $\lambda \in H$ and all $i \in \Z$ the action of $\lambda$ on $H^{i}_{\et}(\bar{X}, \mathcal{F}|_{\bar{X}})$ is unipotent. The analogue statement is true for $H^{i}_{\et,c}(\bar{X}, \mathcal{F}|_{\bar{X}})$.
\end{ThmA}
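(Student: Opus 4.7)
The plan is to reduce Theorem~A to a quasi-unipotency statement for Ayoub's motivic nearby cycles functor $\Psi_f$ with $\Q$-coefficients, and then to transport that statement to $\ell$-adic cohomology through the realization $\R_\ell$. By Nagata compactification and spreading out, I would fix a $K$-compactification $j\colon X \hookrightarrow X^c$ and a proper model $\bar f\colon\overline{\mathcal X}\to S$ with $\overline{\mathcal X}_\eta = X^c$. Setting $\widetilde{\mathcal F} := Rj_*\mathcal F$ in the case of $H^i_\et$ (resp. $\widetilde{\mathcal F} := j_!\mathcal F$ in the case of $H^i_{\et,c}$), the adjunction gives $H^i_\et(\bar X, \mathcal F|_{\bar X}) \simeq H^i_\et(X^c_{\bar K}, \widetilde{\mathcal F})$, and proper base change together with the standard formula $i^*R\bar f_* \simeq R\bar f_{\sigma,*}\circ\Psi_{\bar f}^\ell$ yields a $\Gal(\bar K/K)$-equivariant isomorphism
\[
H^i_\et\bigl(X^c_{\bar K}, \widetilde{\mathcal F}\bigr) \;\simeq\; H^i_\et\bigl(\overline{\mathcal X}_{\bar\sigma},\, \Psi_{\bar f}^\ell(\widetilde{\mathcal F})\bigr),
\]
in which the inertia $I$ acts only through the monodromy on $\Psi_{\bar f}^\ell$. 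Since quasi-unipotency is stable under $\Q_\ell$-linear pushforwards, this reduces the theorem to the claim that $I$ acts quasi-unipotently on $\Psi_{\bar f}^\ell(\widetilde{\mathcal F})$.

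Next I would invoke the $I$-equivariant compatibility $\Psi^\ell\circ\R_\ell \simeq \R_\ell\circ\Psi$ recalled in the introduction, together with the fact that $\R_\ell$ commutes with $j_!$ and $Rj_*$ on constructible motives. For $\mathcal F = \R_\ell(M)$ this yields
\[
\Psi_{\bar f}^\ell(\widetilde{\mathcal F}) \;\simeq\; \R_\ell\bigl(\Psi_{\bar f}(\widetilde M)\bigr),
\]
where $\widetilde M \in \DA^\cons_\et(X^c, \Q)$ is the motivic extension of $M$. The problem is thereby reduced to the purely motivic assertion that for every constructible $M' \in \DA^\cons_\et(X^c, \Q)$ the inertia $I$ acts quasi-unipotently on $\Psi_{\bar f}(M')$. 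This is the geometric heart of Ayoub's theory: $\Psi_f$ is constructed as a filtered colimit of tame nearby cycles functors indexed by the tame extensions $K(\pi^{1/n})$ of $K$, each of which is an extension of a unipotent nearby cycles functor on which a suitable open subgroup of $I$ acts through the canonical nilpotent monodromy operator. Applying $\R_\ell$ then yields the sought-for quasi-unipotency on the $\ell$-adic side, and the previous paragraph concludes the first case.

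For the perverse variant, I would use that $\Psi^\ell[-1]$ is $t$-exact for the perverse $t$-structure, so that $\Psi^\ell$ intertwines the perverse cohomology functors ${}^p\mathcal H^k$ up to a shift. Consequently the nearby cycles of ${}^p\mathcal H^k(\R_\ell(M))$ identify, up to shift, with a perverse cohomology sheaf of $\Psi_{\bar f}^\ell(\R_\ell(M)) \simeq \R_\ell(\Psi_{\bar f}\widetilde M)$. Since the quasi-unipotency of an inertia action on a complex is inherited by its perverse cohomology subquotients in the abelian category of perverse $\Q_\ell$-sheaves, the conclusion of the motivic step transfers to these perverse cohomology sheaves, and the first reduction then closes the argument.

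The hardest part will be the motivic quasi-unipotency of $\Psi_f(M')$ in a form compatible with $\R_\ell$ and uniform in the constructible motive. The excellence hypothesis on $S$ enters here in two places: in Ayoub's realization/nearby cycles compatibility, and in the tame-extension control of the motivic monodromy, through the existence of sufficiently nice models after finite base change. Once these motivic inputs are in place, the transfer through $\R_\ell$ and the nearby-cycles reduction are essentially formal.
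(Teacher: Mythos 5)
Your overall architecture matches the paper's: Nagata compactification to reduce to nearby cycles, the compatibility $\Psi^\ell\circ\R_\ell\simeq\R_\ell\circ\Psi$, and perverse $t$-exactness of $\Psi^\ell[-1]$ for the second statement. But there is a genuine gap at what you call ``the geometric heart'', and it is exactly where the paper's main new technical input lives.

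Knowing that $\Psi_f$ is a filtered colimit (over $\Xi_\tau$) of tame nearby cycles $\Psi^{\tame}_{t_L\circ f_L}$, each of which is a filtered colimit (over $\N'^\times$) of unipotent nearby cycles $\Upsilon_{f_n}$, is \emph{not} by itself enough to conclude quasi-unipotency. The monodromy operator $N$ is defined on $\Upsilon$, not on $\Psi^{\tame}$, and the transition maps in the colimit defining $\Psi^{\tame}$ are a priori nontrivial, so the $I$-action on the colimit could be wild. The step you need, and which you do not supply, is that the colimit stabilizes: for a fixed constructible $M$ there is some $k$ for which the canonical map $\Upsilon_{f_k}(t_k^*M)\to\Psi^{\tame}_f(M)$ is an equivalence. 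In the paper this is Corollary~\ref{cor:UpsilonfnIsPsitame}, and it is deduced from the paper's Theorem~C (Theorem~\ref{thm:UpsilonRetractOfPsiTame} / Corollary~\ref{cor:UpsilonRetractOfPsi}): with rational coefficients each $\Upsilon_{f_n}(t_n^*M)\to\Psi^{\tame}_f(M)$ is the inclusion of a direct summand, and then compactness of $\Psi^{\tame}_f(M)$ (which uses excellence of $S$ and constructibility of $M$, via Theorem~\ref{thm:PropertiesOfEtaleMotivicNearby}(3) and Proposition~\ref{prop:SstrLocImpliesCompactGen}) forces an actual equivalence at some finite stage. The retract statement itself is nontrivial and rests on the analysis of the logarithm motive in \S\ref{section:OnTheLog}; it is not a formal consequence of Ayoub's constructions.

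Once one has the equivalence $\Upsilon_{f_k}(t_k^*M)\simeq\Psi^{\tame}_f(M)$, the argument closes the way you expect: $N$ is nilpotent on $\Upsilon_{f_k}(t_k^*M)$ since $M$ is constructible, and Theorem~\ref{thm:MonodromySquareAyoub} (Ayoub's comparison \cite[11.17]{AyoubRealizationEtale}) identifies the action of $\chi(\lambda)$ on $\Psi^{J,\tame}_{f_k}(\R_J t_k^*M)$ with $\exp(N\cdot\chi(\lambda))$, from which $(\lambda-1)^m=0$ for suitable $m$ follows immediately. This also clarifies where excellence of $S$ is used, which your proposal leaves vague: it is needed for the constructibility-preservation of $\Psi^{\tame}_f$ and $\Psi_f$, hence for the compactness that makes the colimit stabilize, and also in Theorem~\ref{thm:PropertiesOfEtaleMotivicNearby}(2) to reduce from $\Psi$ to $\Psi^{\tame}$ after a finite base change. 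Without Theorem~C and the stabilization corollary, your reduction plan leaves the key quasi-unipotency unproved.
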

 
In particular, we recover Grothendieck's local monodromy theorem (under the additional excellency assumption) if we plug in $M=\Q$. Let us note that our proof is completely independent of the existing proofs of the local monodromy theorem. Moreover this confirms (even generalizes!) an expectation stated by Illusie in \cite[\S 1]{IllusieAutour}. \\

The notion of local acyclicity goes back to \cite[Exp. XV]{SGA4.3}. It was a key tool to prove the smooth base change theorem for \'etale torsion sheaves. Consider a morphism of schemes $f: X \rightarrow S$ and an \'etale torsion sheaf $\mathcal{F}$ on $X$. Then $\mathcal{F}$ is called \textit{locally acyclic with respect to $f$} if for all geometric points $x$ of $X$ and $t$ of $S_{(f(x))}$ the canonical map
\[
\mathcal{F}_x \simeq R\Gamma(X_{(x)}, \mathcal{F}) \longrightarrow R\Gamma(X_{(x)} \times_{S_{(f(x))}} t, \mathcal{F})
\]
is an isomorphism.  $\mathcal{F}$ is called \textit{universally locally acyclic with respect to $f$} if the analogue is true after base change along any $S' \rightarrow S$. In the case where $S$ is the spectrum of a strictly henselian discrete valuation ring we may consider the associated nearby cycles functor $\Psi_f$. It comes with a canonical map 
\[\alpha: i^* \mathcal{F} \longrightarrow \Psi_f(j^* \mathcal{F}). \]
 It is easy to see that $\mathcal{F}$ is locally acyclic with respect to $f$ if and only if $\alpha$ is an equivalence: This can be checked on stalks of geometric points $x$ of $X_\sigma$ where we have 
 \[\Psi_f(j^* \mathcal{F})_x \simeq R \Gamma(X_{(x)} \times_S \eta, \F).\]
  If $f$ is moreover of finite type, being locally acyclic is in fact equivalent to being universally locally acyclic by \cite[6.6]{LuZhengDuality}. 

Recently, Lu-Zheng \cite{lu_zheng_2022} gave an equivalent characterisation of universal local acyclicity which makes sense in any six functor formalism. This was for example used by Hansen-Scholze to define a relative perverse t-structure in \cite{HansenScholzeRelative}. In particular for any six functor formalism with a theory of nearby cycles $\Psi$ one can ask: Is there a relation between $\Psi$ and the property of being universally locally acyclic? We give a positive answer for \'etale motives:

\begin{ThmB}[\ref{thm:ULAinDAdetectedByNearby}]
Let $f: X \rightarrow S$ be of finite type, where $S$ is the spectrum of an excellent strictly henselian discrete valuation ring. Let $\Lambda$ be a noetherian ring flat over $\Z$ and $M$ a motive in $\DA_{\et}^{\cons}(X, \Lambda)$. Then $M$ is universally locally acyclic with respect to $f$ if and only if the canonical map
\[
 i^*M \longrightarrow \Psi_f(j^*M)
\]
is an equivalence. 
\end{ThmB}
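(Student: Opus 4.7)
For the forward direction, suppose $M$ is universally locally acyclic with respect to $f$. The plan is to show that the canonical comparison map $\alpha: i^*M \to \Psi_f(j^*M)$ is an equivalence by a formal argument based on Ayoub's construction of $\Psi_f$: the motivic nearby cycles functor can be written as a colimit indexed by finite tame covers of $\eta$, and the ULA hypothesis on $M$ should ensure that $i^*M$ is insensitive to pullback along such covers, identifying the comparison with an equivalence. This compatibility between $\Psi_f$ and the class of ULA motives should be part of the preliminary theory developed earlier in the thesis, in the spirit of the classical proof in the $\ell$-adic setting.

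For the reverse direction, my strategy is to invoke the Lu--Zheng characterization of ULA in an arbitrary six-functor formalism, which is internal to $\DA_{\et}^{\cons}(X, \Lambda)$, and then translate the hypothesis into its internal form. A natural path is to reduce first to the case $\Lambda = \Q$ via a devissage combining rationalization with reduction modulo primes invertible on $S$, using a motivic-to-\'etale rigidity comparison in the torsion coefficient case. Once in the rational case, I would apply the $\ell$-adic realization: the compatibility $\R_\ell \circ \Psi_f \simeq \Psi_f^\ell \circ \R_\ell$ converts the hypothesis into the $\ell$-adic statement $i^*\R_\ell(M) \simeq \Psi_f^\ell(j^*\R_\ell(M))$, so $\R_\ell(M)$ is locally acyclic, and hence universally locally acyclic by the classical Lu--Zheng result quoted in the excerpt. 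Joint conservativity of the collection $\{\R_\ell\}_\ell$ on constructible $\Q$-motives, together with the fact that the Lu--Zheng criterion is expressed purely via the six operations and thus commutes with realization, should then force $M$ itself to be ULA.

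The main obstacle will be establishing enough compatibility between $\Psi_f$ and the six operations (especially the tensor product and $f^!$) to rephrase the Lu--Zheng criterion through the nearby cycles functor; this is where the "properties of $\Psi_f$ of independent interest" alluded to in the abstract are probably invoked. A secondary difficulty is the devissage needed to reach general noetherian $\Lambda$ flat over $\Z$, which requires a well-behaved integral realization and a rigidity-type comparison in the torsion case, and may not follow directly from the $\Q$-coefficient theory. The excellency assumption on $S$ presumably enters both when controlling nearby cycles under ramified base changes (to make the tame-cover presentation available) and when invoking the $\ell$-adic Lu--Zheng theorem.
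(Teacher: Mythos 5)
Your forward direction is essentially the paper's: for ULA $M$ one shows $\Upsilon_f(j^*M)\simeq i^*M$ using the termwise dualizability of the cosimplicial object $\mathscr{A}_S$ together with the K\"unneth/projection-formula characterization of ULA, and then base-change stability of ULA makes the colimits defining $\Psi^{\tame}_f$ and $\Psi_f$ constant (Proposition \ref{prop:ULAImpliesCanIsEquiv}); your "insensitivity to tame covers" sketch is that argument in outline. Your reduction of the converse to the cases $\Lambda$ a $\Q$-algebra and $\Lambda$ a $\Z/\ell\Z$-algebra, handling the torsion case by rigidity and the classical Lu--Zheng theorem, also matches the paper (Lemma \ref{lem:SuffToCheckULAAfterTensoringWQAndZ/p}, Theorem \ref{thm:Rigidity}, and the trivial case $\ell=p$ via Proposition \ref{prop:pOnBaseAutomaticallyInverted}).

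The genuine gap is in your treatment of the rational case of the converse. You propose to realize $\ell$-adically, conclude that $\R_\ell(M)$ is ULA for every $\ell$, and then descend to $M$ using "joint conservativity of the collection $\{\R_\ell\}_\ell$ on constructible $\Q$-motives." No such conservativity is available: for constructible \'etale motives with rational coefficients the conservativity of the $\ell$-adic realization is a well-known open problem, and nothing in the thesis (or in the literature) provides it; the conservative family actually used in the paper, $\{\rho^*_\Q,\rho^*_{\Z/\ell\Z}\}$ of Proposition \ref{prop:QandZ/plinearizationIsConsFam}, is a change of coefficients on the motivic side, not a realization, and its $\Q$-linear member cannot be further reduced to $\ell$-adic sheaves. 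So the step "$\R_\ell(\mathrm{can})$ an equivalence for all $\ell$ $\Rightarrow$ the motivic Lu--Zheng map is an equivalence" does not follow, and your argument proves only that the realizations of $M$ are ULA. The paper avoids realization entirely in this case: Proposition \ref{prop:InQsetupCanIsoImplULA} constructs an explicit duality datum for $(X,M)$ in the bicategory of cohomological correspondences by gluing duality data over $\eta$ and $\sigma$ through the recollement of $\DA_{\et}(X,\Lambda)$ along $\chi_f=i^*j_*$, and the heart of that construction is the splitting of the monodromy triangle induced by the hypothesis $i^*M\simeq\Psi_f(j^*M)$, which in turn rests on the rational-coefficient result that $\Upsilon_f$ is a direct summand of $\Psi_f$ (Corollary \ref{cor:UpsilonRetractOfPsi}), together with the compatibilities of the duality comparison maps with this splitting (Lemmas \ref{lem:LocalizationSequCommWDuality}, \ref{lem:Beta_MequivImpliesBeta_DMequiv}, \ref{lem:DChi=ChiDCompatibleWithSplitting}, \ref{lem:CompAndPullback}). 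That purely motivic construction is the part your proposal is missing and cannot replace by realization arguments.
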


As an application  we show in Proposition \ref{prop:SSviaPsi} that the weak singular support of a motive can be determined using motivic nearby cycles functors. We hope that this tool will be useful for further study of the (weak) singular support of a motive.

\vspace{1cm}

Along the way we study the motivic nearby cycles functor in some depth. Given a morphism of schemes $f: X \rightarrow S$, where $S$ is the spectrum of a strictly henselian discrete valuation ring, Ayoub defines not only the motivic nearby cycles functor $\Psi_f$ but also a functor
\[
\Upsilon_f: \DA_{\et}(X_\eta, \Lambda) \longrightarrow  \DA_{\et}(X_\sigma, \Lambda)
\]
which he calls the \textit{unipotent nearby cycles functor}. There is a canonical natural transformation $ \Upsilon_f \rightarrow \Psi_f$. Our main result concerning this is the following:
\begin{ThmC}[\ref{cor:UpsilonRetractOfPsi}]
Assume that $\Lambda$ is a $\Q$-algebra and $S$ is excellent. Then for every morphism of finite type $f:X \rightarrow S$ and  every $M$ in $\DA^{\cons}_{\et}(X_\eta, \Lambda)$  the canonical map 
\[ \Upsilon_{f}(M) \longrightarrow \Psi_f(M)\]
is the inclusion of a direct summand.
\end{ThmC}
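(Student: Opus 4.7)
The plan is to combine Ayoub's description of $\Psi_f$ as a filtered colimit of unipotent nearby cycles along a tower of tame covers of $S$ with a character-averaging argument, which is harmless because $\Lambda$ is a $\Q$-algebra.

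First, I would invoke (or prove using Theorem \ref{thm_siftedcoloimtisofLog}) a presentation of the form
\[
\Psi_f(M) \;\simeq\; \colim_{n \in (\N^\times_{(p')}, \mid)} \Upsilon_{f^{(n)}}\bigl((p^{(n)}_\eta)^* M\bigr),
\]
where $p^{(n)}: S^{(n)} \to S$ is the degree-$n$ Kummer cover (which exists because $S$ is excellent strictly henselian and $n$ is invertible on the residue field), $f^{(n)}: X^{(n)} \to S^{(n)}$ denotes the base change of $f$, and the transition maps $n\mid m$ are the natural comparison maps in Ayoub's framework. Since the residue field is separably closed, the induced map on special fibres $p^{(n)}_\sigma$ is an isomorphism, so no pushforward is needed on the right-hand side. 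This presentation realises $\Psi_f$ as built from the unipotent parts after passing to finer and finer tame layers.

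Next, for each $n$ the Galois group $\Gal(S^{(n)}/S) \cong \mu_n$ acts on $X^{(n)}$ over $X$, hence on $(p^{(n)}_\eta)^* M$ and thus on the $n$-th stage $\Upsilon_{f^{(n)}}((p^{(n)}_\eta)^* M)$. Because $\Lambda$ is a $\Q$-algebra and $\mu_n$ is finite, one obtains a canonical idempotent
\[
e_n \;:=\; \frac{1}{n}\sum_{\zeta \in \mu_n} \rho(\zeta),
\]
whose image is the $\mu_n$-invariants on the $n$-th stage. The unit of the adjunction $(p^{(n)}_\eta)^* \dashv (p^{(n)}_\eta)_*$ combined with the compatibility of $\Upsilon$ with finite étale pushforward identifies these invariants canonically with $\Upsilon_f(M)$, so at each finite stage one obtains a retraction splitting the canonical map $\Upsilon_f(M) \to \Upsilon_{f^{(n)}}((p^{(n)}_\eta)^* M)$.

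Finally, one verifies that the $e_n$ are compatible with the transition maps of the colimit: for $n\mid m$, the surjection $\mu_m \twoheadrightarrow \mu_n$ and the $\mu_m$-equivariance of the comparison maps force $e_n$ to agree with the restriction of $e_m$ to the $n$-th stage, so the $e_n$'s assemble into a projector $e_\infty$ on $\Psi_f(M)$ whose image is $\Upsilon_f(M)$ and whose inclusion coincides with the natural map in the statement; since idempotents split in our stable $\Q$-linear setting, this exhibits $\Upsilon_f(M)$ as a direct summand of $\Psi_f(M)$.

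The main obstacle I anticipate is the compatibility check in Ayoub's concrete model of $\Upsilon$ via $\Log$-modules: one has to verify precisely that passing to $\mu_n$-invariants on the $n$-th stage strips off exactly the non-trivial tame characters introduced by pulling back along $p^{(n)}_\eta$, and that this stripping is stable under refinement $n\mid m$ (so that the squares defining the projectors in the colimit system actually commute, not merely up to a homotopy one still has to pin down). Once this compatibility is established at each finite level and along the tower, the direct summand statement for the colimit is formal from the $\Q$-linear splitting of sequences of compatible idempotents.
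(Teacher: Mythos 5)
Your overall skeleton (a filtered-colimit description of the nearby cycles, a $\Q$-linear averaging/trace to split the transition maps, and the observation that direct summands pass to filtered colimits) is close in spirit to the paper's proof of the \emph{tame} statement, but as written there are two genuine gaps. First, the colimit presentation you invoke computes $\Psi^{\tame}_f$, not $\Psi_f$: by Proposition \ref{prop:NearbyCyclesViaLogAndColimits} the colimit over the Kummer tower of $\Upsilon_{f_n}(t_n^*M)$ is the tame nearby cycles functor, while $\Psi_f$ requires a further colimit over the finite subextensions $L\in\Xi_\tau$ of $M_\tau/K$, which encode the wild part of inertia. Even if completed, your argument would only prove Theorem \ref{thm:UpsilonRetractOfPsiTame}, which indeed holds for arbitrary $M$ and without excellency of $S$. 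Passing from $\Psi^{\tame}$ to $\Psi$ is exactly where constructibility of $M$ and excellency of $S$ enter: one uses Ayoub's stabilization result (Theorem \ref{thm:PropertiesOfEtaleMotivicNearby}(2)) to reduce to a single finite extension $F\in\Xi_\tau$, and then splits the comparison map $\Upsilon_f(M)\to\Upsilon_{t_F\circ f_F}(t_F^*M)$, which is again induced by a unit $\id\to t_{F*}t_F^*$. The fact that your proposal never uses either hypothesis signals that this whole step is missing.

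Second, the finite-stage identification ``the $\mu_n$-invariants of $\Upsilon_{f_n}(t_n^*M)$ are $\Upsilon_f(M)$'' is the actual mathematical content and is not a formal consequence of the adjunction $(t_n)^*\dashv (t_n)_*$. The Galois automorphisms of $S_n$ over $S$ do not preserve the uniformizer $\pi_n$ with respect to which $\Upsilon_{f_n}$ is constructed, so there is no evident action of the \emph{finite} group $\mu_n$ on the unipotent nearby cycles; after realization the natural candidate operators satisfy the $\mu_n$-relations only up to unipotent monodromy, so they need not define a finite group action and your $e_n$ need not be an idempotent (in the $\ell$-adic toy example of a rank-two sheaf with nontrivial unipotent monodromy the candidate generator has infinite order). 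What is actually needed at each stage is the comparison of $\mathscr{A}_{S_n}$ with $e_k^*\mathscr{A}_{S_n}$ after taking the colimit, i.e.\ Lemma \ref{lem:MapInTransitionMapIsEquiForQCoeff}, which rests on identifying the colimit with the logarithm motive and on $(e_k)_{\#}\colon\Log\to e_k^*\Log$ being an equivalence for isogenies (this is precisely the ``obstacle'' you defer, and it is where $\Q$-coefficients are used). Once that is in place, the paper sidesteps group actions entirely: the transition maps become unit maps $\id\to t_{k*}t_k^*$ for the finite surjective morphisms $t_k$, which admit retractions by the separation/trace property of rational motives (Lemma \ref{lem:UnitAdmitsRetraction}), and one concludes with Lemma \ref{lem:DriectSummandInFilteredColim}, with no need to construct a compatible system of idempotents on the colimit.
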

Whenever $\Lambda$ is a $\Q$-algebra Ayoub constructs an interesting monodromy operator $N: \Upsilon_f \rightarrow \Upsilon_f(-1)$. The theorem allows us to use the monodromy operator $N$ as a very effective tool to study $\Psi_f$.

\section*{Leitfaden}
We start off Chapter 1 with introducing \'etale motives. We do this in the language of $\infty$-categories and take special care to eliminate various finiteness hypotheses. Once this bookkeeping duty is done we recall Ayoub's formalism of specialization systems and the construction of the motivic nearby cycles functors $\Upsilon, \Psi^{\tame}$ and $\Psi$. Using our $\infty$-categorical setup we can describe these functors in terms of colimits (Proposition \ref{prop:NearbyCyclesViaLogAndColimits}). Finally we use this description and some observations concerning the logarithm motive to prove Theorem \ref{thm:UpsilonRetractOfPsiTame}, which will be a key technical tool. \\

In Chapter 2 we prove our generalization of the local monodromy theorem (Corollaries \ref{cor:MonodromyThm} and \ref{cor:LocMonForPerverseThings}). With Theorem \ref{thm:UpsilonRetractOfPsiTame} at our disposal this is a rather easy consequence of a theorem of Ayoub (Theorem \ref{thm:MonodromySquareAyoub}). Before that we give some background on the $J$-adic realization and the classical $J$-adic nearby cycles functor.\\

In Chapter 3 we introduce the notion of universal local acyclicity for motives. We give an equivalent characterisation of universal local acyclicity in terms of K\"unneth-type formulas (Proposition \ref{thm:EquivCharOfULA}) and prove a generic universal local acyclicity theorem (Proposition \ref{thm:GenericULA}). Then in Theorem \ref{thm:ULAinDAdetectedByNearby} we finally relate the notion of universal local acyclicity with the motivic nearby cycles functor. This is quite technical and covers a good part of the chapter. As an application we show that the weak singular support of a motive is determined by the motivic nearby cycles functor (Proposition \ref{prop:SSviaPsi}).\\

In Appendix A we state some facts about dualizable objects in a bicategory which we use in the proof of Theorem \ref{thm:ULAinDAdetectedByNearby}.
\section*{Acknowledgements}

I wish to thank my advisor Denis-Charles Cisinski for his hearty support and for the countless hours he spent answering my questions and sharing his insights. \\

I wish to thank Han-Ung Kufner and Sebastian Wolf for our coffee break discussions on the $\Log$-motive. In retrospect these meetings influenced this thesis a lot. Especially \S \ref{section:OnTheLog} is a direct result of the ideas conceived together. \\

Above all I am grateful for all the friends I made along the way as a PhD student in Regensburg and all the good moments we shared.

\newpage

\section*{Notations and conventions}

We freely use the language of $\infty$-category theory as developed by Lurie in \cite{lurie2009higher}, \cite{lurie2016higher} and \cite{lurie2018SAG}. Our notations and conventions often coincide with the ones in \textit{loc. cit.}. Let us still recall some basic and frequently used notations.\\

For two objects $X$ and $Y$ in an $\infty$-category $\C$ we denote by $\map_\C(X,Y)$ the mapping space. To an $\infty$-category $\C$ we can associate its homotopy category $h \C$, which is an ordinary category with homomorphism sets $\HOM_{h\mathcal{C}}(X,Y) = \pi_ 0 \map_\C(X,Y)$. A functor $F: \C \rightarrow \mathcal{D}$ of $\infty$-categories gives rise to a functor $F: h\C \rightarrow h\mathcal{D}$ between the homotopy categories. The $\infty$-category of functors between $\C$ and $\mathcal{D}$ is denoted by $\Fun(\C, \mathcal{D})$. \\

We say that a diagram 
\[
\begin{tikzcd}
X \arrow[r, "a"] \arrow[d, "c"'] & Y \arrow[d, "b"] \\
Z \arrow[r, "d"']                & W               
\end{tikzcd}
\]
in an $\infty$-category $\C$ commutes if there exists an equivalence $ba \simeq dc$ in $\map_\C(X,W)$. Equivalently the induced diagram in $h\C$ commutes in the classical sense.  \\

We have to talk about size. For this we use Grothendieck universes to talk about small sets. An $\infty$-category is called small if it is equivalent to an $\infty$-category whose underlying simplicial set is small. We say a set is possibly non-small if it is small for the next Grothendieck universe in the hierarchy. These two steps suffice for our purpose. \\

We denote the $\infty$-category of small $\infty$-categories by $\Cat_\infty$ and the $\infty$-category of possibly non-small $\infty$-categories by $\widehat{\Cat}_\infty$. We write $\Pr^L$ for the non-full subcategory of $\widehat{\Cat}_\infty$ consisting of presentable $\infty$-categories with small colimit preserving functors between them. We write $\Pr^{L,\st}$ for its full subcategory consisting of stable presentable $\infty$-categories. \\

We denote by $\Spc$ the $\infty$-category of small spaces and by $\Spt$ its stabilization, the $\infty-$category of spectra. Given a small category $\C$ we write $\PSh(\C) := \Fun(\C^{\op}, \Spc)$ for the $\infty$-category of presheaves on $\C$. Note that $\PSh(\C)$ is a presentable $\infty$-category which is non-small. \\

By a ring we will always mean a \textbf{commutative ring}. \\

By a smooth (resp. \'etale) morphism we always mean smooth (resp. \'etale) \textbf{of finite type}.

\chapter{\'Etale motives and the motivic nearby cycles functor}

We start with lifting the classical theory of \'etale motives as developed in \cite{AyoubRealizationEtale} to the level of $\infty$-categories. This is the language we want to speak later on and moreover it allows us to remove assumptions on separatedness. We give a complete proof of the fact that Ayoub's \'etale motives agree with Cisinski-Deglise's $h$-motives when restricted to finite dimensional noetherian schemes (Theorem \ref{thm:CompDAvsDM}). This allows us to use results from both worlds.

Next we introduce Ayoub's formalism of specialization systems and define the motivic nearby cycles functors. We give an alternative description of these functors in terms of ($\infty$-categorical) colimits in Proposition \ref{prop:NearbyCyclesViaLogAndColimits}. 

We make a small digression to a more general setup in Section \ref{section:OnTheLog}. We can associate to a smooth commutative group scheme $X$ a cosimplicial motive which for $X = \G_{m}$ already appears in Ayoub's construction of the unipotent nearby cycles functor $\Upsilon$. We show that this object can be considered as a cosimplicial representation of the logarithm motive associated to $X$ (Corollary \ref{cor:CosimplcialLogAgreesWithClassicalLogByHK}).

Finally our study of the logarithm motive allows us to prove our key technical tool: In Theorem \ref{thm:UpsilonRetractOfPsiTame} we show that with rational coefficients the unipotent nearby cycles functor is actually a direct factor of the tame nearby cycles functor.

\section{Motivic $\infty$-categories}

\noname \label{noname:MotivicInftyCat} Let $S$ be a quasi compact quasi separated (qcqs) scheme and denote by $\Sch^{\qcqs}_{/S}$ the category of qcqs schemes over $S$. Let $\Sm$ denote the collection of smooth morphisms in $\Sch^{\qcqs}_{/S}$. A \textit{motivic $\infty$-category over $S$} is a functor
\[
\T(\_): (\Sch_{/S}^{\text{qcqs}} )^{\op}\longrightarrow \CAlg( \Pr^{L, \st})
\]
to the $\infty$-category of stable presentable symmetric monoidal $\infty$-categories, which is a $(*, \sharp, \otimes)$-formalism on $(\Sch^{\qcqs}_{/S}, \Sm)$ satisfying the Voevodsky conditions in the sense of \cite[\S 2]{Adeel6Functor}. Given a morphism of schemes $f: X \rightarrow Y$ in $\Sch_{/S}^{\text{qcqs}}$ we write $f^*: \T(Y) \rightarrow \T(X)$ for the functor induced by $\T(\_)$ and $f_* : \T(X) \rightarrow \T(Y)$ for its right adjoint. 

Let us recall some properties:
\begin{enumerate}
\item For all $X$ in $\Sch^{\qcqs}_{/S}$ the monoidal structure of $\T(X)$ is closed. We denote the internal Hom-object by $\Hom(\_, \_)$.
\item Let $\Sch^{\qcqs,F}_{/S}$ denote the (non-full) subcategory of $\Sch^{\qcqs}_{/S}$ whose objects are qcqs schemes over $S$ and whose maps are morphisms of finite type between these. Then there exists a functor 
\[
\T(\_)_!: \Sch_{/S}^{\qcqs, F}\longrightarrow \Pr^{L, \st}
\]
which sends a morphism of finite type $f: X \rightarrow Y$ to  functor $f_!: \T(X) \rightarrow \T(Y)$. We denote the right adjoint of $f_!$ by $f^!$. 
\item For all $f: X \rightarrow Y$ of finite type in $\Sch^{\qcqs}_{/S}$ there exists a natural transformation $f_! \rightarrow f_*$ which is an equivalence whenever $f$ is proper.
\item For all smooth morphisms $f: X \rightarrow Y$ of relative dimension $d$ there exists a natural equivalence $f^* \overset{\sim}\rightarrow f^!(-d)[-2d]$. Here $(\_)$ denotes the Tate twist and $[\_]$ denotes the suspension.
\item (Base Change) For any cartesian square
\[
\begin{tikzcd}
Y' \arrow[d, "f'"'] \arrow[r, "g'"] & Y \arrow[d, "f"] \\
X' \arrow[r, "g"']                  & X               
\end{tikzcd}
\]
in $\Sch^{\qcqs}_{/ S}$, where  $f$ is of finite type, there exist natural equivalences
\begin{align*}
g^* f_! &\overset{\sim}\longrightarrow f'_! g'^*, \\
g'_* f'^! &\overset{\sim}\longrightarrow f^! g_*.
\end{align*}

\item (Projection Formula) For any morphism $f:X \rightarrow Y$ of finite type in $\Sch^{\qcqs}_{/ S}$ there are canonical equivalences
\begin{align*}
(f_! A) \otimes B &\overset{\sim}\longrightarrow f_!(A \otimes f^*B), \\
\Hom(f_! A, B) &\overset{\sim}\longrightarrow f_* \Hom(A, f^! B), \\
f^!\Hom(A, C) &\overset{\sim}\longrightarrow \Hom(f^*A, f^! B) \\
\end{align*}
for all $A,C$ in $\T(X)$ and $B$ in $\T(Y)$.
\item (Localization Sequence) Consider a closed immersion $i: Z \rightarrow X$ in $\Sch^{\qcqs}_{/S}$ whose open complement $j:U \rightarrow X$ lies in $\Sch^{\qcqs}_{/S}$. Then there are (co)fiber sequences
\[
j_! j^! \overset{\counit}\longrightarrow \id \overset{\unit}\longrightarrow i_*i^*
\]
and
\[
i_! i^! \overset{\counit}\longrightarrow \id \overset{\unit}\longrightarrow j_*j^*.
\]
\end{enumerate}

A motivic $\infty$-category satisfies several more properties such as excision, descent and purity. We refer to \cite[\S 2]{Adeel6Functor} for more details. 

\noname \label{noname:ExchangeMaps} In fact the natural transformations in (5) above are particular instances of a more general formalism: Consider a natural transformation
\[
\begin{tikzcd}
E \arrow[r, "c"] \arrow[d, "a"'] & F \arrow[d, "d"]\\
G \arrow[r, "b"']      \arrow[ru, Rightarrow, "\alpha", shorten <=2.5ex, shorten >=2.5ex]            & H                           
\end{tikzcd}
\]
of functors between $\infty$-categories and assume that $a$ and $d$ admit left adjoints which we denote by $a^\star$ and $d^\star$ respectively. To this we may associate a natural transformation 
\[
\begin{tikzcd}
G \arrow[d, "b"'] \arrow[r, "a^\star"] & E \arrow[d, "c"] \\
H \arrow[r, "d^\star"'] \arrow[ru, Rightarrow, "\Ex_\alpha", shorten <=2.5ex, shorten >=2.5ex]     & F               
\end{tikzcd}
\]
by defining $\Ex_\alpha$ to be the composition
\[
d^\star b \overset{\unit}\longrightarrow d^\star b a a^\star \overset{\id \alpha \id }\longrightarrow d^\star  d c a^\star \overset{\counit }\longrightarrow c a^\star. 
\]
We call $\Ex_\alpha$ the associated \textit{exchange map}. The formation of exchange maps has the following pasting property: Consider two natural transformations of the form
\[
\begin{tikzcd}
E \arrow[r, "c"] \arrow[d, "a"'] & F \arrow[d, "d"] \arrow[r, "e"] & I \arrow[d, "g"] \\
G \arrow[r, "b"'] \arrow[ru, Rightarrow, "\alpha", shorten <=2.5ex, shorten >=2.5ex]     & H \arrow[r, "f"'] \arrow[ru, Rightarrow, "\beta", shorten <=2.5ex, shorten >=2.5ex]     & J,               
\end{tikzcd}
\]
where $a,d $ and $g$ admit left adjoints denoted by $a^\star, d^{\star}$ and $g^{\star}$. Then 
\[
\begin{tikzcd}
g^{\star}fb \arrow[rr, "\Ex_{\beta \circ \alpha}"] \arrow[rd, "\Ex_{\beta} b"'] &                                           & ec a^{\star} \\
                                                                                & ed^{\star}b \arrow[ru, "e \Ex_{\alpha}"'] &             
\end{tikzcd}
\]
commutes (see \cite[1.1.7]{CisinskiDegliseBook}).

\section{\'Etale motives}

\noname \label{noname:ConstrOfDA} Let $X$ be a scheme, $\Lambda$ a ring and write $\mathcal{D}(\Lambda):= \Mod_{H\Lambda}(\Spt)$. Here $H\Lambda$ denotes the Eilenberg-Maclane spectrum associated to $\Lambda$ and $\Mod_{H\Lambda}(\Spt)$ denotes the $\infty$-category of $H \Lambda$-modules in the $\infty$-category of spectra. Note that $\T(\Lambda)$ is in fact equivalent to the unbounded derived $\infty$-category of $\Lambda$-modules by the Schwede-Shipley Theorem (see \cite[7.1.2.1]{lurie2016higher}). Let $\Sm_{/X}$ denote the category of smooth schemes over $X$ and write
\[
\PSh(\Sm_{/X}, \T(\Lambda)):= \PSh(\Sm_{/X}) \otimes \T(\Lambda),
\]
where $\_\otimes \T(\Lambda)$ denotes the Lurie tensor product of presentable $\infty-$categories (see \cite[\S 4.8.1]{lurie2016higher}). The canonical colimit preserving functor $\Spc \rightarrow \T(\Lambda)$ induces a Yoneda functor
\[
\y_\Lambda: \Sm_{/X} \longrightarrow \PSh(\Sm_{/X}) \longrightarrow \PSh(\Sm_{/X}, \T(\Lambda)).
\]
Let $\Sh^{\text{hyp}}_{\et}(\Sm_{/X}, \mathcal{D}(\Lambda))$ be the full subcategory of $\PSh(\Sm_{/X}, \T(\Lambda))$ consisting of those objects $\mathcal{F}$ which are local with respect to \'etale hyper-covers. The inclusion 
\[
\Sh^{\text{hyp}}_{\et}(\Sm_{/X}, \mathcal{D}(\Lambda)) \subset \PSh(\Sm_{/X}, \T(\Lambda))
\]
admits by \cite[5.5.4.15]{lurie2009higher} a left adjoint  which we denote by $L_{\et}$. An element $\mathcal{F}$ in $\Sh^{\text{hyp}}_{\et}(\Sm_{/X}, \mathcal{D}(\Lambda))$ is called \textit{$\A^1$-invariant} if $\mathcal{F}(\pi_Y): \mathcal{F}(Y) \rightarrow \mathcal{F}(\A^1_Y)$ is an equivalence for $\A^1$-projections $\pi_Y: \A^1_Y \rightarrow Y$ in $\Sm_{/X}$. We define $\DA_{\et}^{\text{eff}}(X, \Lambda)$ to be the full subcategory of $\Sh^{\text{hyp}}_{\et}(\Sm_{/X}, \mathcal{D}(\Lambda))$ consisting of $\A^1$-invariant objects. Again the inclusion 
\[
\DA_{\et}^{\text{eff}}(X, \Lambda) \subset \Sh^{\text{hyp}}_{\et}(\Sm_{/X}, \mathcal{D}(\Lambda))
\]
admits a left adjoint by \cite[5.5.4.15]{lurie2009higher} which we denote by $L_{\A^1}$. Let us denote the motive associated to a $Y$ in $\Sm_{/X}$ via
\[
\Sm_{/X} \overset{\y_\Lambda}\longrightarrow  \PSh(\Sm_{/X}, \mathcal{D}(\Lambda)\overset{L_{\et}}\longrightarrow \Sh^{\text{hyp}}_{\et}(\Sm_{/X}, \mathcal{D}(\Lambda)) \overset{L_{\A^1}}\longrightarrow \DA_{\et}^{\text{eff}}(X, \Lambda)
\]
by $\Lambda (Y)$. 

\noname \label{noname:SymMonStrOfDAeff} $\T( \Lambda)$ is the underlying $\infty$-category of a symmetric monoidal $\infty$-category $\T( \Lambda)^\otimes$ which comes equipped with a canonical symmetric monoidal functor 
\begin{equation} \label{eqn:SpcToDLambda}
\Spc^\times \overset{\Sigma^\infty}\longrightarrow \Spt^\otimes \longrightarrow \T( \Lambda)^\otimes
\end{equation}
of presentable $\infty$-categories, where $\Spc$ is equipped with its cartesian monoidal structure and $\Spt$ with its smash product as defined in \cite[\S 4.8.2]{lurie2016higher}. The constant sheaf functor $\Gamma^*: \Spc \longrightarrow \PSh(\Sm_{/X})$  preserves finite limits and hence equips $\PSh(\Sm_{/X})^\times$ with the structure of a $\Spc^\times$-algebra in $\CAlg(\Pr^{L})$. By \cite[4.5.3.1]{lurie2016higher} the symmetric monoidal functor (\ref{eqn:SpcToDLambda}) induces a symmetric monoidal functor 
\[
\_ \otimes \T(\Lambda): \Mod_{\Spc^\times}(\Pr^L)^\otimes \longrightarrow \Mod_{\T(\Lambda)^\otimes}(\Pr^L)^\otimes. 
\]
The symmetric monoidal structure of $\PSh(\Sm_{/X})$ is expressed by a functor \[\Fin_* \longrightarrow \Mod_{\Spc^\times}(\Pr^L)^\otimes\]
 over $\Fin_*$.
The composition 
\[
\Fin_* \longrightarrow \Mod_{\Spc^\times}(\Pr^L)^\otimes \overset{\_ \otimes \T(\Lambda)}\longrightarrow \Mod_{\T(\Lambda)^\otimes}(\Pr^L)^\otimes 
\]
over $\Fin_*$ defines a $\T(\Lambda)$-algebra in $\Pr^L$ which we denote by $\PSh(\Sm_{/X}, \T(\Lambda))^\otimes$. It is clear from the construction that its underlying $\infty$-category is $\PSh(\Sm_{/X}, \T(\Lambda))$. It follows from \cite[2.2.1.9]{lurie2016higher} that $\Sh^{\text{hyp}}_{\et}(\Sm_{/X}, \mathcal{D}(\Lambda))$ is the underlying $\infty$-category of a symmetric monoidal $\infty$-category $\Sh^{\text{hyp}}_{\et}(\Sm_{/X}, \mathcal{D}(\Lambda))^\otimes$ such that the sheafification functor lifts to a functor
\[
L_{\et}^\otimes: \PSh( \Sm_{/X}, \T(\Lambda))^\otimes \longrightarrow \Sh^{\text{hyp}}_{\et}(\Sm_{/X}, \mathcal{D}(\Lambda))^\otimes
\]
of symmetric monoidal $\infty$-categories. Similarly $L_{\A^1}$ admits a lift 
\[
L_{\A^1}^\otimes: \Sh^{\text{hyp}}_{\et}(\Sm_{/X}, \mathcal{D}(\Lambda))^\otimes \rightarrow \DA_{\et}^{\text{eff}}(X, \Lambda)^\otimes
\]
to a functor of symmetric monoidal $\infty$-categories.

\noname\label{noname:SymMonStrOfDA} The unit section $1: X \rightarrow \G_{m,X}$ induces a morphism 
\begin{equation} \label{eqn:Lamda1}
\Lambda (X) \rightarrow \Lambda(\G_{m,X})
\end{equation}
in $\DA_{\et}^{\text{eff}}(X, \Lambda)$. We denote by $\Lambda(1)$ the object such that $\Lambda(1)[1]$ is the cofiber of (\ref{eqn:Lamda1}) and call it \textit{the Tate object}. We define \textit{the category of \'etale motives with $\Lambda$-coefficients} as the colimit  of the diagram
\[
\DA_{\et}^{\text{eff}}(X, \Lambda) \overset{\_ \otimes \Lambda(1)} \longrightarrow\DA_{\et}^{\text{eff}}(X, \Lambda) \overset{\_ \otimes \Lambda(1)}\longrightarrow \DA_{\et}^{\text{eff}}(X, \Lambda) \overset{\_ \otimes \Lambda(1)} \longrightarrow \dots 
\]
indexed by the poset $\mathbb{N}= \{0 \rightarrow 1  \rightarrow 2 \rightarrow \dots \}$ in the $\infty$-category $\Pr^L$ and denote it by $\DA_{\et}(X, \Lambda)$. We write
\[
\Sigma^\infty: \DA_{\et}^{\text{eff}}(X, \Lambda) \longrightarrow \DA_{\et}(X, \Lambda)
\]
for the functor induced by the canonical functor into the 0-th level of the $\mathbb{N}$-indexed diagram above. By slight abuse of notation we denote for any $Y$ in $\Sm_{/X}$ the object $\Sigma^\infty (\Lambda(Y))$ again by $\Lambda(Y).$ Since the object $\Lambda(1)$ is symmetric by \cite[4.4]{VoevodskyProceedings1998} and \cite[2.16]{Robalo15} we get that $\DA_{\et}(X, \Lambda)$ underlies a symmetric monoidal $\infty$-category $\DA_{\et}(X, \Lambda)^\otimes$ such that $\Sigma^\infty$ lifts to a symmetric monoidal functor (see \cite[2.2, 2.22]{Robalo15}). As in \cite[\S 2.4]{Robalo15} it is straightforward to check that the homotopy category of $\DA_{\et}(X, \Lambda)$ with its induced monoidal structure is equivalent to the symmetric monoidal triangulated category considered \cite{AyoubRealizationEtale}. 

\noname \label{noname:f*onDA} For any map of schemes $f: X \rightarrow Y$ the functor
\[
\_ \times_Y X : \Sm_{/Y} \longrightarrow \Sm_{/X}
\]
induces by pre-composition a functor
\[
f_{\PSh*}: \PSh(\Sm_{/X}, \T(\Lambda)) \longrightarrow \PSh(\Sm_{/Y}, \T(\Lambda))
\]
which admits a left adjoint
\[
f_{\PSh}^*: \PSh(\Sm_{/Y}, \T(\Lambda)) \longrightarrow \PSh(\Sm_{/X}, \T(\Lambda))
\]
given by left Kan extension. As $\_ \times_X Y$ commutes with finite products its Kan extension $f_{\PSh}^*$ lifts to a symmetric monoidal functor. The functor $\_ \times_Y X$ preserves \'etale hyper-covers and maps $\A^1$-projections to $\A^1$-projections. Thus $f_{\PSh *}$ restricts to a functor
\[
f_{\eff *}: \DA_{\et}^{\text{eff}}(X, \Lambda) \longrightarrow \DA_{\et}^{\text{eff}}(Y, \Lambda). 
\]
Define
\[
f_{\eff}^*: \DA_{\et}^{\text{eff}}(Y, \Lambda) \longrightarrow \DA_{\et}^{\text{eff}}(X, \Lambda)
\] 
as the compsition
\[
\DA_{\et}^{\text{eff}}(Y, \Lambda) \overset{f_{\PSh}^*|_{\DA_{\et}^{\text{eff}}(Y, \Lambda)}}\longrightarrow \PSh(\Sm_{/X}, \T(\Lambda)) \overset{L_{\A^1} \circ L_{\et}}\longrightarrow \DA_{\et}^{\text{eff}}(X, \Lambda).
\]
Then $f_{\eff}^*$ is left adjoint to $f_{\eff *}$. As $L_{\et}$ and $L_{\A^1}$ are symmetric monoidal (see \ref{noname:SymMonStrOfDA}) we can deduce that $f_{\eff}^*$ is symmetric monoidal. Combining this with the fact that $f_{\eff}^* \Lambda(1) \simeq \Lambda(1)$ we see that the two left adjoints $f_{\eff}^*$ and $\_ \otimes \Lambda(1)$ commute. Thus by \cite[2.9, 2.22]{Robalo15} $f_{\eff}^*$ induces a functor
\[
f^*: \DA_{\et}(Y, \Lambda) \longrightarrow \DA_{\et}(X, \Lambda)
\]
in $\Pr^L$ such that $\Sigma_\infty \circ f_{\eff}^* \simeq f^* \circ \Sigma_\infty$ which moreover lifts to a symmetric monoidal functor. We denote the right adjoint of $f^*$ by $f_*$. In the case where $f: X \rightarrow Y$ is smooth we can argue as in \cite[1.23, 1.26]{Adeel6Functor} to show that $f^*$ admits a left adjoint $f_\#$ that satisfies smooth base change and the projection formula.  


\noname \label{noname:FunctorialityOfDA}
Let $S$ be a qcqs scheme. Proceeding as in \cite[\S 9.1, Step 1)]{Robalo14} one can make the formation of $\DA_{\et}(\_, \Lambda)$ functorial in the sense that we can construct a functor of $\infty$-categories
\begin{equation} \label{eqn:FunctorDA*}
\DA_{\et}( \_, \Lambda): (\Sch_{/S}^{\qcqs})^{\op} \longrightarrow \CAlg(\Pr^{L,\text{st}})
\end{equation}
which sends a morphism $f: X \rightarrow Y$ to
\[
f^* : \DA_{\et}(Y, \Lambda) \longrightarrow \DA_{\et}(X, \Lambda).
\] 
From the observations above we see immediately that this is a $(*,\#, \otimes)$-formalism as defined in \cite[Definition 2.2]{Adeel6Functor}. Moreover one shows that $\DA_{\et}( \_, \Lambda)$ satisfies the Voevodsky conditions (see \cite[2.4]{Adeel6Functor}) analogous to \cite[2.5]{Adeel6Functor}. In particular $\DA_{\et}( \_, \Lambda)$ is  a motivic $\infty$-category over $S$ in the sense of \ref{noname:MotivicInftyCat}.

\noname We may adapt the steps (3.3)-(3.8) in \cite{LiuZhengEnhanced} in order to extend (\ref{eqn:FunctorDA*}) to a functor
\begin{equation} \label{eqn:FunctorFromDelta*22ToCat}
\delta^*_{2,\lbrace 2 \rbrace}(((\Sch_{/S}^{\qcqs})^{\op})^{\sqcup, \op})^{cart}_{F, all} \longrightarrow \Pr^{L, \st}
\end{equation}
(using the notations of \textit{loc. cit.}) where $F$ denotes the set of separated morphisms locally of finite type. Using the descent machinery developed in \cite[\S 4]{LiuZhengEnhanced} we can extend (\ref{eqn:FunctorFromDelta*22ToCat}) further to a functor 
\begin{equation} \label{eqn:FunctorFromDelta*22ToCatFT}
\delta^*_{2,\lbrace 2 \rbrace}(((\Sch_{/S}^{\qcqs})^{\op})^{\sqcup, \op})^{cart}_{F', all} \longrightarrow \Pr^{L, \st}
\end{equation}
where $F'$ denotes the set of morphisms locally of finite type which are not necessarily separated. Restricting (\ref{eqn:FunctorFromDelta*22ToCatFT}) to the first direction and forgetting the operadic structures gives rise to a functor
\begin{equation} \label{eqn:D!SchRingsCat}
\DA_{\et}(\_ , \Lambda)_!: \Sch_{/S}^{\qcqs,F'} \longrightarrow \Pr^{L, \st},
\end{equation}
where $\Sch_{/S}^{\qcqs,F'}$ denotes the subcategory of $\Sch_{/S}^{\qcqs}$ consisting of morphisms locally of finite type. This functor sends a morphism $f: X \rightarrow Y$ locally of finite type to the exceptional push forward
\[
f_!: \DA_{\et}(X, \Lambda) \longrightarrow \DA_{\et}(Y, \Lambda).
\]
We denote the right adjoint of $f_!$ by $f^!$. Again it is straightforward to check that when restricted to quasi projective morphisms the functors induced by $f^*,f_*,f^!, f_!$ on the homotopy categories are equivalent to the ones defined in \cite{AyoubRealizationEtale}.

The six functors $f^*,f_*,f^!, f_!, \otimes$ and $\Hom(\_, \_)$ satisfy various properties and compatibilities which are for example discussed in \cite[\S 2]{Adeel6Functor}.

\begin{remark} \label{rem:DAasSHotimesD}
\begin{enumerate}
\item Let $X$ be a scheme. If we replace $\T(\Lambda)$ with $\Spt$ in the constructions of $\DA^{\eff}_{\et}(X, \Lambda)$ and $\DA_{\et}(X, \Lambda)$ above we end up with $\SH^{S^1}_{\et}(X)$ and $\SH_{\et}(X)$ (as for example considered in \cite{BachmannRigidity}). One calls $\SH_{\et}(X)$ the \'etale motivic stable homotopy category. As above this construction can be made functorial giving rise to a functor
\begin{equation} \label{eqn:FunctorSH*}
\SH_{\et}( \_): (\Sch_{/S}^{\qcqs})^{\op} \longrightarrow \CAlg(\Pr^{L,\text{st}}).
\end{equation}

\item There is a canonical equivalence
\[
\DA_{\et}(X, \Lambda) \simeq \SH_{\et}(X) \otimes \T(\Lambda).
\]
Indeed since the Lurie tensor product interacts well with localisations (see the proof of \cite[4.8.1.15]{lurie2016higher}) we see that $\SH^{S^1}_{\et}(X) \otimes \T(\Lambda)$ and $\DA_{\et}^{\eff}(X, \Lambda)$ identify with the same subcategories in $\PSh(\Sm_{/X}, \T(\Lambda)) \simeq \PSh(\Sm_{/X}, \Spt) \otimes\T(\Lambda)$. Let us denote the image of a $Y$ in $\Sm_{/Y}$ under 
\[
\Sm_{/X} \overset{\text{Yoneda}}\longrightarrow \PSh(\Sm_{/X}, \Spt) \overset{L_{\A^1} \circ L_{\et}}\longrightarrow \SH^{S^1}_{\et}(X)
\]
by $\mathbb{S}(Y)$ and define the Tate twist $\mathbb{S}(1)$ analogous as in \ref{noname:SymMonStrOfDA}. The canonical symmetric monoidal functor $\Spt \rightarrow \T(\Lambda)$ in $\Pr^{L,\st}$ induces a symmetric monoidal functor $\SH^{S^1}_{\et}(X) \rightarrow \DA^{\eff}(X, \Lambda)$ in $\Pr^{L,\st}$ which maps $\mathbb{S}(1)$ to $\Lambda(1)$. Hence we have a chain of equivalences
\begin{align*}
\SH_{\et}(X) \otimes \T(\Lambda) &\simeq \left( \colim_{\N} (\SH^{S^1}_{\et}(X) \overset{\_ \otimes \mathbb{S}(1)}\longrightarrow \dots ) \right) \otimes \T(\Lambda) \\
& \simeq  \colim_{\N} \left( \SH^{S^1}_{\et}(X) \otimes \T(\Lambda)  \overset{\_ \otimes \Lambda(1)}\longrightarrow \dots  \right) \\
& \simeq \DA_{\et}(X, \Lambda).  
\end{align*}

\item In fact one can replace $\T(\Lambda)$ with $\Mod_A(\Spt)$ for any $E_\infty$-ring $A$ in the constructions above in order to get a theory of \'etale motives with values in $E_\infty$-rings. Since we will not need this in the following we chose to restrict ourselves to ordinary rings in order to avoid confusion.   
\end{enumerate}
\end{remark}

\noname \label{noname:DAChangeOfCoeff}
The $\infty$-category $\Spt$ comes equipped with a $t$-structure whose heart is equivalent to the ordinary category of abelian groups (see \cite[1.4.3.6]{lurie2016higher}). Hence there is a canonical fully faithful functor
\[
H: \CRing \simeq \CAlg(\Ab) \longrightarrow \CAlg(\Spt)
\]
which maps a ring to its Eilenberg-Maclane spectrum. Consider the composition
\begin{equation} \label{eqn:FunctorCRingToPrL}
\T(\_): \CRing \overset{H}\longrightarrow \CAlg(\Spt) \overset{\Mod{\_}(\Spt)}\longrightarrow \CAlg(\Pr^{L,\st}),
\end{equation}
where the second functor is obtained by straightening the coCartesian fibration of \cite[4.5.3.1]{lurie2016higher}. This functor sends a morphism of rings $\rho: \Lambda \rightarrow \Lambda'$ to a symmetric monoidal functor
\[
\rho^*: \T(\Lambda) \longrightarrow \T(\Lambda')
\]
in $\Pr^{L,\st}$ which gives rise to a symmetric monoidal change of coefficients functor
\[
\rho^*: \DA_{\et}(X, \Lambda) \simeq \SH_{\et}(X) \otimes \T(\Lambda) \overset{\id \otimes \rho^*}\longrightarrow \SH_{\et}(X) \otimes \T(\Lambda') \simeq \DA_{\et}(X, \Lambda')
\]
in $\Pr^{L,\st}$.

\noname \label{noname:Rigidity} Let $\Et_{/X}$ denote the category of \'etale schemes of finite type over $X$. For a ring $\Lambda$ and a scheme $X$ let us denote by $\T_{\et}(X, \Lambda)$ the unbounded derived $\infty-$category associated to the abelian category $\Sh_{\et}( \Et_{/X}, \Lambda)$ of \'etale sheaves on $\Et_{/X}$ with $\Lambda$ coefficients.  By \cite[2.1.2.2]{lurie2018SAG} there is a canonical equivalence
\[
\T_{\et}(X, \Lambda) \overset{\sim}\longrightarrow \Sh^{\hyp}_{\et}(\Et_{/X}, \T(\Lambda))
\]
of symmetric monoidal stable presentable $\infty$-categories. Hence the canonical morphism of \'etale sites
\[
\gamma: (\Et_{/X}, \et) \rightarrow (\Sm_{/X}, \et)
\]
induces an symmetric monoidal and colimit preserving functor
\[
\iota^*: \T_{\et}(X, \Lambda) \simeq \Sh^{\hyp}_{\et}(\Et_{/X}, \T(\Lambda)) \overset{\gamma^*}\rightarrow   \Sh^{\hyp}_{\et}(\Sm_{/X}, \T(\Lambda)) \overset{L_{\A^1}}\rightarrow \DA_{\et}^{\text{eff}}(X, \Lambda) \overset{\Sigma^\infty}\rightarrow \DA_{\et}(X, \Lambda)
\]
between stable presentable $\infty$-categories.

\begin{thm}[Rigidity] \label{thm:Rigidity} Let $X$ be a locally noetherian scheme, $n$ a positive integer invertible in $\mathcal{O}(X)$ and $\Lambda$ a ring satisfying $n\Lambda=0$. Then 
\[
\iota^*: \T_{\et}(X, \Lambda) \longrightarrow \DA_{\et}(X, \Lambda)
\]
is an equivalence.
\end{thm}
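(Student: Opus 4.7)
The plan is to prove the equivalence by verifying that $\iota^*$ is fully faithful on a set of compact generators and that its essential image generates the target under small colimits. The sole non-formal ingredient is classical Suslin--Voevodsky rigidity: under our hypotheses, the étale sheaf $\Lambda$ on $\Sm_{/X}$ is $\A^1$-invariant.

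First I would fix the family $\{\Lambda_U : U \to X \text{ étale of finite type}\}$ as compact generators of $\T_{\et}(X, \Lambda)$; these are carried by $\iota^*$ to the motives $\Lambda(U)$. Next I would invoke the rigidity input. The $\A^1$-invariance of the torsion étale sheaf $\Lambda$, together with étale hyperdescent, implies that for $U, V \to X$ étale of finite type the canonical comparison
\[
\map_{\DA^{\eff}_{\et}(X,\Lambda)}\bigl(\Lambda(U), \Lambda(V)\bigr) \simeq R\Gamma_{\et}(U \times_X V, \Lambda) \simeq \map_{\T_{\et}(X,\Lambda)}(\Lambda_U, \Lambda_V)
\]
is an equivalence, since both the $\A^1$-localization $L_{\A^1}$ and the étale sheafification $L_{\et}$ act trivially on the relevant mapping presheaves. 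This gives fully faithfulness of $\iota^*$ at the effective level. I would then use that, because $n$ is invertible on $X$ and $n\Lambda = 0$, the sheaf $\mu_n \otimes_{\Z} \Lambda$ is $\otimes$-invertible in $\T_{\et}(X,\Lambda)$ and is identified via the Kummer sequence with (a shift of) the Tate object $\Lambda(1)$ under $\iota^*$; hence $\Lambda(1)$ is invertible already in $\DA^{\eff}_{\et}(X,\Lambda)$, so the stabilization $\Sigma^\infty : \DA^{\eff}_{\et}(X,\Lambda) \to \DA_{\et}(X,\Lambda)$ is an equivalence. Fully faithfulness then lifts to the stable category.

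For essential surjectivity, I would note that $\DA_{\et}(X,\Lambda)$ is generated under small colimits by the compact objects $\Lambda(Y)(m)$ with $Y \to X$ smooth and $m \in \Z$. The Tate twists lie in the essential image by the previous paragraph; for $Y \to X$ smooth, one argues that $Y$ is étale-locally of the form étale-over-$\A^d_X$, so by $\A^1$-contractibility of affine space together with étale hyperdescent the motive $\Lambda(Y)$ belongs to the localizing subcategory generated by $\{\Lambda(U) : U \to X \text{ étale of finite type}\}$. A fully faithful colimit-preserving functor between presentable $\infty$-categories whose essential image generates the target under colimits is an equivalence.

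The hard part is the rigidity input: $\A^1$-invariance of étale cohomology with $n$-torsion coefficients, $n$ invertible on $X$. This is classical (Artin/Suslin--Voevodsky over fields, Gabber/Ayoub over general locally noetherian bases) and one can quote it as a black box from \cite{AyoubRealizationEtale} or \cite{BachmannRigidity}; every other step is formal manipulation with adjunctions and compact generators in presentable $\infty$-categories. In particular, the tensor description $\DA_{\et}(X, \Lambda) \simeq \SH_{\et}(X) \otimes \T(\Lambda)$ from Remark \ref{rem:DAasSHotimesD} allows one to deduce the version stated here from the parallel statement for $\SH_{\et}$ proved in \cite{BachmannRigidity}, which is the most efficient route in the $\infty$-categorical setup.
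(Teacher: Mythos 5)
Your skeleton for the fully faithful part matches the paper's (fully faithfulness of $\omega^*=L_{\A^1}\circ\gamma^*$ from $\A^1$-invariance of \'etale cohomology with $n$-torsion coefficients, then invertibility of $\Lambda(1)$ in $\DA^{\eff}_{\et}(X,\Lambda)$ so that $\Sigma^\infty$ is an equivalence), but the step you dispose of "via the Kummer sequence" is precisely the non-formal heart of the theorem, and your justification for it is a gap. The Kummer sequence concerns the sheaf of units on the small \'etale site; the Tate object $\Lambda(1)$ is built from the cofiber of $\Lambda(X)\to\Lambda(\G_{m,X})$, i.e.\ from the free $\Lambda$-linear sheaf on the representable presheaf of the scheme $\G_m$, not from the sheaf of units. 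What Kummer theory gives you is only a comparison map: the $\mu_n$-torsor $e_n$ yields a class in $H^1_{\et}(\G_{m,X},\mu_n)$ and hence a map $\sigma_n\colon\Lambda(1)[1]\to\omega^*\mu_n[1]$; proving that $\sigma_n$ is an equivalence is the actual rigidity statement. The paper spends most of its proof on exactly this: reduction to $\Lambda=\Z/\ell^k\Z$, reduction to uniformly finite \'etale cohomological dimension by passing to strict localizations (using \cite[1.1.5]{CisinskiDegliseEtale}), and then a transfer of Bachmann's theorem \cite[6.6]{BachmannRigidity} for $\SH^{S^1}_{\et}(X)^{\wedge}_{\ell}$ along $-\otimes H\Lambda$, checking that $\rho^*_\Lambda\sigma$ and $\sigma_{\ell^k}$ classify the same torsor. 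Your closing remark that one could instead tensor Bachmann's statement with $\T(\Lambda)$ via $\DA_{\et}(X,\Lambda)\simeq\SH_{\et}(X)\otimes\T(\Lambda)$ gestures at the same source, but it does not substitute for these reductions: Bachmann's theorem is about the $\ell$-complete (hypercomplete) category under finiteness hypotheses, and the Lurie tensor product does not commute with completion for free; one has to descend to the level of the single invertibility statement and perform the coefficient and cohomological-dimension reductions, as the paper does. Also note that your formula $\map(\Lambda(U),\Lambda(V))\simeq R\Gamma_{\et}(U\times_X V,\Lambda)$ is only correct for finite \'etale $V$; in general the left-hand side involves the extension $\pi_{V\#}\Lambda$ rather than the pushforward, though this is a minor slip compared with the main point.

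Your essential surjectivity argument also has a genuine gap, and here you diverge from the paper. After choosing an \'etale cover of a smooth $Y/X$ by schemes \'etale over $\A^d_X$ and applying \'etale hyperdescent, you are left with motives $\Lambda(W)$ for $W\to\A^d_X$ \'etale; but $\A^1$-invariance only contracts $\A^d_U$ for $U$ \'etale over $X$, not arbitrary \'etale schemes over $\A^d_X$, so you cannot conclude that $\Lambda(W)$ lies in the localizing subcategory generated by $\{\Lambda(U):U\to X\ \text{\'etale}\}$ --- already for $X$ the spectrum of a separably closed field and $W$ a nontrivial finite \'etale cover of $\G_m$, seeing this requires the identification $\Lambda(1)\simeq\omega^*\mu_n$, i.e.\ rigidity itself; the proposed reduction is circular. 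The paper avoids this by a different route: every object of $\DA_{\et}(X,\Lambda)$ is a colimit of objects $p_*\1(n)$ with $p$ proper \cite[2.2.23]{AyoubThesisI}, $\iota^*$ commutes with $p_*$ for proper $p$ (proper base change, as in \cite[4.4.3]{CisinskiDegliseEtale}), and the twists $\1(n)$ lie in the image by the first part. To repair your proof you either need to adopt that generation argument or supply a genuine argument (with transfers, or Gabber-type input) for why \'etale schemes over $\A^d_X$ are captured by the small \'etale site of $X$ after $\A^1$- and \'etale-localization.
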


\begin{proof}
It is easy to deduce from \cite{BachmannRigidity} a proof independend of the one given in \cite[4.1]{AyoubRealizationEtale}. For the sake of completeness we will sketch how. We freely use the notations of \cite{BachmannRigidity}.

Let us write $\omega^* := L_{\A^1} \circ \gamma^*$. The $\A^1$- invariance of \'etale cohomology (see \cite[1.3.2]{CisinskiDegliseEtale}) implies that 
\[
{\omega^* = L_{\A^1} \circ \gamma^*}: \T_{\et}(X, \Lambda) \longrightarrow \DA^{\eff}_{\et}(X, \Lambda) 
\]
is fully faithful. We claim that $\Lambda(1)$ is already tensor invertible in $\DA^{\eff}_{\et}(X, \Lambda)$. This implies that
\[
\Sigma^\infty:\DA^{\eff}_{\et}(X, \Lambda) \longrightarrow \DA_{\et}(X, \Lambda)
\]
is an equivalence. In particular $\iota^*$ is fully faithful.

It suffices to treat the case $\Lambda = \Z/ n \Z$. Consider the morphism 
\[
e_n : \G_{m,X} \longrightarrow \G_{m,X}
\]
given by elevating to the $n$-th power and let $q: \G_{m,X} \rightarrow X$ be the projection. By \cite[3.11]{BachmannRigidity} $e_n$ is canonically a $\mu_n$ torsor which becomes trivial when pulled back along the unit map $1: X \rightarrow \G_{m,S}$. In particular the torsor $e_n$ is classified by an element $\tilde{\sigma}_n$ in $H^1_{\et}(\G_{m,X}, \mu_n) \simeq \pi_0 \map_{\T_{\et}(\G_{m,S}, \Lambda)}(\1, \mu_n [1])$. The map $\tilde{\sigma}_n$ gets mapped via $\omega^*$  to a morphism $\1 \rightarrow \omega^* \mu_n[1] \simeq q^* \omega^* \mu_n[1]$ in $\DA^{\eff}_{\et}(\G_{m,X}, \Lambda)$ which corresponds to a map $\Lambda( \G_{m,X}) \rightarrow \omega^* \mu_n[1]$ in $\DA^{\eff}_{\et}(X, \Lambda)$ such that the composition
\[
\Lambda(X) \longrightarrow \Lambda( \G_{m,X}) \longrightarrow \omega^* \mu_n[1]
\]
induced by the unit map is equivalent to the zero morphism. In particular we get a map $\Lambda(1)[1] \rightarrow \omega^* \mu_n[1]$ which we denote by $\sigma_n$. Since $\mu_n$ is tensor-invertible in $\T_{\et}(X, \Lambda)$ and $\omega^*$ is symmetric monoidal it suffices to show that $\sigma_n$ is an equivalence. We may check this after pulling back to strict localizations of $X$ and hence by local noetherianess of $X$ and \cite[1.1.5]{CisinskiDegliseEtale} we can assume that $X$ is uniformly of finite \'etale cohomological dimension. 

Let $\ell^k$ be a  maximal prime power dividing $n$. The construction of $\sigma_n$ above is compatible with change of coefficients along the ring map $ \rho_{\ell^k}: \Z/n\Z \rightarrow \Z/ \ell^k \Z$ in the sense that $\rho_{\ell^k}^* \sigma_n \simeq \sigma_{\ell^k}$. The collection of functors $\rho_{\ell^k}^*$ where $\ell^k$ runs through the maximal prime powers dividing $n$  is conservative. Thus we may assume that $\Lambda = \Z / \ell^k \Z$ for a prime number $\ell$ invertible in $\mathcal{O}(X)$.

Consider the map $\sigma: \1(1)[1] \rightarrow \hat{\1}_p (1)[1]$ in  $\SH^{S^1}_{\et}(X)^\wedge_\ell$ defined in the beginning of \cite[\S 6]{BachmannRigidity}. By \cite[6.6]{BachmannRigidity} $\sigma$ is an equivalence. The canonical functor $ \_ \otimes H \Lambda:  \Spt \rightarrow \T(\Lambda)$ induces a functor
\[
\rho^*_\Lambda: \SH_{\et}^{S^1}(X)^\wedge_\ell \longrightarrow \DA_{\et}^{\eff}(X, \Lambda). 
\]
Here we used that $\DA^{\eff}_{\et}(X, \Lambda)^\wedge_\ell \simeq \DA^{\eff}_{\et}(X, \Lambda)$ since $\ell^k\Lambda = 0$. We claim that $\rho^*_\Lambda \sigma$ and $\sigma_{\ell^k}$ are equivalent. This follows from the fact that by the proof of \cite[4.5]{BachmannRigidity} both maps classify the torsor $e_{\ell^k}$.

Every object of $\DA_{\et}(X, \Lambda)$ can be written as a colimit of objects of the form $p_* \1(n)$ for $p$ proper and $n \in \Z$ (see \cite[2.2.23]{AyoubThesisI} ). As in \cite[4.4.3]{CisinskiDegliseEtale} one deduces that $\iota^*$ commutes with $p_*$ for $p$ proper. Moreover we have shown above that $\1(n)$ for $n \in \Z$ lies in the image of $\iota^*$. Combining this shows that $\iota^*$ is essentially surjective and therefore an equivalence.
\end{proof}

\noname Let $X$ be a noetherian scheme and denote by $\Sch^{\ft}_{/X}$ the category of schemes of finite type over $X$. The $h$-topology on $\Sch^{\ft}_{/X}$ is the Grothendieck-topology whose covers are universal topological epimorphisms (see \cite[3.1.2]{VoevodskyHomOfSchemes}). Let $\Sh^{\text{hyp}}_{h}(\Sch^{\ft}_{/X}, \mathcal{D}(\Lambda))$ denote the $\infty$-category of $h$-hypersheaves on $\Sch^{\ft}_{/X}$ with values in $\T(\Lambda)$ and write $\underline{\DM}^{\eff}_h(X, \Lambda)$ for the full subcategory consisting of $\A^1$-invariant objects.  The inclusion
\[
\underline{\DM}^{\eff}_h(X, \Lambda) \subset \Sh^{\text{hyp}}_{h}(\Sch^{\ft}_{/X}, \mathcal{D}(\Lambda))
\]
admits a left adjoint $L_{\A^1}$ and we can define the object $\Lambda(1)$ analogously as in \ref{noname:SymMonStrOfDA}. Let $\underline{\DM}_h(X, \Lambda)$ denote the $\N$-indexed colimit of the diagram
\[
\underline\DM_{h}^{\text{eff}}(X, \Lambda) \overset{\_ \otimes \Lambda(1)} \longrightarrow \underline\DM_{h}^{\text{eff}}(X, \Lambda) \overset{\_ \otimes \Lambda(1)}\longrightarrow \underline\DM_{h}^{\text{eff}}(X, \Lambda) \overset{\_ \otimes \Lambda(1)} \longrightarrow \dots 
\]
in $\Pr^{L}$ and denote by ${\DM}_h(X, \Lambda)$ its smallest full stable subcategory closed under small colimits and containing the objects of the form $\Lambda(Y)(n)$ for all $Y \rightarrow X$ smooth and $n$ in $\Z$. The commutative diagram of sites
\[
\begin{tikzcd}
                             & {(\Et_{/X}, \et)} \arrow[ld] \arrow[rd] &                        \\
{(\Sm_{/X}, \et)} \arrow[rr] &                                         & {(\Sch^{\ft}_{/X}, h)}
\end{tikzcd}
\]
induces a commutative diagram
\begin{equation} \label{eqn:TriangleDetDAetDMh}
\begin{tikzcd}
                                                & {\T_{\et}(X,\Lambda)} \arrow[ld, "\iota^*"'] \arrow[rd, "\iota_h^*"] &                       \\
{\DA_{\et}(X, \Lambda)} \arrow[rr, "\varphi^*"] &                                                                      & {\DM_{h}(X, \Lambda)}
\end{tikzcd}
\end{equation}
in $\Pr^{L,\st}$. As in \ref{noname:SymMonStrOfDA} we can equip ${\DM_{h}(X, \Lambda)}$ with a symmetric monoidal structure and lift the diagram above to a diagram in $\CAlg(\Pr^{L, \st})$. Note that the formation of $\varphi^*$ is compatible with $f^*$ for any $f: X \rightarrow Y$ in $\Sch^{\qcqs}_{/S}$ as well as $f_\#$ for $f$ smooth.

\begin{prop} \label{prop:pOnBaseAutomaticallyInverted}
Let $X$ be a qcqs scheme and $\Lambda$ a $\Z/ N\Z$-algebra where $N = p^r$ for some prime $p$ and positive integer $r$. We write $X[1/p] := X \times_{\Spec Z} \Spec \Z[1/p]$ and $j: X[1/p] \rightarrow X$ for the canonical open immersion. Then
\[
j^* : \DA_{\et}(X, \Lambda) \longrightarrow \DA_{\et}(X[1/p], \Lambda)
\]
and 
\[
j^* : \DM_{h}(X, \Lambda) \longrightarrow \DM_{h}(X[1/p], \Lambda)
\]
are equivalences of categories. 
\end{prop}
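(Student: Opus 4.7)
The plan is to reduce the statement to a vanishing theorem via the localization sequence, and then to establish this vanishing using a Frobenius argument. Let $i : Z \hookrightarrow X$ be the closed immersion complementary to $j$, with $Z = X \times_{\Spec \Z} \Spec \mathbb{F}_p$. For either of the motivic $\infty$-categories $\DA_{\et}(-, \Lambda)$ or $\DM_h(-, \Lambda)$ the localization triangle
\[
j_! j^* \longrightarrow \id \longrightarrow i_* i^*
\]
together with the full faithfulness of $i_*$ shows that $j^*$ is an equivalence if and only if $i_* = 0$, which in turn is equivalent to the vanishing of the motivic category on $Z$. So it suffices to prove that $\DA_{\et}(Z, \Lambda)$ and $\DM_h(Z, \Lambda)$ vanish whenever $Z$ is an $\mathbb{F}_p$-scheme and $\Lambda$ is a $\Z/p^r\Z$-algebra.

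The vanishing is extracted from the absolute Frobenius $F : Z \to Z$, which on an $\mathbb{F}_p$-scheme is a universal homeomorphism. In $\DM_h$ this is immediately an $h$-cover, so $F^*$ is an equivalence on $\DM_h(Z, \Lambda)$. On the other hand, the Frobenius acts on $\G_{m, Z}$ by the $p$-th power map, and a direct computation shows that the induced endomorphism of the Tate object $\Lambda(1)$ is multiplication by $p$. Since $p = 0$ in $\Lambda$, we deduce $\Lambda(1) \simeq 0$ in $\DM_h(Z, \Lambda)$; as this category is generated under small colimits and shifts by objects of the form $\Lambda(Y)(n)$, this forces $\DM_h(Z, \Lambda) = 0$.

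For $\DA_{\et}$ the same argument goes through once one knows that $F^*$ is an equivalence on $\DA_{\et}(Z, \Lambda)$. This topological invariance is not a consequence of Rigidity (Theorem \ref{thm:Rigidity}), since $p$ is not invertible on $Z$, but it can be extracted from Bachmann's rigidity-style results, reducing ultimately to the topological invariance of the small \'etale site under universal homeomorphisms (SGA 4 VIII). Combined with the Tate-twist computation above, this yields $\DA_{\et}(Z, \Lambda) = 0$.

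I expect the main obstacle to be establishing the topological invariance of $\DA_{\et}(-, \Lambda)$ along the absolute Frobenius in characteristic $p$ with $p$-power torsion coefficients, since the standard Rigidity theorem does not apply in this regime. Once this invariance is in hand, the rest is a straightforward combination of the localization sequence and the Tate-twist computation, uniform in the two variants $\DA_{\et}$ and $\DM_h$.
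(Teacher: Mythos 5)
Your overall strategy --- reduce via the localization triangle to the vanishing of the motivic categories over the characteristic-$p$ locus $Z = X \times_{\Spec \Z} \Spec \mathbb{F}_p$, and prove that vanishing by playing the Frobenius against the Tate twist --- is sound, and for $\DM_h$ it is essentially the argument behind the reference the paper invokes (the paper's own proof is just the citation \cite[A.3.4]{CisinskiDegliseEtale}). Two small repairs are needed even there: $\Lambda$ is only a $\Z/p^r\Z$-algebra, so $p$ is nilpotent, not zero, in $\Lambda$; and the correct deduction is that multiplication by $p$ on $\Lambda(1)$ is \emph{invertible} (the absolute Frobenius of $\G_{m,Z}$ factors as the $p$-power isogeny followed by a base change of $F_Z$, and $F_Z^*$ is an equivalence on $h$-motives because a universal homeomorphism and its diagonal are $h$-covers), while $p^r$ kills $\Lambda(1)$; together these give $\Lambda(1) \simeq 0$, hence $\1 \simeq \Lambda(1) \otimes \Lambda(-1) \simeq 0$ and the category vanishes. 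Mere nilpotence of multiplication by $p$, without the invertibility, proves nothing.

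The genuine gap is the $\DA_{\et}$ half, exactly where you flag it. Invariance of $\DA_{\et}(-,\Lambda)$ under the absolute Frobenius does not reduce to the topological invariance of the small \'etale site: $\DA_{\et}$ is built from the big smooth-\'etale site, whose points are strictly henselian but in general non-perfect local rings, and a universal homeomorphism such as a relative Frobenius $Y \to Y^{(p)}$ does not become an isomorphism of representable \'etale sheaves there (for instance $a \mapsto a^p$ is not surjective on a non-perfect strictly henselian ring of characteristic $p$); this failure is precisely why the first half of your argument works only after passing to the $h$-topology. Bachmann's results \cite{BachmannRigidity} do not help either, since they require $p$ invertible on the base; in effect, the Frobenius-invariance statement you need is as strong as the vanishing you are trying to prove. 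The standard way to close the gap is a genuinely different argument in the \'etale case: Artin--Schreier theory shows that in characteristic $p$ the \'etale classifying space of $\Z/p\Z$ is $\A^1$-contractible, whence $p$ acts invertibly on $\DA_{\et}(Z,\Z)$ and therefore $\DA_{\et}(Z,\Lambda) \simeq 0$ for any $\Z/p^r\Z$-algebra $\Lambda$; this is how the appendix \cite[A.3]{CisinskiDegliseEtale} that the paper cites handles it. Note also that you cannot instead transfer the vanishing from $\DM_h$ to $\DA_{\et}$ through Theorem \ref{thm:CompDAvsDM}: beyond its noetherian finite-dimensionality hypotheses, its proof in this paper invokes Proposition \ref{prop:pOnBaseAutomaticallyInverted}, so that route would be circular.
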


\begin{proof}
This is \cite[A.3.4]{CisinskiDegliseEtale}.
\end{proof}

\noname \label{noname:DADMAndBasechange} Completely analogously as in \ref{noname:DAChangeOfCoeff} we obtain for  a morphism of rings $\rho: \Lambda \rightarrow \Lambda'$ a symmetric monoidal change of coefficients functor
\[
\rho^*: \DM_{h}(X, \Lambda) \longrightarrow \DM_{h}(X, \Lambda')
\]
in $\Pr^{L,\st}$ which is compatible with the one of $\DA_{\et}$ in the sense that
\[
\begin{tikzcd}
{\DA_{\et}(X, \Lambda)} \arrow[d, "\rho^*"'] \arrow[r, "\varphi^*"] & {\DM_{h}(X, \Lambda)} \arrow[d, "\rho^*"] \\
{\DA_{\et}(X, \Lambda')} \arrow[r, "\varphi^*"]                                  & {\DM_{h}(X, \Lambda')}                                
\end{tikzcd}
\]
commutes.

\noname \label{noname:rhoQrholambda} Given a ring $\Lambda$ we write $\rho_\Q: \Lambda \rightarrow \Lambda \otimes_\Z \Q $ and $\rho_{\Z/ \ell \Z}: \Lambda \rightarrow \Lambda \otimes_ \Z \Z/ \ell \Z $ for the canonical ring morphisms, where $\ell$ ranges through the set of all prime numbers.

\begin{thm} \label{thm:CompDAvsDM}
Let $X$ be a noetherian scheme of finite dimension. Then the functor
\[
\varphi^*: \DA_{\et}(X, \Lambda) \longrightarrow \DM_{h}(X, \Lambda)
\]
is an equivalence. Moreover it commutes with the six operations when restricted to noetherian schemes of finite dimension. 
\end{thm}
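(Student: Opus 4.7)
The plan is to establish the equivalence by reducing to two tractable cases via change of coefficients, and then to deduce the compatibility with the six operations. First I would observe that $\varphi^*$ is symmetric monoidal and colimit-preserving, and that it sends the compact generators $\Lambda(Y)(n)$ of $\DA_{\et}(X, \Lambda)$ (for $Y \to X$ smooth and $n \in \Z$) to the compact generators of $\DM_h(X, \Lambda)$ of the same name. Consequently, essential surjectivity is automatic once fully faithfulness on compacts is proven. Using that $\varphi^*$ commutes with the change-of-coefficient functors $\rho_\Q^*$ and $\rho_{\Z/\ell\Z}^*$ for every prime $\ell$ (see \ref{noname:DADMAndBasechange}), I would reduce fully faithfulness to the cases where $\Lambda$ is either a $\Q$-algebra or a $\Z/\ell^r\Z$-algebra. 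This reduction rests on an arithmetic-fracture argument: the family $\{\rho_\Q^*\} \cup \{\rho_{\Z/\ell\Z}^*\}_{\ell \text{ prime}}$ is jointly conservative on compact objects, which relies on the finite dimensionality of $X$ to control the $\ell$-adic cohomological behaviour.

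For the torsion case ($\Lambda$ a $\Z/\ell^r\Z$-algebra), I would first apply Proposition \ref{prop:pOnBaseAutomaticallyInverted} to replace $X$ by $X[1/\ell]$, so that $\ell$ is invertible in $\mathcal{O}(X)$. Then the Rigidity Theorem \ref{thm:Rigidity} gives $\iota^*: \T_{\et}(X, \Lambda) \overset{\sim}\rightarrow \DA_{\et}(X, \Lambda)$; a parallel rigidity argument on the $h$-motivic side (replaying the sketch of \ref{thm:Rigidity} with $\DM_h$ in place of $\DA_{\et}$, using that $h$-covers refine \'etale covers) yields $\iota_h^*: \T_{\et}(X, \Lambda) \overset{\sim}\rightarrow \DM_h(X, \Lambda)$. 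The commutative triangle (\ref{eqn:TriangleDetDAetDMh}) forces $\varphi^*$ to intertwine these two equivalences, and is therefore itself an equivalence. For the rational case ($\Lambda$ a $\Q$-algebra), I would invoke the comparison theorem of Cisinski--D\'eglise, which identifies $\DA_{\et}(X, \Q)$ with $\DM_h(X, \Q)$ over finite-dimensional noetherian schemes via the agreement of both with Morel's $\Q$-linear motivic category together with $h$-descent.

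For the compatibility with the six operations, the functor $\varphi^*$ is already compatible with $\otimes$, $f^*$, and $f_\#$ for $f$ smooth by construction. Once $\varphi^*$ is known to be an equivalence, compatibility with the right adjoints $f_*$ and $\Hom(\_,\_)$ follows by the mate correspondence. To handle $f_!$ and $f^!$ for a morphism $f$ of finite type between finite-dimensional noetherian schemes, I would use Nagata compactification to factor $f$ as an open immersion followed by a proper morphism, reducing to the cases $f_! = f_\#$ (open immersions) and $f_! = f_*$ (proper morphisms), both of which have already been established. The main obstacle in the whole argument is the reduction step: making the arithmetic-fracture / joint-conservativity argument rigorous in the $\infty$-categorical presentable setting, where one must genuinely use the finite dimensionality of $X$ to avoid pathologies arising from unbounded cohomological dimension.
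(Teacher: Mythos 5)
Your overall route coincides with the paper's: reduce by change of coefficients to the rational and torsion cases, handle the torsion case by rigidity on both sides after inverting $p$ via Proposition \ref{prop:pOnBaseAutomaticallyInverted} and the triangle (\ref{eqn:TriangleDetDAetDMh}) (for the $h$-side you do not need to replay the rigidity argument, \cite[5.5.4]{CisinskiDegliseEtale} can be cited directly), handle the rational case by identifying both sides with Beilinson motives, and obtain the six operations from compatibility with $\otimes$, $f^*$ and smooth $f_\#$ (your Nagata/mate elaboration of the last point is fine). The gap is in the reduction step. Joint conservativity of $\{\rho_\Q^*\}\cup\{\rho_{\Z/\ell\Z}^*\}_\ell$ is not the statement you need: conservativity of functors on the source says nothing about whether the comparison map $\map_{\DA_{\et}(X,\Lambda)}(M,N) \to \map_{\DM_h(X,\Lambda)}(\varphi^*M,\varphi^*N)$ is an equivalence (equivalently, whether the unit $\id \to \varphi_*\varphi^*$ is invertible, and $\varphi_*$ is not known to commute with $\rho^*$ a priori). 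What is actually needed, and what the paper uses, is that for $M$ constructible the change-of-coefficient functors compute the rationalization and the mod-$\ell$ reduction of the mapping objects, i.e.\ the analogues for $\DA_{\et}$ of \cite[5.4.5, 5.4.9]{CisinskiDegliseEtale}; then the cofiber of the comparison map dies after $\otimes\,\Q$ and after reduction mod every $\ell$, hence vanishes. Note also that the conservativity statement you gesture at is Proposition \ref{prop:QandZ/plinearizationIsConsFam}, which in this paper is \emph{deduced} from Theorem \ref{thm:CompDAvsDM}, so invoking it here would be circular; and it requires $\Lambda$ flat over $\Z$, whereas the theorem has no such hypothesis.

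This flatness point is the second, related, defect: your fracture argument is run for an arbitrary $\Lambda$, but the identification of $\map(\rho_{\Z/\ell\Z}^*M,\rho_{\Z/\ell\Z}^*N)$ with the derived mod-$\ell$ reduction of $\map(M,N)$ uses that the underived ring $\Lambda\otimes_\Z\Z/\ell\Z$ of \ref{noname:DAChangeOfCoeff} agrees with the derived one, which fails for non-flat $\Lambda$ that is neither torsion nor a $\Q$-algebra. The paper sidesteps this with a step missing from your proposal: it first reduces to $\Lambda=\Z$, observing that $\varphi^*$ for general $\Lambda$ is obtained from the integral case by applying $\_\otimes_{\T(\Z)}\T(\Lambda)$, and only then runs the mod-$\ell$/rationalization argument on hom-groups (also testing against arbitrary $N$, not just compact $N$, so no compact generation of $\DM_h(X,\Lambda)$ is needed). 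With that initial reduction inserted, or with the corrected use of the mapping-object formulas under a flatness hypothesis, the remainder of your argument matches the paper's proof.
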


\begin{proof} We use the argument of \cite[5.5.7]{CisinskiDegliseEtale}. First note that it suffices to treat the case $\Lambda = \Z$ since we obtain $\varphi^*$ from this case by applying $\_ \otimes_{\T(\Z)} \T(\Lambda)$.
Since $\varphi^*$ is compatible with $f_\#$ for smooth $f$ and commutes with small colimits it follows right away from the definition of $\DM_{h}(X, \Lambda)$ that $\varphi^*$ is essentially surjective. Hence it suffices to show that the homomorphism of abelian groups
\[
\pi_0 \map_{\DA_{\et}(X,\Z)}(M,N) \longrightarrow \pi_0 \map_{\DM_{h}(X,\Z)}(\varphi^* M, \varphi^* N)
\]
is an isomorphism for all $M$ in $\DA^{\cons}_{\et}(X,\Lambda)$ and $N$ in $\DA_{\et}(X,\Lambda)$. There are equivalences 
\[
\pi_0 \map_{\DM_{h}(X,\Z)}(M,N)/\ell \overset{\sim}\longrightarrow \pi_0 \map_{\DM_{h}(X,\Z/\ell\Z)}(\rho_{\Z/\ell \Z}^*M, \rho_{\Z/\ell \Z}^*N)
\]
by \cite[5.4.5]{CisinskiDegliseEtale} and 
\[
\pi_0 \map_{\DM_{h}(X,\Z)}(M,N) \otimes \Q \overset{\sim}\longrightarrow \pi_0 \map_{\DM_{h}(X,\Q)}(\rho_{\Q}^*M, \rho_{\Q}^*N)
\]
by \cite[5.4.9]{CisinskiDegliseEtale}. It is easy to adapt these proofs and show the analogous statements for $\DA_{\et}$. Since $\varphi^*$ is compatible with change of coefficients it therefore suffices to show that $\varphi^*$ is an equivalence for $\Lambda= \Z/\ell \Z$ where $\ell$ runs through all prime numbers and for $\Lambda = \Q$. The case where $\Lambda=\Q$ follows from \cite[5.2.2]{CisinskiDegliseEtale} and \cite[16.2.18]{CisinskiDegliseBook}: Both sides are equivalent to Beilinson motives. For the case where $\Lambda= \Z/\ell \Z$ note that we may assume that $\ell$ is invertible in $\mathcal{O}(X)$ by Proposition \ref{prop:pOnBaseAutomaticallyInverted}. Hence the claim follows from the triangle (\ref{eqn:TriangleDetDAetDMh}) and the fact that both diagonal arrows are equivalences by Theorem \ref{thm:Rigidity} and \cite[5.5.4]{CisinskiDegliseEtale} respectively. The last sentence is clear since $\varphi$ is symmetric monoidal and commutes with $f^*$ for all $f$ and with $f_\#$ for smooth $f$. 
\end{proof}

\begin{prop} \label{prop:QandZ/plinearizationIsConsFam}
Let $\Lambda$ be a flat $\Z$-algebra and $X$ a finite dimensional noetherian scheme. Then the family of functors
\begin{align*}
\rho_{\mathbb{Q}}^* &: \DA_{\et}(X, \Lambda) \longrightarrow \DA_{\et}(X, \Lambda \otimes_{\Z} \mathbb{Q}) \\
\rho_{\Z/ \ell \Z}^* &: \DA_{\et}(X, \Lambda) \longrightarrow \DA_{\et}(X, \Lambda \otimes_{\Z} \Z/ \ell \Z), 
\end{align*}
is conservative, where $\ell$ runs through the set of all prime numbers.
\end{prop}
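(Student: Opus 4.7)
The plan is to reduce the proposition to the following elementary fact about $\Z$-module spectra: a $V \in \T(\Z)$ vanishes whenever $V \otimes_\Z \Q \simeq 0$ and $V \otimes_\Z \Z/\ell\Z \simeq 0$ for every prime $\ell$. To prove this, I would argue via cofiber sequences: tensoring $V$ with $\Z \xrightarrow{\ell} \Z \to \Z/\ell\Z$ shows that $V \otimes_\Z \Z/\ell\Z \simeq 0$ forces multiplication by $\ell$ to be an equivalence on $V$. Iterating yields $V \otimes_\Z \Z/\ell^n \Z \simeq 0$, and passing to the filtered colimit gives $V \otimes_\Z \Z[1/\ell]/\Z \simeq 0$. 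Summing over primes, $V \otimes_\Z \Q/\Z \simeq 0$. The cofiber sequence $\Z \to \Q \to \Q/\Z$ tensored with $V$ then delivers an equivalence $V \simeq V \otimes_\Z \Q$, which combined with $V \otimes_\Z \Q \simeq 0$ forces $V \simeq 0$.

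To transport this argument to $\DA_{\et}(X, \Lambda)$, I would use the presentation $\DA_{\et}(X, \Lambda) \simeq \SH_{\et}(X) \otimes \T(\Lambda)$ from Remark~\ref{rem:DAasSHotimesD}. Under this equivalence, for $\Lambda'' \in \{\Q\} \cup \{\Z/\ell\Z : \ell \text{ prime}\}$ the change-of-coefficients functor $\rho^*: \DA_{\et}(X, \Lambda) \to \DA_{\et}(X, \Lambda \otimes_\Z \Lambda'')$ identifies with $\_ \otimes_{\T(\Lambda)} \T(\Lambda \otimes_\Z \Lambda'')$, and its right adjoint $\rho_*$ (restriction of scalars) is conservative with $\rho_* \rho^* M \simeq M \otimes_\Lambda (\Lambda \otimes_\Z \Lambda'')$. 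Using the $\Z$-flatness of $\Lambda$, this simplifies to $M \otimes_\Z \Lambda''$, computed through the natural $\T(\Z)$-module structure of $\DA_{\et}(X, \Lambda)$. Hence $\rho_\Q^* M \simeq 0$ and $\rho_{\Z/\ell\Z}^* M \simeq 0$ translate into $M \otimes_\Z \Q \simeq 0$ and $M \otimes_\Z \Z/\ell\Z \simeq 0$ respectively. The cofiber-sequence argument above, run with $M$ in place of $V$ inside $\DA_{\et}(X, \Lambda)$, then forces $M \simeq 0$; all the operations used (tensoring with a fixed cofiber sequence, filtered colimits, and direct sums) are preserved by the $\T(\Z)$-module action, so nothing prevents the argument from carrying over verbatim.

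I expect the only real subtlety to be the formal bookkeeping in the second paragraph, namely the identification of $\rho_* \rho^*$ with extension of scalars and the associativity $M \otimes_\Lambda (\Lambda \otimes_\Z \Lambda'') \simeq M \otimes_\Z \Lambda''$. Both are standard manipulations with Lurie tensor products (cf.\ \cite[\S 4.8]{lurie2016higher}), and flatness of $\Lambda$ over $\Z$ enters precisely here, so that $\Lambda \otimes_\Z \Lambda''$ unambiguously denotes the expected object and no derived corrections appear.
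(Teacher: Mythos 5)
Your argument is correct, but it is a genuinely different route from the one taken in the paper. The paper's proof is a one-liner: since the comparison functor $\varphi^*$ of Theorem \ref{thm:CompDAvsDM} is an equivalence compatible with change of coefficients, the statement is transported from the corresponding conservativity result for $h$-motives \cite[5.4.12]{CisinskiDegliseEtale} — this is where the hypothesis that $X$ be noetherian of finite dimension enters. You instead argue directly inside $\DA_{\et}(X,\Lambda)$, viewed as a $\T(\Z)$-linear category: identify $\rho^*_{\Lambda''}$ (for $\Lambda''=\Q$ or $\Z/\ell\Z$) with extension of scalars along $\1\otimes_\Z\Lambda''$, so that vanishing of $\rho^*_{\Lambda''}M$ is equivalent to $M\otimes_\Z\Lambda''\simeq 0$, and then run the arithmetic fracture argument ($M/\ell\simeq 0$ for all $\ell$ gives $M\otimes_\Z\Q/\Z\simeq 0$, whence $M\simeq M\otimes_\Z\Q\simeq 0$). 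The only point that carries real content is the bookkeeping you flag: the identification $\DA_{\et}(X,\Lambda\otimes_\Z\Lambda'')\simeq \Mod_{\1\otimes_\Z\Lambda''}(\DA_{\et}(X,\Lambda))$ under which $\rho^*$ becomes the free-module functor and its right adjoint the (conservative) forgetful functor — this is \cite[\S 4.8.4]{lurie2016higher} applied to $\T(\Lambda'')\simeq\Mod_{H\Lambda''}$, and note that conservativity should be obtained through this identification rather than by tensoring a conservative functor with the identity, which is not automatic. Flatness of $\Lambda$ over $\Z$ is used exactly where you say, to guarantee $H(\Lambda\otimes_\Z\Z/\ell\Z)\simeq H\Lambda\otimes_{H\Z}H(\Z/\ell\Z)$ so that $\rho^*_{\Z/\ell\Z}$ really computes derived reduction mod $\ell$. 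What each approach buys: the paper's proof is maximally short but leans on the comparison theorem and the $\DM_h$ literature; yours is self-contained, purely formal, and in fact never uses the noetherian finite-dimensionality of $X$, so it proves a more general statement.
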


\begin{proof}
Since $\varphi^*$ in Theorem \ref{thm:CompDAvsDM} is an equivalence which is compatible with change of coefficients we may apply \cite[5.4.12]{CisinskiDegliseEtale}.
\end{proof}

\begin{definition}
Let $\DA^{\cons}_{\et}(X, \Lambda) \subset \DA_{\et}(X, \Lambda)$ be the smallest idempotent complete full stable subcategory consisting of the objects of the form $\Lambda(Y)(n)$ for all $n \in \Z$ and $f: Y \rightarrow X$ in $\Sm_{/X}$. We call an element $A$ in $\DA_{\et}(X, \Lambda)$ \textit{constructible} if it lies in $\DA^{\cons}_{\et}(X, \Lambda)$.
\end{definition}

\begin{thm}\label{thm:SixFunctorsPresConstr} Let $f: X \rightarrow Y$ be a morphism of schemes and $\Lambda$ any ring. We have the following stability properties for constructible objects under the six functors:
\begin{enumerate}
\item $f^*$ preserves constructibility.
\item $f_!$ preserves constructibility whenever $f$ is of finite type between qcqs schemes.
\item $f_*$ and $f^!$ preserve constructibility when $f$ is of finite type between quasi-excellent noetherian schemes of finite dimension. 
\item If $M,N$ are in $\DA_{\et}^{\cons}(X, \Lambda)$, then $M \otimes N$ is constructible.
\item If $M,N$ are in $\DA_{\et}^{\cons}(X, \Lambda)$ and $X$ is quasi-excellent noetherian of finite dimension, then $\Hom(M,N)$ is constructible.
\end{enumerate}
\end{thm}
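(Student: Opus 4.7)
The approach splits into three groups. Parts (1) and (4) are essentially formal consequences of the definition of $\DA^{\cons}_{\et}(X,\Lambda)$ as the smallest idempotent-complete full stable subcategory generated by the objects $\Lambda(Y)(n)$. Part (2) requires Nagata compactification together with standard geometric reductions inside any motivic $\infty$-category. Parts (3) and (5) are transferred to $h$-motives via Theorem~\ref{thm:CompDAvsDM} and rely on deeper geometric input.

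For (1) and (4): Since $f^*$ is exact, symmetric monoidal, and commutes with retracts, and since $f^*\Lambda(Y)(n) \simeq \Lambda(Y \times_X X')(n)$, it sends generators to generators, giving (1). For (4), smooth base change together with the projection formula yields $\Lambda(Y)(n) \otimes \Lambda(Y')(m) \simeq \Lambda(Y \times_X Y')(n+m)$; fixing constructible $N$, the full subcategory of $M$ with $M \otimes N$ constructible is stable, idempotent-complete, and contains every generator, hence all of $\DA^{\cons}_{\et}(X,\Lambda)$.

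For (2): Use Nagata compactification to factor a finite-type morphism $f \colon X \to Y$ between qcqs schemes as $f = p \circ j$ with $j$ a quasi-compact open immersion and $p$ proper (after an open cover to reduce to the separated case). For $j$ one has $j_! = j_\#$, which preserves generators. For proper $p$ we have $p_! = p_*$; Chow's lemma together with proper cdh descent reduces to the case of a projective morphism and further to the projection $q \colon \mathbb{P}^n_Y \to Y$, where the projective bundle formula expresses $q_* \Lambda(\mathbb{P}^n_Y)$ as a finite colimit of twists $\Lambda(Y)(-i)[-2i]$.

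For (3) and (5): Invoke Theorem~\ref{thm:CompDAvsDM} to identify $\DA_{\et}(X,\Lambda) \simeq \DM_h(X,\Lambda)$ on finite-dimensional noetherian schemes, and import the analogous stability statements from \cite{CisinskiDegliseBook}. The case of $f_*$ is the main obstacle, and it is where quasi-excellency enters crucially through the existence of alterations (de Jong/Gabber); one performs a noetherian induction on the dimension of the support, using the localization sequence together with (1) and (2) to patch information across strata, and reduces the essential case to proper smooth morphisms handled by relative purity. Once $f_*$ is controlled, $f^!$ is recovered from the established cases for smooth morphisms (via $f^! \simeq f^*(d)[2d]$) and for closed immersions (via the localization sequence), combined with Nagata factorization. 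Finally (5) reduces to (3) through the identification $\Hom(\Lambda(Y)(n), N) \simeq \pi_* \pi^* N(-n)$ for $\pi \colon Y \to X$ smooth of finite type, followed by a closure argument as in (4).
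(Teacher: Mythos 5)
Your proposal is correct and follows essentially the same route as the paper, which simply cites \cite[2.56, 2.60]{Adeel6Functor} for (1), (2), (4) and deduces (3), (5) by transferring to $h$-motives via Theorem \ref{thm:CompDAvsDM} and invoking \cite[6.2.14]{CisinskiDegliseEtale}; your sketch just unwinds the standard arguments behind those citations (note the relevant reference for (3), (5) is \cite{CisinskiDegliseEtale}, not \cite{CisinskiDegliseBook}).
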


\begin{proof}
The statements (1), (2) and (4) are standard and for example shown in \cite[2.56, 2.60]{Adeel6Functor}. The statements (3) and (5) are shown in \cite[6.2.14]{CisinskiDegliseEtale} for $h$-motives which implies the analogue for \'etale motives by Theorem \ref{thm:CompDAvsDM}.
\end{proof}

\noname \label{noname:DAoverDVRconstGen} Let $S$ be a strictly local noetherian scheme. Then it is shown in \cite[1.1.5]{CisinskiDegliseEtale} that any scheme $f:X \rightarrow S$ of finite type over $S$ is of finite \'etale cohomological dimension and the residue fields of $X$ are uniformly of finite \'etale cohomological dimension. This has the following consequence:

\begin{prop} \label{prop:SstrLocImpliesCompactGen}
Let $S$ be a strictly local noetherian scheme and $f:X \rightarrow S$ a morphism of finite type. Then an object $M$ in $\DA_{\et}(X, \Lambda)$ is constructible if and only if it is compact. In particular if $h:X \rightarrow Y$ is a morphism between schemes of finite type over $S$ then $h_*$ and $h^!$ preserve small colimits.
\end{prop}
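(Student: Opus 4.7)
The plan is to prove that $\DA_{\et}(X, \Lambda)$ is compactly generated by the family $\{\Lambda(Y)(n) : Y \in \Sm_{/X},\ n \in \Z\}$. Once this is established, the equivalence between constructibility and compactness is formal: in any presentable stable $\infty$-category compactly generated by a set $G$ of compact objects, the compacts coincide with the smallest idempotent complete full stable subcategory containing $G$, which is by definition $\DA^{\cons}_{\et}(X, \Lambda)$.

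The first and main step is to verify that each $\Lambda(Y)(n)$ is compact. Using the smooth adjunction $g_\# \dashv g^*$ for the structure map $g : Y \to X$ together with the invertibility of the Tate twist, this reduces to showing that the unit $\Lambda \in \DA_{\et}(Y, \Lambda)$ is compact. The key input is the fact recalled in \ref{noname:DAoverDVRconstGen}: every scheme of finite type over a strictly local noetherian base has uniformly bounded \'etale cohomological dimension. This forces the right adjoint of $L_{\et}$, namely the inclusion of hypersheaves into presheaves, to commute with filtered colimits, so $L_{\et}$ preserves compactness of representables. The $\A^1$-localization step is unproblematic since $\A^1$-invariance is tested against a fixed small set of maps, and since $\Lambda(1)$ is tensor-invertible in the stable category, the stabilization $\Sigma^\infty$ also preserves compact objects. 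I expect this careful tracking of compactness through the two successive localizations and the $\Lambda(1)$-stabilization to be the main obstacle, with the finite cohomological dimension hypothesis doing the essential work in the \'etale step. Once done, compact generation itself follows because every object of $\DA_{\et}(X, \Lambda)$ is, by construction, a small colimit of objects of the form $\Lambda(Y)(n)$.

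For the last assertion, let $h : X \to Y$ be a morphism between schemes of finite type over $S$; in particular $h$ is itself of finite type between noetherian (hence qcqs) schemes. By Theorem \ref{thm:SixFunctorsPresConstr}(1), $h^*$ preserves constructibility, hence by the first part of the proposition preserves compact objects; therefore its right adjoint $h_*$ commutes with filtered colimits. Since $h_*$ is exact between stable $\infty$-categories it also commutes with finite colimits, and hence with all small colimits. Applying the same reasoning to the adjunction $h_! \dashv h^!$, using Theorem \ref{thm:SixFunctorsPresConstr}(2) to know that $h_!$ preserves constructibility, yields that $h^!$ preserves all small colimits as well.
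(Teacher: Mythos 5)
Your proposal is correct and follows essentially the same route as the paper: for the first assertion the paper simply cites \cite[5.2.4]{CisinskiDegliseEtale}, and what you sketch --- compact generation by the objects $\Lambda(Y)(n)$, with the finite \'etale cohomological dimension input of \ref{noname:DAoverDVRconstGen} guaranteeing that the \'etale hyperlocalization (and then the $\A^1$-localization and the $\Lambda(1)$-stabilization) preserves compact objects --- is precisely the content of that citation. Your treatment of the last sentence (the left adjoints $h^*$ and $h_!$ preserve constructible $=$ compact objects, so the right adjoints $h_*$ and $h^!$ commute with filtered colimits, hence by exactness with all small colimits) is the paper's argument verbatim.
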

\begin{proof}
The first part is \cite[5.2.4]{CisinskiDegliseEtale}. The last sentence is an easy consequence of the first part: Since $h_!$ and $h^*$ preserve compact objects by Theorem \ref{thm:SixFunctorsPresConstr} it is straightforward to check that their right adjoints commute with small colimits.
\end{proof}

\begin{remark}
In the following we will almost exclusively be concerned with schemes over a base $S$, where $S$ is the spectrum of a strictly henselian discrete valuation ring. Theorem \ref{thm:SixFunctorsPresConstr} tells us that we have to impose excellency on $S$ whenever we want to preserve constructibility. Moreover whenever we want compact generation by constructible objects Proposition \ref{prop:SstrLocImpliesCompactGen} tells us that we have to consider schemes of finite type over $S$.
\end{remark}

\begin{remark} Let us fix a qcqs base scheme $S$. The change of coefficients functor constructed in \ref{noname:DAChangeOfCoeff} commutes with the formation of $f^*$ for $f:X \rightarrow Y$ in $\Sch_{/S}^{\qcqs}$. We can express this by a functor
\begin{equation} \label{eqn:FunctorSchxRingToPrL}
\DA_{\et}(\_, \_): (\Sch_{/S}^{\op}) \times \CRing \longrightarrow \CAlg(\Pr^{L,\st}), 
\end{equation}
which restricts to (\ref{eqn:FunctorDA*}) when fixing a ring $\Lambda$. 

Indeed giving a functor $F: \C \rightarrow \CAlg(\Pr^{L,\st})$, where $\C$ is a $\infty$-category with coproducts, is equivalent to giving a map of $\infty$-operads $\tilde{F}: \C^{\coprod} \rightarrow (\widehat{\Cat}_\infty)^\times$ which factors through $(\Pr^{L,\st})^{\otimes} \subset (\widehat{\Cat}_\infty)^\times$ by \cite[2.4.3.18]{lurie2016higher}. Hence (\ref{eqn:FunctorSH*}) and (\ref{eqn:FunctorCRingToPrL}) give rise to maps of infinity operads $(\Sch_{/S}^{\op})^{\coprod} \rightarrow (\widehat{\Cat}_\infty)^\times$ and $\CRing^{\coprod} \rightarrow (\widehat{\Cat}_\infty)^\times$ respectively. Equivalently they give rise to weak cartesian structures $(\Sch_{/S}^{\op})^{\coprod} \rightarrow \widehat{\Cat}_\infty$ and $\CRing^{\coprod} \rightarrow \widehat{\Cat}_\infty$ by \cite[2.4.1.7]{lurie2016higher}. Given two $\infty$-categories $\C$ and $\mathcal{D}$ let us note that by universal property (see \cite[2.4.3.1]{lurie2016higher}) there exists a canonical map of simplicial sets $(\mathcal{C} \times \mathcal{D})^{\coprod} \rightarrow \mathcal{C}^{\coprod} \times \mathcal{D}^{\coprod}$ which makes the diagram
\[
\begin{tikzcd}
(\mathcal{C} \times \mathcal{D})^{\coprod} \arrow[d,"p_{\mathcal{C} \times \mathcal{D}}"'] \arrow[r] & \mathcal{C}^{\coprod} \times \mathcal{D}^{\coprod} \arrow[d,"p_{\mathcal{C}} \times p_{\mathcal{D}}"] \\
\Fin_* \arrow[r, "\Delta"]                                   & \Fin_* \times \Fin_*                                    
\end{tikzcd}
\]
commute. Consider the composition
\[
(\Sch_{/S}^{\op} \times \CRing)^{\coprod}  \overset{(1)}\longrightarrow (\Sch_{/S}^{\op})^{\coprod} \times \CRing^{\coprod} 
\overset{(2)}\longrightarrow \widehat{\Cat}_\infty \times \widehat{\Cat}_\infty 
\overset{(3)}\longrightarrow \widehat{\Cat}_\infty,
\]
where $(1)$ is the canonical map considered above, (2) is the product of the weak cartesian structures associated to (\ref{eqn:FunctorSH*}) and (\ref{eqn:FunctorCRingToPrL}) respectively and (3) is the product given by the cartesian monoidal structure of $\widehat{\Cat}_\infty$. It is straightforward to see that this is a weak cartesian structure and the associated map of $\infty$-operads $(\Sch_{/S}^{\op} \times \CRing)^{\coprod} \rightarrow (\widehat{\Cat}_\infty)^\times$ factors through $(\Pr^{L,\st})^\otimes$. This produces the desired functor (\ref{eqn:FunctorSchxRingToPrL}).

For a morphism $f: X \rightarrow Y$ in $\Sch^{\qcqs}_{/S}$ and a morphism of rings $\rho: \Lambda \rightarrow \Lambda'$ the functoriality exhibited by  (\ref{eqn:FunctorSchxRingToPrL}) gives rise to an equivalence 
\[
\rho^* f^* \overset{\sim}\longrightarrow  f^* \rho^*
\]
for which we get an associated exchange map
\[\rho^* f_* \longrightarrow  f_* \rho^*.\]
As in \ref{noname:FunctorialityOfDA} we may apply the formalism developed in \cite[\S 3, \S 4]{LiuZhengEnhanced} in order to extend (\ref{eqn:FunctorSchxRingToPrL}) to a functor
\begin{equation} \label{eqn:FunctorFromDelta*22(Sch x ring)ToCat}
\delta^*_{2,\lbrace 2 \rbrace}(((\Sch_{/S}^{\qcqs})^{\op} \times \CRing)^{\sqcup, \op})^{cart}_{F', all} \longrightarrow \Pr^{L, \st},
\end{equation}
where $F'$ denotes the set of morphisms locally of finite type. Restricting (\ref{eqn:FunctorFromDelta*22(Sch x ring)ToCat}) to the first direction and forgetting the operadic structures gives rise to a functor
\begin{equation} \label{eqn:D!SchRingsCat}
\DA_{\et}(\_ , \_)_!: \Sch_{/S}^{\qcqs,F'} \times \CRing \longrightarrow \Pr^{L, \st}.
\end{equation}
In particular for a morphism $f: X \rightarrow Y$ in $\Sch^{\qcqs}_{/S}$  which is locally of finite type and a morphism of rings $\rho: \Lambda \rightarrow \Lambda'$ the functoriality of (\ref{eqn:D!SchRingsCat}) gives rise to a natural equivalence 
\[
\rho^* f_! \overset{\sim}\longrightarrow  f_! \rho^*.
\]
To this natural equivalence we can associate the exchange map
\[\rho^* f^! \longrightarrow  f^! \rho^*.\]
\end{remark}

\begin{prop} \label{prop:ChangeOfCoeffCompWSixFunctors}
Consider a morphism of rings $\rho: \Lambda \rightarrow \Lambda'$.
\begin{enumerate}
\item Let $f: X \rightarrow Y$ be a morphism of finite type between noetherian schemes of finite dimension. Then the comparison maps
\[
\rho^* f_* M \longrightarrow  f_* \rho^* M
\]
and
\[
\rho^* f^! N \longrightarrow  f^! \rho^* N
\]
constructed above are equivalences for all $M$ in $\DA_{\et}(X, \Lambda)$ and $N$ in $\DA_{\et}(Y, \Lambda)$.
\item Let $X$ be a noetherian scheme of finite dimension, $M$ in $\DA_{\et}^{\cons}(X, \Lambda)$ and $N$ in $\DA_{\et}(X, \Lambda)$. Then the comparison map
\[
\rho^* \Hom(M,N) \longrightarrow  \Hom(\rho^*M, \rho^*N )
\]
obtained as the transpose of
\[
\Hom(M,N) \rightarrow \Hom(M, \rho_*\rho^*N ) \simeq \rho_* \Hom(\rho^*M, \rho^*N ) 
\]
is an equivalence. 
\end{enumerate}
\end{prop}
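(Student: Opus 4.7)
Plan. The proof splits into part (1), the main content, and part (2), which follows formally from part (1) together with the projection formula.

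For part (1), the compatibilities $\rho^* f^* \simeq f^* \rho^*$, $\rho^* f_! \simeq f_! \rho^*$, and $\rho^*(A \otimes B) \simeq \rho^* A \otimes \rho^* B$ are built into the functorial enhancements (\ref{eqn:FunctorSchxRingToPrL}) and (\ref{eqn:D!SchRingsCat}) and into the symmetric monoidality of $\rho^*$. Two easy cases follow at once: when $f$ is proper, $f_* \simeq f_!$ yields the $f_*$-comparison; when $f$ is smooth of relative dimension $d$, the purity equivalence $f^! \simeq f^*(d)[2d]$ combined with the fact that $\rho^*$ commutes with Tate twists and shifts yields the $f^!$-comparison. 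Using Nagata compactification one factors a general $f$ of finite type as an open immersion followed by a proper morphism, reducing the $f_*$-comparison to the case of an open immersion $j: U \hookrightarrow X$; dually, factoring a finite-type map through a closed immersion into a relative affine space and using the smooth case reduces the $f^!$-comparison to the case of a closed immersion $i: Z \hookrightarrow X$. The localization cofiber sequence $i_* i^! \to \id \to j_* j^*$, applied together with $\rho^* i_* \simeq i_* \rho^*$ (which holds because $i_* = i_!$), shows that the $j_*$-compatibility for $j$ and the $i^!$-compatibility for the complementary $i$ are logically equivalent. To break this circularity the cleanest route is to transport the question to $h$-motives via Theorem \ref{thm:CompDAvsDM}, which is compatible with both the six functors and with change of coefficients (cf.\ \ref{noname:DADMAndBasechange}), and invoke the analogous compatibility for $h$-motives in Cisinski--D\'eglise.

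For part (2), both $\rho^* \Hom(-, N)$ and $\Hom(\rho^* -, \rho^* N)$ are exact contravariant functors of $M$ and the comparison map is natural in $M$, so the full subcategory of $\DA^{\cons}_{\et}(X, \Lambda)$ on which the comparison is an equivalence is stable under (co)fibres, retracts and finite sums. By definition of $\DA^{\cons}_{\et}(X, \Lambda)$ it suffices to check on generators $M = \Lambda(Y)(n) = p_\# \Lambda(n)$ for $p: Y \to X$ smooth of some relative dimension $d$; by smooth purity $p_\# \simeq p_!(d)[2d]$ this reduces further to $M = p_! \Lambda(m)$ for some integer $m$. For such $M$ the projection formula yields
\[
\Hom(p_! \Lambda(m), N) \simeq p_* \Hom(\Lambda(m), p^! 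N) \simeq p_* (p^! N)(-m),
\]
and applying $\rho^*$ and commuting it past $p_*$, $p^!$ and Tate twists via part (1) and the symmetric monoidality of $\rho^*$, and then using $\rho^* p_! \simeq p_! \rho^*$ to rewrite the right-hand side, produces the desired equivalence $\rho^* \Hom(M, N) \simeq \Hom(\rho^* M, \rho^* N)$.

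The hard step is the remaining open-immersion case of part (1), where the $j_*$- and $i^!$-compatibilities are intertwined by the localization sequence; I expect the most efficient resolution to be via the equivalence $\DA_{\et} \simeq \DM_h$ of Theorem \ref{thm:CompDAvsDM} and the corresponding statement for $h$-motives, with a self-contained noetherian induction on $\dim Z$ (using purity for smooth closed immersions as the induction base) as an alternative.
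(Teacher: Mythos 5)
Your part (2) is essentially the paper's own argument: the paper likewise reduces to generators, taking $M = f_\# \1$ for $f\colon W \to X$ smooth, so that $\Hom(f_\#\1,N) \simeq f_*f^*N$ and the claim becomes an instance of part (1); your variant through $p_!$ and the formula $\Hom(p_!A,B)\simeq p_*\Hom(A,p^!B)$ works just as well, granted part (1).

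The gap is in part (1). Your d\'evissage (proper case via $f_!\simeq f_*$, smooth case of $f^!$ via purity, Nagata, and the localization sequence linking $j_*$ for an open immersion with $i^!$ for the complementary closed immersion) is sound and correctly isolates the crux, namely the commutation $\rho^* j_* \to j_*\rho^*$ for an open immersion (equivalently $\rho^* f_*\to f_*\rho^*$ in general). But that crux is the actual content of the proposition and you do not establish it: you defer it to ``the analogous compatibility for $h$-motives in Cisinski--D\'eglise''. Their change-of-coefficients results (e.g.\ \cite[5.4.5, 5.4.9]{CisinskiDegliseEtale}) concern the specific reductions $\Lambda\to\Lambda\otimes\Q$ and $\Lambda\to\Lambda/\ell$, and no statement for an arbitrary ring morphism $\rho$ over noetherian finite-dimensional bases is available there; the general statement is Ayoub's \cite[6.3]{AyoubRealizationEtale}, which is exactly what the paper invokes for part (1), supplemented by \cite[5.5.10]{CisinskiDegliseEtale} to remove Ayoub's cohomological-dimension hypothesis (so the transport through Theorem \ref{thm:CompDAvsDM} is not needed at all). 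Your fallback, a ``self-contained noetherian induction on $\dim Z$ with purity for smooth closed immersions as base case'', is not developed and would not be routine: a purity-based d\'evissage of $i^!$ needs regularity (and in practice quasi-excellence, via Gabber or alterations), which the proposition does not assume, and in any case the closed-immersion $i^!$ case already follows formally from the open-immersion $j_*$ case by your own localization argument, so such an induction cannot supply the missing input. Two minor points in the reductions: Nagata compactification requires $f$ separated (reduce to this case by a Zariski Mayer--Vietoris argument), and factoring through a closed immersion into affine space is only available locally on the source, which suffices since $u^*f^!\simeq (f\circ u)^!$ for $u$ an open immersion.
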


\begin{proof}
Part (1) can be proven as in \cite[6.3]{AyoubRealizationEtale} where we can drop the Hypothesis on the cohomological dimension by using \cite[5.5.10]{CisinskiDegliseEtale}. It suffices to prove (2) in the case where $M = f_\# \1$ for some smooth $f: W \rightarrow X$. Then (2) reduces to showing that the comparison map $\rho^* f_* f^* N \rightarrow  f_* f^* \rho^* N$ is an equivalence, which is true by (1).
\end{proof}

\begin{definition}
\label{def:Realization} Let $X$ be a locally noetherian scheme, $\Lambda$  a ring and $J \subset \Lambda$ an ideal such that $\Lambda/J$ is of positive characteristic invertible in $\mathcal{O}(X)$. Write $\rho_{J}$ for the canonical ring map $\Lambda \rightarrow \Lambda / J$. We call the composition 
\[
\mathfrak{R}_{\modulo J}: \DA_{\et}(X, \Lambda) \overset{\rho_J^*} \longrightarrow \DA_{\et}(X, \Lambda/ J) \overset{(\iota^*)^{-1}}\longrightarrow \T_{\et}(X, \Lambda/ J)
\]
the \textit{mod $J$ \'etale realization functor.} 
\end{definition}

\begin{remark}
The formation of $\mathfrak{R}_{\modulo J}$ commutes with the six functors under the mild conditions of Proposition \ref{prop:ChangeOfCoeffCompWSixFunctors}. Moreover if $\Lambda$ is flat over $\Z$ and $J = n\Lambda$ for some positive integer $n$ we can deduce as in \cite[5.4.5]{CisinskiDegliseEtale} that the natural transformations of Proposition \ref{prop:ChangeOfCoeffCompWSixFunctors} are equivalences without any finiteness conditions. 
\end{remark}

\section{Specialization systems}

\noname Let us recall Ayoub's formalism of specialization systems (see \cite[\S 3.2]{AyoubThesisII}).

\noname \label{noname:DefOfSpezSystem} Let $S$ be a qcqs scheme together with a decomposition of qcqs schemes
\[
\begin{tikzcd}
\sigma \arrow[r, "i"] & S & \eta = S\setminus \sigma \arrow[l, "j"']
\end{tikzcd}\]
where $i$ is a closed immersion and $j$ is its open complement. A morphism of schemes  $f: X \rightarrow S$ gives rise to a diagram
\[
\begin{tikzcd}
X_ \sigma \arrow[d, "f_\sigma"'] \arrow[r, "i"] & X \arrow[d, "f"] & X_\eta \arrow[d, "f_\eta"] \arrow[l, "j"'] \\
\sigma \arrow[r, "i"']                          & S                & \eta \arrow[l, "j"]                       .
\end{tikzcd}
\]
Here and throughout the following by slight abuse of notation we denote any pullback of $j$ and $i$ by a map $f$ again by $j$ and $i$. 

\begin{definition} \label{def:SpezSys}
A \textit{specialization system} (for $\DA_{\et}( \_ , \Lambda)$) over $(S,i,j)$ is a collection of functors $\sp_f: \DA_{\et}(X_\eta, \Lambda) \rightarrow \DA_{\et}(X_\sigma, \Lambda)$ for every $f: X \rightarrow S$ in $\Sch^{\qcqs}_{/S}$ such that the following hold:
\begin{enumerate}
\item For any map
\begin{equation} \label{eqn:TriangleForNonameSpecSys}
\begin{tikzcd}
X \arrow[rd, "f"'] \arrow[rr, "h"] &   & Y \arrow[ld, "g"] \\
                                   & S &                  
\end{tikzcd}
\end{equation}
in $\Sch^{\qcqs}_{/S}$ there is a natural transformation
\[
\Ex^*: (h_\sigma)^* \sp_g\rightarrow  \sp_f (h_\eta)^* ,
\]
which is an equivalence whenever $h$ is smooth. 
\item For 
\[
\begin{tikzcd}
X \arrow[rrd, "f"'] \arrow[rr, "h"] &  & Y \arrow[d, "g"] \arrow[rr, "k"] &  & Z \arrow[lld, "l"] \\
                                    &  & S                                &  &                   
\end{tikzcd}
\]
in $\Sch^{\qcqs}_{/S}$ the square
\[
\begin{tikzcd}
(h_\sigma)^* (k_\sigma)^* \sp_l \arrow[d, "\sim", no head] \arrow[r, "\Ex^*"] & (h_\sigma)^*\sp_g (k_\eta)^* \arrow[r, "\Ex^*"] & \sp_f (h_\eta)^* (k_\eta)^* \arrow[d, "\sim", no head] \\
(kh)_\sigma^*\sp_l \arrow[rr, "\Ex^*"]                             &                                 & \sp_f (kh)_\eta^*                            
\end{tikzcd}
\]
commutes. 

\item For a map of the form (\ref{eqn:TriangleForNonameSpecSys}) the natural transformation
\[
\Ex_*: \sp_g (h_\eta)_* \rightarrow   (h_\sigma)_* \sp_f,
\]
obtained as the composition
\[
\sp_g (h_\eta)_* \overset{\unit}\longrightarrow (h_\sigma)_* (h_\sigma^*) \sp_g (h_\eta)_* \overset{\Ex^*}\longrightarrow (h_\sigma)_*  \sp_f (h_\eta)^*(h_\eta)_* \overset{\counit}\longrightarrow (h_\sigma)_* \sp_f
\]
is an equivalence whenever $h$ is proper. 
\end{enumerate}
\end{definition} 

\noname It is shown in \cite[\S 3.2]{AyoubThesisII} that any specialization system is compatible with Thom twists from which one can deduce for any map of the form (\ref{eqn:TriangleForNonameSpecSys}) the existence of natural transformations
\[
\Ex^!: \sp_f (h_\eta)^! \rightarrow   (h_\sigma)^! \sp_g
\]
and
\[
\Ex_!: (h_\sigma)_! \sp_f \rightarrow  \sp_g (h_\eta)_!
\]
satisfying various compatibilities (see \cite[\S 3.2]{AyoubThesisII} for more details). 

\begin{definition}
Given two specialization systems $\sp$ and $\sp'$ over $(S,i,j)$ a \textit{morphism of specialization systems} is a family of natural transformations $\sp_f \rightarrow \sp_f'$ for every $f: X \rightarrow S$ in $\Sch^{\qcqs}_{/S}$ such that for any map of the form (\ref{eqn:TriangleForNonameSpecSys}) the induced square
\[
\begin{tikzcd}
(h_\sigma)^* \sp_g \arrow[r, "\Ex^*"] \arrow[d] &  \sp_f (h_\eta)^* \arrow[d] \\
(h_\sigma)^* \sp'_g \arrow[r, "\Ex^*"]          &  \sp'_f (h_\eta)^*         
\end{tikzcd}
\]
commutes.
\end{definition}

\begin{definition}
We call a specialization system $\sp$ over $(S,i,j)$ \textit{lax-monoidal} if it induces on homotopy categories a pseudo-monoidal specialization system in the sense of \cite[3.1.12]{AyoubThesisII}.
Given two lax-monoidal specialization systems $\sp$ and $\sp'$ a morphism of specialization systems $\sp \rightarrow \sp'$ is called \textit{lax-monoidal} if for all $f:X \rightarrow S$ in $\Sch^{\qcqs}_{/S}$ the natural transformation $\sp_f \rightarrow \sp'_f$ induces a monoidal natural transformation on homotopy categories.
\end{definition}

\begin{example}
Consider the collection of functors $\chi_f := i^* j_*: \DA_{\et}(X_\eta, \Lambda) \rightarrow \DA_{\et}(X_\sigma, \Lambda)$ for all $f:X \rightarrow S$ in $\Sch^{\qcqs}_{/S}$. Smooth and proper base change imply immediately that this gives rise to a specialization system. Moreover it is lax monoidal since $j_* $ is lax-monoidal. 
\end{example}

\section{Nearby cycles functors for \'etale motives}

\noname We want to lift Ayoub's construction of the motivic nearby cycles functors in \cite{AyoubThesisII} and \cite{AyoubRealizationEtale} to our $\infty$-categorical setup. For this we have to talk about motives on a diagram of schemes.

\begin{construction}  \label{noname:ConstOfDAForiagrams} Consider the category $\DiaSch_{/S}$ whose objects $(\mathcal{F},I)$ are functors $\mathcal{F}: I \rightarrow \Sch^{\qcqs}_{/S}$, where $I$ is a small category, and a morphism $(\theta, \alpha): (\mathcal{G},J) \rightarrow (\mathcal{F},I)$ is the data of a functor $\alpha: J \rightarrow I$ and a natural transformation $\theta: \mathcal{G} \rightarrow \mathcal{F} \circ\alpha$. The functor
\[
\Sm_{/\_}: \left(\Sch^{\qcqs}_{/S}\right)^{\op} \longrightarrow \Cat_1 \subset \Cat_\infty,
\]
which assigns to a qcqs scheme $X$ over $S$ the category $\Sm_{/X}$, gives rise to a cartesian fibration $\underline{\Sm} \rightarrow \Sch^{\qcqs}_{/S}$. For a $(\mathcal{F},I)$ in $\DiaSch_{/S}$ we define the  category $\Sm_{/(\mathcal{F},I)}$ via the cartesian diagram of categories
\[
\begin{tikzcd}
{\Sm_{/(\mathcal{F} ,I)}} \arrow[r] \arrow[d] & \underline{\Sm} \arrow[d] \\
I \arrow[r, "\mathcal{F}"]                   & \Sch^{\qcqs}_{/S}.          
\end{tikzcd}
\]
An object of ${\Sm_{/(\mathcal{F},I)}}$ is a tuple $\{U \rightarrow \mathcal{F}(i), i \}$, where $i \in I$ and $U \rightarrow \mathcal{F}(i)$ is a smooth morphism, and a map $\{U \rightarrow \mathcal{F}(i), i \} \rightarrow \{V \rightarrow \mathcal{F}(j), j \}$ is the data of a map $i \rightarrow j$ in $I$ together with a map $U \rightarrow V \times_{\mathcal{F}(j)} \mathcal{F}(i) $ in $\Sm_{/\mathcal{F}(i)}$. 

For any $(\mathcal{F},I)$ in $\DiaSch_{/S}$ we define the \'etale topology on  $\Sm_{/(\mathcal{F},I)}$ as the Grothendieck topology generated by the families of maps $\{ (U_\alpha, i) \rightarrow (U,i)\}_\alpha$ where $\{U_\alpha \rightarrow U \}_\alpha$ is a covering family for the ordinary \'etale topology. 

\end{construction}

\begin{construction}  Let $\mathcal{F}: I \rightarrow \Sch^{\qcqs}_{/S}$ be a functor and $\Lambda$ a ring. We set
\[
\PSh(\Sm_{/(\mathcal{F},I)}, \T(\Lambda)):= \PSh(\Sm_{/(\mathcal{F},I)})\otimes \T(\Lambda).
\]
Let 
\[
\Sh^{\hyp}_{\et}(\Sm_{/(\mathcal{F},I)}, \T(\Lambda)) \subset \PSh(\Sm_{/(\mathcal{F},I)}, \T(\Lambda))
\]
be the full subcategory consisting of hypersheaves with respect to the \'etale topology. A sheaf $\mathcal{G}$ in $\Sh^{\hyp}_{\et}(\Sm_{/(\mathcal{F},I)}, \T(\Lambda))$ is called $\A^1$-local if $\mathcal{G}(\pi_X,i): \mathcal{G}(X,i) \rightarrow \mathcal{G}(\A^1_X,i)$ is an equivalence for all projection maps $(\pi_X,i):(\A^1_X,i) \rightarrow (X,i)$ in $\Sm_{/(\mathcal{F},I)}$. We denote by
\[
\DA^{\eff}_{\et}((\mathcal{F},I), \Lambda) \subset \Sh^{\hyp}_{\et}(\Sm_{/(\mathcal{F},I)}, \T(\Lambda))
\]
the full subcategory consisting of $\A^1$-local objects. The inclusion $\DA^{\eff}_{\et}((\mathcal{F},I), \Lambda) \subset \Sh_{\et}(\Sm_{/(\mathcal{F},I)}, \T(\Lambda))$ admits a left adjoint which we denote by $L_{\A^1}$. As in \ref{noname:SymMonStrOfDAeff} all the categories constructed above underlie a symmetric monoidal $\infty$-category and the localization functors are symmetric monoidal.  Clearly by construction we have for any scheme $X$ considered as a diagram $X: * \rightarrow \Sch^{\qcqs}_{/S}$ that $\DA^{\eff}_{\et}((X,*), \Lambda) \simeq \DA^{\eff}_{\et}(X, \Lambda)$, where $\DA^{\eff}_{\et}(X, \Lambda)$ is defined as in \ref{noname:ConstrOfDA}.
\end{construction} 

\begin{construction} \label{constr:MapsOfDiagrams} We may always factor a morphism $(\theta, \alpha): (\mathcal{G},J) \rightarrow (\mathcal{F},I)$ in $\DiaSch_{/S}$ as
\[
(\mathcal{G},J) \overset{(\theta, \id)}\longrightarrow (\mathcal{F} \circ \alpha, J) \overset{(\id, \alpha)}\longrightarrow (\mathcal{F},I). 
\]
For a morphism of the form $(\theta, \id): (\mathcal{G},J) \rightarrow (\mathcal{H},J)$ we can define a functor 
\[
\bar{\theta}: \Sm_{/(\mathcal{H},J)} \longrightarrow \Sm_{/(\mathcal{G},J)}
\]
by mapping a smooth morphism $U \rightarrow \mathcal{H}(j)$ to $U \times_{\mathcal{H}(j)} \mathcal{G}(j) \rightarrow \mathcal{G}(j)$. This induces via left Kan extension a functor
\[
(\theta_{\PSh})^*: \PSh(\Sm_{/(\mathcal{H},J)}, \T(\Lambda)) \longrightarrow \PSh(\Sm_{/(\mathcal{G},J)}, \T(\Lambda)),
\]
in $\Pr^{L,\st}$ which can be lifted to a symmetric monoidal functor since $\bar{\theta}$ preserves finite products. We denote the right adjoint of $(\theta_{\PSh})^*$ by $(\theta_{\PSh})_*$. If $\theta$ is levelwise smooth, i.e. $\theta(j): \mathcal{G}(j) \rightarrow \mathcal{H}(j)$ is smooth for all $j$ in $J$, then we may define a functor 
\[
\tilde\theta: \Sm_{/(\mathcal{G},J)} \longrightarrow \Sm_{/(\mathcal{H},J)}
\]
by sending $\{ U \rightarrow \mathcal{G}(j) ,j \}$ to $\{ U \rightarrow \mathcal{G}(j) \rightarrow \mathcal{H}(j) ,j \}$. This induces by left Kan extension a functor
\[
(\theta_{\PSh})_\#: \PSh(\Sm_{/(\mathcal{G},J)}, \T(\Lambda)) \longrightarrow \PSh(\Sm_{/(\mathcal{H},J)}, \T(\Lambda))
\]
whose right adjoint is equivalent to $(\theta_{\PSh})^*$. A morphism of the form $(\id, \alpha): (\mathcal{H} \circ \alpha, J) \rightarrow (\mathcal{H},I)$ for some functor $\alpha: J \rightarrow I$ gives rise to a functor
\[
\bar{\alpha}:  \Sm_{/(\mathcal{H} \circ \alpha ,J)} \longrightarrow \Sm_{/(\mathcal{H} ,I)},
\]
which sends $\{ U \rightarrow \mathcal{H}(j), j\}$ to $\{ U \rightarrow \mathcal{H}(j), \alpha(j)\}$. This induces by precomposition a functor
\[
\bar{\alpha}_*: \PSh(\Sm_{/(\mathcal{H} \circ \alpha,J)}, \T(\Lambda)) \longrightarrow \PSh(\Sm_{/(\mathcal{H},I)}, \T(\Lambda))
\]
which admits a left and a right adjoint. We write $(\alpha_{\PSh})^*:= \bar{\alpha}_*$ and denote by $(\alpha_{\PSh})_*$ its  right adjoint and by $(\alpha_{\PSh})_\#$  its left adjoint. 

Given a morphism of the form $(\theta, \id): (\mathcal{G},J) \rightarrow (\mathcal{H},J)$ we can observe that $(\theta_{\PSh})_*$ preserves $\A^1$-local \'etale hypersheaves. Hence it restricts to a functor
\[ 
(\theta_{\eff})_* : \DA^{\eff}_{\et}((\mathcal{G},J), \Lambda) \longrightarrow \DA^{\eff}_{\et}((\mathcal{H},J), \Lambda),
\]
whose left adjoint is given by 
\[
(\theta_{\eff})^*: \DA^{\eff}_{\et}((\mathcal{H},J), \Lambda) \overset{\theta^*|_{\DA^{\eff}_{\et}((\mathcal{H},J), \Lambda)}}\longrightarrow \PSh(\Sm_{/(\mathcal{G},J)}, \T(\Lambda)) \overset{L_{\A^1} \circ L_{\et}}\longrightarrow \DA^{\eff}_{\et}((\mathcal{G},J), \Lambda).
\]
In the case that $(\theta, \id)$ is level-wise smooth the left adjoint of $(\theta_{\eff})^*$ is given by the composition
\[
(\theta_{\eff})_\#:\DA^{\eff}_{\et}((\mathcal{G},J), \Lambda) \overset{(\theta_{\PSh})_\#|_{\DA^{\eff}_{\et}((\mathcal{H},J), \Lambda)}}\longrightarrow \PSh(\Sm_{/(\mathcal{H},J)}, \T(\Lambda)) \overset{L_{\A^1} \circ L_{\et}}\longrightarrow \DA^{\eff}_{\et}((\mathcal{H},J), \Lambda).
\] 
Similarly for a morphism of the form $(\id, \alpha): (\mathcal{H} \circ \alpha, J) \rightarrow (\mathcal{H},I)$ we observe that $\bar{\alpha}_* = \alpha^*$ preserves \'etale hypersheaves and $\A^1$-local objects. As above we get an adjunction
\[
(\alpha_{\eff})_\#: \DA^{\eff}_{\et}((\mathcal{H} \circ \alpha,J), \Lambda) \longleftrightarrows \DA^{\eff}_{\et}((\mathcal{H},I), \Lambda): (\alpha_{\eff})^*.
\]

\end{construction} 

\begin{construction} \label{remark:ConstOfDAForiagramsMaps} For any $(\mathcal{F},I)$ in $\DiaSch_{/S}$ there is a unique structure map $\pi_{(\mathcal{F},I)}:(\mathcal{F},I) \rightarrow  (S, *)$ which induces a functor
\[
\pi_{(\mathcal{F},I)}^* : \DA_{\et}^{\text{eff}}(S, \Lambda)\longrightarrow \DA_{\et}^{\text{eff}}((\mathcal{F},I), \Lambda).
\]
Consider the Tate object $\Lambda(1)$ in $\DA_{\et}^{\text{eff}}(S, \Lambda)$ as defined in \ref{noname:SymMonStrOfDA} and let us write by slight abuse of notation $\Lambda(1) := \pi_{(\mathcal{F},I)}^*\Lambda(1)$. We define $\DA_{\et}((\mathcal{F},I), \Lambda)$ as the colimit in $\Pr^L$ of the $\mathbb{N}$-indexed diagram
\[
\DA_{\et}^{\text{eff}}((\mathcal{F},I), \Lambda) \overset{\_ \otimes \Lambda(1)} \longrightarrow\DA_{\et}^{\text{eff}}((\mathcal{F},I), \Lambda) \overset{\_ \otimes \Lambda(1)}\longrightarrow \DA_{\et}^{\text{eff}}((\mathcal{F},I), \Lambda) \overset{\_ \otimes \Lambda(1)} \longrightarrow \dots 
\]
Similarly as in \ref{noname:f*onDA} we get induced adjoint pairs
\[ 
\theta^* : \DA_{\et}((\mathcal{H},J), \Lambda) \longleftrightarrows \DA_{\et}((\mathcal{G},J), \Lambda): \theta_*
\]
for morphisms of the form $(\theta, \id): (G,J) \rightarrow (H,J)$,
\[ 
\theta_\# : \DA_{\et}((\mathcal{G},J), \Lambda) \longleftrightarrows \DA_{\et}((\mathcal{H},J), \Lambda): \theta^*
\]
if $(\theta, \id)$ is level-wise smooth and
\[
\alpha_\#: \DA_{\et}((\mathcal{H} \circ \alpha,J), \Lambda) \longleftrightarrows \DA_{\et}((\mathcal{H},I), \Lambda): \alpha^*
\]
for morphisms of the form $(\id, \alpha): (\mathcal{H} \circ \alpha, J) \rightarrow (\mathcal{H},I)$. As in \ref{noname:SymMonStrOfDA} and \ref{noname:f*onDA} we can deduce that $\DA_{\et}((\mathcal{F},I), \Lambda)$ underlies a symmetric monoidal $\infty$-category and $\theta^*$ as well as $\alpha^*$ lift to symmetric monoidal functors. 
\end{construction} 

\begin{remark} \label{rem:DA(diagram)ConstCoinidesWAyoub}
It is not hard to check that the homotopy category of $\DA_{\et}((\mathcal{F},I), \Lambda)$ is equivalent to the triangulated category constructed in \cite[4.5.2]{AyoubThesisII}. Moreover the functors on homotopy categories induced by $\theta_\# \dashv \theta^* \dashv  \theta_*$ and $\alpha_\#  \dashv \alpha^*$ agree with the ones constructed in \textit{loc. cit}.
\end{remark}  

\noname \label{noname:SimplObjAssToSpan} Let $\C$ be a 1-category with finite products and consider a cospan
\[
A \overset{f}\longrightarrow B \overset{g}\longleftarrow C
\]
in $\C$. Then as in \cite[3.4.1]{AyoubThesisII} we can associate to this cospan a cosimplicial object
\[
\BarConstr(f,g): \Delta \longrightarrow \C
\]
with
\[
\BarConstr(f,g)([i]) = A \times B^{i} \times C.
\]
for $[i]$ in $\Delta$. The coface and codegeneracy maps are given by:
\begin{enumerate}
\item $d_0: (a, b_0,\dots, b_i, c) \mapsto (a, f(a), b_0,\dots, b_i, c)$.
\item $d_k: (a, b_0,\dots, b_i, c) \mapsto (a, b_0,\dots, b_k, b_k, \dots b_i, c)$ for $0<k<i$.
\item $d_i: (a, b_0,\dots, b_i, c) \mapsto (a, b_0,\dots, b_i, g(c), c)$.
\item $s_k:(a, b_0,\dots, b_i, c) \mapsto (a, b_0,\dots, \widehat{b_k}, \dots, b_i, c)$ for $0 \leq k \leq i$.
\end{enumerate}

Let us denote the category of cospans in $\C$ by $\C^{\rightarrow \leftarrow}$. It is easy to see that this construction gives rise to a functor
\[
\BarConstr(\_, \_) : \C^{\rightarrow \leftarrow} \longrightarrow \Fun (\Delta , \C). 
\]

\begin{construction} \label{noname:ConstructionOfMAthcalA} Let $S$ be a qcqs scheme and consider the cospan
\[
\G_{m,S} \overset{\id}\longrightarrow \G_{m,S} \overset{1}\longleftarrow S
\] 
in $\Sch^{\qcqs}_{/S}$ where $1$ denotes the unit section. As in \ref{noname:SimplObjAssToSpan} we can associate to such a cospan a cosimplicial object
\[
\mathcal{A}_S:= \BarConstr(\id, 1) : \Delta \longrightarrow \Sch^{\qcqs}_{/S}
\]
with
\[
\mathcal{A}_S([i]) \simeq \G_{m,S} \times_S (\G_{m,S})^{\times i} 
\]
for $[i]$ in $\Delta.$ Let us denote the constant diagram $\Delta \rightarrow * \overset{\G_{m,S}}\rightarrow \Sch^{\qcqs}_{/S}$ in $\DiaSch_{/ S}$ by $(\G_{m,S}, \Delta)$. The first projections $p_1: \G_{m,S} \times_S (\G_{m,S})^{\times i} \rightarrow \G_{m,S}$ induce a canonical map 
\begin{equation} \label{eqn:ThetaA}
\theta^\mathcal{A}: (\mathcal{A}_S, \Delta) \rightarrow (\G_{m,S}, \Delta)
\end{equation}
in $\DiaSch_{/S}$. In particular we can observe that $\mathcal{A}_S$ factors as 
\[
\mathcal{A}_S: \Delta \longrightarrow \Sm_{/{\G_{m,S}}} \longrightarrow \Sch^{\qcqs}_{/S}. 
\]
For a positive integer $n$ let $e_n: \G_{m,S} \rightarrow \G_{m,S}$ be the morphism given by elevating to the $n$-th power.

The morphism of cospans
\[
\begin{tikzcd}
\G_{m,S} \arrow[d, "e_n"'] \arrow[r, "\id"] & \G_{m,S}  \arrow[d, "e_n"] & S \arrow[d, "\id"] \arrow[l, "1"'] \\
\G_{m,S} \arrow[r, "\id"']                  & \G_{m,S}                   & S \arrow[l, "1"]                  
\end{tikzcd}
\]
gives rise to a map 
\begin{equation} \label{eqn:enAtoA}
e_n: \mathcal{A}_S \rightarrow \mathcal{A}_S
\end{equation}
in $\Fun( \Delta, \Sch^{\qcqs}_{/ S})$ which is given level-wise by 
\[
(e_n, (e_n)^i): \mathcal{A}_S([i]) \simeq \G_{m,S} \times_S (\G_{m,S})^{\times i}  \longrightarrow \G_{m,S} \times_S (\G_{m,S})^{\times i}\simeq  \mathcal{A}_S([i]).
\]

Let us denote the object in $\Fun(\Delta, \Sch^{\qcqs}_{/S})$ associated to the cospan
\[
\G_{m,S} \overset{e_n}\longrightarrow \G_{m,S}  \overset{1}\longleftarrow S
\] 
by $e_n^* \mathcal{A}_S$. As above one observes that $e_n^* \mathcal{A}_S$ takes values in $\Sm_{/ \G_{m,S}}$. Moreover there is a canonical map 
\begin{equation} \label{eqn:varphin:Atoen*A}
\varphi_n: \mathcal{A}_S \longrightarrow e_n^* \mathcal{A}_S
\end{equation}
in $\Fun( \Delta, \Sm_{/\G_{m,S}})$ induced by the morphism of cospans
\[
\begin{tikzcd}
\G_{m,S} \arrow[d, "\id"'] \arrow[r, "\id"] & \G_{m,S}  \arrow[d, "e_n"] & S \arrow[d, "\id"] \arrow[l, "1"'] \\
\G_{m,S} \arrow[r, "e_n"']                  & \G_{m,S}                   & S. \arrow[l, "1"]                  
\end{tikzcd}
\]

\end{construction}

\noname \label{noname:SetupEtaleNearbyCycle} Let $S$ be the spectrum of a strictly henselian discrete valuation ring $R$ with fixed uniformizer $\pi$. We denote the open point of $S$ by $\eta$ and the closed point by $\sigma$. The uniformizer $\pi$ gives rise to morphisms $\pi: S \rightarrow \A^1_S$ and $\pi: \eta \rightarrow \G_{m,S}$. To fix notations we consider for a morphism of schemes $f: X \rightarrow S$ the diagram
\[
\begin{tikzcd}
X_\eta \arrow[r, "j"] \arrow[d, "f_\eta"'] & X \arrow[d, "f"']  & X_\sigma \arrow[l, "i"'] \arrow[d, "f_\sigma"] \\
\eta \arrow[d, "\pi"'] \arrow[r, "j"]              & S \arrow[d, "\pi"'] & \sigma \arrow[d, "\pi_\sigma"] \arrow[l, "i"']               \\
{\G_{m,S}} \arrow[r, "j"]                       & \A^1_S             & S \arrow[l, "0"']                             
\end{tikzcd}
\]
consisting of pullback squares.

For a small category $I$ let us denote by $(X,I)$ the constant diagram $I \rightarrow * \overset{X} \rightarrow \Sch^{\qcqs}_{/S}$. We write $p_I:(X, I) \rightarrow (X, *)$ for the canonical morphism in $\DiaSch_{/S}$. For a diagram $\mathcal{B}_S: I \rightarrow \Sch^{\qcqs}_{/S}$ equipped with a morphism $\theta^\mathcal{B}: (\mathcal{B}_S,I) \rightarrow (\G_{m,S},I)$  in $\DiaSch_{/S}$ we consider the following diagram by taking pullbacks in $\DiaSch_{/S}$:
\[
\begin{tikzcd}
{(\mathcal{B}_{f}, I)} \arrow[d] \arrow[r, "\theta_f^\mathcal{B}"] & {(X_\eta, I)} \arrow[d, "{(f_\eta,I)}"] \arrow[r, "j"] & {(X,I)} \arrow[d, "{(f, I)}"]   & {(X_\sigma,I)} \arrow[d, "{(f_\sigma, I)}"] \arrow[l, "i"'] \\
{(\mathcal{B}_{\id}, I)} \arrow[d] \arrow[r, "\theta_{\id}^\mathcal{B}"]     & {(\eta, I)} \arrow[d, "{(\pi,I)}"] \arrow[r, "j"] & {(S, I)} \arrow[d, "{(\pi,I)}"] & {(\sigma,I)} \arrow[d, "{(\pi_\sigma,I)}"] \arrow[l, "i"']  \\
{(\mathcal{B}_S, I)} \arrow[r, "\theta^\mathcal{B}"]                      & {(\G_{m,S}, I)} \arrow[r, "j"]                         & {(\A^1_S, I)}                   & {(0,I).} \arrow[l, "i"']                                    
\end{tikzcd}
\]

\begin{construction}  \label{noname:ConstrOfPsi}
\begin{enumerate}
\item In the situation of \ref{noname:SetupEtaleNearbyCycle} consider the diagram $(\mathcal{A}_S, \Delta)$ equipped with the morphism $\theta^\mathcal{A}: (\mathcal{A}_S, \Delta) \rightarrow (\G_{m,S}, \Delta)$ (\ref{eqn:ThetaA}). Then using the notations of \ref{noname:SetupEtaleNearbyCycle} we define for any  morphism of schemes $f:X \rightarrow S$ a functor
\[
\Upsilon_f: \DA_{\et}(X_\eta, \Lambda) \longrightarrow \DA_{\et}(X_\sigma, \Lambda)
\]
by
\[
\Upsilon_f := (p_\Delta)_\# i^*j_* (\theta_f^{\mathcal{A}})_* (\theta_f^{\mathcal{A}})^* (p_\Delta)^*
\]
and call it the \textit{unipotent nearby cycles functor}.

\item Let $p$ be the characteristic of the residue field of $S$. Let us denote by $\mathbb{N}'^\times$ the poset consisting of positive natural numbers which are not divisble by $p$ with a unique morphism $n \rightarrow m$ whenever $m$ divides $n$. For any $n$ in $\mathbb{N}'^\times$ we denote by  $e_n: \G_{m,S} \rightarrow \G_{m,S} $ the morphism given by elevating to the $n$-th power. Let us define a diagram $\mathcal{R}_S$ in $\Fun(\N'^\times \times \Delta, \Sch^{\qcqs}_{/S}) \simeq \Fun(\N'^\times, \Fun( \Delta, \Sch^{\qcqs}_{/S}))$ by sending a map $n \rightarrow m$ in $\N'^\times$ with $k= n/m$ to the map $e_k: \mathcal{A}_S \rightarrow \mathcal{A}_S$ (\ref{eqn:enAtoA}) in $\Fun( \Delta, \Sch^{\qcqs}_{/ S})$. The map $\theta^{\mathcal{A}}$ induces a canonical map  $(\mathcal{R}_S, \N'^\times \times \Delta) \longrightarrow(\G_{m,S}, \N'^\times \times \Delta)$ which we denote by $\theta^\mathcal{R}$. We define the functor
\[
\Psi_f^{\tame}: \DA_{\et}(X_\eta, \Lambda) \longrightarrow \DA_{\et}(X_\sigma, \Lambda)
\]
by
\[
\Psi_f^{\tame} := (p_{\Delta\times \N'^\times})_\# i^*j_* (\theta_f^{\mathcal{R}})_* (\theta_f^{\mathcal{R}})^* (p_{\Delta\times \N'^\times})^* 
\]
and call it the \textit{tame nearby cycles functor}. 
\end{enumerate}

\end{construction}

\begin{construction} \label{const:TotalPsi}In the situation of $\ref{noname:SetupEtaleNearbyCycle}$ let us denote the function field of $S$ by $K$ and fix a separable closure $\bar{K}$ of $K$. Since $S$ is strictly henselian, it follows from Hensel's Lemma that $K$ contains all $n$-th roots of unity for $n$ in $\N'^\times$. This implies that $\tilde{K}:= K(\pi^{1/n} | n \in \N'^\times) \subset \bar{K}$ is a Galois extension of $K$. Let $\mu_n(K)$ denote the group of $n$-th roots of unity in $K$. The Galois group $\Gal(\tilde{K}/K)$ is isomorphic to $\widehat{\Z}'(1) := \lim_{n \in \N'^\times} \mu_n(K)$, where the transition maps $\mu_n(K) \rightarrow \mu_m(K)$ for $n \rightarrow m$ in $\N'^\times$ are given by elevating a $n$-th root $\xi$ to its $k$-th power $\xi^k$ for $k= n/m$. There is a short exact sequence of groups 
\[
0 \longrightarrow P \longrightarrow \Gal(\bar{K}/K) \overset{\chi}\longrightarrow \widehat{\Z}'(1) \longrightarrow 0,
\]
where $P$ is a maximal pro-$p$-subgroup of $\bar{K}$ and $\chi: \Gal(K^{\sep}/K) \rightarrow \widehat{\Z'}(1)$ is given by mapping a $\lambda \in \Gal(K^{\sep}/K)$ to the system of roots of unity $\left\lbrace \lambda(\pi^{1/k})/ \pi^{1/k} \right\rbrace_{k \in \N'^\times}$. By the Schur-Zassenhaus-Theorem \cite[8.10]{SuzukiGroupTheroy} this sequence splits. Let us fix a splitting $\tau$ and denote by $M_\tau$ the extension of $K$ corresponding to $\widehat{\mathbb{Z}}'(1)$ considered as a closed subgroup of $\Gal(\bar{K}/K)$ via $\tau$. We write $\Xi_{\tau}$ for the poset consisting of finite intermediate extensions of $M_\tau/K$ ordered by inclusion.

Let $f: X \rightarrow S$ be a morphism of schemes. For $L$ in $\Xi_{\tau}$ we denote the normalization of $S$ in $L$ by ${S}_L$.  Let us write $\eta_L := \Spec L$ denote by $t_L: \eta_L \rightarrow \eta$ and $t_L: S_L \rightarrow S$ the induced morphisms of spectra. For a morphism of schemes $f: X \rightarrow S$ consider the square
\[
\begin{tikzcd}
X_L \arrow[d, "f_L"'] \arrow[r, "t_L"] & X \arrow[d, "f"] \\
S_L \arrow[r, "t_L"']                  & S               
\end{tikzcd}
\]
obtained by pullback. Let us consider $X_L$ as a $S$ scheme via $t_L \circ f_L$ and let $\mathcal{R}_{t_L \circ f_L}$ be the object in $\Fun(\N'^\times \times \Delta, \Sch^{\qcqs}_{/S})$ defined as in \ref{noname:ConstrOfPsi}(2) above. Then we can define an object $\mathcal{T}_f$ in $\Fun(\Xi_{\tau}^{\op} \times \N'^\times \times \Delta, \Sch^{\qcqs}_{/S})$ by sending an arrow $L \rightarrow L'$ in $\Xi_{\tau}$ to the canonical morphism $\mathcal{R}_{t_{L'} \circ f_{L'}} \rightarrow \mathcal{R}_{t_L \circ f_L}$ in $\Fun(\N'^\times \times \Delta, \Sch^{\qcqs}_{/S})$. The map $\theta^{\mathcal{R}}_f$ defined above induces a canonical map $\theta_f^{\mathcal{T}}: (\mathcal{T}_{f}, \Xi_\tau^{\op} \times \N'^\times \times\Delta) \rightarrow (X_\eta,\Xi_\tau^{\op} \times \N'^\times \times\Delta)$. We define the functor 
\[
\Psi_f: \DA_{\et}(X_\eta, \Lambda) \longrightarrow \DA_{\et}(X_\sigma, \Lambda)
\]
by
\[
\Psi_f := (p_{\Xi_{\tau}^{\op} \times\Delta\times \N'^\times})_\# i^*j_* (\theta_f^{\mathcal{T}})_*(\theta_f^{\mathcal{T}})^* (p_{\Xi_{\tau}^{\op} \times\Delta\times \N'^\times})^*  
\]
and call it the \textit{total nearby cycles functor.}
 
\end{construction}

\begin{remark} \label{remark:AllNearbyCyclesCoincWAyoubAndAreSpecSystems}
\begin{enumerate}
\item From the construction and Remark \ref{rem:DA(diagram)ConstCoinidesWAyoub} it follows right away that the functors induced by $\Upsilon_f, \Psi^{\tame}_f$ and $\Psi_f$ on the homotopy categories coincide with the functors defined in \cite{AyoubRealizationEtale} whenever $f$ is quasi-projective (for $\Psi_f$ see Proposition \ref{prop:NearbyCyclesViaLogAndColimits} below).
\item  It is shown in \cite[3.2.9]{AyoubThesisII} (robust enough to apply to our setting) that $\Upsilon_f, \Psi^{\tame}_f$ and $\Psi_f$ indeed give rise to specialization systems over $(S,i,j)$. Moreover by \cite[3.2.12]{AyoubThesisII} the canonical maps $(\mathcal{A}, \Delta) \rightarrow (\G_{m,S}, *)$, $(\mathcal{R}, \N'^\times \times \Delta) \rightarrow (\mathcal{A}, \Delta)$ and $(\mathcal{T}, \Xi_\tau^{\op} \times \N'^\times \times \Delta) \rightarrow (\mathcal{R}, \N'^\times \times \Delta)$ in $\DiaSch_{/S}$ give rise to morphisms of specialization systems $\chi \rightarrow \Upsilon$, $\Upsilon \rightarrow \Psi^{\tame}$ and $\Psi^{\tame} \rightarrow \Psi$ respectively.
\item By \cite[3.2.17, 3.2.18]{AyoubThesisII} the specialization systems $\Upsilon, \Psi^{\tame}$ and $\Psi$ are lax-monoidal and the morphisms of specialisation systems above are lax-monoidal
\end{enumerate} 
\end{remark}

\section{An alternative description of the nearby cycles functors}

\noname Let $K$ be a simplicial set and $\C$ a $\infty-$category which admits $K$-shaped colimits. The canonical map of simplicial sets $K \rightarrow *$ induces via precomposition the diagonal functor
\[
\delta: \C \longrightarrow \Fun(K, \C).
\]
Since $\C$ admits $K$-shaped colimits $\delta$ admits a left adjoint
\[
\colim_{K}: \Fun(K, \C) \longrightarrow \C
\]
which sends a diagram $F: K \rightarrow \C$ to its colimit (see \cite[4.2.4.3]{lurie2009higher}). 

\begin{lem} \label{lem:DA(X,I)eqFun(IDA)} 
Let $X$ be a scheme, $I$ a small category and consider the constant diagram $(X,I)$ in $\DiaSch_{/S}$. Then there is an equivalence
\[\DA_{\et}((X,I), \Lambda ) \simeq \Fun\left(I^{\op}, \DA_{\et}(X, \Lambda )\right)\]
such that the composition
\[
\DA_{\et}(X, \Lambda ) \overset{(p_I)^*}\longrightarrow \DA_{\et}((X,I), \Lambda ) \simeq \Fun\left(I^{\op}, \DA_{\et}(X, \Lambda )\right)
\]
is equivalent to the diagonal functor $\delta$. In particular the composition
\[
\Fun\left(I^{\op}, \DA_{\et}(X, \Lambda )\right) \simeq \DA_{\et}((X,I), \Lambda ) \overset{(p_I)_\#}\longrightarrow \DA_{\et}(X, \Lambda ) 
\]
is equivalent to $\colim_{I^{\op}}$.
\end{lem}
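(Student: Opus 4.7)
The strategy is to unwind all the definitions, reduce the statement to the case of effective motives, and then pass to the colimit defining $\DA_{\et}$ along the tower of Tate twists.

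First I would identify the underlying site of the constant diagram. Unwinding Construction \ref{noname:ConstOfDAForiagrams}, the pullback defining $\Sm_{/(X,I)}$ collapses to the product $\Sm_{/X} \times I$, since the fiber products $V \times_{\mathcal{F}(j)} \mathcal{F}(i)$ simplify to $V$ when $\mathcal{F}$ is the constant diagram at $X$. The \'etale topology and the class of $\A^1$-projections on $\Sm_{/(X,I)}$ are, by construction, generated level-wise in $I$. Using the universal property of the presheaf construction $\PSh(\C) \otimes \mathcal{D} \simeq \Fun(\C^{\op}, \mathcal{D})$ for any small $\C$ and presentable $\mathcal{D}$, I obtain
\[
\PSh(\Sm_{/(X,I)}, \T(\Lambda)) \simeq \Fun\!\left(I^{\op}, \PSh(\Sm_{/X}, \T(\Lambda))\right).
\]
Since both hyper-descent with respect to the $\et$-covers $\{(U_\alpha,i) \to (U,i)\}$ and $\A^1$-invariance with respect to projections $(\A^1_U,i) \to (U,i)$ can be checked level-wise at each $i \in I$, the full subcategory of $\A^1$-local \'etale hypersheaves is exactly the full subcategory of diagrams $I^{\op} \to \Sh^{\hyp}_{\et}(\Sm_{/X}, \mathcal{D}(\Lambda))$ landing in $\A^1$-local objects. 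This gives an equivalence
\[
\DA_{\et}^{\eff}((X,I), \Lambda) \simeq \Fun\!\left(I^{\op}, \DA_{\et}^{\eff}(X, \Lambda)\right)
\]
in $\Pr^{L,\st}$.

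Next I would stabilize. By \cite[4.8.1.17]{lurie2016higher} the Lurie tensor product distributes over colimits, and for any small category $I$ the functor $\Fun(I^{\op}, -)$ on $\Pr^L$ is given by $\_ \otimes \PSh(I)$. Applying this to the $\N$-indexed colimit of $\_ \otimes \Lambda(1)$, one gets
\begin{align*}
\Fun\!\left(I^{\op}, \DA_{\et}(X, \Lambda)\right)
&\simeq \PSh(I) \otimes \colim_{\N}\DA_{\et}^{\eff}(X, \Lambda) \\
&\simeq \colim_{\N}\Fun\!\left(I^{\op}, \DA_{\et}^{\eff}(X, \Lambda)\right) \\
&\simeq \colim_{\N} \DA_{\et}^{\eff}((X,I),\Lambda)\\
&\simeq \DA_{\et}((X,I), \Lambda),
\end{align*}
where the last equivalence uses that tensoring with the pullback $\Lambda(1)$ on $(X,I)$ corresponds level-wise to tensoring with $\Lambda(1)$ on $X$.

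Finally I would identify the functors. The structure map $p_I: (X,I) \to (X,*)$ is of the form $(\id,\alpha)$ for $\alpha: I \to *$, so by Construction \ref{constr:MapsOfDiagrams} the functor $(p_I)^*$ at the presheaf level is precomposition with $\bar{\alpha}: \Sm_{/X} \times I \to \Sm_{/X}$, which is the projection. Precomposition with a projection is precisely the diagonal functor $\delta: \PSh(\Sm_{/X}, \T(\Lambda)) \to \Fun(I^{\op}, \PSh(\Sm_{/X}, \T(\Lambda)))$, and this compatibility passes through \'etale sheafification, $\A^1$-localization, and the stable Tate-twist colimit since all of them are levelwise. Hence under the equivalence above, $(p_I)^*$ corresponds to $\delta$, and by uniqueness of adjoints its left adjoint $(p_I)_\#$ corresponds to $\colim_{I^{\op}}$.

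The only subtle point is the commutation in step two: one must be careful that $\PSh(I) \otimes (-)$ actually commutes with the $\N$-indexed colimit in $\Pr^L$ and that this identification is compatible with the Tate twist, but this follows from the bilinearity of the Lurie tensor product in $\Pr^L$ and the fact that $\Lambda(1)$ on $(X,I)$ is by definition the pullback of $\Lambda(1)$ on $X$ along $p_I$.
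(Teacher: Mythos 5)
Your plan is correct and follows essentially the same route as the paper: identify $\Sm_{/(X,I)}\simeq I\times\Sm_{/X}$, observe that \'etale hyperdescent and $\A^1$-locality are checked levelwise to get $\DA^{\eff}_{\et}((X,I),\Lambda)\simeq\Fun(I^{\op},\DA^{\eff}_{\et}(X,\Lambda))$, pass this through the Tate stabilization, and identify $(p_I)^*$ with $\delta$. The only differences are cosmetic: you compute the $\N$-indexed stabilization via $\PSh(I)\otimes(-)$ and colimit-preservation of the Lurie tensor product in $\Pr^L$, whereas the paper dually takes the limit of the right adjoints $\Omega$ in $\Pr^R$ and commutes $\Fun(I^{\op},-)$ with limits, and where you assert that the identification of $(p_I)^*$ with $\delta$ "passes through" the stabilization levelwise, the paper pins this down by lifting the relevant squares to symmetric monoidal functors and invoking the universal property of inverting $\Lambda(1)$ (Robalo), which is the cleanest way to make your last step rigorous.
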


\begin{proof}
Let us note that $\Sm_{/(X,I)} \simeq I\times \Sm_{/X}$ and therefore
\[
\PSh(\Sm_{/(X,I)}, \T(\Lambda)) \simeq \Fun(I^{\op} \times \Sm_{/X}^{\op}, \T(\Lambda)) \simeq \Fun(I^{\op}, \PSh(\Sm_{/X}, \T(\Lambda))).
\]
From the definitions it follows right away that there is a cartesian square
\[
\begin{tikzcd}
{\PSh(\Sm_{/(X,I)}, \T(\Lambda))} \arrow[d, "\sim"] & {\DA_{\et}^{\eff}((X,I), \Lambda)} \arrow[d, "\sim"] \arrow[l, hook'] \\
{\Fun(I^{\op}, \PSh(\Sm_{/X}, \T(\Lambda)))}                 & {\Fun(I^{\op},\DA_{\et}^{\eff}(X, \Lambda))} \arrow[l, hook']                
\end{tikzcd}
\]
of $\infty$-categories. Moreover it is straightforward to check that the composition
\[
\DA_{\et}^{\eff}(X, \Lambda) \overset{p_{I}^*}\longrightarrow \DA_{\et}^{\eff}((X,I), \Lambda) \simeq \Fun(I^{\op},\DA_{\et}^{\eff}(X, \Lambda))
\]
is simply the diagonal functor $\delta$. Let us denote the right adjoints of $\_ \otimes \Lambda(1) : \DA^{\eff}_{\et}(X, \Lambda ) \rightarrow \DA^{\eff}_{\et}(X, \Lambda )$ and $\_ \otimes \Lambda(1) : \DA^{\eff}_{\et}((X,I), \Lambda ) \rightarrow \DA^{\eff}_{\et}((X,I), \Lambda )$ by $\Omega$. Then there are equivalences
\[
 \DA_{\et}(X, \Lambda ) \simeq \lim_{\N^{\op}}\left( \dots \overset{\Omega}\rightarrow \DA^{\eff}_{\et}(X, \Lambda ) \overset{\Omega}\rightarrow \DA^{\eff}_{\et}(X, \Lambda )\right)
\]
and
\begin{align*}
 \DA_{\et}((X,I), \Lambda ) &\simeq \lim_{\N^{\op}}\left( \dots \overset{\Omega}\rightarrow \DA^{\eff}_{\et}((X,I), \Lambda ) \overset{\Omega}\rightarrow \DA^{\eff}_{\et}((X,I), \Lambda ) \right) \\
 &\simeq \lim_{\N^{\op}}\left( \dots \overset{\Omega_*}\rightarrow \Fun \left(I^{\op},\DA^{\eff}_{\et}(X, \Lambda )\right)  \overset{\Omega_*}\rightarrow \Fun \left(I^{\op},\DA^{\eff}_{\et}(X, \Lambda )\right) \right) \\
&\simeq \Fun\left(I^{\op},\lim_{\N^{\op}}\left( \dots \overset{\Omega}\rightarrow \DA^{\eff}_{\et}(X, \Lambda ) \overset{\Omega}\rightarrow \DA^{\eff}_{\et}(X, \Lambda )\right) \right) \\
&\simeq \Fun\left(I^{\op},\DA_{\et}(X, \Lambda )\right),
\end{align*}
where all limits are taken in $\Pr^R$ (or equivalently in $\widehat{\Cat}_\infty$ by \cite[5.5.3.13]{lurie2009higher}) and $\Omega_*$ denotes the functor given by post-composition with $\Omega$. 

The diagrams of $\infty-$categories
\[
\begin{tikzcd}
{\DA_{\et}^{\eff}(X, \Lambda)} \arrow[r, "\delta"] \arrow[d, "\Sigma^\infty"'] & {\Fun(I^{\op},\DA_{\et}^{\eff}(X, \Lambda))} \arrow[d, "(\Sigma^{\infty})_*"] \\
{\DA_{\et}(X, \Lambda)} \arrow[r, "\delta"]                                    & {\Fun(I^{\op},\DA_{\et}(X, \Lambda))}                                    
\end{tikzcd}
\]
and
\[
\begin{tikzcd}
{\DA_{\et}^{\eff}(X, \Lambda)} \arrow[d, "\Sigma^\infty"'] \arrow[r, "(p_I)^*"] & {\DA_{\et}^{\eff}((X,I), \Lambda)} \arrow[d, "\Sigma^\infty"] \arrow[r, "\sim"] & {\Fun(I^{\op},\DA_{\et}^{\eff}(X, \Lambda))} \arrow[d, "(\Sigma^{\infty})_*"] \\
{\DA_{\et}(X, \Lambda)} \arrow[r, "(p_I)^*"]                                    & {\DA_{\et}((X,I), \Lambda)} \arrow[r, "\sim"]                                   & {\Fun(I^{\op},\DA_{\et}(X, \Lambda))}                                        
\end{tikzcd}
\]
commute and can be in fact be canonically lifted to diagrams of symmetric monoidal $\infty$-categories. Hence the universal property of $\DA_{\et}(X, \Lambda) \simeq \DA_{\et}^{\eff}(X, \Lambda)[\Lambda(1)^{-1}]$ (see \cite[2.9]{Robalo15}) implies that $\delta$ is equivalent to the composition
\[
\DA_{\et}(X, \Lambda ) \overset{(p_I)^*}\longrightarrow \DA_{\et}((X,I), \Lambda ) \overset{\sim}\longrightarrow\Fun\left(I^{\op}, \DA_{\et}(X, \Lambda )\right)
\]
as desired.

\end{proof}

\begin{construction} \label{noname:Abullet} Let $S$ be a scheme and write
\[
\Lambda(\_): \Sm_{/\G_{m,S}} \overset{\y_\Lambda}\longrightarrow \PSh(\Sm_{/\G_{m,S}}, \T(\Lambda)) \overset{L_{\A^1} \circ L_{\et}}\longrightarrow \DA^{\eff}_{\et}(\G_{m,S},\Lambda) \overset{\Sigma^\infty}\longrightarrow \DA_{\et}(\G_{m,S},\Lambda)
\]
for the composition .

Let us define the cosimplicial object
\[
\mathscr{A}_S := \Lambda(\mathcal{A}_S) : \Delta \longrightarrow \DA_{\et}(\G_{m,S},\Lambda)
\]
where $\mathcal{A}_S: \Delta \rightarrow \Sm_{/\G_{m,S}}$ is defined in Construction \ref{noname:ConstructionOfMAthcalA}. By construction we have 
\[
\mathscr{A}_S([i]) \simeq  \Lambda(\G_{m,S} \times_S (\G_{m,S})^{\times i}),
\]
where $\G_{m,S} \times_S (\G_{m,S})^{\times n}$ is considered as a $\G_{m,S}$-scheme via the first projection. Let us write $q: \G_{m,S} \rightarrow S$ for the structure morphism. Note that 
\[\Lambda({\G_{m,S}}) \simeq q_{\#}\1_{\G_{m,S}} \simeq \1_S \oplus \1_S (1)[1] \]
 in $\DA_{\et}(S,\Lambda)$ and hence $\Lambda(\G_{m,S})$ is dualizable in $\DA_{\et}(S, \Lambda)$ in the sense of \cite[4.6.1.7]{lurie2016higher}. Therefore 
 \[\mathscr{A}_S([i]) \simeq\Lambda(\G_{m,S} \times_S (\G_{m,S})^{\times n}) \simeq q^* \Lambda((\G_{m,S})^{\times n})\]
   is  dualizable in $\DA_{\et}(\G_{m,S}, \Lambda)$ for all $[i]$ in $\Delta$. 
   
The structure maps $p_1: \G_{m,S} \times_S (\G_{m,S})^{\times n} \rightarrow \G_{m,S}$ equip $\mathscr{A}_S$ with a canonical map
\begin{equation} \label{eqn:Ato1}
\mathscr{A}_S \longrightarrow \underline\1_{\G_{m,S}}
\end{equation} 
in $\Fun(\Delta, \DA_{\et}(\G_{m,S}, \Lambda))$, where $\underline\1_{\G_{m,S}}$ denotes the constant functor with value $\1_{\G_{m,S}}$.  

In Construction \ref{noname:ConstructionOfMAthcalA} we defined a further cosimplicial object $e_k^* \mathcal{A}_S: \Delta \rightarrow \Sm_{/\G_{m,S}}$. It is straightforward to check that $\Lambda(e_k^* \mathcal{A}_S) \simeq e_k^* \mathscr{A}_S$ and hence the map (\ref{eqn:varphin:Atoen*A}) in $\Fun(\Delta, \Sm_{/ \G_{m,S}})$ induces a map 
\begin{equation} \label{eqn:Atoen*A}
\varphi_k: \mathscr{A}_S \longrightarrow e_k^* \mathscr{A}_S
\end{equation}
in $\Fun(\Delta, \DA_{\et}(\G_{m,S}, \Lambda))$.
\end{construction}

\noname \label{noname:ModelOfDADiag} Let us recall some notation of \cite{CisinskiDegliseBook}. Fix a base scheme $S$. Then there is a $\Sm$ fibred model category $C(\Sp(\PSh(\Sm, \Lambda)))$ over $\Sch^{\qcqs}_{/S}$ such that for $X$ in $\Sch^{\qcqs}_{/S}$ we have
\[
C(\Sp(\PSh(\Sm_{/X}, \Lambda))) [W^{-1}] \simeq h \DA_{\et}(X, \Lambda)
\]
(see \cite[5.3.31]{CisinskiDegliseBook}).
Here we wrote
\[
C(\Sp(\PSh(\Sm_{/X}, \Lambda))) :=C(\Sp(\PSh(\Sm, \Lambda)))(X)
\]
and $W$ denotes the weak equivalences of its model structure. As in \cite[\S 3.1]{CisinskiDegliseBook} we can define for a $\mathcal{X}: I \rightarrow \Sch^{\qcqs}_{/S}$ in $\DiaSch_{/S}$ a category $D(\Sp(\PSh(\Sm_{/(\mathcal{X},I)}, \Lambda)))$ and endow it with two model structures: The projective model structure (see \cite[\S 3.1.6]{CisinskiDegliseBook}) and the injective model structure (see \cite[\S 3.1.7]{CisinskiDegliseBook}). The two model structures share the same weak equivalences $W$ and it is not hard to show that 
\[
C(\Sp(\PSh(\Sm_{/(\mathcal{X},I)}, \Lambda))) [W^{-1}] \simeq h \DA_{\et}((\mathcal{X},I), \Lambda).
\]
Let us denote by
\[
\gamma: C(\Sp(\PSh(\Sm_{/(\mathcal{X},I)}, \Lambda))) \rightarrow h \DA_{\et}((\mathcal{X},I), \Lambda)
\]
the localization functor.

A map $(\theta,\alpha): (\mathcal{X}, I) \rightarrow (\mathcal{Y}, J)$ in $\DiaSch_{/S}$ induces a Quillen adjunction
\[
(\theta, \alpha)^*: C(\Sp(\PSh(\Sm_{/(\mathcal{X},I)}, \Lambda))) \longleftrightarrows C(\Sp(\PSh(\Sm_{/(\mathcal{X},I)}, \Lambda))) : (\theta, \alpha)_*
\]
for the injective model structures. Taking derived functors gives rise to an adjunction 
\[
L (\theta, \alpha)^*: h\DA_{\et}(( \mathcal{Y},J), \Lambda) \longleftrightarrows h\DA_{\et}(( \mathcal{X},I), \Lambda) : R(\theta, \alpha)_*.
\]
If $(\theta, \alpha)$ is moreover level-wise smooth then $(\theta, \alpha)^*$ admits a Quillen left adjoint $(\theta, \alpha)_\#$ for the projective model structure and $(\theta, \alpha)^*$ preserves weak equivalences. In particular we have $L(\theta, \alpha)^* = (\theta, \alpha)^* = R (\theta, \alpha)^*$ and taking derived functors yields an adjunction
\[
L (\theta, \alpha)_\#: h\DA_{\et}(( \mathcal{X},I), \Lambda) \longleftrightarrows h\DA_{\et}(( \mathcal{Y},J), \Lambda) : R(\theta, \alpha)^* = (\theta, \alpha)^* =R (\theta, \alpha)^*.
\]
This is shown in \cite[3.1.11]{CisinskiDegliseBook}.  

It is not hard to check that these functors agree with the ones defined in Construction \ref{remark:ConstOfDAForiagramsMaps} on homotopy categories.

\begin{lem} \label{lem:CoherenceLemma}
Consider the situation of \ref{noname:ConstrOfPsi} (1). For a  morphism of schemes $f: X \rightarrow S$ and $M$ in $\DA_{\et}(X_\eta, \Lambda)$ let us write
\[
\Hom(f_\eta^* \pi^* \mathscr{A}_S, M): \Delta^{\op} \overset{(  f_\eta^* \pi^* \mathscr{A}_S)^{\op}}\longrightarrow \DA_{\et}(X_\eta, \Lambda)^{\op} \overset{\Hom(\_, M)}\longrightarrow \DA_{\et}(X_\eta, \Lambda).
\]
Then there is an equivalence
\[
\Hom(f_\eta^* \pi^* \mathscr{A}_S, M) \simeq (\theta_f^\mathcal{A})_*(\theta_f^\mathcal{A})^* (p_\Delta)^* M
\]
in $\Fun(\Delta^{\op}, \DA_{\et}(X_\eta, \Lambda)) \simeq \DA_{\et}((X_\eta, \Delta), \Lambda)$. 
\end{lem}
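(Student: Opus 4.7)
The plan is to check the equivalence level-wise in $\Delta$ and then verify naturality. Under the equivalence $\DA_{\et}((X_\eta, \Delta), \Lambda) \simeq \Fun(\Delta^{\op}, \DA_{\et}(X_\eta, \Lambda))$ of Lemma \ref{lem:DA(X,I)eqFun(IDA)}, $(p_\Delta)^* M$ corresponds to the constant diagram $\underline{M}$, and we must compute the image of $\underline M$ under $(\theta_f^{\mathcal A})_*(\theta_f^{\mathcal A})^*$ as a functor $\Delta^{\op} \to \DA_{\et}(X_\eta, \Lambda)$.

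First, I would argue that the composition $(\theta_f^{\mathcal A})_*(\theta_f^{\mathcal A})^*$ is computed pointwise. Since $\theta_f^{\mathcal A}$ is a morphism of the form $(\theta, \id_\Delta)$, its pullback and pushforward do not interact with the indexing category. The cleanest way to see this is via the injective model-categorical description in \ref{noname:ModelOfDADiag}: weak equivalences and fibrations are level-wise, so $R(\theta_f^{\mathcal A})_*$ and $(\theta_f^{\mathcal A})^*$ are computed one simplicial degree at a time. Writing $g_i := \theta_f^{\mathcal A}([i]): \mathcal A_f([i]) = X_\eta \times_S (\G_{m,S})^{\times i} \to X_\eta$ (the first projection, obtained by pulling back $\mathcal A_S([i]) \to \G_{m,S}$ along $\pi \circ f_\eta$), we get the level-$[i]$ value $(g_i)_* g_i^* M$.

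Next, I would compute $(g_i)_* g_i^* M$. Each $g_i$ is smooth, so the projection formula for $\_ \otimes (g_i)_\# \mathbbm{1}$ (\ref{noname:MotivicInftyCat}(6) together with $g_\#(g^* A \otimes B) \simeq A \otimes g_\# B$) yields
\[
(g_i)_\# g_i^* M \simeq (g_i)_\# \mathbbm{1} \otimes M.
\]
Taking right adjoints as functors of $M$ and using that $(\_\otimes (g_i)_\# \mathbbm{1})^R \simeq \Hom((g_i)_\# \mathbbm{1}, \_)$ while the right adjoint of $(g_i)_\# g_i^* = (g_i)_\# \circ g_i^*$ is $(g_i)_* \circ g_i^*$, we obtain
\[
(g_i)_* g_i^* M \simeq \Hom((g_i)_\# \mathbbm{1}, M).
\]
Finally, smooth base change along
\[
\begin{tikzcd}
\mathcal A_f([i]) \arrow[r] \arrow[d, "g_i"'] & \mathcal A_S([i]) \arrow[d, "\theta^{\mathcal A}([i])"] \\
X_\eta \arrow[r, "\pi \circ f_\eta"'] & \G_{m,S}
\end{tikzcd}
\]
identifies $(g_i)_\# \mathbbm{1}$ with $f_\eta^* \pi^* (\theta^{\mathcal A}([i]))_\# \mathbbm{1} = f_\eta^* \pi^* \mathscr A_S([i])$, yielding the level-wise equivalence $(g_i)_* g_i^* M \simeq \Hom(f_\eta^* \pi^* \mathscr A_S([i]), M)$.

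The remaining task is naturality in $[i] \in \Delta^{\op}$. The coface and codegeneracy maps of $\mathcal A_S$ are $\G_{m,S}$-morphisms between the $\mathcal A_S([i])$; pulled back to $X_\eta$, they induce maps between the smooth $X_\eta$-schemes $\mathcal A_f([i])$, which simultaneously determine the transition maps of $F := (\theta_f^{\mathcal A})_*(\theta_f^{\mathcal A})^* \underline M$ (by the level-wise description of pushforward along maps of diagrams of the form $(\theta, \id)$) and the transition maps of $\Hom(f_\eta^* \pi^* \mathscr A_S, M)$ (through functoriality of $(g_i)_\# \mathbbm{1}$ and $\Hom(-, M)$). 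The main obstacle is ensuring that the level-wise equivalences arising from the projection formula assemble into a genuine equivalence of $\infty$-functors $\Delta^{\op} \to \DA_{\et}(X_\eta, \Lambda)$ rather than merely an objectwise equivalence; this is where the strictness afforded by the model-categorical presentation of \ref{noname:ModelOfDADiag} does the heavy lifting, since there the Quillen adjunction $(\theta_f^{\mathcal A})^* \dashv (\theta_f^{\mathcal A})_*$ and the projection formula are strict, and passing to $\gamma$ gives the required natural equivalence.
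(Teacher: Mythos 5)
Your route is essentially the paper's: strictify via the model categories of \ref{noname:ModelOfDADiag}, identify the value at $[i]$ with $\Hom(f_\eta^*\pi^*\mathscr{A}_S([i]),M)$ using the projection formula for the smooth map $\theta_i\colon \mathcal{A}_f([i])\to X_\eta$ together with smooth base change, and let the strict model-level construction supply the coherence over $\Delta^{\op}$. The level-wise identification $(g_i)_*g_i^*M\simeq \Hom((g_i)_\#\1,M)\simeq \Hom(f_\eta^*\pi^*\mathscr{A}_S([i]),M)$ is correct and matches the paper. However, the step you dispose of with ``passing to $\gamma$ gives the required natural equivalence'' is exactly where the paper's proof spends most of its effort, and as written it is a genuine gap. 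The strict isomorphism produced at the model level identifies the \emph{underived} composite $\theta_*\theta^*p_\Delta^*$ with the underived functor $N\mapsto \bigl(Hom(\tilde{\mathscr{A}}([i]),N)\bigr)_{[i]}$; localizing yields an equivalence $RHom(\mathscr{A},\_)\simeq R(\theta_*\theta^*p_\Delta^*)$. But the lemma concerns the composite of the derived ($\infty$-categorical) functors $(\theta_f^{\mathcal{A}})_*(\theta_f^{\mathcal{A}})^*(p_\Delta)^*$, i.e.\ $(R\theta_*)\,\theta^*\,p_\Delta^*$. Since $\theta^*$ is a \emph{left} Quillen functor for the injective structure (it preserves weak equivalences but not fibrant objects), $\theta^*p_\Delta^*$ applied to a fibrant object need not be fibrant, so the canonical comparison $R(\theta_*\theta^*p_\Delta^*)\to (R\theta_*)\theta^*p_\Delta^*$ is not formally invertible; strictness of the Quillen adjunction and of the projection formula does not settle this. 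The paper proves precisely this invertibility: it evaluates level-wise using that the evaluations $i^*$ are jointly conservative, right Quillen for the injective structure and equal to their derived functors (so $i^*R\theta_*\simeq (R\theta_{i*})i^*$ by \cite[3.1.13, 3.1.15]{CisinskiDegliseBook}), uses cofibrancy of $\theta_i^*\1$ to get $(R\theta_{i*})\theta_i^*\simeq RHom(\mathscr{A}([i]),\_)$, and then verifies a nontrivial compatibility diagram showing that the induced endomorphism of $RHom(\mathscr{A}([i]),\_)$ is the identity. Your proposal needs this (or an equivalent) argument; without it the level-wise equivalences are not known to be the components of the comparison map you have to invert.

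A smaller point: your justification for computing $(\theta_f^{\mathcal{A}})_*$ level-wise --- that in the injective model structure ``weak equivalences and fibrations are level-wise'' --- is false: in the injective structure the cofibrations and weak equivalences are level-wise, while fibrations are not (that description fits the projective structure, for which $\theta_*$ is not right Quillen). The correct statement, and the one the paper uses, is that each evaluation $i^*$ is right Quillen for the injective structure and preserves all weak equivalences, whence the exchange map $i^*R\theta_*\to (R\theta_{i*})i^*$ is an equivalence; this is what legitimizes the pointwise computation of the derived pushforward along a morphism of the form $(\theta,\id_\Delta)$.
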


\begin{proof}The idea is to construct this equivalence of functors on the level of model categories. For this we will use some notations of \cite{CisinskiDegliseBook} and consider the model categories of \ref{noname:ModelOfDADiag}.

For simplicity let us write $\theta := \theta_f^\mathcal{A}: (\mathcal{A}_f, \Delta) \rightarrow (X_\eta, \Delta)$ and ${\mathscr{A}} := f_\eta^* \pi^* \mathscr{A}_S$. We define $\tilde{\mathscr{A}}$ as the diagram
\[
\Delta \overset{\mathcal{A}_f}\longrightarrow \Sm_{/X_\eta} \overset{\text{Yoneda}}\longrightarrow C(\PSh(\Sm_{/X_\eta}, \Lambda)) \overset{\Sigma^\infty}\longrightarrow C(\Sp(\PSh(\Sm_{/X_\eta}, \Lambda))).
\]
Then by construction there is an equivalence $\gamma (\tilde{\mathscr{A}}) \simeq \mathscr{A}$ in $h \DA_{\et}((X_\eta, \Delta), \Lambda)$. The tensor product $\_ \otimes N$ for $N$ in $C(\Sp(\PSh(\Sm_{/X_\eta}, \Lambda)))$ admits a Quillen right adjoint which we denote by $Hom(N, \_)$.

Denote by $\theta_i: \mathcal{A}_f([i]) \rightarrow X_\eta$ the canonical  map for all $[i]$ in $\Delta$. Then there is an equivalence $\tilde{\mathscr{A}}([i]) \simeq (\theta_{i}) _\# (\theta_i)^* \1$ and thus we have
\begin{equation} \label{eqn:UnderivedThetaStarThetaStarIsoHom}
(\theta_{i})_* (\theta_i)^* \_ \simeq Hom((\theta_{i})_\# (\theta_i)^* \1 , \_ ) \simeq Hom(\tilde{\mathscr{A}}([i]) ,\_ ).
\end{equation} 
For a map $\varphi: [j] \rightarrow [i]$ in $\Delta$ consider the induced map $\theta_\varphi: \mathcal{A}([j]) \rightarrow \mathcal{A}([i])$. Then the diagram
\[
\begin{tikzcd}
(\theta_i)_*(\theta_i)^* \arrow[d, "\unit"] \arrow[r, "\sim", no head]                            & {Hom(\theta_{i \#} \theta_i^* \1 , \_ )} \arrow[r, "\sim", no head] & {Hom(\tilde{\mathscr{A}}([i]) ,\_ )} \arrow[dd, "\theta_\varphi^*"] \\
(\theta_i)_*(\theta_\varphi)_*(\theta_\varphi)^*(\theta_i)^* \arrow[d, "\sim"', no head] &                                                                      &                                                  \\
(\theta_j)_*(\theta_j)^* \arrow[r, "\sim", no head]                                      & {Hom(\theta_{j \#} \theta_j^* \1 , \_ )} \arrow[r, "\sim", no head] & {Hom(\tilde{\mathscr{A}}([j]) ,\_ )}           
\end{tikzcd}
\]
commutes. This shows that there is a natural equivalence
 \[
 \theta_* \theta^* p_{\Delta}^* \simeq Hom(\tilde{\mathscr{A}}^{}, \_): C(\Sp(\PSh(\Sm_{/X_\eta}, \Lambda))) \rightarrow C(\Sp(\PSh(\Sm_{/(X_\eta, \Delta)}, \Lambda))).
 \]
and therefore there is an equivalence of right derived functors
 \[
 R( \theta_* \theta^* p_{\Delta}^*) \simeq RHom(\mathscr{A}^{}, \_): h\DA_{\et}(X_\eta , \Lambda) \longrightarrow h\DA_{\et}((X_\eta, \Delta), \Lambda). 
 \]
Recall that since $\theta$ and $p_\Delta$ are level-wise smooth we have $L (\theta^*) = \theta = R (\theta^*)$ and $L (p_\Delta^*) =p_\Delta^* = R (p_\Delta^*) $. Hence the universal property of derived functors induces a natural transformation
\begin{equation} \label{eqn:CoherenceLemmaRelevantMap}
RHom(\mathscr{A}^{}, \_) \simeq R( \theta_* \theta^* p_{\Delta}^*)  \longrightarrow (R \theta_*) \theta^* p_{\Delta}^*.
\end{equation}
The Lemma follows if we can show that this is an equivalence. 

For all $[i]$ in $\Delta$ let us denote by $i: (X_\eta, *) \rightarrow (X_\eta, \Delta)$ the canonical map in $\DiaSch_{/S}$ induced by the functor $i: * \rightarrow \Delta$ which maps to $[i]$. Then the family of functors 
\[
Li^* = i^* = Ri^*: h\DA_{\et}((X_\eta, \Delta), \Lambda) \longrightarrow h\DA_{\et}((X_\eta), \Lambda)
\]
running through all $[i]$ in $\Delta$ is conservative by definition of the weak equivalences in the injective  and projective model structure. Since $i^*$ is a Quillen right adjoint with respect to the injective model structure by \cite[3.1.13]{CisinskiDegliseBook} and $i^* = R i^*$ the canonical natural transformation
\[i^* R \theta_* \rightarrow (R\theta_{i*}) i^*\]
is an equivalence (see \cite[3.1.15]{CisinskiDegliseBook}). Moreover the canonical natural transformation
\[
RHom(\mathscr{A}([i]) ,\_ ) \rightarrow i^* RHom(\mathscr{A} , \_ )
\] 
is an equivalence. 

Since $\theta_i^* \1$ is cofibrant in $C(\Sp(\PSh(\Sm_{/\mathcal{A}([i])}, \Lambda)))$ we have that 
\[ \mathscr{A}([i]) \simeq \gamma (\tilde{\mathscr{A}}([i])) \simeq L(\theta_{i \#}) \theta_i^* \1.\] 
In particular there are equivalences
\[
(R\theta_{i*})\theta_i^*  \simeq RHom((L \theta_{i\#}) \theta_i^* \1 , \_ ) \simeq RHom(\mathscr{A}([i]) ,\_ ).
\]
Hence in order to show that (\ref{eqn:CoherenceLemmaRelevantMap}) is an equivalence we may show that the composition
\begin{align*}
RHom(\mathscr{A}([i]) ,\_ ) &\simeq i^* RHom(\mathscr{A} , \_ ) \\
&\simeq i^* R( \theta_* \theta^* p_{\Delta}^*)  \\
&\rightarrow i^* (R \theta_*) \theta^* p_{\Delta}^*  \\
&\simeq  (R\theta_{i*}) \theta_i^*  \\
&\simeq RHom(\mathscr{A}([i]) ,\_ )
\end{align*}
is the identity, where the arrow in the third row is obtained by applying $i^*$ to (\ref{eqn:CoherenceLemmaRelevantMap}). 

By the universal property of right derived functors this amounts to proving that
\[
\begin{tikzcd}
{ \gamma \circ Hom(\tilde{\mathscr{A}}([i]), \_)} \arrow[r] \arrow[ddddd, "\id"', bend right=60] \arrow[d, "\sim"] & {RHom({\mathscr{A}}([i]), \gamma(\_))} \arrow[d, "\sim"]           \\
{ \gamma \circ i^* Hom(\tilde{\mathscr{A}}^{{}}, \_)} \arrow[d, "\sim"]                                      & {i^*RHom({\mathscr{A}}^{{}}, \gamma (\_))} \arrow[d, "\sim"] \\
\gamma \circ i^* \theta_* \theta^* p_{\Delta}^* \arrow[r] \arrow[rd] \arrow[dd, "\sim"]                             & i^* (R\theta_* \theta^* p_{\Delta}^*) \circ \gamma \arrow[d]         \\
                                                                                                                    & i^* (R\theta_*) \theta^* p_{\Delta}^* \circ \gamma \arrow[d, "\sim"] \\
\gamma \circ \theta_{i *}\theta_i^*  \arrow[r] \arrow[d, "\sim"]                                                  & (R\theta_{i*})\theta_i^* \circ \gamma \arrow[d, "\sim"]             \\
{ \gamma \circ Hom(\tilde{\mathscr{A}}([i]), \_)} \arrow[r]                                                                  & { RHom({\mathscr{A}}([i]), \gamma (\_) )}                            
\end{tikzcd}
\]
commutes. Here the maps from left to right are the canonical maps induced by the universal property of the respective right derived functors. This is straightforward to check. 
\end{proof}

\noname \label{noname:SetupPropNearbyCycleViaColimits} Let us fix some notations: $S$ is the spectrum of a strictly henselian discrete valuation ring with uniformizer $\pi.$ For $n$ in $\N'^\times$ let $S_n$ denote the normalisation of $S$ in $K(\pi^{1/n})$. Since $S$ is strictly henselian, $S_n$ is again a strictly henselian discrete valuation ring with uniformizer $\pi_n := \pi^{1/n}$ for some $n$-th root of $\pi$. Let us write $\eta_n := \Spec K(\pi^{1/n})$ denote by $t_n: \eta_n \rightarrow \eta$ and $t_n: S_n \rightarrow S$ the induced morphisms of spectra. For a morphism of schemes $f: X \rightarrow S$ consider the square
\[
\begin{tikzcd}
X_n \arrow[d, "f_n"'] \arrow[r, "t_n"] & X \arrow[d, "f"] \\
S_n \arrow[r, "t_n"']                  & S               
\end{tikzcd}
\]
obtained by pullback. Its generic fiber is the top square of the commutative diagram
\begin{equation} \label{eqn:DiagramSnfn}
\begin{tikzcd}
                                & (X_n)_\eta \arrow[r, "t_n"] \arrow[d, "(f_n)_\eta"']          & X_\eta \arrow[d, "f_\eta"] \\
                                & \eta_n \arrow[ld, "\pi_n"'] \arrow[d, "\pi"] \arrow[r, "t_n"] & \eta \arrow[d, "\pi"]      \\
{\G_{m,{S_n}}} \arrow[r, "e_n"] & {\G_{m,{S_n}}} \arrow[r, "t_n^{\G_m}"]                                      & {\G_{m,S}},                
\end{tikzcd}
\end{equation}
where $t_n^{\G_m}$ denotes the base change of $t_n: S_n \rightarrow S$ along the projection $q: \G_{m,S} \rightarrow S$.

As in \ref{noname:ConstrOfPsi}(1) we obtain a morphism of diagrams  $\theta_{f_n}^\mathcal{A}: (\mathcal{A}_{f_n}, \Delta) \rightarrow (X_n, \Delta)$ from $f_n: X_n \rightarrow S_n$ and $\theta^\mathcal{A}: (\mathcal{A}_{S_n}, \Delta) \rightarrow (\G_{m, S_n}, \Delta)$ via the uniformizer $\pi_n: S_n \rightarrow \A^1_{S_n}$. 

\begin{prop} \label{prop:NearbyCyclesViaLogAndColimits}
Consider the situation of $\ref{noname:SetupEtaleNearbyCycle}$. For any morphism of schemes $f:X \rightarrow S$ and any $M$ in $\DA_{\et}(X_\eta, \Lambda)$ there are equivalences
\[
\Upsilon_f(M) \simeq \colim_{\Delta^{op}} i^*j_* \Hom(f_\eta^* \pi^*\mathscr{A}_S, M),
\]
\[
\Psi_f^{\tame}(M) \simeq \colim_{n \in (\N'^\times)^{\op}} \Upsilon_{{f}_n}(t_n^* M)
\]
and
\[
\Psi_f(M) \simeq \colim_{L \in \Xi_\tau} \Psi_{t_L \circ f_L}^{\tame}(t_L^*M).
\]
\end{prop}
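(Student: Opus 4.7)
The plan is to derive all three equivalences from two main ingredients. The first is Lemma \ref{lem:DA(X,I)eqFun(IDA)}, which identifies $(p_I)_\#$ with $\colim_{I^{\op}}$ under the equivalence $\DA_{\et}((Y,I), \Lambda) \simeq \Fun(I^{\op}, \DA_{\et}(Y, \Lambda))$ for any constant diagram $(Y,I)$. The second is Lemma \ref{lem:CoherenceLemma}, which identifies $(\theta_f^{\mathcal{A}})_*(\theta_f^{\mathcal{A}})^*(p_\Delta)^* M$ with $\Hom(f_\eta^* \pi^* \mathscr{A}_S, M)$.

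For the first equivalence, I would substitute Lemma \ref{lem:CoherenceLemma} into the definition of $\Upsilon_f$, then use that $i$ and $j$ are constant in the $\Delta$-direction so that $i^*$ and $j_*$ on diagram categories act via post-composition under the equivalence of Lemma \ref{lem:DA(X,I)eqFun(IDA)} (this is immediate from the construction of these functors, since they are induced from functors acting fiberwise on the indexing simplicial set). Applying $(p_\Delta)_\# \simeq \colim_{\Delta^{\op}}$ then produces the desired formula.

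For the second equivalence, I would factor $p_{\N'^\times \times \Delta}$ as $p_{\N'^\times} \circ p_\Delta$, yielding $(p_{\N'^\times \times \Delta})_\# \simeq (p_{\N'^\times})_\# \circ (p_\Delta)_\#$, and apply Lemma \ref{lem:DA(X,I)eqFun(IDA)} to identify $(p_{\N'^\times})_\#$ with $\colim_{(\N'^\times)^{\op}}$. It then remains to show that at each $n \in \N'^\times$ the inner functor produces $\Upsilon_{f_n}(t_n^* M)$. The geometric input is the diagram (\ref{eqn:DiagramSnfn}): the restriction of $(\mathcal{R}_f, \theta^{\mathcal{R}}_f)$ at level $n$ can be identified with the image of $(\mathcal{A}_{f_n}, \theta^{\mathcal{A}}_{f_n})$ under the base change along $t_n: S_n \to S$, and standard base change isomorphisms for $i^*$, $j_*$ and the $\theta$-pushforwards with respect to $t_n$ allow one to commute $t_n^*$ through. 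Applying the first equivalence (now over $S_n$) produces $\Upsilon_{f_n}(t_n^* M)$ at each level. The third equivalence follows by the same pattern: factor $p_{\Xi_\tau^{\op} \times \N'^\times \times \Delta}$ through $p_{\Xi_\tau^{\op}}$, use Lemma \ref{lem:DA(X,I)eqFun(IDA)} to identify $(p_{\Xi_\tau^{\op}})_\#$ with $\colim_{\Xi_\tau}$, and note that at each $L \in \Xi_\tau$ the restricted $\mathcal{T}_f$-diagram is precisely the $\mathcal{R}$-diagram for $t_L \circ f_L: X_L \to S$, whence the second equivalence supplies $\Psi^{\tame}_{t_L \circ f_L}(t_L^* M)$.

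The main obstacle I anticipate is the geometric identification in the second step: one has to carefully match the $\N'^\times$-transition maps on $\mathcal{R}_f$, induced by the $e_k: \mathcal{A}_S \to \mathcal{A}_S$, with the natural transition maps between the $\Upsilon_{f_n}(t_n^* M)$ coming from the morphisms $S_n \to S_m$, while simultaneously commuting $t_n^*$ past all the six-functor operations present in the definition of $\Upsilon$. This is essentially careful bookkeeping, but it involves several layers of base change that must all be tracked coherently at the level of diagrams of schemes.
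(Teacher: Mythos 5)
Your proposal matches the paper's proof: the first equivalence is obtained exactly as you describe from Lemmas \ref{lem:DA(X,I)eqFun(IDA)} and \ref{lem:CoherenceLemma}, and the second by factoring the $\#$-pushforward through the projection to $\N'^\times$, identifying the level-$n$ term via diagram (\ref{eqn:DiagramSnfn}) as $i^*j_*(t_n)_*(\theta^{\mathcal{A}}_{f_n})_*(\theta^{\mathcal{A}}_{f_n})^*t_n^*M$, and then cancelling $(t_n)_*$ by proper base change for the finite map $t_n$ --- which is precisely your ``commuting $t_n^*$ through'' step. The third equivalence is treated analogously, as you propose.
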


\begin{proof}
Using the identification $\DA_{\et}((X,I), \Lambda ) \simeq \Fun\left(I^{op}, \DA_{\et}(X, \Lambda )\right)$ of Lemma \ref{lem:DA(X,I)eqFun(IDA)} and Lemma \ref{lem:CoherenceLemma} we have
\begin{align*}
\Upsilon_f(M) &= (p_\Delta)_\# i^*j_* (\theta_f^\mathcal{A})_* (\theta_f^\mathcal{A})^*  (p_\Delta)^* M \\
& \simeq \colim_{i \in \Delta^{\op}} i^*j_* (\theta_f^\mathcal{A})_* (\theta_f^\mathcal{A})^*  (p_\Delta)^* M \\
& \simeq \colim_{i \in \Delta^{\op}} i^*j_* \Hom(f_\eta^* \pi^*\mathscr{A}_S, M). 
\end{align*}

Let us write $q: (\G_{m,S}, \N'^\times \times \Delta) \rightarrow (\G_{m,S}, \N'^\times)$ for the map of constant diagrams in $\DiaSch_{/S}$ induced by the first projection $\N'^\times \times \Delta \rightarrow  \N'^\times$. Then we have
\begin{align*}
\Psi_f^{\tame} M &= (p_{\Delta\times \N^\times})_\# i^*j_* (\theta_f^{\mathcal{R}})_* (\theta_f^{\mathcal{R}})^* (p_{\Delta\times \N^\times})^*M \\
& \simeq (p_{\N^\times})_\# q_\# i^*j_* (\theta_f^{\mathcal{R}})_* (\theta_f^{\mathcal{R}})^* (p_{\Delta\times \N^\times})^*M \\
& \overset{(1)}\simeq \colim_{n \in (\N'^\times)^{\op}} q_{\#} i^* j_* (t_n)_* (\theta^{\mathcal{A}}_{f_n})_* (\theta^{\mathcal{A}}_{f_n})^*  t_n^* M \\
& \overset{(2)}\simeq \colim_{n \in (\N'^\times)^{\op}} q_{\#} i^* j_* (\theta^{\mathcal{A}}_{f_n})_* (\theta^{\mathcal{A}}_{f_n})^*  t_n^* M \\
& \simeq \colim_{n \in (\N'^\times)^{\op}} \Upsilon_{{f}_n}(t_n^* M).
\end{align*}
Here (1) follows from Lemma \ref{lem:DA(X,I)eqFun(IDA)} and (2) follows from the fact that the squares in
\[
\begin{tikzcd}
\eta_n \arrow[d, "t_n"'] \arrow[r, "j"] & S_n \arrow[d, "t_n"'] & \sigma \arrow[l, "i"'] \arrow[d, "\id"] \\
\eta \arrow[r, "j"]                     & S                     & \sigma \arrow[l, "i"']                 
\end{tikzcd}
\]
are cartesian and since $t_n$ is finite proper base change implies that $i^* j_* (t_n)_* \simeq i^* (t_n)_*j_* \simeq i^*j_* $.

The last equivalence follows analogously. 
\end{proof}

\begin{remark} \label{remark:MapsOfSpezSystemsForDA} 
From the descriptions of the nearby cycles functors in Proposition \ref{prop:NearbyCyclesViaLogAndColimits} above we get for all $f: X \rightarrow S$ a canonical natural transformation
\[
i^*j_* \longrightarrow \Upsilon_f\]
induced by the canonical map $\mathscr{A}_S \longrightarrow \underline{\1}$ (\ref{eqn:Ato1}) as well as natural transformations 
\[ \Upsilon_f \longrightarrow \Psi^{\tame}_f \text{ and } \Psi^{\tame}_f \longrightarrow \Psi_f  
\] 
given by the canonical maps into the respective colimits. It is not hard to verify that these natural transformations agree with to the ones considered in Remark \ref{remark:AllNearbyCyclesCoincWAyoubAndAreSpecSystems} (2). 
\end{remark} 

\begin{remark}
\label{remark:ExplicitTransitionMapsNearby} It is straightforward to check that the transition maps of the colimits in Proposition \ref{prop:NearbyCyclesViaLogAndColimits} above are given as follows: 
\begin{enumerate}
\item For a map $ \varphi: n \rightarrow m$ in $\mathbb{N}^\times$ write $k=\frac{n}{m}$. Moreover write $t_k: \eta_n \rightarrow \eta_m$ and $t_k: (X_n)_\eta \rightarrow (X_m)_\eta$ for its basechange along $(f_m)_\eta$. Then the transition map $\tau_\varphi$ is given by
\begin{align*}
\Upsilon_{f_m}(t_m^* M) & \overset{\sim}\longrightarrow \colim_{\Delta^{op}} i^*j_* \Hom((f_m)_\eta^* \pi_m^*\mathscr{A}_{S_m}, t_m^*M) \\
&\overset{\text{unit}}  \longrightarrow \colim_{\Delta^{op}} i^*j_* t_{k *} t_k^* \Hom( (f_m)_\eta^* \pi_m^*\mathscr{A}_{S_m}, t_m^* M)\\
&\overset{\sim}\longrightarrow \colim_{\Delta^{op}} i^*j_* t_{k*}  \Hom(t_k^* (f_m)_\eta^* \pi_m^*\mathscr{A}_{S_m}, t_k^* t_m^* M)\\
&\overset{\sim}\longrightarrow \colim_{\Delta^{op}} i^*j_*  \Hom(t_k^* (f_m)_\eta^* \pi_m^*\mathscr{A}_{S_m}, t_n^* M)\\
&\overset{\sim}\longrightarrow \colim_{\Delta^{op}} i^*j_*  \Hom((f_n)_\eta^* \pi_n^* e_k^*\mathscr{A}_{S_n}, t_n^*M)\\
&\longrightarrow   \colim_{\Delta^{op}} i^*j_* \Hom((f_n)_\eta^* \pi_n^* \mathscr{A}_{S_n}, t_n^* M)  \\
& \overset{\sim}\longrightarrow \Upsilon_{f_n}(t_n^* M).
\end{align*}
As in the proof above the equivalence in the fourth row follows from the fact that 
\[
\begin{tikzcd}
S_n \arrow[d, "t_k"'] & \sigma \arrow[d, "\id"] \arrow[l, "i"'] \\
S_m                     & \sigma \arrow[l, "i"']                 
\end{tikzcd}
\]
is a pullback square and proper basechange. The equivalence in the fifth row is induced by the equivalence 
\[
t_k^* \pi_m^* \simeq  \pi_n^* e_n^*(t_k^{\G_m})^*,
\]
which is exhibited by the diagram (\ref{eqn:DiagramSnfn}), together with the observation that
\[
(t_k^{\G_m})^* \mathscr{A}_{S_m} \simeq \mathscr{A}_{S_n}.
\]
Finally the second to last map is induced by the canonical map $\varphi_k: \mathscr{A}_{S_n} \rightarrow e_k^* \mathscr{A}_{S_n}$ (see (\ref{eqn:Atoen*A})).
 
\item  For a morphism $u: L \rightarrow L'$ in $\Xi_\tau$ we denote the induced morphisms of schemes $S_{L'} \rightarrow S_L$ and $X_{L'} \rightarrow X_L$ also by $u.$ Consider the commutative diagram
\[
\begin{tikzcd}
X_{L'} \arrow[d, "f_{L'}"'] \arrow[r, "u"]  & X_L \arrow[d, "f_L"] \\
S_{L'} \arrow[rd, "t_{L'}"'] \arrow[r, "u"] & S_L \arrow[d, "t_L"] \\
                                            & S.                   
\end{tikzcd}
\]
Then the transition map $\tau_u$ is simply given by the composition
\[
\Psi^{\tame}_{t_L \circ f_L}(t_L^* M) \simeq u_\sigma ^* \Psi^{\tame}_{t_L \circ f_L}(t_L^* M) \overset{\Ex^*}\longrightarrow \Psi^{\tame}_{t_L \circ f_L \circ u}(u_\eta^* t_L^* M) \simeq \Psi^{\tame}_{t_{L'} \circ f_{L'}}(t_{L'}^* M).
\]
Where $\Ex^*$ is the exchange map describing the functoriality of the specialization system $\Psi^{\tame}$ (see Definition \ref{def:SpezSys}(1)).
\end{enumerate}

\end{remark} 

\noname \label{noname:CompPsi} Let $f:X \rightarrow S$ be a morphism of finite type and $M$ in $\DA_{\et}(X_\eta, \Lambda)$. Then we obtain a comparison map
\[
\comp_\Psi: \Psi_f(\D_\eta(M)) \longrightarrow \D_\sigma (\Psi_f (M))
\]
as the transpose of the composition
\begin{align*}
\Psi_f(\D_\eta(M)) \otimes \Psi_f(M) &\longrightarrow \Psi_f(\D_\eta(M) \otimes M) \\
&\overset{\Psi_f \id^t}\longrightarrow \Psi_f (f_\eta^!\1) \\
&\overset{\Ex^!}\longrightarrow f_\sigma^! \Psi_{\id} (\1) \\
&\overset{\sim}\longrightarrow f_\sigma^! \1.
\end{align*}
Here
\[
\id^t: M \otimes \D_\eta(M) \longrightarrow f_\eta^!\1
\]
is the transpose of $\id: \D_\eta(M) \rightarrow \D_\eta(M) $ with respect to the $\otimes \dashv \Hom$ adjunction.

\noname Let us collect the main properties of the nearby cycles functor proven in \cite{AyoubRealizationEtale}:

\begin{thm} \label{thm:PropertiesOfEtaleMotivicNearby}
Consider the situation of \ref{noname:SetupEtaleNearbyCycle}. Assume that $S$ is excellent and the residue characteristic of $S$ is invertible in $\Lambda$. Then the following hold:
\begin{enumerate}
\item The composition
\[
\1 \overset{\unit}\longrightarrow \chi_{\id}(\1) \longrightarrow \Psi_{\id}(\1)
\]
is an equivalence. 
\item Let $f: X \rightarrow S$ be of finite type and $M$ in $\DA_{\et}^{\cons}(X_\eta, \Lambda)$. Then there exists a $L$ in $\Xi_\tau$ such that for all $u: L \rightarrow L'$ in $\Xi_\tau$ the transition map
\[
\tau_u: \Psi_{t_L \circ f_L}^{\tame}(t_L^* M) \longrightarrow \Psi_{t_{L'} \circ f_{L'}}^{\tame}(t_{L'}^* M)
\]
is an equivalence. In particular the canonical map
\[
\Psi_{t_L \circ f_L}^{\tame}(t_L^* M) \longrightarrow \Psi_f(M)
\]
is an equivalence. 
\item Let $f: X \rightarrow S$ be of finite type. Then $\Psi^{\tame}_f(\_)$ and $\Psi_f(\_)$ preserve constructibiltiy. 
\item Let $f: X \rightarrow S$ and $g: Y \rightarrow S$ be of finite type. Then for any $M$ in $\DA(X_\eta, \Lambda)$ and $N$ in $\DA_{\et}(Y_\eta, \Lambda)$ the canonical comparison map
\[
\Psi_f(M) \boxtimes \Psi_g(N) \longrightarrow \Psi_{f \times g}(M \boxtimes N)
\]
induced by the lax-monoidal structure of $\Psi$ is an equivalence.
\item Let $f:X \rightarrow S$ be of finite type and $M$ in $\DA^{\cons}_{\et}(X_\eta, \Lambda)$. Then the comparison map
\[
\comp_\Psi: \Psi_f(\D_\eta(M)) \longrightarrow \D_\sigma (\Psi_f (M))
\]
is an equivalence. 
\end{enumerate}
\end{thm}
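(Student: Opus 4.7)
The strategy is to reduce each item to the corresponding statement at the level of the triangulated homotopy categories, where everything reduces to results of Ayoub in \cite{AyoubRealizationEtale}. By Remark \ref{remark:AllNearbyCyclesCoincWAyoubAndAreSpecSystems}(1), the functors $\Upsilon_f$, $\Psi^{\tame}_f$ and $\Psi_f$ induce Ayoub's functors on homotopy categories whenever $f$ is quasi-projective, and for arbitrary finite type $f$ one extends by Zariski descent using the functoriality (\ref{eqn:D!SchRingsCat}) of $f_!$. The natural transformations featured in items (1), (4), (5) are compatible with these identifications by Remark \ref{remark:MapsOfSpezSystemsForDA} and the lax-monoidal structure of Remark \ref{remark:AllNearbyCyclesCoincWAyoubAndAreSpecSystems}(3).

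For (1), I would write the composition $\1 \to \chi_{\id}(\1) \to \Psi_{\id}(\1)$ as a map in $\DA_{\et}(\sigma, \Lambda)$ and apply Proposition \ref{prop:QandZ/plinearizationIsConsFam} to reduce to the cases $\Lambda \otimes \Q$ and $\Lambda \otimes \Z/\ell\Z$. For the torsion case, Theorem \ref{thm:Rigidity} identifies both sides with their classical \'etale counterparts, where the computation $i^{*} j_{*} \Lambda_\eta \simeq \Lambda_\sigma$ is standard for a strictly henselian trait and the tame/total nearby cycles give the same answer by the vanishing of higher nearby cycles of the constant sheaf. For the rational case, Theorem \ref{thm:CompDAvsDM} identifies $\DA_{\et}(\_, \Q)$ with $\DM_h(\_, \Q)$ (Beilinson motives), and one invokes the rational analogue in Ayoub's paper.

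For (2), the key input is that by excellency one may apply de Jong's alterations to reduce to the case where $X_\eta$ has semi-stable reduction after a finite extension $L/K$, and then the tame nearby cycles functor stabilizes on $t_L^{*} M$; this is \cite[Th. 10.5]{AyoubRealizationEtale} in spirit, and the colimit description in Proposition \ref{prop:NearbyCyclesViaLogAndColimits} together with Proposition \ref{prop:SstrLocImpliesCompactGen} ensures that constructible (equivalently compact) objects commute with this colimit. Item (3) then follows from (2) by noting that $\Upsilon_f$ preserves constructibility because the objects $\mathscr{A}_S([i])$ are dualizable, whence $\Hom(f_\eta^{*} \pi^{*} \mathscr{A}_S, M) \simeq f_\eta^{*} \pi^{*} \mathscr{A}_S^{\vee} \otimes M$, and the colimit over $\Delta^{\op}$ stabilizes up to constructible truncation after applying the six-functor preservation results in Theorem \ref{thm:SixFunctorsPresConstr}.

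For (4), the Künneth map is an equivalence first for $\Upsilon$ by the explicit Hom-formula in Lemma \ref{lem:CoherenceLemma} (using that external products of cosimplicial diagrams factor through products of the $\mathscr{A}_S$), then for $\Psi^{\tame}$ by taking filtered colimits over $\N'^{\times}$, and finally for $\Psi$ after passing to the cofinal colimit over $\Xi_\tau$ using (2) to identify $\Psi_f$ with $\Psi^{\tame}_{t_L \circ f_L}$ for sufficiently large $L$. For (5), the comparison map $\comp_{\Psi}$ is constructed from the Künneth formula, excision, and the natural equivalence $\Psi_{\id}(\1) \simeq \1$ of (1); one verifies it is an equivalence by checking after pairing with an arbitrary $N$ in $\DA^{\cons}_{\et}(X_\sigma, \Lambda)$, where (4) and the constructibility statements of (3) reduce everything to the absolute case $\Psi_{\id}(\1) \simeq \1$. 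The main technical obstacle is item (2): the stabilization of the $\Xi_\tau$-indexed colimit is the motivic incarnation of Grothendieck's quasi-unipotence theorem, and it is precisely the ingredient required for Theorem~A; once (2) is established, items (3)--(5) follow by fairly formal manipulations of the six-functor formalism already set up in the previous sections.
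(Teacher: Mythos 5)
Your central observation — that for quasi-projective $f$ the $\infty$-categorical nearby cycles functors agree with Ayoub's on homotopy categories (Remark \ref{remark:AllNearbyCyclesCoincWAyoubAndAreSpecSystems}(1)), and that items (2)–(5) reduce to this case by Zariski-locality of the statements together with the glueing afforded by (\ref{eqn:D!SchRingsCat}) — is exactly the paper's proof: after that reduction the paper simply cites Ayoub's Theorems 10.18, 10.13, 10.9, 10.19 and 10.20 for items (1) through (5) respectively, and stops. Everything beyond that in your proposal is an attempt to re-prove Ayoub's results from scratch, which is unnecessary, and some of those sketches have genuine gaps.

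Most concretely: for item (3) you argue that $\Upsilon_f$ preserves constructibility because $\Hom(f_\eta^*\pi^*\mathscr{A}_S,M)\simeq f_\eta^*\pi^*\mathscr{A}_S^\vee\otimes M$ is levelwise constructible and ``the colimit over $\Delta^{\op}$ stabilizes up to constructible truncation.'' This step does not work as stated: $\Delta^{\op}$-colimits (geometric realizations) of constructible objects are not constructible in general, and there is no automatic truncation that makes the diagram eventually constant. The constructibility of $\Upsilon_f(M)$ is precisely the non-trivial content of Ayoub's result cited in the paper, and it rests on alterations and a careful analysis of the logarithm motive, not on levelwise dualizability plus Theorem \ref{thm:SixFunctorsPresConstr}. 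For item (1) your proposed reduction to $\Lambda\otimes\Q$ and $\Lambda\otimes\Z/\ell\Z$ coefficients is a detour that the paper avoids entirely: $\id_S$ is quasi-projective, so one may quote Ayoub's unitality statement directly once the identification of Remark \ref{remark:AllNearbyCyclesCoincWAyoubAndAreSpecSystems}(1) is in place (and note the coefficient-change reduction would in any case require checking compatibility of the whole composite with $\rho^*$, something you assert but do not verify).
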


\begin{proof}
Statement (1) is proven in \cite[10.18]{AyoubRealizationEtale}. Let us note that (2)-(5) may be checked Zariski-locally on $X$. Hence we may assume that $f:X \rightarrow S$ is quasi-projective. Then (2) is \cite[10.13]{AyoubRealizationEtale}, (3) follows from (2) and \cite[10.9]{AyoubRealizationEtale}, (4) is  \cite[10.19]{AyoubRealizationEtale} and (5) is \cite[10.20]{AyoubRealizationEtale}.
\end{proof}

\section{Digression: On the logarithm motive}
\label{section:OnTheLog}

\noname \label{noname:ConstrCosimplLog}Let $S$ be a qcqs scheme and $(X,s)$ a smooth pointed $S$-scheme. By this we mean a smooth scheme $\pi: X \rightarrow S$ over $S$ together with a section $s: S \rightarrow X$ of $\pi$. Consider the cospan
\[
[X \overset{\id}\rightarrow X] \overset{\Delta}\longrightarrow [X \times_S X \overset{p_1}\rightarrow X] \overset{(\id, s)}\longleftarrow [X \overset{\id}\rightarrow X]
\] 
in $\Sm_{/X}$. By \ref{noname:SimplObjAssToSpan} we can associate to such a cospan a cosimplicial object
\[
\mathcal{A}_{(X,s)}:= \BarConstr(\Delta, (\id,s)): \Delta \longrightarrow \Sm_{/X}
\]
with
\[
\mathcal{A}_{(X,s)}([i]) = [X \times_S X^{\times i} \overset{p_1}\rightarrow X]
\]
for $[i]$ in $\Delta$. 

Alternatively we can associate to the cospan
\[
[X \overset{\pi}\rightarrow S] \overset{\id}\longrightarrow [X \overset{\pi}\rightarrow S] \overset{s}\longleftarrow [S \overset{\id}\rightarrow S].
\]
in $\Sch^{\qcqs}_{/S}$ the cosimplicial object 
\[
\mathcal{A}'_{(X,s)}:= \BarConstr(\id, s): \Delta \longrightarrow \Sch^{\qcqs}_{/S}
\]
with 
\[
\mathcal{A}'_{(X,s)}([i]) = X \times_S X^{\times i}. 
\]
The first projection $p_1: X \times_S X^{\times i} \rightarrow X$ equips
$\mathcal{A}'_{(X,s)}([i])$ with the structure of a smooth $X$-scheme. This is compatible for all $[i]$ in $\Delta$ in the sense that $\mathcal{A}'_{(X,s)}$ factors as
\[
\mathcal{A}'_{(X,s)}: \Delta \longrightarrow \Sm_{/X} \longrightarrow\Sch^{\qcqs}_{/S}
\] 
Note that this is what we do in \ref{noname:ConstructionOfMAthcalA} for $(X,s) = (\G_{m,S},1)$. It is straightforward to check that the two cosimplicial objects $\mathcal{A}_{(X,s)}$ and $\mathcal{A}'_{(X,s)}$  agree in $\Sm_{/X}$.

\noname Consider the stable motivic homotopy category $\SH(X)$ as for example considered in \cite[\S 2.4.3]{Robalo15}. Via the functor
\[
\Sigma^\infty: \Sm_X \longrightarrow \SH(X)
\]
(see \cite[2.39]{Robalo15}) the cosimplicial scheme $\mathcal{A}_{(X,s)}$ gives rise to a cosimplicial object
\[
\mathscr{A}_{(X,s)}: \Delta \overset{\mathcal{A}_{(X,S)}}\longrightarrow \Sm_{/X} \overset{\Sigma^\infty}\longrightarrow \SH(X).
\]

Let $\T(\_)$ be a motivic $\infty$-category over $S$. By this we mean a $(*,\#, \otimes)$-formalism on $(\Sch^{\qcqs}_{/S}, \Sm_{/S})$ satisfying the Voevodsky-conditions in the sense of \cite{Adeel6Functor}. Then there exists a unique system of colimit preserving functors
\[
R_Y: \SH(Y) \longrightarrow \T(Y)
\]
for all $Y$ in $\Sch^{\qcqs}_{/S}$ which commute with $f^*$ , tensor-product and $f_\#$ for smooth $f$ (see \cite[2.14]{Adeel6Functor}). We denote the cosimplicial object
\[
\Delta \overset{\mathscr{A}_{(X,s)}}\longrightarrow \SH(X) \overset{R_X}\longrightarrow \T(X)
\]
again by $\mathscr{A}_{(X,s)}$.

The structure of $\mathcal{A}_{(X,s)}$ as a cosimplicial $X$-scheme induces a canonical map
\begin{equation} \label{eqn:EpsilonLog}
\varepsilon: \mathscr{A}_{(X,s)} \rightarrow \underline\1_X
\end{equation}
in $\Fun(\Delta,\T(X))$, where $\underline\1_X$ denotes the constant functor with value in the tensor unit $\1_X$. It is easy to check that the cosimplicial object 
\[s^* \mathscr{A}_{(X,s)}: \Delta \overset{\mathscr{A}_{(X,s)}}\longrightarrow \T(X) \overset{s^*}\longrightarrow \T(S)
\]
is equivalent to the cosimplicial object in $\T(S)$ obtained from the cospan
\[
\begin{tikzcd}
{[S \overset{\id}\rightarrow S]} \arrow[r, "s"] & {[ X \overset{\pi}\rightarrow S]} & {[S \overset{\id}\rightarrow S]} \arrow[l, "s"']
\end{tikzcd}
\]
in $\Sm_{/S}$. In particular there is a canonical map
\begin{equation} \label{eqn:EtaLog}
\eta:\underline\1_S  \rightarrow s^*\mathscr{A}_{(X,s)}
\end{equation}
in $\Fun(\Delta,\T(S))$ induced by the map of cospans
\[
\begin{tikzcd}
{[S \overset{\id}\rightarrow S]} \arrow[d, "\id"] \arrow[r, "\id"] & {[S \overset{\id}\rightarrow S]} \arrow[d, "s"] & {[S \overset{\id}\rightarrow S]} \arrow[d, "\id"'] \arrow[l, "\id"'] \\
{[S \overset{\id}\rightarrow S]} \arrow[r, "s"]                    & {[ X \overset{\pi}\rightarrow S]}               & {[S \overset{\id}\rightarrow S]} \arrow[l, "s"']                    
\end{tikzcd}
\]
in $\Sm_{/S}$.

\begin{lem} \label{lem:CompEtas^*EpsilonIsID}
Let $(\pi:X \rightarrow S,s)$ be a smooth pointed $S$-scheme. Then the composition
\[
\underline\1_S \overset{\eta}\longrightarrow s^*\mathscr{A}_{(X,s)} \overset{s^* \varepsilon}\longrightarrow \underline\1_S
\]
is equivalent to the identity. 
\end{lem}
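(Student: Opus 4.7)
The plan is to exhibit both $\eta$ and $s^*\varepsilon$ as images under $\BarConstr(-)$ followed by the canonical motivic functor $\Sm_{/S} \to \T(S)$ (namely $R_S \circ \Sigma^\infty$) of explicit morphisms of cospans in $\Sm_{/S}$, and then to observe that the underlying composition of these morphisms of cospans is the identity. Specifically, consider in $\Sm_{/S}$ the cospans
\[\mathfrak{C}_0 := \bigl([S] \overset{\id}\to [S] \overset{\id}\leftarrow [S]\bigr) \quad \text{and} \quad \mathfrak{C}_1 := \bigl([S] \overset{s}\to [X] \overset{s}\leftarrow [S]\bigr),\]
together with morphisms $(\id, s, \id) \colon \mathfrak{C}_0 \to \mathfrak{C}_1$ and $(\id, \pi, \id) \colon \mathfrak{C}_1 \to \mathfrak{C}_0$ in the 1-category of cospans. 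Since $s$ is a section of $\pi$, their composite is $(\id, \pi s, \id) = (\id, \id, \id)$, the identity on $\mathfrak{C}_0$.

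The first step is to identify $s^* \mathscr{A}_{(X,s)}$ with $\BarConstr(\mathfrak{C}_1)$ in $\Fun(\Delta, \T(S))$. The pullback functor $s^* \colon \Sm_{/X} \to \Sm_{/S}$ preserves finite products (since fiber products are stable under base change), so it intertwines with $\BarConstr$. A direct computation of the pullback of the cospan $[X \overset{\id}\to X] \overset{\Delta}\to [X \times_S X \overset{p_1}\to X] \overset{(\id, s)}\leftarrow [X \overset{\id}\to X]$ in $\Sm_{/X}$ along $s$ yields exactly $\mathfrak{C}_1$. At the motivic level, smooth base change ensures that this scheme-level identification is carried faithfully through $\Sigma^\infty$ and $R_{(-)}$.

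Under this identification, $s^* \varepsilon$ corresponds to the map induced by $(\id, \pi, \id) \colon \mathfrak{C}_1 \to \mathfrak{C}_0$: the map $\varepsilon$ comes from the morphism of cospans $(\id_X, p_1, \id_X)$ in $\Sm_{/X}$ from the $\mathcal{A}_{(X,s)}$-cospan to the constant cospan at $[X \overset{\id}\to X]$, and pulling back $p_1$ along $s$ yields $\pi$. By definition, $\eta$ is the map induced by $(\id, s, \id) \colon \mathfrak{C}_0 \to \mathfrak{C}_1$. Their composite is therefore induced by the identity morphism on $\mathfrak{C}_0$, and applying $\BarConstr$ followed by the passage to $\T(S)$ gives $\id_{\underline{\1}_S}$. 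The main obstacle is purely bookkeeping: carefully unwinding the definitions of $\varepsilon$ and $\eta$ and tracking the smooth-base-change identification of $s^* \mathscr{A}_{(X,s)}$ throughout.
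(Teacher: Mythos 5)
Your proposal is correct and is essentially the paper's own argument: the paper likewise identifies $s^*\mathscr{A}_{(X,s)}$ with the bar construction of the cospan $[S]\overset{s}\to[X\overset{\pi}\to S]\overset{s}\leftarrow[S]$ and observes that $\eta$ and $s^*\varepsilon$ are induced by the cospan morphisms $(\id,s,\id)$ and $(\id,\pi,\id)$, whose composite is the identity since $\pi s=\id_S$. Your extra detail on the base-change bookkeeping just fills in what the paper dismisses as "easy to check."
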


\begin{proof}
This follows from the fact that this composition is induced by the top to bottom composition of cospans
\[
\begin{tikzcd}
{[S \overset{\id}\rightarrow S]} \arrow[d, "\id"] \arrow[r, "\id"] & {[S \overset{\id}\rightarrow S]} \arrow[d, "s"]    & {[S \overset{\id}\rightarrow S]} \arrow[d, "\id"] \arrow[l, "\id"'] \\
{[S \overset{\id}\rightarrow S]} \arrow[r, "s"] \arrow[d, "\id"]   & {[ X \overset{\pi}\rightarrow S]} \arrow[d, "\pi"] & {[S \overset{\id}\rightarrow S]} \arrow[l, "s"'] \arrow[d, "\id"]   \\
{[S \overset{\id}\rightarrow S]} \arrow[r, "\id"]                  & {[S \overset{\id}\rightarrow S]}                   & {[S \overset{\id}\rightarrow S]} \arrow[l, "\id"']                 
\end{tikzcd}
\]
which is the identity.

\end{proof}

\begin{definition} Let $\pi: X \rightarrow S$ a smooth map and $\T(\_)$ a six functor formalism. We denote the smallest stable full subcategory of $\T(X)$ containing the objects of the form $\pi^*N$ for all $N$ in $\T(S)$ by $\Uni_\pi(X)$. An object $M$ in $\Uni_\pi(X)$ is called \textit{unipotent}.
\end{definition}

\begin{remark} For a smooth pointed $S-$scheme $(X,s)$ we have 
\[ \mathscr{A}_{(X,s)}([i]) \simeq \Sigma^\infty [X \times_S X^{\times i} \overset{p_1}\rightarrow X] \simeq \pi^* \Sigma^\infty [X^{\times i} \rightarrow S]. 
\]
Hence $\mathscr{A}_{(X,s)}$ factors as
\[
\mathscr{A}_{(X,s)}: \Delta \longrightarrow \Uni_\pi(X) \subset \T(X).
\]
\end{remark}

\begin{lem} \label{lem:GenOfAyoubsThm}
Let $(\pi:X \rightarrow S,s)$ be a smooth pointed $S$-scheme. Then for any $N$ in $\T(S)$ the composition
\[
N \overset{\unit}\longrightarrow \pi_* \pi^* N \overset{\varepsilon^\#}\longrightarrow \colim_{\Delta^{op}}\pi_*\Hom(\mathscr{A}_{(X,s)},\pi^* N ),
\]
is an equivalence. Here the last arrow is induced by $\varepsilon: \mathscr{A}_{(X,s)} \rightarrow \underline\1_X$ (\ref{eqn:EpsilonLog}).
\end{lem}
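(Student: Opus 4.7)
My plan is to produce an inverse $\beta_\Phi\colon \Phi(N)\to N$ to the composition $\alpha\colon N\to \Phi(N)$ of the statement, where $\Phi(N):=\colim_{\Delta^{\op}}\pi_*\Hom(\mathscr{A}_{(X,s)},\pi^*N)$. The section $s$ provides the crucial structure: since $\pi\circ s=\id_S$, we have $s^*\pi^*=\id_{\T(S)}$ and $\pi_*s_*=\id_{\T(S)}$, so the composite adjunction $s^*\pi^*\dashv\pi_*s_*$ is just $\id\dashv\id$ with trivial unit. Unpacking this, the exchange natural transformation $\vartheta\colon \pi_*\Rightarrow s^*$ (obtained by applying $\pi_*$ to the unit of $s^*\dashv s_*$ and identifying $\pi_*s_*s^*=s^*$) satisfies $\vartheta_{\pi^*N}\circ\unit_N=\id_N$, providing a natural retraction $\beta\colon\pi_*\pi^*N\to N$ of the unit.

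To promote $\beta$ to $\beta_\Phi$, I assemble a cocone from $\pi_*\Hom(\mathscr{A}_{(X,s)},\pi^*N)$ to the constant functor $\underline N$ in $\Fun(\Delta^{\op},\T(S))$. At cosimplicial level $[n]$ the cocone leg is the composition
\[
\pi_*\Hom(\mathscr{A}([n]),\pi^*N)\xrightarrow{\vartheta} s^*\Hom(\mathscr{A}([n]),\pi^*N)\to \Hom(s^*\mathscr{A}([n]),N)\xrightarrow{\eta_n^*} N,
\]
where the middle map is the canonical comparison $s^*\Hom(-,-)\to\Hom(s^*-,s^*-)$, $\eta_n$ is the level-$n$ component of $\eta\colon \underline\1_S\to s^*\mathscr{A}_{(X,s)}$ from \eqref{eqn:EtaLog}, and I use $s^*\pi^*N=N$. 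Naturality of $\vartheta$, of the comparison map, and of $\eta$ makes these legs compatible with the simplicial face and degeneracy maps, yielding $\beta_\Phi$ after applying $\colim_{\Delta^{\op}}$.

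For $\beta_\Phi\circ\alpha$, everything reduces to the $[0]$-th cosimplicial level, at which $\mathscr{A}([0])=\1_X$, $s^*\mathscr{A}([0])=\1_S$, and $\eta_0=\id_{\1_S}$; the cocone leg at $[0]$ collapses to $\beta$, so the entire composition equals $\beta\circ\unit_N=\id_N$.

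The main obstacle is the reverse composition $\alpha\circ\beta_\Phi=\id_{\Phi(N)}$. Since $\pi^*$ is conservative (any morphism $f$ in $\T(S)$ is recovered as $s^*\pi^*f$), it suffices to establish $\pi^*(\alpha\beta_\Phi)=\id_{\pi^*\Phi(N)}$, equivalently that $\pi^*\alpha$ is an equivalence. I plan to accomplish this by exhibiting the augmented simplicial object $\bigl(\pi_*\Hom(\mathscr{A}_{(X,s)},\pi^*N)\to N\bigr)$ as split, with extra degeneracies built from $\eta$ via Lemma~\ref{lem:CompEtas^*EpsilonIsID}; a split augmented simplicial object has colimit equal to its augmentation, which forces $\alpha$ and $\beta_\Phi$ to be mutually inverse. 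Verifying the simplicial identities for these extra degeneracies across all levels is the technical heart of the argument; a fallback is to first prove the analogue $\colim_{\Delta^{\op}}\Hom(\mathscr{A}_{(X,s)},\pi^*N)\simeq\pi^*N$ in $\T(X)$ using the cosimplicial structure of $\mathscr{A}_{(X,s)}$ and then descend via the counit $\pi^*\pi_*\Rightarrow\id$ combined with conservativity of $\pi^*$, in the spirit of Ayoub's treatment of the case $(X,s)=(\G_{m,S},1)$.
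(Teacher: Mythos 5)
Your construction of $\beta_\Phi$ from the cocone legs $\vartheta$ composed with $\eta_n^*$, and the verification that $\beta_\Phi\circ\alpha=\id_N$ by evaluating at level $[0]$ and using Lemma~\ref{lem:CompEtas^*EpsilonIsID}, are fine. The paper does not do this explicitly (it reformulates in the diagram-scheme language and cites \cite[3.4.9]{AyoubThesisII} ``verbatim''), so this part of your proposal is a genuine, if minor, elaboration. The issues are all in the other direction $\alpha\circ\beta_\Phi\simeq\id$, which is where the real content lives.

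Your main plan --- exhibiting $\bigl(\pi_*\Hom(\mathscr{A}_{(X,s)},\pi^*N)\to N\bigr)$ as split --- is indeed the right idea, but the mechanism you describe is not the one that makes it work. The splitting does \emph{not} arise by assembling extra degeneracies ``from $\eta$'' in the sense of pushing $\eta\colon\underline\1_S\to s^*\mathscr{A}_{(X,s)}$ through the structure maps: $\eta$ lives over $S$ via $s^*$, while the simplicial object at hand uses $\pi_*$, and the two do not interact in a way that produces the extra degeneracies directly. The actual mechanism is a \emph{shift}: using smoothness one has $\pi^*\Hom(A,B)\simeq\Hom(\pi^*A,\pi^*B)$ and $\pi_*\pi^*\simeq\Hom(\Lambda(X),-)$, so $\pi_*\Hom(\mathscr{A}([n]),\pi^*N)\simeq\Hom(\Lambda(X^{n+1}),N)$. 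Thus $\pi_*\Hom(\mathscr{A}_{(X,s)},\pi^*N)$ is, up to re-indexing the faces, the d\'ecalage of the simplicial object $\Hom(\Lambda(X^{\bullet}),N)$ associated with $s^*\mathscr{A}_{(X,s)}$, and it is the d\'ecalage structure (the re-inserted top codegeneracy, not $\eta$ per se) that supplies the split and identifies the realization with the level-zero term $\Hom(\Lambda(X^0),N)=N$. This is precisely what makes Ayoub's proof of the $\G_m$ case go through for arbitrary smooth pointed $(X,s)$, and is the point you would have to make precise. As written, you have identified the shape of the argument but not the device that actually produces it.

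The fallback is not merely incomplete but incorrect. The claimed analogue $\colim_{\Delta^{\op}}\Hom(\mathscr{A}_{(X,s)},\pi^*N)\simeq\pi^*N$ in $\T(X)$ is \emph{false} in general. By the smooth projection formula that colimit is $\pi^*\colim_n\Hom(\Lambda(X^{n}),N)$, i.e.\ $\pi^*$ applied to the geometric realization of the two-sided bar construction $B_\bullet(\1,\Lambda(X)^\vee,N)$, which in general computes a Tor object rather than $N$. For $X=\G_{m,S}$ with $\Lambda(\G_m)^\vee\simeq\1\oplus\1(-1)[-1]$ one gets $\pi^*$ of a divided-power-type algebra tensored with $N$, not $\pi^*N$. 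The presence of $\pi_*$ is exactly what performs the d\'ecalage that collapses the bar construction; strip it off and the collapse disappears. Consequently the ``descend via the counit $\pi^*\pi_*\Rightarrow\id$'' step has nothing to land on, and conservativity of $\pi^*$ cannot rescue the argument, since $\pi^*\Phi(N)=\colim\pi^*\pi_*\Hom(\mathscr{A},\pi^*N)$ is genuinely different from $\colim\Hom(\mathscr{A},\pi^*N)$.
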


\begin{proof}
Let us consider the maps $(\theta, \id_\Delta) :(\mathcal{A}_{(X,s)}, \Delta) \rightarrow (X,\Delta)$, $(\pi, \id_\Delta): (X, \Delta) \rightarrow (S, \Delta)$ and $p_\Delta: (S,\Delta) \rightarrow (S, \star)$ in $\DiaSch_{/S}$. We claim that
\[
\colim_{\Delta^{op}}\pi_*\Hom(\mathscr{A}_{(X,s)},\pi^* N ) \simeq (p_\Delta)_\# (\pi, \id_\Delta)_*(\theta, \id_{\Delta})_*(\theta, \id_{\Delta})^*(\pi, \id_\Delta)^* (p_\Delta)^* N.
\]
Indeed we can follow the proof of Lemma \ref{lem:CoherenceLemma} verbatim. Moreover we see from the construction of this equivalence that the claim of the Lemma is equivalent to saying that the composition
\begin{align*}
N &\longrightarrow \pi_* \pi^* N \\
&\overset{\sim}\longrightarrow (p_\Delta)_\# (\pi, \id_\Delta)_*(\pi, \id_\Delta)^* (p_\Delta)^* N \\
&\overset{\unit}\longrightarrow (p_\Delta)_\# (\pi, \id_\Delta)_*(\theta, \id_{\Delta})_*(\theta, \id_{\Delta})^*(\pi, \id_\Delta)^* (p_\Delta)^* N
\end{align*}
is an equivalence. We can show this by following \cite[3.4.9]{AyoubThesisII} verbatim where the case $(X,s) = (\G_{m,S},1)$ is treated. 
\end{proof}

\noname \label{noname:CanMapForUnivPropOfLog} There is a canonical comparison map 
\begin{align*}
\colim_{\Delta^{op}} \pi_* \Hom(\mathscr{A}_{(X,s)},  \_) & \overset{\unit}\longrightarrow \colim_{\Delta^{op}} \pi_* s_* s^* \Hom(\mathscr{A}_{(X,s)}, \_) \\
& \overset{\sim}\longrightarrow \colim_{\Delta^{op}} s^* \Hom(\mathscr{A}_{(X,s)}, \_) \\
& \longrightarrow  \colim_{\Delta^{op}} \Hom(s^*\mathscr{A}_{(X,s)}, s^* \_) \\
& \overset{\eta^\#}\longrightarrow s^* \_,
\end{align*}
in $\Fun^{\ex}(\T(X), \T(S))$, where the last arrow is induced by $\eta:\1  \rightarrow s^*\mathscr{A}_{(X,s)}$.

\begin{thm} \label{thm:UnivPropOfLog}
Let $(\pi:X \rightarrow S,s)$ be a smooth pointed $S$-scheme. Then for any $M$ in $\Uni_\pi(X)$ the comparison map
\[
\colim_{\Delta^{op}} \pi_* \Hom(\mathscr{A}_{(X,s)}, M) \longrightarrow s^* M
\]
obtained from \ref{noname:CanMapForUnivPropOfLog} is an equivalence.
\end{thm}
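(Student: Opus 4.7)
\medskip

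\noindent\textbf{Proof sketch.} The plan is to reduce to the generating objects $M=\pi^{*}N$ and to verify the compatibility there by hand using Lemma \ref{lem:GenOfAyoubsThm} and Lemma \ref{lem:CompEtas^*EpsilonIsID}.

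First I would observe that both functors
\[
F(M):=\colim_{\Delta^{\op}}\pi_{*}\Hom(\mathscr{A}_{(X,s)},M) \quad \text{and}\quad G(M):=s^{*}M
\]
are exact functors $\T(X)\to\T(S)$. For $G$ this is clear since $s^{*}$ is a left adjoint in the stable setting. For $F$ it follows because $\Hom(A,-)$, $\pi_{*}$, and colimits in stable $\infty$-categories all commute with finite fibres. The comparison map of \ref{noname:CanMapForUnivPropOfLog} is a natural transformation between these two exact functors. Since $\Uni_{\pi}(X)$ is by definition the smallest stable full subcategory of $\T(X)$ containing the objects $\pi^{*}N$, it is enough to show that the comparison map is an equivalence on objects of the form $M=\pi^{*}N$ for $N\in\T(S)$.

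For $M=\pi^{*}N$ both sides identify with $N$: the target because $\pi s=\id_{S}$ yields $s^{*}\pi^{*}N\simeq N$, and the source by Lemma \ref{lem:GenOfAyoubsThm}, which provides a canonical equivalence
\[
\alpha_{N}: N\overset{\unit}\longrightarrow \pi_{*}\pi^{*}N \overset{\varepsilon^{\#}}\longrightarrow \colim_{\Delta^{\op}}\pi_{*}\Hom(\mathscr{A}_{(X,s)},\pi^{*}N).
\]
Thus it remains to verify that the composite of $\alpha_{N}$ with the comparison map is homotopic to $\id_{N}$.

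The main obstacle, and the only real content of the proof, is this diagram chase. I would trace each of the four maps in \ref{noname:CanMapForUnivPropOfLog} after precomposing with $\alpha_{N}$. The key simplification is that all operations live in the constant cosimplicial direction up to a single twist by $\varepsilon$ (on the source side) and by $\eta$ (on the target side). More precisely, the unit $\pi_{*}\to\pi_{*}s_{*}s^{*}$ cancels against the canonical equivalence $\pi_{*}s_{*}\simeq\id$ (using $\pi s=\id$), so the two $s$-related steps in the middle of \ref{noname:CanMapForUnivPropOfLog} collapse and turn the whole composite into the map induced on colimits by the composition of cosimplicial maps
\[
\underline{\1}_{S}\overset{\eta}\longrightarrow s^{*}\mathscr{A}_{(X,s)} \overset{s^{*}\varepsilon}\longrightarrow \underline{\1}_{S}
\]
in $\T(S)$. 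By Lemma \ref{lem:CompEtas^*EpsilonIsID} this composition is the identity, so the induced map on colimits (and hence $\colim\alpha_{N}\circ\text{comparison}$) is $\id_{N}$. This finishes the proof on the generators $\pi^{*}N$, and by the exactness reduction above, on all of $\Uni_{\pi}(X)$.
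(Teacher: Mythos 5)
Your proposal follows the same route as the paper: reduce to the generators $M=\pi^{*}N$ using exactness of both sides, identify the source via Lemma \ref{lem:GenOfAyoubsThm}, and invoke Lemma \ref{lem:CompEtas^*EpsilonIsID} to recognise the residual composition $s^{*}\varepsilon\circ\eta$ as the identity. Where the proposal is thinner than the paper is exactly at the moment you describe as the "collapse" of the two $s$-related steps. After you cancel the unit $\pi_{*}\to\pi_{*}s_{*}s^{*}$ against $\pi_{*}s_{*}\simeq\id$, what remains to be checked is that $s^{*}$ applied to $\varepsilon^{\#}$ and the canonical lax-monoidal comparison $s^{*}\Hom(\mathscr{A}_{(X,s)},\pi^{*}N)\to\Hom(s^{*}\mathscr{A}_{(X,s)},s^{*}\pi^{*}N)$ together coincide with $(s^{*}\varepsilon)^{\#}$, i.e.\ that the square the paper labels (1) commutes. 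This is a genuine compatibility, not a cancellation: it amounts to a naturality (in the contravariant variable) of the closed-monoidal comparison for $s^{*}$, and in the paper it is precisely the step that cannot be waved through --- the proof passes to the model in terms of maps of diagrams of schemes $\DiaSch_{/S}$, re-expresses $\Hom(\mathscr{A}_{(X,s)},-)$ and $s^{*}$ as $(\theta,\id_\Delta)_{*}(\theta,\id_\Delta)^{*}$ and $(s,\id_\Delta)^{*}$, and verifies the square via the pasting property of exchange maps (\ref{noname:ExchangeMaps}). Your outline correctly identifies the diagram chase as the real content and correctly predicts its outcome, but the assertion that the two $s$-steps simply ``collapse'' conceals this exchange-map verification; a complete proof needs to make that square commute, as the paper does.
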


\begin{proof}
It suffices to show the claim in the case where $M = \pi^*N$ for an $N$ in $\T(S)$. Consider the following diagram:
\[
\begin{tikzcd}
N \arrow[r, "\unit"] \arrow[rd, "\sim"'] & \pi_*\pi^*N \arrow[d, "\unit"] \arrow[r, "\varepsilon^\#"]        & {\colim_{\Delta^{op}}\pi_*\Hom(\mathscr{A}_{(X,s)}, \pi^*N)} \arrow[d, "\unit"]                     \\
                                         & \pi_*s_* s^*\pi^*N \arrow[r, "\varepsilon^\#"] \arrow[d, "\sim"'] & {\colim_{\Delta^{op}}\pi_*s_* s^*\Hom(\mathscr{A}_{(X,s)}, \pi^*N)} \arrow[d, "\sim"]               \\
                                         & s^* \pi^* N \arrow[r, "\varepsilon^\#"] \arrow[d, "\id"']                        & { \colim_{\Delta^{op}} s^*\Hom(\mathscr{A}_{(X,s)}, \pi^*N)} \arrow[d] \arrow[ld, "(1)" description, phantom] \\
                                         & s^* \pi^* N \arrow[r, "(s^*\varepsilon)^\#"] \arrow[rd, "\id"']     & {\colim_{\Delta^{op}} \Hom( s^*\mathscr{A}_{(X,s)},  s^*\pi^*N)} \arrow[d, "\eta^\#"]                   \\
                                         &                                                                &  s^* \pi^* N.                                                            
\end{tikzcd}
\]
Here the supscript $(\_)^\#$ denotes the map induced by the respective map. The composition of the top horizontal row is an equivalence by Lemma \ref{lem:GenOfAyoubsThm} and the right vertical composition is the comparison map of \ref{noname:CanMapForUnivPropOfLog}. Hence it remains to show that the diagram commutes. The commutativity of the bottom triangle follows from Lemma \ref{lem:CompEtas^*EpsilonIsID} and commutativity of the unmarked squares is obvious. In order to check that (1) commutes let us rewrite the diagram in terms of functors induced by maps in $\DiaSch_{/S}$ as in the proof of Lemma \ref{lem:GenOfAyoubsThm}. Let us denote the diagram
\[
\Delta \overset{\mathcal{A}_{(X,s)}}\longrightarrow \Sm_{/X} \overset{\_ \times_X S}\longrightarrow \Sm_{/S}
\]
by $s^*\mathcal{A}_{(X,s)}$. Here $\_ \times_X S$ denotes the functor given by pullback along $s: S \rightarrow X$. Write $(\theta_S, \id_\Delta): (s^*\mathcal{A}_{(X,s)}, \Delta) \rightarrow (S, \Delta)$ for the canonical map in $\DiaSch_{/S}$. Then commutativity of (1) is equivalent to commutativity of the square
\[
\begin{tikzcd}
{(p_\Delta)_\# (s, \id_\Delta)^*(\pi, \id_\Delta)^* (p_\Delta)^* N} \arrow[r, "\unit"] \arrow[d, "\id"'] & {(p_\Delta)_\# (s, \id_\Delta)^*(\theta, \id_{\Delta})_*(\theta, \id_{\Delta})^*(\pi, \id_\Delta)^* (p_\Delta)^* N} \arrow[d, "\Ex"] \\
{(p_\Delta)_\# (s, \id_\Delta)^*(\pi, \id_\Delta)^* (p_\Delta)^* N} \arrow[r, "\unit"]                   & {(p_\Delta)_\# (\theta_S, \id_{\Delta})_*(\theta_S, \id_{\Delta})^*(s, \id_\Delta)^*(\pi, \id_\Delta)^* (p_\Delta)^* N.}             
\end{tikzcd}
\]
This follows from the pasting property of exchange maps (see \ref{noname:ExchangeMaps}).
\end{proof}

\noname Let us denote by $\Uni_{\pi}^{\cons}(X)$ the full subcategory of $\Uni_\pi(X)$ consisting of constructible unipotent objects. In particular $\Uni_{\pi}^{\cons}(X)$ is a small $\infty$-category. Clearly $\mathscr{A}_{(X,s)}$ factors as
\[
\mathscr{A}_{(X,s)}: \Delta \longrightarrow \Uni_{\pi}^{\cons}(X) \subset \T(X).
\]
Consider the composition
\begin{equation} \label{eqn:DiagForSimplicialLog}
 \Delta \overset{\mathscr{A}_{(X,s)}}\longrightarrow \Uni_{\pi}^{\cons}(X) \overset{j}\longrightarrow \Fun(\Uni_{\pi}^{\cons}(X), \Spc)^{\op},
\end{equation}
where $j$ denotes the opposite Yoneda functor ( i.e. the functor sending $M$ to $\map_{\Uni_{\pi}^{\cons}(X)}(M, \_)$). We denote the colimit of (\ref{eqn:DiagForSimplicialLog}) in $\Fun(\Uni_{\pi}^{\cons}(X), \Spc)^{\op}$ by $\underline{\mathscr{A}}_{(X,s)}$.

\noname Let $\C$ be a small $\infty$-category. Recall that $\Pro (\C)$ is the full subcategory of $\Fun(\C, \Spc)^{\op}$ consisting of the finite limit preserving functors. Equivalently $\Pro (\C) = (\Ind (\C^{\op}))^{\op}$ is the subcategory of $\Fun(\C, \Spc)^{\op}$ consisting of those functors which can be written as a small co-filtered limit of representable functors (see \cite[5.3.5.4]{lurie2009higher}).

\begin{cor} \label{cor:LogIsProObject}
Assume that the tensor unit $\1$ in $\T(S)$ is compact. Then there is a natural equivalence
\[
\map_{\Fun(\Uni_{\pi}^{\cons}(X), \Spc)^{\op}}(\underline{\mathscr{A}}_{(X,s)}, j \_) \overset{\sim}\longrightarrow \map_{\T(S)}(\1, s^* \_)
\]
in $\Fun(\Uni_{\pi}^{\cons}(X), \Spc)^{\op}$. In particular $\underline{\mathscr{A}}_{(X,s)}$ belongs to $\Pro (\Uni_{\pi}^{\cons}(X))$.
\end{cor}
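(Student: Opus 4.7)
The plan is to show that both sides of the asserted natural equivalence compute the same geometric realization of mapping spaces, applying the Yoneda lemma on the one side and Theorem~\ref{thm:UnivPropOfLog} together with compactness of $\1$ on the other. By the universal property of $\underline{\mathscr{A}}_{(X,s)}$ as the colimit in $\Fun(\Uni_{\pi}^{\cons}(X), \Spc)^{\op}$ of the diagram $j \circ \mathscr{A}_{(X,s)}$, together with the full faithfulness of the opposite Yoneda embedding $j$, I first identify
\[
  \map_{\Fun(\Uni_{\pi}^{\cons}(X), \Spc)^{\op}}\bigl(\underline{\mathscr{A}}_{(X,s)}, j(M)\bigr) \simeq \colim_{[i] \in \Delta^{\op}} \map_{\T(X)}\bigl(\mathscr{A}_{(X,s)}([i]), M\bigr),
\]
the geometric realization in $\Spc$ of the simplicial mapping space $[i] \mapsto \map_{\T(X)}(\mathscr{A}_{(X,s)}([i]), M)$. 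This is natural in $M \in \Uni_{\pi}^{\cons}(X)$ by construction.

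Next I would apply Theorem~\ref{thm:UnivPropOfLog}, which provides the equivalence $s^*M \simeq \colim_{[i] \in \Delta^{\op}} \pi_*\Hom(\mathscr{A}_{(X,s)}([i]), M)$ in $\T(S)$. Since $\1 \in \T(S)$ is compact by hypothesis and $\T(S)$ is stable, $\map_{\T(S)}(\1, -)$ preserves geometric realizations: in a stable $\infty$-category, such a realization is a sequential colimit of finite colimits, and both are preserved by $\map(\1, -)$ from compactness and exactness. Combining this with $\pi^*\1 \simeq \1_X$, the adjunction $\pi^* \dashv \pi_*$, and the closed monoidal structure of $\T(X)$ (which gives $\map_{\T(X)}(\1_X, \Hom(\mathscr{A}_{(X,s)}([i]), M)) \simeq \map_{\T(X)}(\mathscr{A}_{(X,s)}([i]), M)$), I obtain
\[
  \map_{\T(S)}(\1, s^*M) \simeq \colim_{[i] \in \Delta^{\op}} \map_{\T(X)}\bigl(\mathscr{A}_{(X,s)}([i]), M\bigr),
\]
which matches the formula from the previous paragraph and yields the desired natural equivalence.

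For the ``in particular'' clause, recall that $\Pro(\Uni_{\pi}^{\cons}(X))$ identifies with the full subcategory of $\Fun(\Uni_{\pi}^{\cons}(X), \Spc)^{\op}$ spanned by those functors $\Uni_{\pi}^{\cons}(X) \to \Spc$ that preserve finite limits. Via the equivalence just established, $\underline{\mathscr{A}}_{(X,s)}$ represents the functor $M \mapsto \map_{\T(S)}(\1, s^*M)$. This preserves finite limits, since $s^*$ is an exact functor between stable $\infty$-categories (being a $*$-pullback in the motivic formalism) and $\map_{\T(S)}(\1, -)$ is exact by stability of $\T(S)$. The main subtlety I anticipate is the careful bookkeeping in $\Fun^{\op}$ versus $\Fun$ when realizing $\underline{\mathscr{A}}_{(X,s)}$ as a geometric realization of representables, and rigorously justifying the interchange of $\map_{\T(S)}(\1, -)$ with the $\Delta^{\op}$-colimit of Theorem~\ref{thm:UnivPropOfLog} using compactness together with stability.
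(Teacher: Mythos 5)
Your proposal follows essentially the same route as the paper's proof: identify $\map_{\Fun(\Uni_{\pi}^{\cons}(X), \Spc)^{\op}}(\underline{\mathscr{A}}_{(X,s)}, jM)$ with the realization $\colim_{\Delta^{\op}}\map(\mathscr{A}_{(X,s)}([i]),M)$, rewrite each term as $\map_{\T(S)}(\1, \pi_*\Hom(\mathscr{A}_{(X,s)}([i]),M))$ via the closed structure and $\pi^*\dashv\pi_*$, commute $\map_{\T(S)}(\1,\_)$ with the $\Delta^{\op}$-colimit using compactness, invoke Theorem \ref{thm:UnivPropOfLog}, and settle the Pro-object clause by left exactness of $M\mapsto\map_{\T(S)}(\1,s^*M)$ — exactly the paper's argument.

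Two caveats on your justifications. The first identification is not literally ``mapping out of a colimit'' in $\Fun(\Uni_{\pi}^{\cons}(X),\Spc)^{\op}$ together with full faithfulness of $j$ (read that way the universal property produces a limit of mapping spaces); the correct reason is that $jM$ is representable, so mapping out of it is evaluation at $M$ by Yoneda, and colimits in $\Fun(\Uni_{\pi}^{\cons}(X),\Spc)$ are pointwise — you flag this bookkeeping yourself, and the paper is equally brief. More substantively, your justification of the interchange is wrong as stated: the space-valued functor $\map_{\T(S)}(\1,\_)$ is left exact but does not preserve finite colimits, even in a stable $\infty$-category, so ``realization equals a sequential colimit of finite colimits'' does not by itself give preservation of geometric realizations. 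What is true is that the spectrum-valued mapping functor out of $\1$ is exact and, by compactness, preserves filtered colimits, hence all small colimits; this is the interchange the paper simply attributes to compactness of $\1$. So your conclusion matches the paper's step, but the finite-colimit argument you give for it should be replaced by an argument at the level of mapping spectra (or a direct justification of the commutation for this particular $\Delta^{\op}$-colimit).
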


\begin{proof}
The comapctness of $\1$ in $\T(S)$ implies the equivalences
\begin{align*}
\map_{\Fun(\Uni_{\pi}^{\cons}(X), \Spc)^{\op}}(\underline{\mathscr{A}}_{(X,s)}, j \_) &\simeq \colim_{\Delta^{\op}} \map_{\Uni_{\pi}^{\cons}(X)}(\mathscr{A}_{(X,s)}, \_) \\
& \simeq \colim_{\Delta^{\op}} \map_{\T(X)}(\1 , \Hom(\mathscr{A}_{(X,s)}, \_)) \\
& \simeq \colim_{\Delta^{\op}} \map_{\T(S)}(\1 , \pi_*\Hom(\mathscr{A}_{(X,s)}, \_)) \\
& \simeq  \map_{\T(S)}(\1 , \colim_{\Delta^{\op}} \pi_*\Hom(\mathscr{A}_{(X,s)}, \_)).
\end{align*}
Applying $\map_{\T(S)}(\1, \_)$ to the composition in \ref{noname:CanMapForUnivPropOfLog} gives rise to a natural transformation
\[
\map_{\T(S)}(\1 , \colim_{\Delta^{\op}} \pi_*\Hom(\mathscr{A}_{(X,s)}, \_)) \longrightarrow \map_{\T(S)}(\1 , s^* \_)
\]
which is an equivalence when restricted to unipotent objects by Theorem \ref{thm:UnivPropOfLog}. The last sentence follows from the fact that $\map_{\T(S)}(\1 , s^* \_)$ commutes with finite limits.
\end{proof}

\begin{cor} \label{cor:CosimplcialLogAgreesWithClassicalLogByHK}
Assume that $\T(\_)$ is $\DA_{\et}( \_, \Lambda)$, where $\Lambda$ is a $\Q$-algebra. Let $S$ be a finite dimensional noetherian scheme, $X$ a smooth commutative group scheme over $S$ and  $s: S \rightarrow X$ the unit section. If either $S$ is of characteristic $0$ or $X$ is affine, then $\underline{\mathscr{A}}_{(X,s)}$ is equivalent in $\Pro (\Uni_{\pi}^{\cons}(X))$ to the logarithm motive $\Log$ defined in \cite[\S 4]{HuberKingsPolylog}.
\end{cor}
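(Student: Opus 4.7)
The plan is to invoke Yoneda in the pro-category $\Pro(\Uni_{\pi}^{\cons}(X))$: via the embedding $\Pro(\Uni_{\pi}^{\cons}(X)) \hookrightarrow \Fun(\Uni_{\pi}^{\cons}(X), \Spc)^{\op}$ of \cite[5.3.5.4]{lurie2009higher}, two pro-objects are equivalent if and only if they corepresent equivalent functors on $\Uni_{\pi}^{\cons}(X)$. Hence it suffices to show that both $\underline{\mathscr{A}}_{(X,s)}$ and $\Log$ corepresent the functor $M \mapsto \map_{\DA_{\et}(S,\Lambda)}(\1, s^{*}M)$.

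For $\underline{\mathscr{A}}_{(X,s)}$ this is essentially Corollary \ref{cor:LogIsProObject}. I would first verify its hypothesis: under our assumptions ($S$ finite-dimensional noetherian, $\Lambda$ a $\Q$-algebra) the unit $\1$ of $\DA_{\et}(S,\Lambda)$ is constructible by construction, and since with rational coefficients constructibility and compactness coincide over such bases (by the $\DA_{\et} \simeq \DM_{h}$ identification of Theorem \ref{thm:CompDAvsDM} together with the compact generation statements of \cite{CisinskiDegliseEtale}, compare Proposition \ref{prop:SstrLocImpliesCompactGen}), the unit $\1$ is compact. Corollary \ref{cor:LogIsProObject} then delivers the desired corepresentability.

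For $\Log$ I would recall the construction in \cite[\S 4]{HuberKingsPolylog}: it is a pro-object of $\Uni_{\pi}^{\cons}(X)$, built as an inverse limit of a canonical tower $\Log = \varprojlim_{n} \Log_{n}$ in which $\Log_{n}/\Log_{n-1}$ is the $n$-th symmetric power of $H_{1}(X/S)$. Either the characteristic-zero or the affineness assumption is precisely what is needed in \emph{loc. cit.} to realize these symmetric powers and the relevant extension classes at the motivic level rather than only after a realization. Huber-Kings' splitting principle then states that $\Log$ co-represents the functor $M \mapsto \map_{\DA_{\et}(S,\Lambda)}(\1, s^{*}M)$ on $\Uni_{\pi}^{\cons}(X)$, modulo the passage from their triangulated statement to an $\infty$-categorical mapping-space statement.

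The main obstacle is precisely this last upgrade. My strategy is to match $\underline{\mathscr{A}}_{(X,s)}$ and $\Log$ level-wise: the Bar-construction of \ref{noname:ConstrCosimplLog} equips $\underline{\mathscr{A}}_{(X,s)}$ with a canonical filtration whose associated graded can be computed via Theorem \ref{thm:UnivPropOfLog} together with the Hopf-algebra structure on $\pi_{*}\1$ induced by the group law of $X$, and identifies with the symmetric powers of the augmentation ideal, hence with symmetric powers of $H_{1}(X/S)$. This yields a map of towers $\Log \to \underline{\mathscr{A}}_{(X,s)}$ in $\Pro(\Uni_{\pi}^{\cons}(X))$, and applying Yoneda to the two corepresentability statements above shows that this map is an equivalence.
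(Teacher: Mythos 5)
Your main line — embed into $\Fun(\Uni_{\pi}^{\cons}(X), \Spc)^{\op}$, check that $\1$ is compact in $\DA_{\et}(S,\Lambda)$ (finite dimensional noetherian base, rational coefficients), use Corollary \ref{cor:LogIsProObject} for $\underline{\mathscr{A}}_{(X,s)}$, use Huber--Kings for $\Log$, and conclude by Yoneda — is exactly the paper's proof. The paper handles the point you flag as the ``main obstacle'' much more directly: \cite[4.5.2]{HuberKingsPolylog} is an equivalence of objects (functors valued in the motivic category), not merely an isomorphism of $\HOM$-groups, so one simply applies $\map_{\DA_{\et}(S,\Lambda)}(\1,\_)$ to that equivalence to obtain the mapping-space corepresentability
\[
\map_{\Fun(\Uni_{\pi}^{\cons}(X), \Spc)^{\op}}(\Log, j\,\_) \overset{\sim}\longrightarrow \map_{\DA_{\et}(S,\Lambda)}(\1, s^{*}\_),
\]
and Yoneda finishes the argument. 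No explicit map of towers $\Log \to \underline{\mathscr{A}}_{(X,s)}$ is needed, nor any ``triangulated to $\infty$-categorical'' upgrade of a splitting principle stated only on homotopy groups.

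The detour you propose to bridge your perceived obstacle is where the proposal is weakest. As sketched it has real gaps: the skeletal filtration of the Bar construction, the ``Hopf-algebra structure on $\pi_{*}\1$'' (presumably you mean the bialgebra structure on $\Lambda(X)$ coming from the group law), and the identification of the associated graded with symmetric powers of $H_{1}(X/S)$ are not consequences of Theorem \ref{thm:UnivPropOfLog}; establishing them would essentially amount to redoing the computations of \cite[\S 4]{HuberKingsPolylog} (and this is precisely where rational coefficients and the characteristic-$0$/affineness hypotheses enter), and you would still have to produce the map of towers compatibly with all transition maps. Since the corepresentability route you state at the outset already suffices, you should drop the level-wise matching; with the first two paragraphs alone (and the compactness check, which is fine and parallels the paper's appeal to finite \'etale cohomological dimension), your argument coincides with the paper's.
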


\begin{proof}
Since $S$ is finite dimensional noetherian it is of finite \'etale cohomological dimension for $\Lambda$-coefficients by \cite[1.1.4]{CisinskiDegliseEtale}. By standard arguments this implies that the compact objects of $\DA_{\et}( S, \Lambda)$ and $\DA_{\et}( S, \Lambda)$ are precisely the constructible objects and $\pi_*$ commutes with small colimits. It is clear from the construction in  \cite[\S 4]{HuberKingsPolylog} that $\Log$ is a pro-object in $\Uni_{\pi}^{\cons}(X)$. Moreover by applying $\map_{\DA_{\et}(S, \Lambda)}(\1, \_)$ to the equivalence in \cite[4.5.2]{HuberKingsPolylog} we get an equivalence
\[
\map_{\Fun(\Uni_{\pi}^{\cons}(X), \Spc)^{\op}}(\Log, j \_) \overset{\sim}\longrightarrow \map_{\DA_{\et}(S, \Lambda)}(\1, s^* \_)
\]
in $\Fun(\Uni_{\pi}^{\cons}(X), \Spc)^{\op}$. Hence Corollary \ref{cor:LogIsProObject} shows that there is an equivalence
 \[
 \Log \overset{\sim}\longrightarrow \underline{\mathscr{A}}_{(X,s)}
 \] 
 in $\Pro (\Uni_{\pi}^{\cons}(X))$.
\end{proof}

\begin{remark}
This Corollary tells us that we may consider $\mathscr{A}_{(X,s)}$ as a cosimplicial representation of the classical logarithm motive. Let us note that while the construction of $\Log$ needs rational coefficients there are no restrictions to the coefficients for $\underline{\mathscr{A}}_{(X,s)}$. Hence $\underline{\mathscr{A}}_{(X,s)}$ might be useful for defining polylogarithm classes as in \cite[\S5]{HuberKingsPolylog} integrally. 
\end{remark}

\begin{prop} \label{prop:CharacterisationOfFunctOfLog}

Assume that $\1$ is compact in $\T(S)$ and let $f: (X,x) \rightarrow (Y,y)$ be a morphism of smooth pointed $S$-schemes. Then there exists a unique (up to homotopy) map $\gamma_f: \underline{\mathscr{A}}_{(X,x)} \rightarrow f^* \underline{\mathscr{A}}_{(Y,y)}$ in $\Pro (\Uni_{\pi}^{\cons}(X))$ 
with the property that 
\[
\begin{tikzcd}
                                             & \1 \arrow[ld, "{\eta_{(X,x)}}"'] \arrow[rd, "{\eta_{(Y,y)}}"] &                   \\
{x^* \underline{\mathscr{A}}_{(X,x)}} \arrow[r, "x^* \gamma_f"] & {x^* f^* \underline{\mathscr{A}}_{(Y,y)}} \arrow[r, "\sim", no head]             & {y^*\underline{\mathscr{A}}_{(Y,y)}}
\end{tikzcd}
\]
commutes. 

\end{prop}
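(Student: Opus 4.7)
The plan is to reduce the statement to Corollary \ref{cor:LogIsProObject}, which identifies $\underline{\mathscr{A}}_{(X,x)}$ as the pro-object corepresenting the functor $M \mapsto \map_{\T(S)}(\1, x^*M)$ on $\Uni_{\pi}^{\cons}(X)$. The first key step will be to extend this corollary from representable targets $jM$ to arbitrary pro-object targets $A$: concretely, I would produce a natural equivalence
\[
\map_{\Pro(\Uni_{\pi}^{\cons}(X))}(\underline{\mathscr{A}}_{(X,x)}, A) \simeq \map_{\T(S)}(\1, x^*A)
\]
in $A \in \Pro(\Uni_{\pi}^{\cons}(X))$, where $x^*A := \Pro(x^*)(A) \in \Pro(\T(S)^{\cons})$ and the right hand side is interpreted via the compactness of $\1$ as $\colim_i \map_{\T(S)}(\1, x^*A_i)$ for any cofiltered presentation $A = \slim_i A_i$. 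Both sides preserve cofiltered limits in $A$ (the left because mapping out of a fixed object preserves all limits, the right because $\Pro(x^*)$ preserves cofiltered limits by construction and $\1$ is compact in $\T(S)$), so the agreement of both functors on representable pro-objects, given by Corollary \ref{cor:LogIsProObject}, propagates to all of $\Pro(\Uni_{\pi}^{\cons}(X))$.

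Next I apply this extended universal property to $A = f^*\underline{\mathscr{A}}_{(Y,y)} := \Pro(f^*)(\underline{\mathscr{A}}_{(Y,y)})$, using $x^*f^* \simeq (f \circ x)^* = y^*$, to obtain an equivalence
\[
\map_{\Pro(\Uni_{\pi}^{\cons}(X))}(\underline{\mathscr{A}}_{(X,x)}, f^*\underline{\mathscr{A}}_{(Y,y)}) \simeq \map_{\T(S)}(\1, y^*\underline{\mathscr{A}}_{(Y,y)}).
\]
A short naturality argument shows that this bijection sends a morphism $\beta$ to the composite $x^*\beta \circ \eta_{(X,x)}$: indeed, applied to the identity on $\underline{\mathscr{A}}_{(X,x)}$ the equivalence of the previous step returns exactly $\eta_{(X,x)}$, as one checks by unpacking the construction of the equivalence in \ref{noname:CanMapForUnivPropOfLog} and Corollary \ref{cor:LogIsProObject}, and then the formula $\Phi_A(\beta) = x^*\beta \circ \Phi_{\underline{\mathscr{A}}_{(X,x)}}(\id)$ drops out of naturality. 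The triangle condition of the proposition thus translates into the equation $x^*\gamma_f \circ \eta_{(X,x)} = \eta_{(Y,y)}$ on the right hand side, and both existence and uniqueness of $\gamma_f$ follow by taking $\eta_{(Y,y)}$ as the defining element.

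The main technical obstacle will be the careful setup of Step~1: giving a clean interpretation of $x^*A$ and of $\map_{\T(S)}(\1, x^*A)$ for an arbitrary pro-object $A$, and verifying rigorously that both functors in the putative equivalence genuinely preserve cofiltered limits in the target variable. Once this bookkeeping is in place, the proposition becomes a formal consequence of Corollary \ref{cor:LogIsProObject} by a Yoneda-type argument.
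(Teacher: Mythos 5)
Your proposal follows the same route as the paper's proof: apply Corollary \ref{cor:LogIsProObject} to the pro-object target $f^*\underline{\mathscr{A}}_{(Y,y)}$, then unwind the construction of the equivalence via the comparison map in \ref{noname:CanMapForUnivPropOfLog} to see that it carries a morphism $\beta$ to $x^*\beta \circ \eta_{(X,x)}$, so that the triangle condition pins down $\gamma_f$ uniquely as the preimage of $\eta_{(Y,y)}$. The only substantive difference is that the paper applies the corollary directly to a pro-object target without comment, while you explicitly flag and fill in the cofiltered-limit-preservation argument needed to extend the corepresentability statement from $\Uni_\pi^{\cons}(X)$ to all of $\Pro(\Uni_\pi^{\cons}(X))$ — a step the paper leaves implicit; your more careful version is correct and arguably preferable.
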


\begin{proof}
By Corollary \ref{cor:LogIsProObject} we have an equivalence
\[
\map_{\Pro (\Uni_{\pi}^{\cons}(X))}(\underline{\mathscr{A}}_{(X,x)}, f^* \underline{\mathscr{A}}_{(Y,y)}) \simeq \map_{\Pro (\T^{\cons}(S))}(\1, x^* f^* \underline{\mathscr{A}}_{(Y,y)}).
\]
Unwinding the construction of this equivalence via the comparison map \ref{noname:CanMapForUnivPropOfLog} we get the claim. 
\end{proof}

\begin{remark} \label{rem:ExplicitDescriptionOfGammaViaSpans} In fact we can give a very explicit construction of the map $\gamma_f$ in Proposition \ref{prop:CharacterisationOfFunctOfLog} above. Note that $f^* \underline{\mathscr{A}}_{(Y,y)}$ is the pro-object associated to the cosimplicial object in $\Sm_{/X}$ obtained from the cospan
\[
[X \overset{\pi_X}\rightarrow S] \overset{f}\longrightarrow [Y \overset{\pi_Y}\rightarrow S] \overset{y}\longleftarrow [S \overset{\id}\rightarrow S].
\]
This is a priori a cosimplicial object in $\Sch^{\qcqs}_{/S}$ but as in \ref{noname:ConstrCosimplLog} we can consider it as a cosimplicial object in $\Sm_{/X}$ via the projection maps to $X$. Then the map of cospans
\[
\begin{tikzcd}
{[X \overset{\pi_X}\rightarrow S]} \arrow[r, "\id"] \arrow[d, "\id"] & {[X \overset{\pi_X}\rightarrow S]} \arrow[d, "f"] & {[S \overset{\id}\rightarrow S]} \arrow[l, "x"'] \arrow[d, "\id"] \\
{[X \overset{\pi_X}\rightarrow S]} \arrow[r, "f"]                    & {[Y \overset{\pi_Y}\rightarrow S]}                & {[S \overset{\id}\rightarrow S]} \arrow[l, "y"']                 
\end{tikzcd}
\]
gives rise to a map $\tilde{\gamma}_f: \underline{\mathscr{A}}_{(X,x)} \rightarrow f^* \underline{\mathscr{A}}_{(Y,y)}$. It is easy to check that  $x^* \underline{\mathscr{A}}_{(X,x)}$ is the pro object associated to the cospan
\[
\begin{tikzcd}
{[S \overset{\id}\rightarrow S]} \arrow[r, "x"] & {[X \overset{\pi_X}\rightarrow S]} & {[S \overset{\id}\rightarrow S]} \arrow[l, "x"']
\end{tikzcd}
\]
and $x^* f^* \underline{\mathscr{A}}_{(Y,y)} \simeq y^* \underline{\mathscr{A}}_{(Y,y)}$ is associated to
\[
\begin{tikzcd}
{[S \overset{\id}\rightarrow S]} \arrow[r, "y"] & {[Y \overset{\pi_Y}\rightarrow S]} & {[S \overset{\id}\rightarrow S].} \arrow[l, "y"']
\end{tikzcd}
\]
Moreover the map $x^* \tilde{\gamma}_f: x^* \underline{\mathscr{A}}_{(X,x)} \rightarrow x^* f^* \underline{\mathscr{A}}_{(Y,y)} \simeq y^* \underline{\mathscr{A}}_{(Y,y)}$ is induced by the map of cospans
\[
\begin{tikzcd}
{[S \overset{\id}\rightarrow S]} \arrow[r, "x"] \arrow[d, "\id"] & {[X \overset{\pi_X}\rightarrow S]} \arrow[d, "f"] & {[S \overset{\id}\rightarrow S]} \arrow[l, "x"'] \arrow[d, "\id"] \\
{[S \overset{\id}\rightarrow S]} \arrow[r, "y"]                  & {[Y \overset{\pi_Y}\rightarrow S]}                & {[S \overset{\id}\rightarrow S].} \arrow[l, "y"']                 
\end{tikzcd}
\]
Now taking into account the definition of (\ref{eqn:EtaLog}) via a map of cospans we see that the diagram 
\[
\begin{tikzcd}
                                             & \1 \arrow[ld, "{\eta_{(X,x)}}"'] \arrow[rd, "{\eta_{(Y,y)}}"] &                   \\
{x^* \underline{\mathscr{A}}_{(X,x)}} \arrow[r, "x^* \tilde{\gamma}_f"] & {x^* f^* \underline{\mathscr{A}}_{(Y,y)}} \arrow[r, "\sim", no head]             & {y^*\underline{\mathscr{A}}_{(Y,y)}}
\end{tikzcd}
\]
commutes. Therefore the uniqueness in Proposition \ref{prop:CharacterisationOfFunctOfLog} shows that $\tilde{\gamma}_f$ and $\gamma_f$ are equivalent. 
\end{remark}

\begin{lem} \label{lem:GammaFIsFofHK}
Assume that $\T(\_)$ is $\DA_{\et}( \_, \Lambda)$ where $\Lambda$ is a $\Q$-algebra. Let $S$ be a finite dimensional noetherian scheme and $f: X \rightarrow Y$ a morphism of smooth commutative group schemes with connected fibers over $S$. Assume that either $S$ is of characteristic zero or that $X$ and $Y$ are affine. Let us denote the logarithm motives associated to $X$ and $Y$ by $\Log_X$ and $\Log_Y$ respectively. Then
\[
\begin{tikzcd}
\Log_X \arrow[d, "\sim"] \arrow[r, "f_{\#}"]             & f^*\Log_Y \arrow[d, "\sim"]           \\
{\underline{\mathscr{A}}_{(X,x)}} \arrow[r, "\gamma_f"'] & {f^* \underline{\mathscr{A}}_{(X,x)}}
\end{tikzcd}
\]
commutes. Here the vertical equivalences are the equivalences from Corollary \ref{cor:CosimplcialLogAgreesWithClassicalLogByHK} and the map $f_{\#}$ is the map constructed in \cite[4.4.1]{HuberKingsPolylog}. 
\end{lem}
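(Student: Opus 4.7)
The plan is to invoke the uniqueness clause of Proposition \ref{prop:CharacterisationOfFunctOfLog}. Under the equivalences of Corollary \ref{cor:CosimplcialLogAgreesWithClassicalLogByHK}, the Huber--Kings morphism $f_{\#}: \Log_X \to f^* \Log_Y$ becomes a morphism $\underline{\mathscr{A}}_{(X,x)} \to f^* \underline{\mathscr{A}}_{(Y,y)}$ in $\Pro(\Uni_{\pi}^{\cons}(X))$. Since Proposition \ref{prop:CharacterisationOfFunctOfLog} characterises $\gamma_f$ as the essentially unique such morphism whose pullback along $x$ exchanges the two unit maps $\eta_{(X,x)}$ and $\eta_{(Y,y)}$, it suffices to check that the transported $f_{\#}$ satisfies this same compatibility.

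First, I would recall the construction of $f_{\#}$ from \cite[4.4.1]{HuberKingsPolylog}. Huber--Kings' logarithm $\Log_X$ carries a canonical unit section $\1 \to x^* \Log_X$, and their map $f_{\#}: \Log_X \to f^* \Log_Y$ is constructed precisely so that its pullback along $x$ identifies the unit section of $\Log_X$ with the unit section of $\Log_Y$ under the canonical equivalence $x^* f^* \Log_Y \simeq y^* \Log_Y$. (Here is where we use the hypothesis that $f$ is a morphism of commutative group schemes with connected fibers, which is required in the Huber--Kings construction of $f_{\#}$.)

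Next, I would verify that the equivalence of Corollary \ref{cor:CosimplcialLogAgreesWithClassicalLogByHK} transports the Huber--Kings unit section to $\eta_{(X,x)}$. The proof of that corollary proceeds by observing that both $\Log_X$ and $\underline{\mathscr{A}}_{(X,x)}$ corepresent the functor $\map_{\DA_{\et}(S,\Lambda)}(\1, x^* \_)$ on $\Uni_{\pi}^{\cons}(X)$, via \cite[4.5.2]{HuberKingsPolylog} on one side and Corollary \ref{cor:LogIsProObject} on the other; the equivalence is pinned down by Yoneda so as to match $\id_\1$ on both sides. Unwinding the comparison map of \ref{noname:CanMapForUnivPropOfLog}, the class corresponding to $\id_\1$ on the $\underline{\mathscr{A}}$-side is precisely $\eta_{(X,x)}$, while on the $\Log$-side it is by definition the canonical unit section. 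Hence the two units are identified.

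Combining these two observations, the transported $f_{\#}$ satisfies the characterising triangle of Proposition \ref{prop:CharacterisationOfFunctOfLog}, so by uniqueness it is homotopic to $\gamma_f$. The only delicate point is the bookkeeping in the second step, tracking how Yoneda pins down the equivalence of pro-objects to match the two unit maps; this is routine but the most easily overlooked part of the argument. An alternative, more hands-on route would be to use the explicit model for $\gamma_f$ given in Remark \ref{rem:ExplicitDescriptionOfGammaViaSpans} (via the evident map of cospans) and compare it termwise with the explicit description of $f_{\#}$ in \cite[\S 4]{HuberKingsPolylog}, but this is bulkier and less conceptual than the uniqueness argument sketched above.
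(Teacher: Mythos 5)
Your proposal is correct and matches the paper's argument: both invoke the uniqueness clause of Proposition \ref{prop:CharacterisationOfFunctOfLog}, both reduce the problem to checking that the equivalence of Corollary \ref{cor:CosimplcialLogAgreesWithClassicalLogByHK} carries the Huber--Kings unit section $\eta^{\Log_X}$ to $\eta_{(X,x)}$ (via the observation that each corresponds to the identity under the respective corepresentability equivalences), and both then use that $f_{\#}$ satisfies the characterising triangle for the two unit sections by the explicit construction in \cite[\S 4]{HuberKingsPolylog}. The alternative termwise comparison you mention via Remark \ref{rem:ExplicitDescriptionOfGammaViaSpans} is not the route the paper takes, but you correctly identify the uniqueness argument as the cleaner one.
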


\begin{proof}
We are in the situation of Corollary \ref{cor:CosimplcialLogAgreesWithClassicalLogByHK}. In \cite[4.3.1]{HuberKingsPolylog} a map $\eta^{\Log_X}: \1 \rightarrow s^*\Log_X$ is constructed. It is not hard to see that 
\[
\begin{tikzcd}
\1 \arrow[r, "\eta^{\Log_X}"] \arrow[rd, "{\eta_{(X,x)}}"'] & \Log_X \arrow[d, "\sim"]          \\
                                                            & {\underline{\mathscr{A}}_{(X,x)}}
\end{tikzcd}
\]
commutes: Indeed the construction of the equivalence in \cite[4.5.2]{HuberKingsPolylog} shows that $\eta^{\Log_X}$ corresponds to $\id: \Log_X \rightarrow \Log_X$ under the equivalence
\[
\map_{\Pro(\Uni_{\pi}^{\cons}(X))}(\Log_X, \Log_X) \overset{\sim}\longrightarrow \map_{\Pro (\DA_{\et}(S, \Lambda))}(\1, s^* \Log_X).
\]
Similarly the construction of the equivalence in Theorem \ref{thm:UnivPropOfLog} shows that $\eta_{(X,x)}$ corresponds to $\id: \underline{\mathscr{A}}_{(X,x)} \rightarrow \underline{\mathscr{A}}_{(X,x)}$ under the equivalence
\[
\map_{\Pro(\Uni_{\pi}^{\cons}(X))}(\underline{\mathscr{A}}_{(X,x)}, \underline{\mathscr{A}}_{(X,x)}) \overset{\sim}\longrightarrow \map_{\Pro (\DA_{\et}(S, \Lambda))}(\1, s^* \underline{\mathscr{A}}_{(X,x)}).
\]

It follows straightforward from the explicit constructions in \cite[\S 4]{HuberKingsPolylog} that
\[
\begin{tikzcd}
                                             & \1 \arrow[ld, "{\eta^{\Log_X}}"'] \arrow[rd, "{\eta^{\Log_Y}}"] &                   \\
{x^* \Log_X} \arrow[r, "x^* f_{\#}"] & {x^* f^* \Log_Y} \arrow[r, "\sim", no head]             & {y^*\Log_Y}
\end{tikzcd}
\]
commutes. Hence the uniqueness property of Proposition \ref{prop:CharacterisationOfFunctOfLog} implies the claim.
\end{proof}

\section{The nearby cycles functors with rational coefficients}

\noname \label{noname:ABulletIsLog} Let $\Lambda$ be a $\Q$-algebra and consider the pro-object $\Log$ of $\DA_{\et}(\G_m, \Lambda)$ constructed in \cite[\S 6.4]{HuberKingsPolylog}. Note that the construction of the pro-object $\Log$ is dual to the construction of the ind-object $\Log^\vee$ in \cite[\S 3.6]{AyoubThesisII}. It is shown in \cite[11.14]{AyoubRealizationEtale} that we may essentially replace $\mathscr{A}_{S}$ by $\Log$ in the description of $\Upsilon_f$ in Proposition \ref{prop:NearbyCyclesViaLogAndColimits} when $f$ is of fintie type. With Corollary \ref{cor:CosimplcialLogAgreesWithClassicalLogByHK} one can make this precise. The logarithm motive is crucial for the following Lemma:

\begin{lem} \label{lem:MapInTransitionMapIsEquiForQCoeff}
Consider the situation of Proposition \ref{prop:NearbyCyclesViaLogAndColimits} and assume that $\Lambda$ is a $\Q$-algebra. Then for all $n$ in $\N'^\times$ the map
\[
\colim_{\Delta^{op}} \Hom((f_n)_\eta^* \pi_n^* e_k^*\mathscr{A}_{S_n}, t_n^*M)
\longrightarrow   \colim_{\Delta^{op}}  \Hom((f_n)_\eta^* \pi_n^* \mathscr{A}_{S_n}, t_n^* M)
\]
induced by the canonical map $\varphi_k: \mathscr{A}_{S_n} \rightarrow e_k^* \mathscr{A}_{S_n}$ is an equivalence. 
\end{lem}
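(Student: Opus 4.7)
The plan is to show that the cosimplicial map $\varphi_k \colon \mathscr{A}_{S_n} \to e_k^* \mathscr{A}_{S_n}$ is already a levelwise equivalence in $\DA_{\et}(\G_{m,S_n}, \Lambda)$ when $\Lambda$ is a $\Q$-algebra. Once this is known, the rest is formal: $(f_n)_\eta^* \pi_n^*$ is exact and so preserves levelwise equivalences of cosimplicial objects, $\Hom(-, t_n^* M)$ turns a levelwise equivalence into a levelwise equivalence in the opposite direction, and finally $\colim_{\Delta^{\op}}$ takes levelwise equivalences to equivalences.

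To analyze $\varphi_k$ at each $[i] \in \Delta$, note that the underlying $\G_{m,S_n}$-schemes of $\mathcal{A}_{S_n}([i])$ and $e_k^* \mathcal{A}_{S_n}([i])$ coincide with $\G_{m,S_n} \times_{S_n} \G_{m,S_n}^{\times i}$ (only the cosimplicial structure maps differ). Smooth base change together with the K\"unneth isomorphism identifies both $\mathscr{A}_{S_n}([i])$ and $e_k^*\mathscr{A}_{S_n}([i])$ with $q^* \Lambda(\G_{m,S_n})^{\otimes i}$, where $q \colon \G_{m,S_n} \to S_n$ is the structure map. Unwinding the morphism of cospans in Construction \ref{noname:ConstructionOfMAthcalA} that defines $\varphi_k$, the map $\varphi_k([i])$ is induced on schemes by $\id_{\G_{m,S_n}} \times e_k^{\times i}$, which under the identifications above corresponds to $q^* \Lambda(e_k)^{\otimes i}$.

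It therefore suffices to show that $\Lambda(e_k) \colon \Lambda(\G_{m,S_n}) \to \Lambda(\G_{m,S_n})$ is an equivalence in $\DA_{\et}(S_n, \Lambda)$. Using the canonical splitting $\Lambda(\G_{m,S_n}) \simeq \1 \oplus \Lambda(1)[1]$ coming from the unit section, $\Lambda(e_k)$ respects the decomposition: on the $\1$-summand it is the identity because $e_k$ fixes the unit, while on the Tate summand $\Lambda(1)[1]$ it equals multiplication by $k$. This last claim can be checked either after $\ell$-adic realization for some prime $\ell$ invertible on $S_n$ --- where it reduces to the elementary computation $e_k^*(d\log x) = k \cdot d\log x$ in $H^1_{\et}(\G_m, \Q_\ell)$, combined with the conservativity of $\ell$-adic realizations provided by Proposition \ref{prop:QandZ/plinearizationIsConsFam} --- or directly via the transfer associated to the finite \'etale cover $e_k$, which yields $\Lambda(e_k) \circ \tr_{e_k} = k \cdot \id$ on $\Lambda(\G_m)$. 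Since $k \geq 1$ is invertible in the $\Q$-algebra $\Lambda$, $\Lambda(e_k)$ is an equivalence, and hence so is $q^* \Lambda(e_k)^{\otimes i}$ at every level $[i]$.

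The only substantive step is the identification of $\Lambda(e_k)\vert_{\Lambda(1)[1]}$ with multiplication by $k$; this is precisely where the hypothesis of rational coefficients enters, ensuring that this integer becomes invertible.
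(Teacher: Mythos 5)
Your proof is correct, and it takes a genuinely different, more elementary route than the paper. You prove the stronger statement that $\varphi_k$ is a \emph{levelwise} equivalence of cosimplicial objects: at level $[i]$ it is $q^*\bigl(\Lambda(e_k)^{\otimes i}\bigr)$, and with rational coefficients $\Lambda(e_k)$ is invertible, being the identity on the $\1$-summand and an invertible scalar on $\Lambda(1)[1]$. The paper only proves the weaker statement that the induced map of pro-objects $\underline{\varphi}_k$ in $\Pro\,\Uni_\pi^{\cons}(\G_{m,S_n})$ is an equivalence: it identifies $\underline{\mathscr{A}}_{S_n}$ with the Huber--Kings logarithm $\Log$ (Corollary \ref{cor:CosimplcialLogAgreesWithClassicalLogByHK}, Lemma \ref{lem:GammaFIsFofHK}) and $\underline{\varphi}_k$ with the isogeny map $(e_k)_\#$, which is an equivalence by \cite[4.4.2]{HuberKingsPolylog}. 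The computational core is the same in both arguments (invertibility of $k$ on Tate twists), but your version bypasses the entire logarithm digression of \S\ref{section:OnTheLog}, whereas the paper's detour buys the identification of the bar construction with $\Log$, which it wants for independent reasons.

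One caveat on how you justify the key scalar computation. Your first option cites Proposition \ref{prop:QandZ/plinearizationIsConsFam} as giving ``conservativity of $\ell$-adic realizations''; it does not. That proposition concerns the family of change-of-coefficient functors $\rho_\Q^*$ and $\rho_{\Z/\ell\Z}^*$ for $\Lambda$ flat over $\Z$, is vacuous for a $\Q$-algebra, and in any case conservativity of the $\ell$-adic realization is not available in this generality. This does not sink the proof, because your second argument suffices once slightly completed: rationally (e.g.\ after the identification with $h$-motives/Beilinson motives via Theorem \ref{thm:CompDAvsDM}) the finite \'etale map $e_k$ has a transfer with $\Lambda(e_k)\circ\tr_{e_k}\simeq k\cdot\id$. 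Since $\Lambda(e_k)$ preserves the splitting $\Lambda(\G_{m,S_n})\simeq\1\oplus\Lambda(1)[1]$ (it commutes with the idempotent $\Lambda(1\circ q)$ because $e_k\circ 1=1$ and $q\circ e_k=q$) and acts on $\Lambda(1)[1]$ by a scalar $c\in\HOM(\1,\1)\simeq\Lambda$, comparing the $(\Lambda(1)[1],\Lambda(1)[1])$-components of the transfer relation gives $c\cdot t=k$ for some $t\in\Lambda$, so $c$ is a unit; you do not even need to know $c=k$. Alternatively, one can simply invoke the standard fact that $e_k$ acts by multiplication by $k$ on the reduced motive of $\G_m$ in any oriented rational theory. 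With that repair, the levelwise claim holds, and the rest of your reduction (applying $(f_n)_\eta^*\pi_n^*$, then $\Hom(-,t_n^*M)$, then $\colim_{\Delta^{\op}}$, all of which preserve equivalences) is correct.
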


\begin{proof}
The map of cosimplicial objects $\varphi_k: \mathscr{A}_{S_n} \rightarrow e_k^* \mathscr{A}_{S_n}$ in $\DA_{\et}(\G_{m, S_n}, \Lambda)$ induces a map  $\underline{\varphi}_k: \underline{\mathscr{A}}_{S_n} \rightarrow e_k^* \underline{\mathscr{A}}_{S_n}$ in $\Pro \Uni_\pi^{\cons}(\G_{m,S_n})$. Hence it suffices to show that the map of pro-objects $\underline{\varphi}_k$ is an equivalence. 

From the explicit construction of $\varphi_k$ in \ref{noname:Abullet} and the construction in Remark \ref{rem:ExplicitDescriptionOfGammaViaSpans} it follows that $\underline{\varphi}_k$ is equivalent to the map $\gamma_{e_k}$ of Proposition \ref{prop:CharacterisationOfFunctOfLog}. Since $e_k: \G_{m,S_n} \rightarrow \G_{m,S_n}$ is an isogeny $(e_k)_{\#}: \Log \rightarrow e_k^* \Log$ is an equivalence by \cite[4.4.2]{HuberKingsPolylog}. Hence Lemma \ref{lem:GammaFIsFofHK} implies that $\underline{\varphi}_k$ is an equivalence as desired. 
\end{proof}

\begin{lem} \label{lem:UnitAdmitsRetraction}
Let $h: Y \rightarrow X$ be a finite surjective morphism where $X$ is a normal noetherian finite dimensional scheme and consider a $\Q$-algebra $\Lambda$. Then the unit map
\[
M \longrightarrow h_* h^* M
\]
admits a retraction for every $M$ in $\DA_{\et}(X, \Lambda)$. Moreover this is true after basechange along any map $X' \rightarrow X$.
\end{lem}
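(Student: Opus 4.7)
The strategy is to construct a trace map $\mathrm{tr}: h_*h^*M \to M$ whose composition with the unit $M \to h_*h^*M$ is multiplication by the degree of $h$, and then invert this degree using that $\Lambda$ is a $\Q$-algebra.

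First, I would reduce to the case where $h$ is finite étale. Via the equivalence $\DA_{\et}(\_,\Lambda) \simeq \DM_h(\_,\Lambda)$ of Theorem~\ref{thm:CompDAvsDM} combined with the topological invariance of the small étale site, purely inseparable finite surjective morphisms induce equivalences on motives, so the unit is already an equivalence in that case. Using normality of $X$ together with generic flatness, there is an open dense $U \subset X$ over which $h$ becomes finite étale after removing the purely inseparable part; iterating via the localization triangles of~\ref{noname:MotivicInftyCat}(7) and Noetherian induction on the complement, we reduce to $h$ finite étale.

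Second, assume $h: Y \to X$ is finite étale. Then $h$ is proper, so $h_* \simeq h_!$ by~\ref{noname:MotivicInftyCat}(3), and it is smooth of relative dimension $0$, so $h^* \simeq h^!$ by~\ref{noname:MotivicInftyCat}(4). Composing the resulting equivalence $h_* h^* M \simeq h_! h^! M$ with the counit of $h_! \dashv h^!$ yields a canonical trace map
\[
\mathrm{tr}: h_* h^* M \longrightarrow M.
\]
After base change to a geometric point of $X$, the morphism $h$ becomes a disjoint union of copies of the point, and one checks directly that $\mathrm{tr}\circ\unit$ acts as multiplication by the degree, a locally constant positive-integer valued function on $X$. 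Since $\Lambda$ is a $\Q$-algebra, this degree is invertible, and $(1/\deg h)\,\mathrm{tr}$ provides the required retraction of the unit.

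Finally, the base-change compatibility is automatic. The unit $M \to h_*h^*M$ is natural in $M$, and the trace map is stable under arbitrary base change by the proper base change equivalence $g^*h_* \simeq h'_* g'^*$ from~\ref{noname:MotivicInftyCat}(5) together with the fact that purity and the counit of $h_! \dashv h^!$ are preserved by exchange maps. The degree function also pulls back, so the retraction $(1/\deg h)\,\mathrm{tr}$ pulls back to the corresponding retraction for the base-changed morphism. The main technical obstacle is the first step: controlling the locus where $h$ fails to be étale and handling purely inseparable phenomena in positive characteristic, but both difficulties dissolve cleanly with rational coefficients.
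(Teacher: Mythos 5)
There is a genuine gap, and it sits exactly at the heart of the lemma: your first step, reducing to the finite \'etale case by ``iterating via the localization triangles and Noetherian induction on the complement'', does not work. Two problems combine here. First, retractions do not glue along the localization triangles $j_!j^* \to \id \to i_*i^*$: knowing that the unit $M \to h_*h^*M$ splits after restriction to an open dense $U$ and to its closed complement $Z$ does not produce a splitting over $X$ (there is an obstruction coming from the boundary maps, and no naturality you have set up controls it). Second, and more fatally, the closed complement $Z$ need not be normal, and the statement is genuinely false without normality: for the normalization $h\colon Y \to X$ of a nodal curve, $h$ is finite surjective of generic degree $1$, yet the unit $\1_X \to h_*\1_Y$ is not split (the cokernel is a skyscraper at the node and admits no map back into $h_*\1_Y$). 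So any induction that passes to closed subschemes, where normality is lost, cannot close. Your finite \'etale trace construction and the base-change argument at the end are fine, but they only treat the locus where the lemma is easy; the actual content is the non-\'etale locus over the normal base, and that is precisely what your reduction skips.

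For comparison, the paper takes a different route: it reduces to generators of the form $f_*\1$ with $f$ proper and to $\Lambda = \Q$, identifies $\DA_{\et}(\_,\Q)$ with Beilinson motives, deduces that $\DA_{\et}(\_,\Q)$ is separated, and then applies the general result \cite[3.3.40]{CisinskiDegliseBook}. The proof of that result builds the trace globally over the normal base, essentially by dominating $h$ by a pseudo-Galois cover and averaging over the Galois group with rational coefficients (normality enters to identify $X$ with the quotient on the level of the \'etale topology); this is the mechanism that replaces your generic-to-global induction. If you want to salvage your approach, you would need an argument of this kind that uses normality of $X$ everywhere, not just generically.
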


\begin{proof}
Using \cite[5.5.10]{CisinskiDegliseEtale} and \cite[4.2.13]{CisinskiDegliseBook} we can reduce to the case where $M\simeq f_* \1$ for some proper $f:W \rightarrow X$. Using \cite[5.5.12]{CisinskiDegliseEtale}  one observes that it suffices to prove the Lemma in the case where $\Lambda \simeq \Q$. Rational \'etale motives $\DA_{\et}(\_, \Q)$ are equivalent to Beilinson motives $\DM_{\BE}(\_)$ when restricted to finite dimensional noetherian schemes by \cite[5.2.2]{CisinskiDegliseEtale}. This implies that $\DA_{\et}(\_,\Q)$ is separated by \cite[14.3.3]{CisinskiDegliseBook} when restricted to finite dimensional noetherian schemes. Thus we may apply \cite[3.3.40]{CisinskiDegliseBook}. 
\end{proof}

\begin{lem} \label{lem:DriectSummandInFilteredColim}
Let $I$ be a countable filtered poset and $\mathcal{C}$ a stable $\infty$-category which admits countable  colimits. Consider a functor $F: I \rightarrow \C$ such that for any map $i \rightarrow j$ the induced morphism $F(i) \rightarrow F(j)$ is the inclusion of a direct summand. Then for every $i$ in $I$ the canonical map $F(i) \rightarrow \colim F$ is the inclusion of a direct summand. 
\end{lem}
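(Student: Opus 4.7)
The plan is to reduce to the case $I = \mathbb{N}$ by extracting a cofinal sequence, and then use the Milnor exact sequence for the tower of mapping spaces to assemble pointwise retractions into a retraction of $F(i) \to \colim F$.

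First, I would replace $I$ by the slice $I_{i/}$, which is again countable filtered, has $i$ as initial object, and whose inclusion into $I$ is cofinal by filteredness (every $j \in I$ admits a common upper bound with $i$). A standard enumerate-and-bound argument then yields a cofinal sequence $i = i_0 \to i_1 \to i_2 \to \cdots$ in $I_{i/}$, so we may assume $I = \mathbb{N}$ and $i = 0$. Write $\alpha_n \colon F(n) \to F(n+1)$ for the transition maps.

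For each $n$, pick a retraction $\rho_n \colon F(n+1) \to F(n)$ of $\alpha_n$ and set $r_n := \rho_0 \circ \rho_1 \circ \cdots \circ \rho_{n-1} \colon F(n) \to F(0)$, so that $r_0 \simeq \id_{F(0)}$ and the identity $\rho_n \circ \alpha_n \simeq \id_{F(n)}$ yields $r_{n+1} \circ \alpha_n \simeq r_n$. Thus the classes $\{[r_n]\}_n$ define a compatible system for the tower
\[ \cdots \to \map_\C(F(n+1), F(0)) \to \map_\C(F(n), F(0)) \to \cdots \to \map_\C(F(0), F(0)), \]
whose transition maps are precomposition by the $\alpha_n$.

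Finally, since $\C$ is stable each mapping space above is the underlying space of a spectrum, so the Milnor short exact sequence together with the equivalence $\map_\C(\colim F, F(0)) \simeq \lim_n \map_\C(F(n), F(0))$ yields a surjection
\[ \pi_0 \map_\C(\colim F, F(0)) \twoheadrightarrow \lim_n \pi_0 \map_\C(F(n), F(0)). \]
Lifting the compatible system $\{[r_n]\}$ produces a map $r \colon \colim F \to F(0)$ whose restriction along $F(0) \to \colim F$ is homotopic to $r_0 \simeq \id_{F(0)}$, exhibiting $F(0) \to \colim F$ as the inclusion of a direct summand. The main subtlety is a coherence issue: the pointwise retractions $\rho_n$ are only specified up to homotopy and thus need not assemble into a coherent map out of the colimit in the $\infty$-categorical sense. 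The Milnor sequence sidesteps this precisely because we only need surjectivity of $\pi_0 \lim \to \lim \pi_0$, which is automatic in the stable setting where mapping spaces are infinite loop spaces.
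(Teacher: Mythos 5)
Your proof is correct, and it takes a genuinely different route from the one in the paper after the shared reduction to $I = \mathbb{N}$ via a cofinal sequence. The paper's argument is more structural: writing $x_0 = F(0)$ and $x_n = \operatorname{cofib}(F(n-1) \to F(n))$ for $n \geq 1$, one shows inductively that the diagram $F$ is equivalent in $\Fun(\mathbb{N}, \C)$ to the system of canonical inclusions $\bigoplus_{i \leq n} x_i \hookrightarrow \bigoplus_{i \leq n+1} x_i$, whose colimit is visibly $\bigoplus_{i \in \mathbb{N}} x_i$ with $F(0) = x_0$ as a direct summand. This identifies the whole colimit explicitly, but the level-by-level rebuilding of the diagram as a sum requires verifying that the chosen splittings assemble into an actual natural equivalence of $\mathbb{N}$-indexed functors (which works because $N(\mathbb{N})$ is built from spines, so a functor is determined by its restrictions to the $1$-simplices). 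Your argument avoids re-identifying the diagram altogether: you only need a single map $\colim F \to F(0)$ restricting to the identity on $F(0)$, and you produce it by lifting the compatible family $\{[r_n]\} \in \lim_n \pi_0 \map_\C(F(n), F(0))$ through the surjection $\pi_0 \map_\C(\colim F, F(0)) \twoheadrightarrow \lim_n \pi_0 \map_\C(F(n), F(0))$, which holds because the mapping spaces underlie mapping spectra so the Milnor $\lim^1$-sequence applies unconditionally. This is arguably cleaner as it isolates exactly where stability is used (to guarantee the surjection $\pi_0 \lim \to \lim \pi_0$) and makes explicit the coherence subtlety that the paper's terse ``This is clear'' glosses over; the trade-off is that it gives only the retraction, not an identification of $\colim F$ as a direct sum.
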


\begin{proof}
First let us note that we may find a cofinal functor $\varphi: \N \rightarrow I$, where $\N$ denotes the linearly ordered set $\{ 0< 1 < 2< \dots  \}$ of natural numbers. This is well known, we give the proof for completeness. Since $I$ is countable we find a bijective map $\sigma: \N \rightarrow \Ob I$.  We may assume that $I$ has an initial object $\sigma(0)$. For any two objects $i, j$ in $I$ let us write $i \prec j$ whenever there is a map $i \rightarrow j$ in the poset $I$. Now inductively define a functor $\varphi: \N \rightarrow I$ by the rules: $\varphi(0) = \sigma(0)$, $\varphi(m) \prec \varphi(n)$ and $ \sigma(m) \prec  \varphi(n)$ for all $m<n$. The second rule makes sure that $\varphi$ is a functor and the third rule implies that $\varphi$ is cofinal. 

Therefore it is sufficient to prove the Lemma in the case $I = \N$ which boils down to the following: Given a $\N$- indexed family of objects $\{x_i\}_{i \in \N}$ in $\mathcal{C}$ the colimit of the diagram
\[
\dots \longrightarrow \bigoplus_{i \leq n} x_i \longrightarrow \bigoplus_{i \leq n+1} x_i \longrightarrow \dots
\]
in $\mathcal{C}$ is equivalent to $\bigoplus_{i \in \N} x_i$. This is clear. 
\end{proof}

\begin{thm}\label{thm:UpsilonRetractOfPsiTame}
Let $S$ be the spectrum of a strictly henselian discrete valuation ring and assume that $\Lambda$ is a $\Q$-algebra. Then for every morphism of finite type $f:X \rightarrow S$, every $M$ in $\DA_{\et}(X_\eta, \Lambda)$ and every $n$ in $\N'^\times$  the canonical map 
\[ \Upsilon_{f_n}(t_n^* M) \longrightarrow \Psi^{\tame}_f(M)\]
 is the inclusion of a direct summand.
\end{thm}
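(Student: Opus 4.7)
By Proposition \ref{prop:NearbyCyclesViaLogAndColimits} we identify
\[
\Psi^{\tame}_f(M) \simeq \colim_{m \in (\N'^\times)^{\op}} \Upsilon_{f_m}(t_m^* M).
\]
The indexing poset $(\N'^\times)^{\op}$ is countable and filtered (given $m_1, m_2 \in \N'^\times$, the product $m_1 m_2$ lies in $\N'^\times$ and receives arrows from both), so Lemma \ref{lem:DriectSummandInFilteredColim} reduces the theorem to showing that for every arrow $m \to m'$ in $(\N'^\times)^{\op}$, with $k := m'/m$, the transition map $\tau_{m,m'}: \Upsilon_{f_m}(t_m^* M) \to \Upsilon_{f_{m'}}(t_{m'}^* M)$ is the inclusion of a direct summand.

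Consulting the explicit formula for $\tau_{m,m'}$ from Remark \ref{remark:ExplicitTransitionMapsNearby}(1), each arrow marked ``$\sim$'' is an equivalence by construction, and the only remaining nontrivial step, the one induced by the comparison map $\varphi_k : \mathscr{A}_{S_{m'}} \to e_k^* \mathscr{A}_{S_{m'}}$, becomes an equivalence by Lemma \ref{lem:MapInTransitionMapIsEquiForQCoeff} (this is where the $\Q$-algebra hypothesis enters). Hence, up to equivalences, $\tau_{m,m'}$ is the map
\[
\colim_{\Delta^{\op}} i^* j_{m*} F_\bullet \longrightarrow \colim_{\Delta^{\op}} i^* j_{m*} t_{k*} t_k^* F_\bullet
\]
induced by the unit $\eta: \id \to t_{k*} t_k^*$ for the adjunction $t_k^* \dashv t_{k*}$, where $t_k: (X_{m'})_\eta \to (X_m)_\eta$ and $F_\bullet := \Hom((f_m)_\eta^* \pi_m^* \mathscr{A}_{S_m}, t_m^* M)$.

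To show this last map is a split injection it suffices to produce a retraction of $\eta$ as a natural transformation of endofunctors of $\DA_{\et}((X_m)_\eta, \Lambda)$. Since $m' \in \N'^\times$, the integer $k$ is coprime to the residue characteristic $p$, and the field extension $K(\pi^{1/m'})/K(\pi^{1/m})$ is therefore separable of degree $k$. Consequently $t_k: \eta_{m'} \to \eta_m$ is finite \'etale of degree $k$, and so is its base change $t_k: (X_{m'})_\eta \to (X_m)_\eta$; moreover $k$ is invertible in the $\Q$-algebra $\Lambda$. In this setting the composite $\id \xrightarrow{\eta} t_{k*} t_k^* \xrightarrow{\tr} \id$ equals multiplication by $k$, so $\tfrac{1}{k} \tr$ is a retraction of $\eta$ as a natural transformation of endofunctors. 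Applying this retraction levelwise to $F_\bullet$ and then composing with the functor $\colim_{\Delta^{\op}} i^* j_{m*}$ yields the required retraction of $\tau_{m,m'}$.

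The only real subtlety is the need for a retraction of $\eta$ that is \emph{natural} (not merely pointwise), so that it survives passage through the cosimplicial object $F_\bullet$, the functors $i^*$ and $j_{m*}$, and the colimit over $\Delta^{\op}$. Lemma \ref{lem:UnitAdmitsRetraction} in its stated generality only asserts pointwise splittings, but for the finite \'etale covers of degree prime to $p$ at hand the trace map provides this naturality for free, which is what makes the argument go through.
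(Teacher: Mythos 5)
Your proof is correct, and it follows the paper's reduction exactly up to the last step (Proposition \ref{prop:NearbyCyclesViaLogAndColimits}, Lemma \ref{lem:DriectSummandInFilteredColim}, the explicit transition maps of Remark \ref{remark:ExplicitTransitionMapsNearby}(1), and Lemma \ref{lem:MapInTransitionMapIsEquiForQCoeff}), but it diverges where the splitting is produced. The paper does not need a natural retraction: since $f$ is of finite type over the strictly henselian $S$, Proposition \ref{prop:SstrLocImpliesCompactGen} lets $j_*$ and $t_{k*}$ commute with the $\Delta^{\op}$-colimit, so the transition map is identified with $i^*j_*$ applied to the unit $N \to t_{k*}t_k^*N$ at the \emph{single} object $N=\colim_{\Delta^{\op}}\Hom((f_m)_\eta^*\pi_m^*\mathscr{A}_{S_m},t_m^*M)$; the pointwise retraction of Lemma \ref{lem:UnitAdmitsRetraction} (applied to $t_k$ over the normal base $\eta_m$, using its base-change clause) then transports through the functor $i^*j_*$, so the worry you raise about pointwise versus natural splittings is not a gap in the paper's argument. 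Your alternative keeps the colimit outside and instead upgrades the splitting to a natural one via the trace of the finite \'etale map $t_k$ of degree $k$ prime to $p$: this is sound, since the trace is a natural transformation, and once the composite $\unit$ followed by $\tr$ is objectwise invertible it is invertible as a natural transformation, so its inverse composed with $\tr$ retracts the unit naturally and survives the cosimplicial diagram, $i^*j_{m*}$, and the colimit. The one point you assert without justification is that this composite is multiplication by $k$ (equivalently, by the projection formula, that $\1 \to t_{k*}\1 \to \1$ is invertible) in $\DA_{\et}(-,\Lambda)$ for $\Lambda$ a $\Q$-algebra; this degree formula is true but is not proved in the paper, so you should cite it (it follows, e.g., from the identification of $\DA_{\et}(-,\Q)$ with Beilinson motives invoked in the proof of Theorem \ref{thm:CompDAvsDM}, where finite \'etale transfers satisfy the degree formula). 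As for what each route buys: the paper's argument leans on the finite-type hypothesis through Proposition \ref{prop:SstrLocImpliesCompactGen} but only needs the crude splitting Lemma \ref{lem:UnitAdmitsRetraction}, which is stated for arbitrary finite surjective maps over a normal base; your argument dispenses with the commutation of colimits entirely, at the cost of the \'etale-specific trace input and the uncited degree formula.
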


\begin{proof}
Considering the description of $\Psi_f^{\tame}$ in Proposition \ref{prop:NearbyCyclesViaLogAndColimits} and Lemma \ref{lem:DriectSummandInFilteredColim} above it suffices to show that for any $\varphi: n \rightarrow m$ in $\N'^\times$ the transition map
\[
\tau_ \varphi: \Upsilon_{f_m}(t_m^* M) \longrightarrow \Upsilon_{f_n}(t_n^* M)
\]
admits a retract. 

Since $f$ is of finite type the functors $j_*$ and $t_{k*}$ commute with colimits by \ref{prop:SstrLocImpliesCompactGen}. Hence Lemma \ref{lem:MapInTransitionMapIsEquiForQCoeff} shows that the maps in \ref{remark:ExplicitTransitionMapsNearby} (1) induce an equivalence
\[
\Upsilon_{f_n}(t_n^*M) \simeq  i^*j_* t_{k *} t_k^* \colim_{\Delta^{\op}}\Hom((f_m)_\eta^* \pi_m^* \mathscr{A}_{S_m}, t_m^*M), 
\]
and the transition map $\tau_ \varphi$ is simply induced by the unit map $\id \rightarrow  t_{k *} t_k^*$. Therefore $\tau_\varphi$ admits a retraction by Lemma \ref{lem:UnitAdmitsRetraction}.
\end{proof}

\begin{cor} \label{cor:UpsilonfnIsPsitame}
Let $S$ be the spectrum of an excellent strictly henselian discrete valuation ring and assume that $\Lambda$ is a $\Q$-algebra. Then for every $f:X \rightarrow S$ of finite type and every $M$ in $\DA^{\cons}_{\et}(X_\eta, \Lambda)$ there exists an $n$ in $\N'^\times$ such that the canonical map
\[
\Upsilon_{f_n}(t_n^* M) \longrightarrow \Psi_f^{\tame}(M) 
\]
is an equivalence.
\end{cor}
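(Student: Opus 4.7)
The plan is to prove that for some $n \in \N'^\times$ the canonical map $\iota_n \colon \Upsilon_{f_n}(t_n^*M) \to \Psi_f^{\tame}(M)$ is simultaneously a split monomorphism and a split epimorphism, and to observe that in a stable $\infty$-category this forces $\iota_n$ to be an equivalence. The split-mono property is already in hand: Theorem \ref{thm:UpsilonRetractOfPsiTame} asserts that $\iota_n$ is the inclusion of a direct summand for every $n$, so there is a retraction $r_n \colon \Psi_f^{\tame}(M) \to \Upsilon_{f_n}(t_n^*M)$ with $r_n \iota_n \simeq \id$.

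For the split-epi property I would exploit compactness of the target. By Proposition \ref{prop:NearbyCyclesViaLogAndColimits} we have
\[
\Psi_f^{\tame}(M) \simeq \colim_{n \in (\N'^\times)^{\op}} \Upsilon_{f_n}(t_n^*M),
\]
and this colimit is filtered, since $\N'^\times$ is cofiltered under divisibility (any two elements have the least common multiple as an upper bound). Because $S$ is excellent and $f$ is of finite type, Theorem \ref{thm:PropertiesOfEtaleMotivicNearby}(3) guarantees that $\Psi_f^{\tame}(M)$ is constructible; as $f_\sigma$ is of finite type over the strictly henselian noetherian scheme $\sigma$, Proposition \ref{prop:SstrLocImpliesCompactGen} upgrades this to compactness of $\Psi_f^{\tame}(M)$ in $\DA_{\et}(X_\sigma, \Lambda)$. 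Combining compactness with the filtered colimit description yields
\[
\map(\Psi_f^{\tame}(M), \Psi_f^{\tame}(M)) \simeq \colim_{n \in (\N'^\times)^{\op}} \map(\Psi_f^{\tame}(M), \Upsilon_{f_n}(t_n^*M)),
\]
so the identity of $\Psi_f^{\tame}(M)$ lifts to a map $s_n \colon \Psi_f^{\tame}(M) \to \Upsilon_{f_n}(t_n^*M)$ with $\iota_n s_n \simeq \id$ for some $n$ in $\N'^\times$.

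Using this $n$ for both sides, a formal manipulation $r_n \simeq r_n \iota_n s_n \simeq s_n$ identifies the retraction and the section, so $r_n \simeq s_n$ provides a two-sided inverse to $\iota_n$, and $\iota_n$ is an equivalence. I do not anticipate a serious obstacle in carrying this out: the argument is a formal consequence of Theorem \ref{thm:UpsilonRetractOfPsiTame} together with the compactness of $\Psi_f^{\tame}(M)$. The one delicate point to keep in mind is that the excellency hypothesis is used crucially, through Theorem \ref{thm:PropertiesOfEtaleMotivicNearby}(3), to ensure that $\Psi_f^{\tame}(M)$ is constructible and thus compact; without it the colimit argument producing $s_n$ fails.
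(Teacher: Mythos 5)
Your proposal is correct and follows essentially the same route as the paper: compactness of $\Psi_f^{\tame}(M)$ (constructibility via Theorem \ref{thm:PropertiesOfEtaleMotivicNearby}(3) plus Proposition \ref{prop:SstrLocImpliesCompactGen}) lets the identity factor through some $\Upsilon_{f_n}(t_n^*M)$ in the filtered colimit, and this is combined with the split-mono statement of Theorem \ref{thm:UpsilonRetractOfPsiTame}. You merely make the final two-sided-inverse manipulation $r_n \simeq r_n \iota_n s_n \simeq s_n$ explicit, which the paper leaves implicit.
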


\begin{proof}
By Proposition \ref{prop:SstrLocImpliesCompactGen} the constructible objects in $\DA_{\et}(X_\sigma, \Lambda)$ agree with the compact objects and therefore $\Psi^{\tame}_f(M)$ is compact by Theorem \ref{thm:PropertiesOfEtaleMotivicNearby} (3). The canonical maps into the colimit
\[
\Upsilon_{f_n}(t_n^* M) \longrightarrow \Psi_{f}^{\tame}(M)
\]
are inclusions of direct summands by Theorem \ref{thm:UpsilonRetractOfPsiTame}. By compactness, the identity of  $\Psi^{\tame}_f(M)$ factors as
\[
\Psi^{\tame}_f(M) \longrightarrow \Upsilon_{f_n}(t_n^*M) \longrightarrow \Psi^{\tame}_f(M)
\]
for some $n$ in $\N'^\times$. In particular $\Psi^{\tame}_f(M)$ is a direct summand of $\Upsilon_{f_n}(t_n^*M)$. But since $\Upsilon_{f_n}(t_n^* M) $ is a direct summand of $\Psi_{f}^{\tame}(M)$ this implies that  the canonical map
\[
\Upsilon_{f_n}(t_n^* M) \longrightarrow \Psi^{\tame}_f(M)
\] 
is an equivalence. 
\end{proof}

\begin{cor} \label{cor:UpsilonRetractOfPsi}
Let $S$ be the spectrum of an excellent strictly henselian discrete valuation ring and assume that $\Lambda$ is a $\Q$-algebra. Then for every $f:X \rightarrow S$ of finite type and every $M$ in $\DA^{\cons}_{\et}(X_\eta, \Lambda)$ the canonical map 
\[ \Upsilon_{(t_L \circ f_L)_n}(t_n^* t_L^* M) \longrightarrow \Psi_f(M) \]
is the inclusion of a direct summand for all $n$ in $\N'^\times$ and $L$ in $\Xi_\tau$. In particular we can find an $n$ in $\N'^\times$ and $L$ in $\Xi_\tau$ such that this map is an equivalence.
\end{cor}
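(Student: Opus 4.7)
The plan is to factor the canonical map through the tame nearby cycles of the composite $g := t_L \circ f_L: X_L \to S$, which is of finite type since $t_L$ is finite and $f_L$ is of finite type. By Proposition~\ref{prop:NearbyCyclesViaLogAndColimits}, this gives a factorization
\[
\Upsilon_{(t_L\circ f_L)_n}(t_n^* t_L^* M) \xrightarrow{\alpha_{L,n}} \Psi^{\tame}_{t_L\circ f_L}(t_L^* M) \xrightarrow{\beta_L} \Psi_f(M),
\]
and it suffices to exhibit both factors as the inclusion of a direct summand.

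The factor $\alpha_{L,n}$ is handled directly by Theorem~\ref{thm:UpsilonRetractOfPsiTame} applied to $g$ and the object $t_L^* M$ in $\DA_{\et}^{\cons}((X_L)_\eta, \Lambda)$. For $\beta_L$ I would exploit the cofinality of ``large $L'$'' in $\Xi_\tau$: by Theorem~\ref{thm:PropertiesOfEtaleMotivicNearby}(2) there exists $L_0 \in \Xi_\tau$ such that $\Psi^{\tame}_{t_{L'}\circ f_{L'}}(t_{L'}^* M) \to \Psi_f(M)$ is an equivalence for every $L' \supseteq L_0$. Taking $L' := L \cdot L_0$ inside $M_\tau$ (which lies in $\Xi_\tau$) and the morphism $u: L \to L'$ in $\Xi_\tau$, the map $\beta_L$ factors as the transition $\tau_u: \Psi^{\tame}_{t_L\circ f_L}(t_L^* M) \to \Psi^{\tame}_{t_{L'}\circ f_{L'}}(t_{L'}^* M)$ followed by an equivalence, so it suffices to show $\tau_u$ admits a retraction. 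By Remark~\ref{remark:ExplicitTransitionMapsNearby}(2), $\tau_u$ is the exchange map $\Ex^*$ of the specialization system $\Psi^{\tame}$ along the finite surjective morphism $u_X: X_{L'} \to X_L$. Unfolding through the colimit description of $\Psi^{\tame}$ and proceeding in the spirit of the proof of Theorem~\ref{thm:UpsilonRetractOfPsiTame}, this should identify with the unit map $\id \to (u_X)_{\eta*}(u_X)_\eta^*$ applied inside the cosimplicial $\Upsilon$-formula, where $(u_X)_\eta: (X_{L'})_\eta \to (X_L)_\eta$ is the base change of the finite surjection $\eta_{L'} \to \eta_L$ of spectra of fields. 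Lemma~\ref{lem:UnitAdmitsRetraction}, applied to $\eta_{L'} \to \eta_L$ and base changed along $(X_L)_\eta \to \eta_L$, then supplies the retraction.

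For the ``in particular'' part, take $L_0$ as above; then $\beta_{L_0}$ is already an equivalence. Applying Corollary~\ref{cor:UpsilonfnIsPsitame} to the finite type morphism $t_{L_0}\circ f_{L_0}: X_{L_0} \to S$ and the constructible object $t_{L_0}^* M$ yields $n_0 \in \N'^\times$ such that $\alpha_{L_0, n_0}$ is also an equivalence. The composite $\beta_{L_0} \circ \alpha_{L_0, n_0}$ is then the required equivalence.

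The hard part will be the identification of $\tau_u$ with a unit map inside the $\Upsilon$-colimit formula --- the $\Xi_\tau$-direction analog of the step where Lemma~\ref{lem:MapInTransitionMapIsEquiForQCoeff} is used in the proof of Theorem~\ref{thm:UpsilonRetractOfPsiTame} --- together with the coherence bookkeeping needed to assemble the level-wise retractions into a genuine retraction of $\tau_u$ (including the invariance of $\DA_{\et}$ under the universal homeomorphism $u_\sigma$ tacitly used in Remark~\ref{remark:ExplicitTransitionMapsNearby}(2)).
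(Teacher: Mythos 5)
Your proposal shares the paper's key ingredients (Theorem~\ref{thm:PropertiesOfEtaleMotivicNearby}(2), Theorem~\ref{thm:UpsilonRetractOfPsiTame}, Lemma~\ref{lem:UnitAdmitsRetraction}), but it leaves an essential step unfinished, namely the claim that the $\Xi_\tau$-direction transition $\tau_u: \Psi^{\tame}_{t_L\circ f_L}(t_L^* M) \to \Psi^{\tame}_{t_{L'}\circ f_{L'}}(t_{L'}^* M)$ admits a retraction. You flag this yourself, but the difficulty is real: $\Psi^{\tame}_g$ is a colimit over $(\N'^\times)^{\op}$ of $\Upsilon_{g_n}$'s living over the varying schemes $S_n$, so one cannot simply move the colimit through $i^*j_*$ and identify $\tau_u$ with a single unit map the way it is done for $\tilde\tau_u$ at the $\Upsilon$-level. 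Either you must carry the identifications $i^*j_*(t_n)_* \simeq i^*j_*$ coherently through the $(\N'^\times)^{\op}$-direction to reach a single unit, or you must assemble the level-wise retractions (each coming from the $\Q$-linear trace construction in Lemma~\ref{lem:UnitAdmitsRetraction}) into a natural transformation of $(\N'^\times)^{\op}$-diagrams; both are nontrivial bookkeeping that your sketch does not supply.

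The paper takes a different decomposition precisely to avoid this. It first reduces to $L=K$ and $n=1$: the map in question factors as
\[
\Upsilon_{(t_L\circ f_L)_n}(t_n^* t_L^* M) \longrightarrow \Psi_{(t_L\circ f_L)_n}(t_n^* t_L^* M) \overset{\sim}\longrightarrow \Psi_f(M),
\]
so it is enough to show $\Upsilon_f(M)\to\Psi_f(M)$ is the inclusion of a direct summand. It then picks $F\in\Xi_\tau$ from Theorem~\ref{thm:PropertiesOfEtaleMotivicNearby}(2) so that $\Psi^{\tame}_{t_F\circ f_F}(t_F^* M)\to\Psi_f(M)$ is an equivalence, and uses the square
\[
\begin{tikzcd}
\Upsilon_{f}(M) \arrow[d, "\tilde{\tau}_u"'] \arrow[r]            & \Psi^{\tame}_{f}(M) \arrow[d, "\tau_u"]  \\
\Upsilon_{t_F\circ f_F}(t_F^*M) \arrow[r] & \Psi^{\tame}_{t_F\circ f_F}(t_F^*M)
\end{tikzcd}
\]
(commutative because $\Upsilon\to\Psi^{\tame}$ is a morphism of specialization systems) to factor the target map as $\tilde\tau_u$ followed by $\Upsilon_{t_F\circ f_F}\to\Psi^{\tame}_{t_F\circ f_F}\simeq\Psi_f$. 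The second factor retracts by Theorem~\ref{thm:UpsilonRetractOfPsiTame}, and $\tilde\tau_u$ is handled at the pure $\Upsilon$-level, where the $\Delta^{\op}$-colimit moves through $j_*$ (Proposition~\ref{prop:SstrLocImpliesCompactGen}) and the transition becomes a single unit $\id\to t_{F*}t_F^*$, to which Lemma~\ref{lem:UnitAdmitsRetraction} applies directly. In short: the paper never retracts $\tau_u$; it retracts $\tilde\tau_u$ and transports the conclusion through the square. You should rearrange your argument along these lines, which renders the troublesome coherence unnecessary. Your ``in particular'' argument via Corollary~\ref{cor:UpsilonfnIsPsitame} is fine.
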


\begin{proof} Since the map in question factors as 
\[ \Upsilon_{(t_L \circ f_L)_n}(t_n^* t_L^* M) \longrightarrow \Psi_{(t_L \circ f_L)_n}(t_n^* t_L^* M) \overset{\sim}\longrightarrow\Psi_f(M) \]
we may assume that $K=L$ and $n=1$.

By Theorem \ref{thm:PropertiesOfEtaleMotivicNearby} (2) there exists a $u: K \rightarrow F$ in $\Xi_\tau$ such that the canonical map
\[
\Psi_{t_{F} \circ f_{F}}^{\tame}( t_{F}^* M) \longrightarrow \Psi_f(M)
\]
is an equivalence. Let us consider the composition
\[
\tilde{\tau}_u: 
\Upsilon_{f}(M) \simeq u_\sigma^* \Upsilon_{f}(M) \overset{\Ex^*}\longrightarrow \Upsilon_{f \circ u}(u_\eta^* M) \simeq \Upsilon_{t_{F} \circ f_{F}}(t_{F}^* M)
\]
(using the notations of Remark \ref{remark:ExplicitTransitionMapsNearby} (2)).
From the explicit description of the transition map in Remark \ref{remark:ExplicitTransitionMapsNearby} (2) and the fact that $\Upsilon \rightarrow \Psi^{\tame}$ is a map of specialization systems we see that 
\[
\begin{tikzcd}
\Upsilon_{f}(M) \arrow[d, "\tilde{\tau}_u"'] \arrow[r]            & \Psi^{\tame}_{f}(M) \arrow[d, "\tau_u"]  \\
\Upsilon_{ f_F \circ \tau_F}(t_F^*M) \arrow[r] & \Psi^{\tame}_{ f_F \circ \tau_F}(t_F^*M)
\end{tikzcd}
\]
commutes. Since the bottom horizontal map is the inclusion of a direct summand by Theorem \ref{thm:UpsilonRetractOfPsiTame} it suffices to show that $\tilde{\tau}_u$ is the inclusion of a direct summand. 

This follows from Lemma \ref{lem:UnitAdmitsRetraction} since $\tilde{\tau}_u$ is given by
\begin{align*}
\colim_{\Delta^{\op}} i^*j_*\Hom(f_\eta^* \pi^* \mathscr{A}_S, M) & \overset{\sim}\longrightarrow  i^*j_*\colim_{\Delta^{\op}}\Hom(f_\eta^* \pi^* \mathscr{A}_S, M) \\
& \overset{\unit}\longrightarrow  i^*j_* t_{F*} t_F^* \colim_{\Delta^{\op}}\Hom(f_\eta^* \pi^* \mathscr{A}_S, M) \\
& \overset{\sim}\longrightarrow  i^*j_* \colim_{\Delta^{\op}}\Hom(t_F^* f_\eta^* \pi^* \mathscr{A}_S, t_F^* M) \\
&\overset{\sim}\longrightarrow \colim_{\Delta^{\op}} i^*j_*\Hom((f \circ t_F)_\eta^* \pi^* \mathscr{A}_S, t_F^* M).
\end{align*}

The last sentence follows analogue to Corollary \ref{cor:UpsilonfnIsPsitame}.

\end{proof}

\chapter{A local monodromy theorem for $J$-adic realizations of \'etale motives}

Our main Goal in this chapter is to prove two generalizations of Grothendieck's local monodromy theorem: Corollary \ref{cor:MonodromyThm} and Corollary \ref{cor:LocMonForPerverseThings}. It turns out that this is a rather easy consequence of a theorem by Ayoub (Theorem \ref{thm:MonodromySquareAyoub}) and Corollary \ref{cor:UpsilonfnIsPsitame}.

We start off by introducing the $J$-adic realization of \'etale motives and the $J$-adic nearby cycles functor. 

\section{The $J$-adic realization of the motivic nearby cycles functor} \label{section:AdicRealNearby}

\noname \label{noname:SetupGalois} Throughout this chapter let us fix the following situation: $S$ is the spectrum of an excellent strictly henselian discrete valuation ring $R$ with fixed uniformizer $\pi$. We write $K := \Frac(S)$, $\kappa := S/(\pi)$ and denote by $p$ the characteristic of $\kappa$. We fix a separable closure $\bar{K}$ of $K$ and write $\tilde{K}:=  K(\pi^{1/n} | n \in \N'^\times) \subset \bar{K}$. Recall from \ref{const:TotalPsi} that there is a short exact sequence of groups 
\[
0 \longrightarrow P \longrightarrow \Gal(\bar{K}/K) \overset{\chi}\longrightarrow \widehat{\Z}'(1) \longrightarrow 0 
\]
and an isomorphism $\Gal(\tilde{K}/K) \simeq \widehat{\Z}'(1)$ under which $\chi$ corresponds to the restriction map $\Gal(\bar{K}/K) \rightarrow \Gal(\tilde{K}/K)$. 

Finally let $\Lambda$ be a ring and $J \subset \Lambda$ an ideal such that $\Lambda/J$ is of positive characteristic $m$ invertible in $\mathcal{O}(S)$.

\noname Let $Y$ be a scheme. Since we do not need any $\infty$-categorical tools for this chapter let us denote by $\DA_{\et}(Y, \Lambda)$ the triangulated category obtained as the homotopy category of the $\infty$-category defined in \ref{noname:SymMonStrOfDA}.

\noname \label{noname:JAdicRealization} Let $\N$ denote the poset $\{ 0 \rightarrow 1 \rightarrow 2 \rightarrow \dots \}$ and let $\Lambda / J^*$ denote the diagram
\[
\left\lbrace  \dots \longrightarrow \Lambda/ J^3 \longrightarrow \Lambda/ J^2 \longrightarrow \Lambda/ J \longrightarrow 0 \right\rbrace
\]
of rings indexed by the poset $\N^{op}$. We may consider $\Lambda / J^*$ as a ring object in the topos of presheaves (of sets) on $\N$. In particular we can talk about the abelian category of $\Lambda / J^*$-modules. 

Let $Y$ be a scheme such that the characteristic of $\Lambda/J$ is invertible in $\mathcal{O}(Y)$. We denote by $\T_{\et}(Y, \Lambda/J^*)$ the unbounded derived category of \'etale sheaves on $Y$ with values in the abelian category of $\Lambda/J^*$-modules. If $Y$ is locally noetherian we may follow \cite[\S 5]{AyoubRealizationEtale} to obtain a triangulated symmetric monoidal functor
\[
\R_J: \DA_{\et}(Y, \Lambda) \longrightarrow {\T}_{\et}(Y, \Lambda/ J^*).
\]
For $s$ in $\N$ there are canonical functors
\[
s^* : {\T}_{\et}(Y, \Lambda/ J^*) \longrightarrow {\T}_{\et}(Y, \Lambda/ J^s),
\]
which form a conservative family by \cite[5.4]{AyoubRealizationEtale}. Let
\[
\hat{\T}_{\et}(Y, \Lambda_J) \subset {\T}_{\et}(Y, \Lambda/ J^*)
\]
denote the full subcategory consisting of objects $K$ such that the canonical map
\[
(s+1)^* K \otimes_{\Lambda/J^{s+1}} \Lambda/J^{s} \longrightarrow s^* K
\]
is an equivalence for all $s$ in $\N$. (Note that we always talk about dervied functors without explicitly saying so.) Then $\R_J$ factors through $\hat{\T}_{\et}(Y, \Lambda_J)$ by \cite[5.8]{AyoubRealizationEtale}. By slight abuse of notation we write
\[
\R_J: \DA_{\et}(Y, \Lambda) \longrightarrow \hat{\T}_{\et}(Y, \Lambda_J)
\]
and call it the \textit{$J$-adic realization functor}.

The formation of ${\T}_{\et}(\_, \Lambda/ J^*)$ comes equipped with the six functors. The functors $f^*,f_*,f_!, f^!$ and $\otimes$ restrict to the subcategories $\hat{\T}_{\et}(\_, \Lambda_J)$ when restricted to morphisms of finite type between noetherian finite dimensional schemes (Using Proposition \ref{prop:ChangeOfCoeffCompWSixFunctors} we can extend the proof of \cite[6.7]{AyoubRealizationEtale}). As in \cite[6.9]{AyoubRealizationEtale} we can deduce that the $J$-adic realization
\[
\R_J: \DA_{\et}(\_, \Lambda) \longrightarrow \hat{\T}_{\et}(\_, \Lambda_J)
\]
commutes with the six functors $f^*,f_*,f_!, f^!$ and $\otimes$ when restricted to morphisms of finite type between noetherian finite dimensional schemes. Moreover it commutes with the formation of $\Hom$ when restricted to constructible objects in $\DA_{\et}^{\cons}(\_, \Lambda)$. 

\noname An object $K$ in $\hat{\T}_{\et}(Y, \Lambda_J)$ is called \textit{constructible} if for all $s$ in $\N$ and $n \in \Z$ the object $H^n(s^*K)$ is a constructible \'etale sheaf the classical sense (i.e in the sense of \cite[IX, 2.3]{SGA4.3}) and $H^n(s^*K) = 0$ for all but finitely many $n$. We write $\hat{\T}^{\cons}_{\et}(Y, \Lambda_J)$ for the full subcategory of $\hat{\T}_{\et}(Y, \Lambda_J)$ consisting of constructible objects.

We define $\hat{\T}^{\cons}_{\et}(Y, \Lambda_J \otimes \Q)$  to be the pseudo abelian envelope of $\hat{\T}^{\cons}_{\et}(Y, \Lambda_J) \otimes \Q.$
Here $\hat{\T}^{\const}_{\et}(Y, \Lambda_J) \otimes \Q$ denotes the category whose objects are the objects of $\hat{\T}^{\cons}_{\et}(Y, \Lambda_J)$ and 
\[
\map_{\hat{\T}^{\cons}_{\et}(Y, \Lambda_J) \otimes \Q}(A,B) = \map_{\hat{\T}^{\cons}_{\et}(Y, \Lambda_J)}(A,B) \otimes \Q\]
for $A,B$ in $\hat{\T}^{\cons}_{\et}(Y, \Lambda_J)$. As remarked in \cite[9.4]{AyoubRealizationEtale} $\hat{\T}^{\cons}_{\et}(Y, \Lambda_J \otimes \Q)$ carries a canonical structure of a triangulated category. Moreover the functor $\R_J$ above induces a triangulated functor
\[
\R_J: \DA^{\cons}_{\et}(Y, \Lambda \otimes_\Z \Q) \longrightarrow \hat{\T}^{\cons}_{\et}(Y, \Lambda_J \otimes \Q)
\]
by \cite[9.5]{AyoubRealizationEtale}.

The formation of $\hat{\T}^{\cons}_{\et}(\_, \Lambda_J \otimes \Q)$ comes equipped with the six functors when restricted to morphisms of finite type between quasi-excellent noetherian schemes of finite dimension and the realization functors
\[
\R_J: \DA^{\cons}_{\et}(\_, \Lambda \otimes_\Z \Q) \longrightarrow \hat{\T}^{\cons}_{\et}(\_, \Lambda_J \otimes \Q)
\]
commute with the six operations under these restrictions. (One extends the proof of  \cite[9.7]{AyoubRealizationEtale} using Theorem \ref{thm:SixFunctorsPresConstr}.) 

Recall the situation fixed in \ref{noname:SetupGalois}:  $S$ is the spectrum of an excellent strictly henselian discrete valuation ring $R$. In particular in this situation the $J$-adic realization commutes with the six functors when restricted to schemes of finite type over $S$.

\noname Consider a morphism of finite type  $f: X \rightarrow S$ and let us denote by $\tilde{S}$ (resp. $\bar{S}$) the integral closure of $S$ in $\tilde{K}$ (resp. $\bar{K}$). We write $\eta := \Spec K$, $\tilde{\eta} := \Spec \tilde{K}$, $\bar{\eta}:= \Spec \bar{K}$ and $\sigma := \Spec \kappa$. Then there are decompositions
\[
\tilde{\eta} \overset{\tilde{j}} \longrightarrow \tilde{S} \overset{\tilde{i}}\longleftarrow \sigma 
\]
and
\[
\bar{\eta} \overset{\bar{j}} \longrightarrow \bar{S} \overset{\bar{i}}\longleftarrow \sigma 
\]
of $\tilde{S}$ and $\bar{S}$ into a closed immersion and its open complement. 

Denote by $\tilde{f}: \tilde{X} \rightarrow \tilde{S}$ (resp. $\bar{f}: \bar{X} \rightarrow \bar{S}$) the pullback of $f$ along $\tilde{\theta}: \tilde{S}\rightarrow S$ (resp. $\bar{\theta}: \bar{S}\rightarrow S$). Then we obtain the diagram 
\[
\begin{tikzcd}
X_{\tilde{\eta}} \arrow[r, "\tilde{j}"] \arrow[d, "f_{\tilde{\eta}}"] & \tilde{X} \arrow[d, "\tilde{f}"] & X_\sigma \arrow[l, "\tilde{i}"'] \arrow[d, "f_\sigma"] \\
\tilde{\eta} \arrow[r, "\tilde{j}"]                                 & \tilde{S}                        & \sigma. \arrow[l, "\tilde{i}"']                        
\end{tikzcd}
\]
by taking pullbacks. We define the \textit{tame $J$-adic nearby cycles functor} by
\[
\Psi^{J,\tame}_f := \tilde{i}^*\tilde{j}_*\tilde{\theta}^*: \hat\T_{\et}^{\cons}(X_\eta, \Lambda_J \otimes \Q) \longrightarrow \hat\T_{\et}^{\cons}(X_\sigma, \Lambda_J \otimes \Q).
\]
Analogously we obtain a diagram
\[
\begin{tikzcd}
X_{\bar{\eta}} \arrow[r, "\bar{j}"] \arrow[d, "f_{\bar{\eta}}"] & \bar{X} \arrow[d, "\bar{f}"] & X_\sigma \arrow[l, "\bar{i}"'] \arrow[d, "f_\sigma"] \\
\bar{\eta} \arrow[r, "\bar{j}"]                                 & \bar{S}                        & \sigma. \arrow[l, "\bar{i}"']                        
\end{tikzcd}
\]
and define the \textit{total $J$-adic nearby cycles functor} by
\[
\Psi^{J}_f := \bar{i}^*\bar{j}_*\bar{\theta}^*: \hat\T_{\et}^{\cons}(X_\eta, \Lambda_J \otimes \Q) \longrightarrow \hat\T_{\et}^{\cons}(X_\sigma, \Lambda_J \otimes \Q).
\]

By \cite[10.12, 10.17]{AyoubRealizationEtale} the $J$-adic realization is compatible with the formation of (tame) nearby cycles in the sense that
\[
\begin{tikzcd}
{\DA_{\et}^{\cons}(X_\eta, \Lambda \otimes \Q))} \arrow[d, "\Psi_f^{\tame}"'] \arrow[r, "\R_J"] & {\hat\T_{\et}^{\cons}(X_\eta, \Lambda_J \otimes \Q)} \arrow[d, "{\Psi_f^{J, \tame}}"] \\
{\DA^{\cons}_{\et}(X_\sigma, \Lambda \otimes \Q))} \arrow[r, "\R_J"]                            & {\hat\T^{\cons}_{\et}(X_\sigma, \Lambda_J \otimes \Q)}                                      
\end{tikzcd}
\]
and
\[
\begin{tikzcd}
{\DA^{\cons}_{\et}(X_\eta, \Lambda \otimes \Q))} \arrow[d, "\Psi_f^{}"'] \arrow[r, "\R_J"] & {\hat\T^{\cons}_{\et}(X_\eta, \Lambda_J \otimes \Q)} \arrow[d, "\Psi_f^{J}"] \\
{\DA^{\cons}_{\et}(X_\sigma, \Lambda \otimes \Q))} \arrow[r, "\R_J"]                       & {\hat\T^{\cons}_{\et}(X_\sigma, \Lambda_J \otimes \Q)}                             
\end{tikzcd}
\]
commute.

\noname \label{noname:ActionsOfGalOnDifferentNearbyCycles} Let $A$ be an object of  $\hat\T_{\et}^{\cons}(X_\eta, \Lambda_J \otimes \Q)$ and $\xi$ an element of $\widehat{\Z}'(1) \simeq \Gal(\tilde{K}/K)$. By slight abuse of notation let us write $\xi: \tilde{\eta} \rightarrow \tilde{\eta}$ and $\xi: \tilde{S} \rightarrow \tilde{S}$ for the induced morphisms of affine schemes. Also we write $\xi: X_{\tilde{\eta}} \rightarrow X_{\tilde{\eta}}$, $\xi: \tilde{X} \rightarrow \tilde{X}$, $\tilde{\theta}: \tilde{X} \rightarrow X$, $\tilde{\theta}: X_{\tilde{\eta}} \rightarrow X_\eta$ for the morphisms of schemes induced via base change along $f: X \rightarrow S$. Consider the diagram
\[
\begin{tikzcd}
X_{\tilde{\eta}} \arrow[r, "\tilde{j}"] \arrow[d, "\xi"']          & \tilde{X} \arrow[d, "\xi"]          & X_\sigma \arrow[d, "\id"] \arrow[l, "\tilde{i}"'] \\
X_{\tilde{\eta}} \arrow[r, "\tilde{j}"] \arrow[d, "\tilde\theta"'] & \tilde{X} \arrow[d, "\tilde\theta"] & X_\sigma \arrow[l, "\tilde{i}"'] \arrow[d, "\id"] \\
X_\eta \arrow[r, "j"]                                             & X                                   & X_\sigma. \arrow[l, "i"']                          
\end{tikzcd}                    
\]
Then we may define a map $\xi: \Psi^{J, \tame}_f(A) \rightarrow \Psi^{J,\tame}_f(A)$ as the composition
\[
\tilde{i}^*\tilde{j}_* \tilde\theta^*A 
 \overset{\sim}\longrightarrow \tilde{i}^* \xi^* \tilde{j}_* \tilde\theta^*A \overset{\sim}\longrightarrow \tilde{i}^*  \tilde{j}_* \xi^* \tilde\theta^*A \overset{\sim}\longrightarrow \tilde{i}^*\tilde{j}_* \tilde\theta^*A.
\]
Completely analogously for $\lambda$ in $\Gal(\bar{K}/K)$ we write $\lambda: \bar{\eta} \rightarrow \bar{\eta}$, $\lambda: \bar{S} \rightarrow \bar{S}$ for the induced morphisms of affine schemes and $\lambda: X_{\bar{\eta}} \rightarrow X_{\bar{\eta}}$, $\lambda: \bar{X} \rightarrow \bar{X}$, $\tilde{\theta}: \tilde{X} \rightarrow X$, $\tilde{\theta}: X_{\tilde{\eta}} \rightarrow X_\eta$ for the morphisms obtained by base change along $f$. Consider the diagram
\[
\begin{tikzcd}
X_{\bar{\eta}} \arrow[r, "\bar{j}"] \arrow[d, "\lambda"']          & \bar{X} \arrow[d, "\lambda"]          & X_\sigma \arrow[d, "\id"] \arrow[l, "\bar{i}"'] \\
X_{\bar{\eta}} \arrow[r, "\bar{j}"] \arrow[d, "\bar\theta"'] & \bar{X} \arrow[d, "\bar\theta"] & X_\sigma \arrow[l, "\bar{i}"'] \arrow[d, "\id"] \\
X_\eta \arrow[r, "j"]                                             & X                                   & X_\sigma. \arrow[l, "i"']                          
\end{tikzcd}                    
\]
Then we may define a map $\lambda: \Psi^{J}_f(A) \rightarrow \Psi^{J}_f(A)$ as the composition
\[
\bar{i}^*\bar{j}_* \bar\theta^*A 
 \overset{\sim}\longrightarrow \bar{i}^* \lambda^* \bar{j}_* \bar\theta^*A \overset{\sim}\longrightarrow \bar{i}^*  \bar{j}_* \lambda^* \bar\theta^*A \overset{\sim}\longrightarrow \bar{i}^*\bar{j}_* \bar\theta^*A.
\]

It is straightforward to check that these rules give rise to actions of $\Gal(\tilde{K}/K)$ on $\Psi^{J, \tame}_f(A)$ and of $\Gal(\bar{K}/K)$ on $\Psi^{J}_f(A)$ in $\hat\T_{\et}^{\cons}(X_\sigma, \Lambda_J \otimes \Q)$.

\noname Let $m$ be the characteristic of $\Lambda/J$ and denote by $\Lambda_J(1)$ the $\N_{>0}$ indexed system of abelian groups $\left\lbrace\mu_{m^k}(K) \otimes_{\Z / m^k \Z} \Lambda/J^k \right\rbrace_{k \in \N>0}. $ Via the group homomorphism $\chi$ of \ref{noname:SetupGalois} this system is endowed with an action of $\Gal(\bar{K}/K)$ and hence we can consider $\Lambda_J(1)$ as an object in $\hat\T_{\et}(\eta , \Lambda_J)$. Since $\R_{\modulo J^k}\1(1) \simeq \mu_{m^k}(K)\otimes_{\Z / m^k \Z} \Lambda/J^k$ in $\T_{\et}(\eta, \Lambda/ J^k)$ (see the proof of Theorem \ref{thm:Rigidity}) it follows from the construction of $\R_J$ that $\R_J \1(1) \simeq \Lambda_J(1)$. 

Via the composition
\[\widehat{\Z}'(1) \simeq \lim_{n \in \N'^\times} \left( \mu_n(K) \right) \rightarrow \lim_{k\in \N} \left( \mu_{m^k}(K) \right) \rightarrow \lim_{k\in \N} \left(\mu_{m^k}(K) \otimes_{\Z / m^k \Z} \Lambda/J^k \right)
\] 
an element $\xi \in \widehat{\Z}'(1)$ gives rise to an element in $\Lambda_J (1)$ or equivalently to a map
\[
\xi: \Lambda_J \longrightarrow \Lambda_J(1)
\]
in $\hat{\T}_{\et}(\eta, \Lambda_J)$.

\noname Let $f:X \rightarrow S$ be a morphism of schemes and $M$ in $\DA_{\et}(X_\eta, \Lambda \otimes \Q)$. By \cite[11.16]{AyoubRealizationEtale} there is a map
\[
N: \Upsilon_f(M) \longrightarrow \Upsilon_f(M)(-1),
\]
 the so called \textit{monodromy operator}, which fits into the  \textit{monodromy distinguished triangle}
\[
\chi_f(M) \longrightarrow \Upsilon_f(M) \overset{N}\longrightarrow \Upsilon_f(M)(-1).
\]
Moreover $N$ is nilpotent whenever $M$ is constructible and $f$ is of finite type.

\noname Combining the two observations we can define for any $f:X \rightarrow S$ of finite type and $M$ in $\DA^{\cons}_{\et}(X_\eta, \Lambda \otimes \Q)$ a map
\[
N \cdot \xi: \R_J \Upsilon_f(M) \simeq \R_J \Upsilon_f(M) \otimes \Lambda_J \overset{\R_J N \otimes \xi}\longrightarrow \R_J \Upsilon_f(M)(-1) \otimes \Lambda_J (1) \simeq \R_J \Upsilon_f(M).
\]
Since $N$ is nilpotent there is a well defined map
\[
\exp(N \cdot \xi):= \sum_{i=0}^\infty \frac{(N \cdot \xi)^{\circ i}}{i!} :\R_J \Upsilon_f(M)  \longrightarrow \R_J \Upsilon_f(M).
\]

\noname For a $M$ in $\DA_{\et}(X_\eta, \Lambda)$ and $k$ in $\N'^\times$ let us write 
\[
i_k: \Upsilon_{f_k} (t_k^*M) \longrightarrow \Psi_f(M) \simeq \colim_{n \in \N'^\times} \Upsilon_{f_n} (t_n^*M)
\]
for the canonical map into the colimit.  

\begin{thm} \label{thm:MonodromySquareAyoub}
Let $f:X \rightarrow S$ be of finite type, $M$ in $\DA^{\cons}_{\et}(X_\eta, \Lambda \otimes \Q)$ and $\xi \in\widehat{\Z}'(1)$. Then the diagram
\[
\begin{tikzcd}
 \R_J \Upsilon_{f}( M)  \arrow[d, "\exp(N \cdot \xi)"'] \arrow[r, "\R_J i_1"] & \R_J \Psi^{\tame}_{f}(M) \arrow[r, "\sim"] & { \Psi^{J,\tame}_{f}(\R_J M)} \arrow[d, "\xi"] \\
 \R_J \Upsilon_{f}(M)  \arrow[r, "\R_J i_1 "]                          & \R_J \Psi^{\tame}_{f}(M) \arrow[r, "\sim"] & { \Psi^{J,\tame}_{f}(\R_J M)}                 
\end{tikzcd}
\]
commutes.
\end{thm}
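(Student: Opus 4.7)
The plan is to reduce the commutativity of the outer rectangle to the interior square involving $\exp(N\cdot\xi)$ on one side and the Galois action of $\xi$ on the other, since the horizontal maps $\R_J i_1$ and the comparison isomorphism $\R_J\Psi^{\tame}_f\simeq\Psi^{J,\tame}_f\R_J$ are known to be natural in morphisms of specialization systems and thus compatible with both kinds of endomorphisms. Having made this reduction, the task becomes to show that after $J$-adic realization, the monodromy operator $N$ on $\Upsilon_f$ coincides with the logarithm of the unipotent part of the $\widehat{\Z}'(1)$-action on $\Psi^{J,\tame}_f(\R_J M)$ transported along $\R_J i_1$. Since all objects and operators are natural in $M$ and in $f$, and since the monodromy triangle $\chi_f\to\Upsilon_f\to\Upsilon_f(-1)$ and its $J$-adic analogue $\R_J\chi_f\to\R_J\Upsilon_f\to\R_J\Upsilon_f\otimes\Lambda_J(-1)$ are pulled back from the universal situation over $S$ itself (via the $(*,\#,\otimes)$-formalism and the projection formula for $\Upsilon_f$), it suffices to verify the identity in the tautological case $f=\id_S$ and $M=\1$.

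In this base case, I would use the description of $\Upsilon_{\id_S}$ via the cosimplicial logarithm motive $\mathscr{A}_S$ from Proposition \ref{prop:NearbyCyclesViaLogAndColimits} together with the identification $\underline{\mathscr{A}}_S\simeq \Log$ in $\Pro\,\Uni^{\cons}_\pi(\G_{m,S})$ of Corollary \ref{cor:CosimplcialLogAgreesWithClassicalLogByHK}. Under $\R_J$, the pro-object $\Log$ realizes to the pro-system $\{\Lambda_J[T]/T^n(n)\}_n$ concentrated on $\tilde{\eta}$ (the dual of the tame logarithm sheaf), and the monodromy triangle realizes to the short exact sequence identifying $\R_J\Upsilon_{\id_S}(\1)$ with $\tilde{i}^*\tilde{j}_*\Lambda_J$ after inverting the cyclotomic class. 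Under this identification, the endomorphism $\R_J N$ becomes multiplication by the nilpotent generator of $\tilde{i}^*R^1\tilde{j}_*\Lambda_J(1)$, while the action of $\xi\in\widehat{\Z}'(1)$ is by cup product with the Kummer class $\xi$ viewed in $\Lambda_J(1)$. The required equality $\xi=\exp(N\cdot\xi)$ on $\R_J\Upsilon_{\id_S}(\1)$ then follows from the universal Kummer relation: the tame inertia acts as a unipotent automorphism whose logarithm is precisely $N\cdot\xi$ by construction of the cyclotomic character.

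The principal obstacle is the explicit comparison in the base case, namely verifying that the motivic avatar $N$ of the nilpotent endomorphism on $\Log$ and the Kummer-theoretic description of the tame Galois action on $\R_J \Psi^{\tame}_{\id_S}(\1)$ match on the nose (and not merely up to an ambiguous scalar). This is a variant of Ayoub's monodromy compatibility result \cite[\S 10--11]{AyoubRealizationEtale}; once the scalar is pinned down by evaluating both sides on a generator of $\pi_1^{\tame}(\eta)$ and checking that the coefficient is $1$, the exponential series terminates by nilpotency of $N$ (Theorem \ref{thm:PropertiesOfEtaleMotivicNearby}(3) gives compactness, hence nilpotency on constructible $M$), and the naturality reduction above propagates the identity to all $f$ and $M$.
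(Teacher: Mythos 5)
The paper's own proof is simply a citation to \cite[11.17]{AyoubRealizationEtale}; your proposal attempts to reconstruct Ayoub's argument, but the reduction step you rely on is not sound.

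The claim ``since all objects and operators are natural in $M$ and in $f$ \ldots it suffices to verify the identity in the tautological case $f=\id_S$ and $M=\1$'' does not follow from naturality alone. Naturality of the two endomorphisms $\R_J(\exp(N\cdot\xi))$ and $\xi$ (as transformations of the functors $\R_J\Upsilon_f$ and $\Psi^{J,\tame}_f\R_J$) means that each is compatible with all morphisms between objects; it does \emph{not} mean that either transformation is determined by its component at $\1$ over $\id_S$. In particular two natural transformations of triangulated/stable functors that agree on $\1$ can differ elsewhere (phantom maps, summands generated only after tensoring, etc.). If you intended to invoke a monoidal or module structure to propagate the base case, you would first need to show that both $\exp(N\cdot\xi)$ and the Galois action of $\xi$ are \emph{monoidal} natural automorphisms of the lax-monoidal specialization systems, and then argue that two monoidal automorphisms of $\Psi^{J,\tame}$ which agree on the unit and are compatible with the $\otimes$-structure must coincide --- none of which is in the proposal. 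What Ayoub actually proves (and what the paper delegates to) is a comparison at the level of the underlying construction of the monodromy operator (the logarithm pro-object over $\G_{m,S}$ and its realization), which is precisely where the identification of $N$ with the logarithm of the tame inertia action lives; that is a genuinely universal statement over $\G_{m,S}$ rather than over $S$, and does propagate to all $(f,M)$.

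Two further points. First, even in your proposed base case you defer the essential content (``this is a variant of Ayoub's monodromy compatibility result \ldots once the scalar is pinned down''), so the key computation is not actually carried out --- the argument remains circular. Second, you use constructibility (via Theorem \ref{thm:PropertiesOfEtaleMotivicNearby}) to get nilpotency of $N$ and hence convergence of $\exp$, which is correct, but \cite[11.16]{AyoubRealizationEtale} is the reference the paper uses for nilpotency of $N$ on constructible objects; citing Theorem \ref{thm:PropertiesOfEtaleMotivicNearby}(3) for it is indirect at best.
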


\begin{proof}
This is \cite[11.17]{AyoubRealizationEtale}.
\end{proof}

\section{Local monodromy}

\noname For a finite field extension $L/K$ in $\bar{K}$ let $S_L$ denote the normalization of $S$ in $L$ and write $f_L: X_L \rightarrow S_L$ for the base change of $f$ along the canonical map $S_L \rightarrow S$. We write $\eta_L := \Spec L$ and $t_L: \eta_L \rightarrow \eta$ for the induced map of affine schemes as well as $t_L: (X_L)_\eta \rightarrow X_\eta$ for the base change of $t_L$ along $f_\eta$. Then there is a canonical equivalence of functors
\[
\varphi_L:  \Psi^{J}_{f_L}(t_L^*\_) \simeq \bar{i}^*\bar{j}_*\bar{\theta}_L^* t_L^ * \simeq \bar{i}^*\bar{j}_*\bar{\theta}^*\simeq \Psi^{J}_{f}(\_).
\]

\begin{lem} \label{lem:ActionOfGLonPsiLandPsiIsComp}
 For any finite separable extension $L/K$ in $\bar{K}$, $\lambda \in \Gal(\bar{K}/L) \subset \Gal(\bar{K}/K)$ and $A$ in $\hat\T_{\et}^{\cons}(X_\eta, \Lambda_J \otimes \Q)$ the square
\[
\begin{tikzcd}
\Psi_{f_L}^{J}(t_L^*A) \arrow[d, "\lambda"'] \arrow[r, "\varphi_L"] & \Psi^{J}_{f}(A) \arrow[d, "\lambda"] \\
\Psi^{J}_{f_L}(t_L^*A) \arrow[r, "\varphi_L"]                      & \Psi^{J}_{f}(A)                    
\end{tikzcd}
\]
commutes. 
\end{lem}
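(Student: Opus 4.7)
The plan is to unpack the definitions of $\varphi_L$ and of the $\lambda$-actions given in \ref{noname:ActionsOfGalOnDifferentNearbyCycles} and reduce the commutativity to a compatibility between two natural equivalences of pullback functors. The only mathematical input beyond a diagram chase is the hypothesis $\lambda \in \Gal(\bar K / L)$, which implies that $\lambda$ fixes $L$ pointwise and hence the induced automorphism $\lambda : \bar S \to \bar S$ is a morphism of schemes over $S_L$ (not merely over $S$). Under the canonical identification $\bar X = X \times_S \bar S \simeq X_L \times_{S_L} \bar S =: \bar X_L$ (and similarly on generic fibers), the base-changed automorphism $\lambda : \bar X \to \bar X$ is therefore simultaneously an $X$-morphism and an $X_L$-morphism, so that $\bar\theta \circ \lambda = \bar\theta$ and $\bar\theta_L \circ \lambda = \bar\theta_L$.

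The equivalence $\varphi_L$ is by construction the natural equivalence $\bar\theta_L^* t_L^* \simeq \bar\theta^*$ induced by the identity of morphisms $t_L \circ \bar\theta_L = \bar\theta$ (where we reuse $t_L$ for the various base changes). First I would note that the two $\lambda$-actions are constructed by the exact same three-step pattern: insert $\lambda^*$ via base change $\bar i^* \simeq \bar i^* \lambda^*$, commute $\lambda^*$ past $\bar j_*$ using proper base change $\lambda^* \bar j_* \simeq \bar j_* \lambda^*$, and then absorb $\lambda^*$ into the remaining pullback using $\bar\theta \circ \lambda = \bar\theta$ on one side and $\bar\theta_L \circ \lambda = \bar\theta_L$ on the other side. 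Thus it suffices to check that the natural equivalence $\varphi_L$ intertwines the two absorption equivalences $\lambda^* \bar\theta_L^* t_L^* \simeq \bar\theta_L^* t_L^*$ and $\lambda^* \bar\theta^* \simeq \bar\theta^*$.

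This last check is immediate from the paste of natural equivalences
\[
\bar\theta \circ \lambda \;=\; t_L \circ \bar\theta_L \circ \lambda \;=\; t_L \circ \bar\theta_L \;=\; \bar\theta,
\]
together with functoriality of pullbacks; on the level of functors it says that the equivalence $\lambda^* \bar\theta^* \simeq \bar\theta^*$ factors through $\lambda^* \bar\theta_L^* t_L^* \simeq \bar\theta_L^* \lambda^* t_L^* \simeq \bar\theta_L^* t_L^*$, using in the middle that $\lambda$ acts trivially on $t_L$ since it lies over $S_L$. I expect no genuine obstacle here: the proof is a purely 2-categorical piece of bookkeeping, and the main effort will be to set up the notation carefully enough that the two compositions can be compared on the nose. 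The only place where one uses something beyond pure functoriality is proper base change for $\bar j_*$ applied to finite (or rather, integral) morphisms, which is part of the six functor formalism for $\hat\T_{\et}^{\cons}$.
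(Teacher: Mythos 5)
Your proposal is correct and takes essentially the same route as the paper: the paper's entire proof is the displayed two-row diagram expressing exactly your three-step comparison ($\bar i^* \simeq \bar i^*\lambda^*$, commute $\lambda^*$ past $\bar j_*$, absorb $\lambda^*$ into $\bar\theta^*$ resp.\ $\bar\theta_L^* t_L^*$), with the vertical identifications given by $\varphi_L$. The one thing you make explicit that the paper leaves implicit is the use of $\lambda\in\Gal(\bar K/L)$ to get $\bar\theta_L\circ\lambda=\bar\theta_L$, which is indeed the only substantive input.
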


\begin{proof}
This is expressed by the commutativity of 
\[
\begin{tikzcd}
\bar{i}^*\bar{j_*}\bar{\theta}^* A \arrow[d, "\sim"] \arrow[r, "\sim"] & \bar{i}^* \lambda^* \bar{j_*}\bar{\theta}^* A \arrow[d, "\sim"] \arrow[r, "\sim"] & \bar{i}^* \bar{j_*} \lambda^*\bar{\theta}^* A \arrow[r, "\sim"] \arrow[d, "\sim"] & \bar{i}^*\bar{j_*}\bar{\theta}^* A \arrow[d, "\sim"] \\
\bar{i}^*\bar{j_*}\bar{\theta}_L^* t_L^*A \arrow[r, "\sim"]            & \bar{i}^* \lambda^*\bar{j_*}\bar{\theta}_L^* t_L^*A \arrow[r, "\sim"]             & \bar{i}^* \bar{j_*}\lambda^*\bar{\theta}_L^* t_L^*A \arrow[r, "\sim"]             & \bar{i}^*\bar{j_*}\bar{\theta}_L^* t_L^*A.           
\end{tikzcd}
\]

\end{proof}

\noname For $n$ in $\N'^\times$ there is an isomorphism of groups $\Gal(\tilde{K}/K(\pi^{1/n})) \simeq \widehat{\Z}'(1)$ under which the inclusion $\Gal(\tilde{K}/K(\pi^{1/n})) \subset \Gal(\tilde{K}/K)$ corresponds to the injective group homomorphism
\[
(.)^n: \widehat{\Z}'(1) \longrightarrow (\widehat{\Z}'(1))^n \subset \widehat{\Z}'(1). 
\]
As in \ref{noname:SetupPropNearbyCycleViaColimits} let us denote by $S_n$ the normalization of $S$ in $K(\pi^{1/n})$ and write $f_n: X_n \rightarrow S_n$ for the base change of $f$ along the canonical map $S_n \rightarrow S$. We write $\eta_n := \Spec K(\pi^{1/n})$ and  $t_n: \eta_n \rightarrow \eta$ for the induced map of affine schemes as well as $t_n: (X_n)_\eta \rightarrow X_\eta$ for the base change of $t_n$ along $f_\eta$. Then there is a canonical equivalence of functors
\[
\varphi_n:\Psi^{J, \tame}_{{f}_n}(t_n^*\_)  \simeq \tilde{i}^*\tilde{j}_*\tilde{\theta}_n^* t_n^*  \simeq \tilde{i}^*\tilde{j}_*\tilde{\theta}^* \simeq \Psi^{J,\tame}_{f}(\_).
\]

\begin{lem} \label{lem:TameActionIsComp}
For any $n$ in $\N'^\times$, $\xi \in \widehat{\Z}'(1) \simeq \Gal(\tilde{K}/ K(\pi^{1/n}))$ and $A$ in $\hat\T_{\et}^{\cons}(X_\eta, \Lambda_J \otimes \Q)$ the square
\[
\begin{tikzcd}
\Psi_{{f}_n}^{J, \tame}(t_n^*A) \arrow[d, "\xi"'] \arrow[r, "\varphi_n"] & \Psi^{J, \tame}_{f}(A) \arrow[d, "\xi^n"] \\
\Psi^{J, \tame}_{{f}_n}(t_n^*A) \arrow[r, "\varphi_n"]                      & \Psi^{J, \tame}_{f}(A)                    
\end{tikzcd}
\]
commutes. 

\end{lem}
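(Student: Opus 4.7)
The proof strategy rests on a single scheme-level observation: under the isomorphisms $\widehat{\Z}'(1)\simeq\Gal(\tilde K/K(\pi^{1/n}))$ and $\widehat{\Z}'(1)\simeq\Gal(\tilde K/K)$, an element $\xi$ of the former and its image $\xi^n$ in the latter induce \emph{the same} underlying automorphism of $\tilde S$ (and hence, by base change, of $\tilde X=\tilde S\times_S X=\tilde S\times_{S_n}X_n$). The only difference is that this automorphism is regarded as a morphism over $S_n$ on the left and merely over $S$ on the right. This amounts to a direct computation: writing $\pi_n=\pi^{1/n}$ so that $\pi_n^{1/m}=\pi^{1/(nm)}$, one finds via $\pi^{1/k}=(\pi^{1/(nk)})^n$ that the $k$-th component of the image in $\widehat{\Z}'(1)\simeq\Gal(\tilde K/K)$ is $\xi_k^n$, so the inclusion of Galois groups indeed reads $\xi\mapsto\xi^n$ as claimed.

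Granted this, the commutativity of the square is a diagram chase essentially identical to the one in the proof of Lemma~\ref{lem:ActionOfGLonPsiLandPsiIsComp}. First I would make $\varphi_n$ explicit: the factorization $\tilde X=\tilde S\times_{S_n}X_n$ gives $\tilde\theta=t_n\circ\tilde\theta_n$, and hence a natural equivalence $\tilde\theta^*\simeq\tilde\theta_n^*t_n^*$, which is precisely $\varphi_n$. Then I would expand both vertical arrows via Construction~\ref{noname:ActionsOfGalOnDifferentNearbyCycles}: the left vertical is
\[
\tilde i^*\tilde j_*\tilde\theta_n^*t_n^*A\xrightarrow{\mathrm{unit}}\tilde i^*\xi^*\tilde j_*\tilde\theta_n^*t_n^*A\xrightarrow{\sim}\tilde i^*\tilde j_*\xi^*\tilde\theta_n^*t_n^*A\xrightarrow{\sim}\tilde i^*\tilde j_*\tilde\theta_n^*t_n^*A,
\]
with the last equivalence induced by $\tilde\theta_n\circ\xi=\tilde\theta_n$, and the right vertical is the analogous composition obtained by replacing $\tilde\theta_n^*t_n^*$ by $\tilde\theta^*$ and using $\tilde\theta\circ\xi^n=\tilde\theta$.

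Stacking these two rows into a $3\times 2$ grid linked via $\varphi_n$ at each column, the three resulting squares commute respectively by naturality of the unit; by proper base change across the square with $\tilde j$ (the \emph{same} square in either row, since $\xi=\xi^n$ as maps of schemes); and by the factorization $\tilde\theta\circ\xi^n=t_n\circ(\tilde\theta_n\circ\xi)=t_n\circ\tilde\theta_n=\tilde\theta$, which expresses the triangle of $\xi^n$ over $S$ as the composite of the triangle of $\xi$ over $S_n$ with $t_n$. I do not anticipate any substantial obstacle: the entire argument is formal six-functor bookkeeping made rigid by the scheme-level equality $\xi=\xi^n$ recorded at the outset.
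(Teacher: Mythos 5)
Your proposal is correct and follows exactly the route the paper takes: the printed proof is just ``analogous to Lemma~\ref{lem:ActionOfGLonPsiLandPsiIsComp}'', and your argument is precisely that analogy, made explicit by the key observation that $\xi\in\Gal(\tilde K/K(\pi^{1/n}))$ and its image $\xi^n$ under $\Gal(\tilde K/K)\simeq\widehat{\Z}'(1)$ are the \emph{same} automorphism of $\tilde S$ (hence of $\tilde X=\tilde S\times_{S_n}X_n=\tilde S\times_S X$), after which the square decomposes into the same naturality/base-change/factorization squares as in that lemma, using $\tilde\theta=t_n\circ\tilde\theta_n$. No gaps; the only cosmetic point is that the first map in the expanded vertical composites is the equivalence $\tilde i^*\simeq\tilde i^*\xi^*$ coming from $\xi\circ\tilde i=\tilde i$ rather than a unit map, which does not affect the argument.
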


\begin{proof}
Analogous to Lemma \ref{lem:ActionOfGLonPsiLandPsiIsComp}.
\end{proof}

\noname Let us write $\theta: \bar{\eta} \rightarrow \tilde{\eta}$ for the canonical map and denote maps induced by base change $X_{\bar{\eta}} \rightarrow X_{\tilde{\eta}}$ and $\bar{X} \rightarrow \tilde{X}$ also by $\theta.$ Consider the diagram 
\[
\begin{tikzcd}
X_{\bar{\eta}} \arrow[r, "\bar{j}"] \arrow[d, "\theta"'] & \bar{X} \arrow[d, "\theta"] & X_\sigma \arrow[l, "\bar{i}"'] \arrow[d, "\id"] \\
X_{\tilde{\eta}} \arrow[r, "\tilde{j}"']                  & \tilde{X}                   & X_\sigma \arrow[l, "\tilde{i}"]               
\end{tikzcd}
\]
consisting of pullback squares. Then the composition
\[
\tilde{i}^* \tilde{j}_* \tilde{\theta}^* \overset{\sim}\longrightarrow \bar{i}^* \theta^* \tilde{j}_* \tilde{\theta}^* \overset{\Ex}\longrightarrow \bar{i}^* \bar{j}_* \theta^* \tilde{\theta}^* \simeq \bar{i}^* \bar{j}_* \bar{\theta}^*
\]
defines a natural transformation $\alpha: \Psi^{J, \tame}_f(\_) \rightarrow \Psi_f^{J}(\_)$. Here $\Ex$ denotes the exchange map associated to the left  pullback square.

\begin{lem} \label{lem:lambdaActsOnTrameViaChiLambda}
For any $\lambda$ in $\Gal(\bar{K}/K)$ and $A$ in $\hat\T_{\et}^{\cons}(X_\eta, \Lambda_J \otimes \Q)$ the diagram
\[
\begin{tikzcd}
{\Psi_f^{J,\tame}(A)} \arrow[d, "\alpha"'] \arrow[r, "\chi(\lambda)"] & {\Psi_f^{J,\tame}(A)} \arrow[d, "\alpha"] \\
\Psi_f^{J}(A) \arrow[r, "\lambda"]                                  & \Psi_f^{J}(A)                           
\end{tikzcd}
\]
commutes.
\end{lem}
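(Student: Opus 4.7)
Write $\xi := \chi(\lambda)$. The content of the lemma is geometric: the two compositions correspond to Galois actions that are compatible, via the map $\theta$, with the defining identity $\theta \circ \lambda = \xi \circ \theta$ on $\bar\eta \to \tilde\eta$. This identity is essentially the definition of $\chi$ (the Galois restriction $\Gal(\bar K/K) \twoheadrightarrow \Gal(\tilde K/K)$ corresponds on spectra to $\theta$), and it persists after base change to $X_{\bar\eta}$ and to $\bar X$. Together with the trivial identities $\tilde\theta \circ \xi = \tilde\theta$, $\bar\theta \circ \lambda = \bar\theta$, $\tilde\theta \circ \theta = \bar\theta$, $\lambda|_{X_\sigma} = \mathrm{id}$, and $\xi|_{X_\sigma} = \mathrm{id}$, these are the only geometric inputs.

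The plan is to expand both compositions in the square using the definitions of the Galois actions in paragraph \ref{noname:ActionsOfGalOnDifferentNearbyCycles} and the definition of $\alpha$ preceding the lemma. Both compositions become a chain of natural isomorphisms of the form $\bar i^\ast (?)_\ast (?)^\ast A$ in which $(?)$ runs through the various combinations of $\bar j, \tilde j, \bar\theta, \tilde\theta, \theta, \lambda, \xi$; all the constituent natural transformations are equivalences since $\lambda$ and $\xi$ are isomorphisms of schemes. It then suffices to verify that this big diagram commutes cell by cell.

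Each cell will commute for one of three formal reasons: (i) pseudofunctoriality of $(\_)^\ast$ applied to the geometric identities listed above; (ii) the pasting property of exchange maps from \ref{noname:ExchangeMaps} applied to the cartesian square defining $\alpha$ (i.e.\ $\tilde j \circ \theta = \theta \circ \bar j$), pasted with the commutative squares
\[
\begin{tikzcd}
X_{\bar\eta} \arrow[r,"\theta"] \arrow[d,"\lambda"'] & X_{\tilde\eta} \arrow[d,"\xi"] \\
X_{\bar\eta} \arrow[r,"\theta"'] & X_{\tilde\eta}
\end{tikzcd}
\qquad\text{and}\qquad
\begin{tikzcd}
\bar X \arrow[r,"\theta"] \arrow[d,"\lambda"'] & \tilde X \arrow[d,"\xi"] \\
\bar X \arrow[r,"\theta"'] & \tilde X
\end{tikzcd}
\]
(both of which are squares of automorphisms, hence cartesian, and whose commutativity is the identity $\theta\lambda = \xi\theta$); and (iii) the identifications $\bar i^\ast \lambda^\ast \simeq \bar i^\ast$ and $\tilde i^\ast \xi^\ast \simeq \tilde i^\ast$ coming from $\lambda \bar i = \bar i$ and $\xi \tilde i = \tilde i$. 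This kind of verification is formally of the same nature as, and slightly more elaborate than, Lemmas \ref{lem:ActionOfGLonPsiLandPsiIsComp} and \ref{lem:TameActionIsComp}, which can serve as templates.

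The only obstacle is organizational: correctly decomposing the outer square into a manageable number of cells and keeping track of the directions of the exchange maps. Since the squares involving the automorphisms $\lambda$ and $\xi$ are simultaneously commutative and cartesian, every exchange map that appears is an equivalence, so no further hypothesis is needed and the diagram chase goes through cleanly.
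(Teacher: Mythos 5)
Your proposal takes essentially the same route as the paper: the paper's proof likewise records the single geometric input $\theta\circ\lambda=\chi(\lambda)\circ\theta$ on $\bar X\to\tilde X$ and then verifies cell-by-cell that the large diagram obtained by expanding the two compositions (via the definitions of the Galois actions and of $\alpha$) commutes, each cell by naturality of $(\_)^*$ or by the pasting compatibility of the canonical exchange maps. One small caveat: your closing claim that every exchange map appearing is an equivalence is not needed and is not quite accurate for the exchange map $\theta^*\tilde j_*\to\bar j_*\theta^*$ defining $\alpha$ itself, but since commutativity of the cells never requires invertibility of these maps, this does not affect the argument.
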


\begin{proof}
Note that the diagram
\[
\begin{tikzcd}
\bar{X} \arrow[r, "\lambda"] \arrow[d, "\theta"'] & \bar{X} \arrow[d, "\theta"] \\
\tilde{X} \arrow[r, "\chi(\lambda)"]             & \tilde{X}                  
\end{tikzcd}
\]
commutes. Hence it is easy to check that the inner squares of the  diagram
\[
\begin{tikzcd}
\tilde{i}^* \tilde{j}_* \tilde{\theta}^* \arrow[r, "\sim", no head] \arrow[d, "\sim", no head] & \tilde{i}^* \chi(\lambda)^* \tilde{j}_* \tilde{\theta}^* \arrow[r, "\Ex"] \arrow[d, "\sim", no head]        & \tilde{i}^*  \tilde{j}_* \chi(\lambda)^* \tilde{\theta}^* \arrow[r, "\sim", no head] \arrow[d, "\sim", no head] & \tilde{i}^* \tilde{j}_* \tilde{\theta}^* \arrow[d, "\sim", no head] \\
\bar{i}^* \theta^* \tilde{j}_* \tilde{\theta}^* \arrow[dd, "\Ex"] \arrow[r, "\sim", no head]   & \bar{i}^* \theta^* \chi(\lambda)^* \tilde{j}_* \tilde{\theta}^* \arrow[r, "\Ex"] \arrow[d, "\sim", no head] & \bar{i}^*  \theta^* \tilde{j}_* \chi(\lambda)^* \tilde{\theta}^* \arrow[r, "\sim", no head] \arrow[d, "\Ex"]    & \bar{i}^* \theta^* \tilde{j}_* \tilde{\theta}^* \arrow[dd, "\Ex"]   \\
                                                                                               & \bar{i}^* \lambda^* \theta^* \tilde{j}_* \tilde{\theta}^* \arrow[d, "\Ex"]                                  & \bar{i}^*  \tilde{j}_* \theta^* \chi(\lambda)^* \tilde{\theta}^* \arrow[d, "\sim", no head]                     &                                                                     \\
\bar{i}^* \bar{j}_* \bar{\theta}^* \arrow[r, "\sim", no head]                                  & \bar{i}^* \lambda^* \bar{j}_* \bar{\theta}^* \arrow[r, "\Ex"]                                               & \bar{i}^*  \bar{j}_* \lambda^* \bar{\theta}^* \arrow[r, "\sim", no head]                                        & \bar{i}^* \bar{j}_* \bar{\theta}^*                                 
\end{tikzcd}
\]
commute. Here $\Ex$ denote the respective canonical exchange maps. 
\end{proof}

\noname Let $\C$ be an additive category, $G$ a group and $X$ an object of $\C$ with an action of $G$. By this we mean that there is a group homomorphism
\[
\rho: G \longrightarrow \End_{\C}(X).
\]
For a $g \in G$ and $m \in \Z_{>0}$ we write $(\rho(g)-1)^m$ for the $m$-fold composition of the endomorphism $\rho(g)-\id: X \rightarrow X$. We say that \textit{the action of $(g- 1)^{m}$ on $X$ is zero} if $(\rho(g)- \id)^{m}: X \rightarrow X$ is the zero map. The action of a $g \in G$ on $X$ is sometimes called \textit{unipotent} if there is a $m \in \Z_{>0}$ such that the action of $(g- 1)^{m}$ on $X$ is zero.

\begin{thm} \label{thm:OpenSubgroupActsUnipotentlyOnPSi}
Let $S$ be the spectrum of an excellent strictly henselian discrete valuation ring, $f:X \rightarrow S$ a morphism of finite type and $M$ in $\DA_{\et}^{\cons}(X, \Lambda \otimes \Q)$. Then there exists an open subgroup $H \subset \Gal(\bar{K}/K)$ and a positive integer $m$ such that the action of $(\lambda- 1)^{m}$ on $\Psi^J_f(\R_J(M))$ is zero for all $\lambda \in H$.
\end{thm}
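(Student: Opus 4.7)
The plan is to combine Ayoub's monodromy square (Theorem \ref{thm:MonodromySquareAyoub}) with Corollary \ref{cor:UpsilonRetractOfPsi}, reducing the claim to the nilpotency of Ayoub's motivic monodromy operator. Since $j^*M$ is constructible and $\Lambda \otimes \Q$ is a $\Q$-algebra, Corollary \ref{cor:UpsilonRetractOfPsi} produces $n \in \N'^\times$ and $L \in \Xi_\tau$ such that, writing $g := (t_L \circ f_L)_n$ for the resulting morphism of finite type over the excellent strictly henselian DVR $S_n$ (whose function field is $K(\pi^{1/n})$) and $M^\flat := t_n^* t_L^* j^*M$, the canonical map
\[
\Upsilon_g(M^\flat) \longrightarrow \Psi_f(j^*M)
\]
is an equivalence. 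Applying $\R_J$ and its compatibility with $\Psi_f$ yields an equivalence $\R_J \Upsilon_g(M^\flat) \overset{\sim}{\longrightarrow} \Psi^J_f(\R_J M)$. By \cite[11.16]{AyoubRealizationEtale}, since $M^\flat$ is constructible and $g$ is of finite type, the motivic monodromy operator $N \colon \Upsilon_g(M^\flat) \to \Upsilon_g(M^\flat)(-1)$ is nilpotent, so I fix $m \geq 1$ with $N^m = 0$.

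Set $L' := L(\pi^{1/n}) \subset \bar K$ and take $H := \Gal(\bar K / L')$, an open subgroup of $\Gal(\bar K / K)$ since $L'/K$ is finite. The central step is to show that, through the equivalence above, a $\lambda \in H$ acts on $\Psi^J_f(\R_J M)$ as the endomorphism $\exp(N \cdot \chi'(\lambda))$ of $\R_J \Upsilon_g(M^\flat)$, where $\chi' \colon \Gal(\bar K / L') \twoheadrightarrow \widehat{\Z}'(1)$ is the tame character of $L'$. This identification is obtained by pasting three compatibility statements: Lemma \ref{lem:ActionOfGLonPsiLandPsiIsComp} transports the action of $\lambda$ from $\Psi^J_f(\R_J M)$ to the $J$-adic total nearby cycles of the base change; Lemma \ref{lem:lambdaActsOnTrameViaChiLambda}, applied over the smaller base, then relates this via the natural transformation $\alpha$ to the action of $\chi'(\lambda)$ on the tame $J$-adic nearby cycles; and Theorem \ref{thm:MonodromySquareAyoub}, applied to $g$, finally rewrites the tame action as $\exp(N \cdot \chi'(\lambda))$ on $\R_J \Upsilon_g(M^\flat)$.

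Once the identification is in place, the theorem follows from the nilpotency of $N$: the relation $\exp(N \xi) - 1 = N \xi \cdot \sum_{i \geq 1} (N\xi)^{i-1}/i!$ together with $N^m = 0$ gives $(\exp(N \cdot \chi'(\lambda)) - 1)^m = 0$, so $(\lambda - 1)^m$ annihilates $\Psi^J_f(\R_J M)$ for all $\lambda \in H$. I expect the main obstacle to be the diagram chase establishing the identification with $\exp(N \cdot \chi'(\lambda))$: the three compatibility statements live in slightly different setups, with different base schemes ($S$, $S_n$, and $S_{L'}$) and different tame quotients, and one must carefully verify that the composite equivalence $\R_J \Upsilon_g(M^\flat) \simeq \Psi^{J,\tame}_g(\R_J M^\flat) \simeq \Psi^J_g(\R_J M^\flat) \simeq \Psi^J_f(\R_J M)$ produced by these compatibilities agrees with the equivalence obtained by $\R_J$-realizing the map from Corollary \ref{cor:UpsilonRetractOfPsi}.
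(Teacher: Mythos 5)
Your overall strategy is the same as the paper's (a unipotent-computes-total equivalence, Ayoub's monodromy square, nilpotency of $N$, and the action-comparison lemmas), and the endgame via $\bigl(\exp(N\xi)-1\bigr)^m=0$ is fine. But the central identification, as you have set it up, contains a genuine gap. You take $g:=(t_L\circ f_L)_n$, which is the scheme $(X_L)_n=X_L\times_S S_n$ over the trait $S_n$: here the extension $L$ enters in the \emph{fibre} direction, not as a base change of the trait. For such a $g$ the intermediate equivalences you assert are false in general: the geometric generic fibre of $(X_L)_n\times_{S_n}\bar{S}$ is a disjoint union of $[L:K]$ copies of $X_{\bar\eta}$ indexed by the $K(\pi^{1/n})$-embeddings of $L'$, so $\Psi^J_g(\R_J M^\flat)$ is a direct sum of $[L:K]$ copies of $\Psi^J_f(\R_J M)$ (already for $f=\id$, $M=\1$ one gets $\Lambda_J^{[L:K]}$ versus $\Lambda_J$), the comparison $\alpha:\Psi^{J,\tame}_g\to\Psi^J_g$ is then not an equivalence, and Lemma \ref{lem:ActionOfGLonPsiLandPsiIsComp} does not apply, since it concerns $f_{L}:X_{L}\to S_{L}$, i.e.\ a base change of the trait, where the scheme over $\bar S$ is literally $\bar X$. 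Relatedly, your character bookkeeping is inconsistent: if you work over $S_n$ the relevant tame quotient in Lemma \ref{lem:lambdaActsOnTrameViaChiLambda} and in Theorem \ref{thm:MonodromySquareAyoub} is that of $K(\pi^{1/n})$, not ``the tame character of $L'$''; the character of $L'$ would be correct only if the base were $S_{L'}$, which is not what Corollary \ref{cor:UpsilonRetractOfPsi} hands you.

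The repair is exactly the reordering carried out in the paper's proof: absorb the auxiliary extensions into the base trait \emph{before} comparing actions. One first chooses $L\in\Xi_\tau$ via Theorem \ref{thm:PropertiesOfEtaleMotivicNearby}(2) and uses Lemma \ref{lem:ActionOfGLonPsiLandPsiIsComp} to replace $S$ by $S_L$, so that $\Psi^{\tame}\to\Psi$ is an equivalence on the motive in question (hence $\alpha$ becomes an equivalence after realization); then one uses Corollary \ref{cor:UpsilonfnIsPsitame} together with the factorization $i_k=(\text{iso})\circ i_1$ to get that $i_1:\Upsilon_{f_k}(t_k^*M)\to\Psi^{\tame}_{f_k}(t_k^*M)$ is itself an equivalence, so that the horizontal maps in the monodromy square over $S_k$ are equivalences; finally Lemma \ref{lem:TameActionIsComp} (which your sketch omits) bridges the tame actions at level $1$ and level $k$. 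Your observation that in principle only the \emph{composite} horizontal map needs to be an equivalence is correct and could shorten this, but it does not rescue your pasting as written, because the third square you invoke (comparing $\Psi^J_g$ over $S_n$ with $\Psi^J_f$ over $S$) is not supplied by Lemma \ref{lem:ActionOfGLonPsiLandPsiIsComp} and is not even an equivalence for your $g$; to make your route work you would have to first identify $\Upsilon_{(t_L\circ f_L)_n}$ with the unipotent nearby cycles of $X_{L'}/S_{L'}$ and then argue entirely over the trait $S_{L'}$, which amounts to the paper's reduction.
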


\begin{proof}
Let $L/K$ in $\Xi_\tau$ be a finite extension as in Theorem \ref{thm:PropertiesOfEtaleMotivicNearby}(2) such that the composition of the canonical maps
\[
\Psi^{\tame}_{f_L}(t_L^* M) \longrightarrow \Psi_{f_L}(t_L^* M) \overset{\sim}\longrightarrow \Psi_f(M)
\]
is an equivalence. Since $\Gal(\bar{K}/L) \subset \Gal(\bar{K}/K)$ is an open subgroup it suffices to find an open subgroup $H \subset \Gal(\bar{K}/L)$ satisfying the claim. Hence using Lemma \ref{lem:ActionOfGLonPsiLandPsiIsComp} we may assume that the canonical map
\begin{equation} \label{eqn:ComparisonMapPsiTameandPsi}
\Psi^{\tame}_{f}(M) \longrightarrow \Psi_{f}(M)
\end{equation}
is an equivalence.

By Corollary \ref{cor:UpsilonfnIsPsitame} there exists a $k$ in $\N'^\times$ such that 
\[
i_k: \Upsilon_{{f}_k}(t_k^* M) \longrightarrow \Psi^{\tame}_{f}(M)
\]
is an equivalence. Since $i_k$ factors as
\[
\begin{tikzcd}
\Upsilon_{{f}_k}(t_k^* M) \arrow[rd, "i_1"'] \arrow[rr, "i_k"] &                                                & \Psi^{\tame}_{f}(M) \\
                                                               & \Psi^{\tame}_{f_k}(t_k^*M) \arrow[ru, "\sim"'] &                    
\end{tikzcd}
\]
we see in particular that 
\[
i_1: \Upsilon_{{f}_k}(t_k^* M) \longrightarrow \Psi^{\tame}_{{f}_k}(t_k^*M)
\]
is an equivalence. Let $(\widehat{\Z}'(1))^k \subset \widehat{\Z}'(1)$ be the finite index subgroup consisting of the elements of the form $\xi^k$ for all $\xi$ in $\widehat{\Z}'(1)$ and set $H := \chi^{-1} ((\widehat{\Z}'(1))^k) \subset \Gal(\bar{K}/K)$.

By Lemmas \ref{lem:TameActionIsComp} and \ref{lem:lambdaActsOnTrameViaChiLambda} we are reduced to showing that there exists a positive integer $m$ such that the action of $(\chi(\Lambda) - 1)^m$  on $\Psi^{J, \tame}_{{f}_k}(\R_J t_k^*M)$ is zero for all $\lambda \in H$. Since
\[
N: \Upsilon_{{f}_k}(t_k^*M) \longrightarrow \Upsilon_{{f}_k}(t_k^* M) (-1)
\]
is nilpotent by \cite[11.16]{AyoubRealizationEtale} there exists a positive integer $m$ such that
\[
(\exp(N \cdot \xi) - \id)^m: \Upsilon_{{f}_k}(t_k^*M) \longrightarrow \Upsilon_{{f}_k}(t_k^* M)
\]
is the zero map for every $\xi$ in $\widehat{\Z'}(1) \simeq \Gal(\tilde{K}/ K(\pi^{\frac{1}{k}}))$.

By Theorem \ref{thm:MonodromySquareAyoub} the square 
\[
\begin{tikzcd}
 \R_J \Upsilon_{{f}_k}(t_k^* M)  \arrow[d, "\exp(N \cdot \chi(\lambda))"'] \arrow[r, "\R_J i_1"] & \R_J \Psi^{\tame}_{{f}_k}(t_k^*M) \arrow[r, "\sim"] & { \Psi^{J,\tame}_{{f}_k}(\R_J t_k^* M)} \arrow[d, "\chi(\lambda)"] \\
 \R_J \Upsilon_{{f}_k}(t_k^* M)  \arrow[r, "\R_J i_1"]                          & \R_J \Psi^{\tame}_{{f}_k}(t_k^* M) \arrow[r, "\sim"] & { \Psi^{J,\tame}_{{f}_k}(\R_J t_k^* M)}                   
\end{tikzcd}
\]
commutes and its horizontal maps are equivalences since $i_1$ is an equivalence by construction. This shows that  the action of $(\chi(\lambda) - 1)^m$ on $\Psi^{J, \tame}_{{f}_k}(\R_J t_k^*M)$ is zero  as desired. 
\end{proof}

\noname \label{noname:JAdicCohomology} Let $S$ be the spectrum of an excellent strictly henselian discrete valuation ring and $g: Y \rightarrow \eta$ a morphism of finite type. Consider the pullback square
\[
\begin{tikzcd}
\bar{Y} \arrow[r, "\bar{\theta}"] \arrow[d, "\bar{g}"'] & Y \arrow[d, "g"] \\
\bar{\eta} \arrow[r, "\bar{\theta}"']                   & \eta.            
\end{tikzcd}
\]
For an $A$ in $\hat\T_{\et}^{\cons}(Y, \Lambda_J \otimes \Q)$ let us write
\[
R \Gamma(\bar{Y}, A|_{\bar{Y}}) := \bar{g}_* \bar{\theta}^* A
\]
and
\[
R \Gamma_c(\bar{Y}, A|_{\bar{Y}}) := \bar{g}_! \bar{\theta}^* A.
\]
For a $\lambda$ in $\Gal(\bar{K}/K)$ consider the induced morphism of schemes $\lambda: \bar{Y} \rightarrow \bar{Y}$. Then $\lambda$ acts on $R \Gamma(\bar{Y}, A|_{\bar{Y}})$ via
\[
\lambda:  \bar{g}_* \bar{\theta}^* A \overset{\unit}\longrightarrow \bar{g}_* \lambda_* \lambda^* \bar{\theta}^* A \simeq \bar{g}_* \bar{\theta}^* A
\]
and similarly on $R \Gamma_c(\bar{Y}, A|_{\bar{Y}})$.

Consider the case where $\Lambda = \Z$ and $J= (\ell)$ for some prime $\ell \neq p$ and write $\hat\T(\Q_\ell) := \hat\T^{\cons}(\Spec \bar{K}, \Z_\ell \otimes \Q)$ and $\hat\T(\Z_\ell) := \hat\T(\Spec \bar{K}, \Z_\ell)$. Moreover let us denote the abelian group of morphisms in these categories by $\HOM_{\hat{\T}(\Q_\ell)}(\_,\_ )$ and $\HOM_{\hat{\T}(\Z_\ell)}(\_,\_ )$ respectively.

Then we have the formula
\begin{align*}
H^{i}(R \Gamma(\bar{Y}, A|_{\bar{Y}})) &\simeq \HOM_{\hat{\T}(\Q_\ell)}(\1, \bar{g}_* \bar{\theta}^* A [i]) \\
&\simeq \HOM_{\hat{\T}(\Z_\ell)}(\1, \bar{g}_* \bar{\theta}^* A [i]) \otimes_\Z \Q \\
&\simeq H^{i}\left(\Hom_{\hat{\T}(\Z_\ell)}(\1, \bar{g}_* \bar{\theta}^* A) \right) \otimes_\Z \Q \\
&\overset{}\simeq H^{i}\left( R\varprojlim_k \Hom_{\hat{\T}(\Z_\ell)}(\1, (\bar{g}_* \bar{\theta}^* A)/\ell^k ) \right) \otimes_\Z \Q \\
&\overset{(1)}\simeq \varprojlim_k H^{i}\left( \Hom_{{\T}(\Z/ \ell^k)}(\1, \bar{g}_* \bar{\theta}^* (A/\ell^k) ) \right) \otimes_\Z \Q \\
&\simeq \varprojlim_k H^{i}_{\et}(\bar{Y}, A/\ell^k|_{\bar{Y}})\otimes_\Z \Q.
\end{align*}
Here $H^{i}_{\et}(\bar{Y}, A/\ell^k|_{\bar{Y}})$ simply denotes the classical \'etale cohomology with values in the $\Z / \ell^k$-module $ \bar{\theta}^* A/ \ell^k$. The equivalence (1) follows from the fact that the system 
\[
\left\lbrace \Hom_{\hat{\T}(\Z_\ell)}(\1, (\bar{g}_* \bar{\theta}^* A)/\ell^k ) \right\rbrace_{k \in \N}
\]
is a Mittag-Leffler system and therefore the inverse limit is exact. This can be deduced from Artin-vanishing and the fact that $A$ is constructible. 

A similar formula holds for for $ H^{i} (R \Gamma_c(\bar{Y}, A|_{\bar{Y}}))$. 

\begin{cor} \label{cor:MonodromyThm}
Let $S$ be the spectrum of an excellent strictly henselian discrete valuation ring, $g: Y \rightarrow \eta$ a separated morphism of finite type and $M$ in $\DA^{\cons}_{\et}(Y, \Lambda \otimes \Q)$. Then:
\begin{enumerate}
\item There exists an open subgroup $H \subset \Gal(\bar{K}/K)$ and a positive integer $m$ such that the action of $(\lambda- 1)^{m}$ on $R\Gamma(\bar{Y}, \R_J M |_{\bar{Y}})$ is zero for all $\lambda \in H$.
\item There exists an open subgroup $H' \subset \Gal(\bar{K}/K)$ and a positive integer $m'$ such that the action of $(\lambda- 1)^{m'}$ on $R\Gamma_c(\bar{Y}, \R_J M |_{\bar{Y}})$ is zero for all $\lambda \in H'$.
\end{enumerate} 
\end{cor}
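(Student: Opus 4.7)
The plan is to reduce both parts to Theorem \ref{thm:OpenSubgroupActsUnipotentlyOnPSi} by constructing a proper compactification of $g: Y \to \eta$ that extends to $S$. First I would apply Nagata compactification to factor the separated finite-type morphism $g$ as $g = \bar g \circ j$, with $j: Y \hookrightarrow \bar Y$ an open immersion and $\bar g: \bar Y \to \eta$ proper. Since $\eta \hookrightarrow S$ is an open immersion, the composite $\bar Y \to S$ is separated of finite type, so a second application of Nagata yields a proper morphism $f: X \to S$ of finite type with $X_\eta \simeq \bar Y$. In particular, we obtain an open immersion $j: Y \hookrightarrow X_\eta$ with $f_\eta \circ j = g$.

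For (1), set $N := j_* M$; for (2), set $N := j_! M$. By Theorem \ref{thm:SixFunctorsPresConstr}, both lie in $\DA_{\et}^{\cons}(X_\eta, \Lambda \otimes \Q)$. Applying Theorem \ref{thm:OpenSubgroupActsUnipotentlyOnPSi} to the pair $(f, N)$ produces an open subgroup $H \subset \Gal(\bar K/K)$ and an integer $m > 0$ such that $(\lambda - 1)^m$ acts as zero on $\Psi^J_f(\R_J N)$ for every $\lambda \in H$. To transport this unipotence to cohomology, I would push the equivalence forward along $f_\sigma$: since $f$ is proper, the compatibility of $\Psi^J$ with proper pushforward (Definition \ref{def:SpezSys}(3) on the realization side, together with proper base change along $\bar\theta: \bar S \to S$ in $\hat\T_{\et}^{\cons}(\_, \Lambda_J \otimes \Q)$) yields a Galois-equivariant equivalence
\[
(f_\sigma)_* \Psi^J_f(\R_J N) \simeq \bar i^* \bar j_* \bar\theta^* (f_\eta)_* \R_J N.
\]
Because $\bar\eta$ is a geometric point, $\bar i^* \bar j_*$ applied to complexes pulled back along $\bar\theta$ reduces to the identity, so the right-hand side identifies Galois-equivariantly with $\bar\theta^* (f_\eta)_* \R_J N \simeq (f_{\bar\eta})_* \bar\theta_X^* \R_J N = R\Gamma(\bar X_{\bar\eta}, \bar\theta_X^* \R_J N)$, the middle equivalence being proper base change for $f$.

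Finally, base change of $\bar\theta_X$ past $\bar j_*$ (respectively past $\bar j_!$) identifies $\bar\theta_X^* \R_J N$ with $\bar j_* \bar\theta_Y^* \R_J M$ (respectively $\bar j_! \bar\theta_Y^* \R_J M$), so that $R\Gamma(\bar X_{\bar\eta}, \bar\theta_X^* \R_J N) \simeq R\Gamma(\bar Y, \R_J M|_{\bar Y})$ in case (1) and $R\Gamma_c(\bar Y, \R_J M|_{\bar Y})$ in case (2). Transporting the nilpotence of $(\lambda - 1)^m$ through these equivalences yields the conclusion.

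The main obstacle I anticipate is verifying that every equivalence in the chain above is genuinely $\Gal(\bar K/K)$-equivariant. The action on $\Psi^J_f$ was constructed in \ref{noname:ActionsOfGalOnDifferentNearbyCycles} by inserting $\lambda^*$ into $\bar i^* \bar j_* \bar\theta^*$, while the action on $R\Gamma(\bar Y, \_)$ was defined in \ref{noname:JAdicCohomology} via the unit $\id \to \lambda_* \lambda^*$; matching them through the proper base-change and specialization-system identifications requires assembling coherent naturality diagrams. These should reduce to standard naturality properties of proper base change in the six-functor formalism on $\hat\T^{\cons}_{\et}(\_, \Lambda_J \otimes \Q)$, but it is the step that demands the most bookkeeping.
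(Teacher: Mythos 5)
Your proof is correct and takes essentially the same route as the paper's: compactify via Nagata to a proper $f\colon X \to S$ with an open immersion $Y \hookrightarrow X_\eta$ over $\eta$, set $N = j_*M$ (resp.\ $j_!M$), apply Theorem \ref{thm:OpenSubgroupActsUnipotentlyOnPSi} to $\Psi^J_f(\R_J N)$, and transport the nilpotence through a $\Gal(\bar{K}/K)$-equivariant proper/smooth base-change identification of $R\Gamma(\bar{Y}, \R_J M|_{\bar{Y}})$ (resp.\ $R\Gamma_c$) with $(f_\sigma)_*\Psi^J_f(\R_J N)$. The only cosmetic imprecision is the claim $X_\eta \simeq \bar{Y}$ after your second compactification (a priori $\bar{Y}$ is only open and closed in $X_\eta$, fixable by taking the closure of $\bar{Y}$ in $X$), but since your argument only uses the open immersion $Y \hookrightarrow X_\eta$, nothing is affected.
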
 

\begin{proof}
By using Temkin's version of Nagata compactification \cite{TemkinRelative} we find a proper morphism $f: X \rightarrow S$ and an affine open immersion $Y \rightarrow X$ making the diagram
\[
\begin{tikzcd}
Y \arrow[d, "g"'] \arrow[r] & X \arrow[d, "f"] \\
\eta \arrow[r, "j"]         & S               
\end{tikzcd}
\]
commute. Consider the diagram
\[
\begin{tikzcd}
X_{\bar{\eta}} \arrow[d, "f_{\bar{\eta}}"'] \arrow[r, "\bar\theta"] & X_\eta \arrow[d, "f_\eta"] \arrow[r, "j"] & X \arrow[d, "f"] \\
\bar{\eta} \arrow[r, "\bar\theta"]                              & \eta \arrow[r, "j"]                       & S               
\end{tikzcd}
\]
obtained by pullback. The pullbacks above induce canonical maps $k: Y \rightarrow X_{\eta}$ and  $\bar{k}: \bar{Y} \rightarrow X_{\overline{\eta}}$ which are open affine and fit into a cartesian square
\[
\begin{tikzcd}
\bar{Y} \arrow[r, "\bar{\theta}"] \arrow[d, "\bar{k}"'] & Y \arrow[d, "k"] \\
X_{\bar\eta} \arrow[r, "\bar\theta"]                    & X_{\eta}.        
\end{tikzcd}
\]
Proper and smooth base change give rise to an equivalence
\begin{equation} \label{eqn:GalEquivEqui}
R\Gamma(\bar{Y}, \R_J M |_{\bar{Y}}) \simeq (f_{\bar{\eta}})_* \bar{k}_* \bar{\theta}^* \R_J M \simeq (f_{\sigma})_* \Psi^J_f(\R_J k_* M )
\end{equation}
which is $\Gal(\bar{K}/K)$-equivariant. From Theorem \ref{thm:OpenSubgroupActsUnipotentlyOnPSi} we get a subgroup $H$ and an integer $m$ such that $(\lambda-1)^m$ acts as the zero map on $\Psi^J_f(\R_J k_* M )$. Hence (\ref{eqn:GalEquivEqui}) implies that this $H$ and $m$ satisfy (1). By replacing $k_*$ with $k_!$ in the argument above we get (2). 
\end{proof}

\begin{remark}
\begin{enumerate}
\item In the case where $\Lambda= \Z$ and $J =(\ell)$ for some prime $\ell \neq p$  we can see from the proof that the $H$ and $m$ in Theorem \ref{thm:OpenSubgroupActsUnipotentlyOnPSi} and Corollary \ref{cor:MonodromyThm} are independent of $\ell$. By this we mean that the $H$ and $m$ which we construct in the proof of Theorem \ref{thm:OpenSubgroupActsUnipotentlyOnPSi} only depend on the motive $M$ and work for all $\ell \neq p$ simultaneously.

\item In the case where $\Lambda= \Z$, $J= (\ell)$ for some prime $\ell \neq p$ and $M = \1$ in $\DA^{\cons}_{\et}(Y, \Lambda \otimes \Q)$ Corollary \ref{cor:MonodromyThm} recovers Grothendieck's classical local monodromy theorem (see \cite[\S1]{IllusieAutour}) under the minor additional assumption that $S$ is excellent. Note that this proof is completely independent of the proofs existing in the literature. Interestingly we can deduce from this that the classical monodromy operator 
\[
N: H^{i}_{\et}(\bar{Y}, \Q_\ell) \longrightarrow H^{i}_{\et}(\bar{Y}, \Q_\ell)(-1)
\]
is induced by the (motivic!) monodromy operator $N:  \Upsilon_f \rightarrow \Upsilon_f(-1)$. In particular it is induced by the monodromy of the logarithm motive (see \cite[p. 86]{AyoubRealizationEtale}). A similiar observation in the $p$-adic setting was made in \cite[Appendix A]{BindaKatoVezzani}.
\end{enumerate}
\end{remark}

\noname $\hat\T^{\cons}_{\et}(\_ ,\Lambda_J \otimes \Q)$ satisfies the axioms of \cite[1.4.3]{BBD} when restricted to quasi-excellent noetherian schemes of finite dimension. Hence following \textit{loc. cit} we can define for such a scheme $W$ the perverse t-structure on $\hat\T^{\cons}(W ,\Lambda_J \otimes \Q)$. For an $A$ in $\hat\T^{\cons}_{\et}(W,\Lambda_J \otimes \Q)$ let us denote by ${}^p\mathcal{H}^{j}(A)$ the $j$-th cohomology with respect to the perverse $t$-strucutre. 

\begin{cor} \label{cor:LocMonForPerverseThings}
The statement of Corollary \ref{cor:MonodromyThm} holds true if we replace $\R_J M$ in (1) and (2) by ${}^p\mathcal{H}^j(\R_J M)$ for any integer $j$. 
\end{cor}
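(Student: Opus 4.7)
The plan is to adapt the proof of Corollary \ref{cor:MonodromyThm} with $\R_J M$ replaced by the perverse sheaf $P := {}^p\mathcal{H}^j(\R_J M)$, reducing the resulting unipotence statement back to Theorem \ref{thm:OpenSubgroupActsUnipotentlyOnPSi} via the perverse t-exactness of the relevant six-functor operations. First I would apply Temkin's Nagata compactification exactly as in the proof of Corollary \ref{cor:MonodromyThm} to obtain a proper $f: X \to S$ together with an affine open immersion $k: Y \hookrightarrow X$. The key point of choosing an \emph{affine} open immersion is that both $k_*$ and $k_!$ are then perverse t-exact on $\hat{\T}^{\cons}_{\et}(\_, \Lambda_J \otimes \Q)$ --- the perverse analogue of Artin's affine vanishing \cite[4.1.10]{BBD}, available here because $\hat{\T}^{\cons}_{\et}(\_, \Lambda_J \otimes \Q)$ satisfies the BBD axioms. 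Combined with the compatibility $\R_J \circ k_* \simeq k_* \circ \R_J$ from the six-functor formalism for $\R_J$, this gives
\[
k_* P \simeq {}^p\mathcal{H}^j(\R_J M'), \qquad M' := k_* M \in \DA_{\et}^{\cons}(X_\eta, \Lambda \otimes \Q).
\]

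Repeating the proper and smooth base-change calculation from the proof of Corollary \ref{cor:MonodromyThm} verbatim then produces a $\Gal(\bar K/K)$-equivariant equivalence
\[
R\Gamma(\bar Y, P|_{\bar Y}) \simeq (f_\sigma)_* \Psi^J_f\bigl({}^p\mathcal{H}^j(\R_J M')\bigr).
\]
Next I would invoke the classical perverse t-exactness of nearby cycles: $\Psi^J_f[-1]$ is t-exact for the perverse t-structure (due to Beilinson; see \cite{IllusieAutour}), and this carries over to $\hat{\T}^{\cons}_{\et}$ because of the BBD axioms. Unpacking the commutation of $\Psi^J_f[-1]$ with perverse cohomology yields a canonical, $\Gal(\bar K/K)$-equivariant equivalence
\[
\Psi^J_f\bigl({}^p\mathcal{H}^j(\R_J M')\bigr) \simeq {}^p\mathcal{H}^{j-1}\bigl(\Psi^J_f(\R_J M')\bigr)[1].
\]

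Finally, Theorem \ref{thm:OpenSubgroupActsUnipotentlyOnPSi} applied to $M' \in \DA_{\et}^{\cons}(X_\eta, \Lambda \otimes \Q)$ furnishes an open subgroup $H \subset \Gal(\bar K/K)$ and a positive integer $m$ such that $(\lambda - 1)^m = 0$ as an endomorphism of $\Psi^J_f(\R_J M')$ for all $\lambda \in H$. By additivity and functoriality of ${}^p\mathcal{H}^{j-1}$, of the shift $[1]$, and of $(f_\sigma)_*$, this vanishing propagates through the equivalences above to $R\Gamma(\bar Y, P|_{\bar Y})$, proving part~(1); part~(2) is identical with $k_!$ replacing $k_*$ throughout (still t-exact for affine open immersions). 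The main obstacle will be to verify that the two perverse t-exactness statements invoked --- for $k_*, k_!$ at affine open immersions and for $\Psi^J_f[-1]$ --- transport rigorously from the classical $\ell$-adic setting of \cite{BBD} to $\hat{\T}^{\cons}_{\et}(\_, \Lambda_J \otimes \Q)$; I expect this to follow by retracing the original BBD and Beilinson arguments within the BBD-axiomatic framework already installed in the paper.
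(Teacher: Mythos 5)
Your proposal is correct and follows essentially the same route as the paper's proof: the same affine Nagata compactification as in Corollary \ref{cor:MonodromyThm}, perverse t-exactness of $k_*$, $k_!$ for the affine open immersion and of $\Psi^J_f[-1]$ (the paper cites \cite[4.1.3]{BBD} and \cite[4.4.2]{BBD} together with \cite[4.2]{IllusieAutour} for exactly these two inputs), a $\Gal(\bar K/K)$-equivariant identification of $R\Gamma(\bar Y, {}^p\mathcal{H}^j(\R_J M)|_{\bar Y})$ with $(f_\sigma)_*\Psi^J_f(k_*{}^p\mathcal{H}^j(\R_J M))$, and an application of Theorem \ref{thm:OpenSubgroupActsUnipotentlyOnPSi} to $k_*M$ (resp.\ $k_!M$), with the unipotence transported through the additive functors ${}^p\mathcal{H}^{\bullet}$ and $(f_\sigma)_*$. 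Your shift bookkeeping ${}^p\mathcal{H}^{j-1}(\Psi^J_f(-))[1]$ agrees with the paper's formulation, and the "transport of BBD" caveat you flag is handled in the paper precisely by those citations, so there is no gap.
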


\begin{proof}
We use the notations of Corollary \ref{cor:MonodromyThm} and its proof. Since $k$ is open affine $k_*$ and $k_!$ are t-exact with respect to the perverse t-structure by \cite[4.1.3]{BBD}. Moreover $\Psi_f^J [-1]$ is t-exact by \cite[4.4.2]{BBD} combined with \cite[4.2]{IllusieAutour}. This implies that there is a $\Gal(\bar{K}/K)$-equivariant equivalence
\[
\Psi^J_f(k_*{}^p\mathcal{H}^j(\R_J M))[-1] \simeq {}^p\mathcal{H}^j (\Psi^J_f((\R_J k_* M))[-1]). 
\]
Hence Theorem \ref{thm:OpenSubgroupActsUnipotentlyOnPSi} gives rise to a subgroup $H \subset \Gal(\bar{K}/K)$ and an integer $m$ such that the action of $(\lambda -1)^m$ on $\Psi_f(k_*{}^p\mathcal{H}^j(\R_J M))$ is zero for all $\lambda \in H$. Again there is a $\Gal(\bar{K}/K)$-equivariant equivalence
\[
R\Gamma(\bar{Y}, {}^p\mathcal{H}^j(\R_J M)|_{\bar{Y}}) \simeq (f_{\sigma})_* \Psi^J_f(k_*{}^p\mathcal{H}^j(\R_J M))
\]
which allows us to conclude analogue as in the proof of Corollary \ref{cor:MonodromyThm} above.
\end{proof}

\begin{remark}
In particular Corollary \ref{cor:LocMonForPerverseThings} implies that a local monodromy theorem is true for "sheaves of geometric origin" (see \cite[\S 6.2]{BBD}). This was already expected to be true by Illusie in \cite[\S 1]{IllusieAutour}. 
\end{remark}

\chapter{Universal local acyclicity for motives and the nearby cycles functor}

We start off with developing the theory of universal local acyclicity for motivic $\infty$-categories. A good part of this is analogue to the case of \'etale sheaves as developed in \cite{lu_zheng_2022} and \cite{HansenScholzeRelative}. 

We establish a very useful characterization of universal local acyclicity in terms of K\"unneth-type formulas (Proposition \ref{thm:EquivCharOfULA}) and without much effort we get a 'Generic universal local acyclicity theorem' (Proposition \ref{thm:GenericULA}) which generalizes Deligne's classical result.  

Then we turn our attention to proving the main result of this chapter: Theorem \ref{thm:ULAinDAdetectedByNearby} shows that for \'etale motives universal local acyclicity over an excellent regular 1-dimensional base is detected by the nearby cycles functor. The proof claims a good part of this chapter and is unfortunately very technical.

Finally as an application we introduce Beilinson's weak singular support of an \'etale motive and show that it can be characterized via the nearby cycles functor.

\section{Cohomological correspondences}

\begin{construction} \label{noname:ConstrOfCS} Let $S$ be a qcqs scheme and $\T(\_)$ a motivic $\infty$-category over $S$ (see \ref{noname:MotivicInftyCat}). Let us denote by $\Sch^{\text{ft}}_{/S}$ the category of schemes of finite type over $S$. We consider the bicategory $\C_{S, \T}$ of cohomological correspondences with values in $\T( \_)$ as constructed for example in \cite{lu_zheng_2022}. This is the bicategory where:
\begin{enumerate}
\item Objects are pairs $(X,M)$ where $X \in \Sch^{\text{ft}}_{/S}$ and $M \in \T(X)$.
\item For any two objects $X,Y$ in $\Sch^{\text{ft}}_{/S}$ let us fix a choice of pullback $X \times_SY$. Different choices of pullbacks will give rise to equivalent bicategories. A morphism $(C,\alpha): (X, M) \rightarrow (Y,N)$ consists of a correspondence
\[
{(\overleftarrow{c},\overrightarrow{c})}: C \longrightarrow X \times_S Y
\]
and a map $\alpha: \overleftarrow{c}^*M \rightarrow \overrightarrow{c}^! N$ in $h\T(C)$. Given another map $(D, \beta): (Y, N) \rightarrow (Z,P)$ in order to define the composite $(D, \beta)\circ(C, \alpha)$ consider the diagram of schemes
\[
\begin{tikzcd}
  &                                                                     & C \times_Y D \arrow[ld, "\overleftarrow{e}"'] \arrow[rd, "\overrightarrow{e}"] &                                                                     &   \\
  & C \arrow[ld, "\overleftarrow{c}"'] \arrow[rd, "\overrightarrow{c}"] &                                                                                & D \arrow[ld, "\overleftarrow{d}"'] \arrow[rd, "\overrightarrow{d}"] &   \\
X &                                                                     & Y                                                                              &                                                                     & Z
\end{tikzcd}
\]
obtained by taking pullback. Then $(D, \beta)\circ(C, \alpha)$ is given by the correspondence
\[
(\overleftarrow{c}\overleftarrow{e}, \overrightarrow{e} \overrightarrow{d}): C \times_Y D \longrightarrow X \times_S Z
\]
and the map
\[
\gamma: \overleftarrow{e}^*\overleftarrow{c}^*M \overset{\overleftarrow{e}^*\alpha}\longrightarrow \overleftarrow{e}^* \overrightarrow{c}^!N \overset{\Ex}\longrightarrow \overrightarrow{e}^! \overleftarrow{d}^*N \overset{\overrightarrow{e}^! \beta}\longrightarrow \overrightarrow{e}^! \overrightarrow{d}^! P
\]
in $h\T(C \times_Y D)$, where $\Ex$ is the exchange map with respect to the adjunction $(\_)_! \dashv (\_)^!$ and the square
\[
\begin{tikzcd}
\T(C) \arrow[r, "\overleftarrow{e}^*"] \arrow[d, "\overrightarrow{c}_!"'] & \T(C \times_Y D) \arrow[d, "\overrightarrow{e}_!"] \\
\T(Y) \arrow[r, "\overleftarrow{d}^*"'] \arrow[ru, Rightarrow, "\Ex", shorten <=2.5ex, shorten >=2.5ex ]            & {\T(D),}                                          
\end{tikzcd}
\]
whose 2-cell is invertible by proper base change. Since this composition depends on a choices of pullbacks it is not well defined as a strict composition. Given an object $(X,M)$ the identity is (up to equivalence) the morphism 
\[
(\Delta_X, {\id}_M): \id^* M \simeq M \overset{\id}\longrightarrow  M \simeq \id^!M,
\]
where $\Delta_X = (\id, \id): X \rightarrow X \times_S X$ is the diagonal morphism.

\item Given two morphisms $(C,\alpha),(D, \beta): (X, M) \rightarrow (Y,N)$ a 2-cell $(\Theta,f): (C,\alpha) \rightarrow (D, \beta)$ is given by a proper morphism $f: C \rightarrow D$ fitting into the diagram
\[
\begin{tikzcd}
C \arrow[rd, "{(\overleftarrow{c}, \overrightarrow{c})}"'] \arrow[rr, "f"] &              & D \arrow[ld, "{(\overleftarrow{d},\overrightarrow{d})}"] \\
                                                                           & X \times_S Y &                                                         
\end{tikzcd}
\]
such that $\beta$ is equal to the composition
\[
f_* \alpha: \overleftarrow{d}^*M \rightarrow f_*f^*\overleftarrow{d}^* M \simeq f_* \overleftarrow{c}^* M \overset{f_* \alpha}\longrightarrow f_* \overrightarrow{c}^!N \simeq  f_* f^! \overrightarrow{d}^!N \rightarrow \overrightarrow{d}^!N,
\]
where the first and the last arrow are induced by the unit $\id \rightarrow f_* f^*$ and counit $f_* f^! \simeq f_! f^! \rightarrow \id$ respectively. This notion of 2-cells makes the composition in (2) well defined in the appropriate weak sense. 
\end{enumerate} 
\end{construction}

\begin{noname}

 We can equip the bicategory $\C_{S, \T}$ with a symmetric monoidal structure where the tensor product is given by
\[
(X,M) \otimes (Y,N) := (X \times_S Y, M \boxtimes N). 
\]
Then given morphisms $(C,\alpha): (X, M) \rightarrow (Z,P)$ and $(D, \beta): (Y, N) \rightarrow (W,Q)$ the induced morphism 
\[
(C, \alpha) \otimes (D, \beta): (X \times_S Y, M \boxtimes N) \rightarrow (Z \times_S W, P \boxtimes Q)
\]
is given by the correspondence 
\[
(\overleftarrow{e}, \overrightarrow{e}) = ( \overleftarrow{c} \times \overleftarrow{d}, \overrightarrow{c} \times \overrightarrow{d}): C\times_S D \rightarrow (X \times_S Y) \times_S (Z \times_S W)
\]
and the map
\[
\overleftarrow{e}^*(M \boxtimes N) \simeq \overleftarrow{c}^*M \boxtimes \overleftarrow{d}^*N \overset{\alpha \boxtimes \beta}\longrightarrow \overrightarrow{c}^!P \boxtimes \overrightarrow{d}^!Q \overset{}\longrightarrow \overrightarrow{e}^!( P \boxtimes Q),
\]
where the last arrow is the canonical K\"unneth morphism. 

\end{noname}

\begin{lem} \label{lem: C_STAdmitsInternalHom}
For any object $(X,M)$ in $\C_{S,\T}$ the functor
\[
\_ \otimes (X,M): \C_{S, \T} \rightarrow \C_{S,\T}
\]
has a right adjoint ${\Hom}_{C_{S,\T}}((X,M), \_)$. These internal mapping objects are given by
\[
{\Hom}_{C_{S, \T}}((X,M), (Y,N)) \simeq (X \times_S Y, {\Hom}_{\T(X \times_S Y)}(p_X^* M, p_Y^!N)).
\]
\end{lem}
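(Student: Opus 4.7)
The plan is to exhibit, for every triple of objects $(Z,P), (X,M), (Y,N)$ in $\C_{S,\T}$, a natural equivalence of hom-categories
\[
\map_{\C_{S,\T}}\bigl((Z,P) \otimes (X,M), (Y,N)\bigr) \simeq \map_{\C_{S,\T}}\bigl((Z,P), (X \times_S Y, \Hom(p_X^*M, p_Y^!N))\bigr),
\]
so that the claimed formula supplies the right adjoint of $\_ \otimes (X,M)$ explicitly.

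First, I would match up the underlying correspondences. A morphism on the left is a correspondence $(\overleftarrow{c},\overrightarrow{c}) : C \to (Z \times_S X) \times_S Y$ together with a map $\alpha$; a morphism on the right is a correspondence $(\overleftarrow{c'},\overrightarrow{c'}) : C' \to Z \times_S (X \times_S Y)$ together with a map $\alpha'$. Since $(Z \times_S X) \times_S Y$ and $Z \times_S (X \times_S Y)$ are canonically identified with $Z \times_S X \times_S Y$, I may take $C = C'$ and organize the structure maps as $c_Z : C \to Z$, $c_X : C \to X$, $c_Y : C \to Y$, with $\overleftarrow{c} = (c_Z,c_X)$, $\overrightarrow{c} = c_Y$ on the left and $\overleftarrow{c'} = c_Z$, $\overrightarrow{c'} = (c_X,c_Y)$ on the right.

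Next I would translate the $2$-morphism data. For the left-hand side, the K\"unneth identification for external products gives $(c_Z,c_X)^*(P \boxtimes M) \simeq c_Z^*P \otimes c_X^*M$, so $\alpha$ corresponds, via the $\otimes$-$\Hom$ adjunction inside $\T(C)$, to a map
\[
\widetilde{\alpha} : c_Z^*P \longrightarrow \Hom(c_X^*M, c_Y^!N).
\]
For the right-hand side, the third projection formula in property~(6) of a motivic $\infty$-category, applied to $(c_X, c_Y) : C \to X \times_S Y$, gives a natural equivalence $(c_X,c_Y)^!\Hom(p_X^*M, p_Y^!N) \simeq \Hom(c_X^*M, c_Y^!N)$, under which $\alpha'$ corresponds to a map of the same form. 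The two translations are mutually inverse, yielding a bijection at the level of $1$-morphisms.

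It then remains to check that this bijection is functorial with respect to $2$-cells and natural in $(Y,N)$. A $2$-cell on either side is a proper morphism $f : C \to D$ over the common scheme $Z \times_S X \times_S Y$, equipped with the appropriate compatibility between $\alpha$ and $\beta$ expressed through the unit and counit of $f_! = f_* \dashv f^!$. Since all the identifications used in the previous step (pullback K\"unneth, $\otimes$-$\Hom$ adjunction, and the projection formula $f^!\Hom \simeq \Hom(f^*, f^!)$) are natural transformations of functors, the compatibility condition on the left translates step-by-step into the compatibility condition on the right, so the bijection descends to an equivalence of hom-categories. Naturality in $(Y,N)$ is straightforward since the construction only uses functorial operations in $N$.

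The main obstacle is keeping the bookkeeping honest in the bicategorical setting: one must verify that the translations between $\alpha$ and $\alpha'$ respect composition of $1$-morphisms (this is where the exchange map $\Ex$ and the proper-base-change $2$-cell enter the definition in Construction~\ref{noname:ConstrOfCS}) and that the resulting adjunction is a genuine bicategorical adjunction, not merely a bijection on isomorphism classes. Once the $2$-cell compatibility is settled via the naturality of the three basic identifications, the rest is formal.
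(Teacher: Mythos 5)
Your proposal is correct and follows essentially the same route as the source the paper relies on: the paper gives no argument of its own but simply cites \cite[2.8]{lu_zheng_2022}, where precisely this unwinding is carried out — identify the underlying correspondences over $Z\times_S X\times_S Y$, then translate the 2-morphism data using the compatibility of $(-)^*$ with external products, the $\otimes$--$\Hom$ adjunction in $h\T(C)$, and the equivalence $f^!\Hom(A,B)\simeq\Hom(f^*A,f^!B)$ from property (6), with the naturality/2-cell bookkeeping you defer handled there as well. So there is nothing to correct, only the routine verification that your identifications are compatible with 2-cells and with composition of correspondences, which is exactly the content of the cited lemma.
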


\begin{proof}
See \cite[2.8.]{lu_zheng_2022}.
\end{proof}

\begin{construction} \label{noname:FuntcorOnC_SIndByAnyMap} Given a morphism  $g:T \rightarrow S$ between qcqs schemes let us denote by $\T(\_)|_T$ the restriction of $\T(\_)$ to $\Sch^{\qcqs}_{/T}$. We denote the pull back of a scheme $f: X \rightarrow S$ along $g$ by $f_T: X_T \rightarrow T.$ Further for a scheme $X$ over $S$ we write by slight abuse of notation $g^*: \T(X) \rightarrow \T (X_T)$ for the induced inverse image functor. 

We define a functor $g^\diamondsuit: \C_{S,\T} \rightarrow \C_{T, \T|_T}$ as follows: We send a morphism $(C, \alpha): (X,M) \rightarrow (Y,N)$ to $(C_T, g^*\alpha): (X_T, g^*M) \rightarrow (Y_T, g^* N)$ where $g^* \alpha$ denotes the composition
\[
\overleftarrow{c_T}^* g^* M \simeq g^* \overleftarrow{c}^*M \overset{g^* \alpha}\longrightarrow g^* \overrightarrow{c}^! N \overset{\Ex}\longrightarrow \overrightarrow{c_T}^!g^*  N.
\]
Given two morphisms $(C,\alpha),(D, \beta): (X, M) \rightarrow (Y,N)$ a 2-cell $(\Theta,f): (C,\alpha) \rightarrow (D, \beta)$ given by a proper morphism $f: C \rightarrow D$ is sent to the 2-Morphism $g^\diamondsuit \Theta: (C_T,g^*\alpha) \rightarrow (D_T, g^*\beta)$ induced by the proper morphism $f_T: C_T \rightarrow D_T$. Due to proper base-change this is easily verified to be a well defined functor of bicategories. Moreover the equivalences 
\[
((X\times_S Y)_T, g^*(M \boxtimes N)) \rightarrow ((X_T \times_T Y_T), g^*M \boxtimes g^*N)
\]
and 
\[
(T , g^* \1) \rightarrow (T, \1)
\]
equip $g^\diamondsuit: \C_{S,\T} \rightarrow \C_{T, \T}$ with the structure of a strict monoidal functor. 
\end{construction} 

\section{Universal local acyclicity}

\noname Let us fix a qcqs scheme $S$ and let $\T(\_)$ be a motivic $\infty$-category over $S$. Most of what we are treating in this section is essentially due to \cite{lu_zheng_2022} and \cite{HansenScholzeRelative}.

\begin{definition} \label{def:ULA}
Let $f: X \rightarrow S$ be of finite type and $M \in \T(X)$. $M$ is called \textit{ universally locally acyclic with respect to $f$}  if $(f:X \rightarrow S,M)$ is dualizable in $\C_{S,\T}$ (see Definition \ref{def:StronglyDualizable}).
\end{definition}

\noname For convenience we will also write '$f$-ULA' or 'ULA with respect to $f$' instead of 'universally locally acyclic with respect to $f$'.

\begin{remark}
It follows right from the definition that an object $M$ is a dualizable object of $\T(S)$ if and only if it is ULA with respect to the identity of $S$.
\end{remark}

\noname For any $f:X \rightarrow S$ of finite type and $M$ in $\T(X)$ let us write
\[
\D_S(M) := \Hom(M, f^! \1).
\]
Note that by Lemma \ref{lem: C_STAdmitsInternalHom} we have 
\[
{\Hom}_{\C_{S, \T}}((X,M), (S, \1_S)) \simeq (X, \D_S(M))
\]
in $\C_{S, \T}$.

\noname For any two morphisms of schemes $f: X \rightarrow S$ and $g: Y \rightarrow S$ let us denote by $p_1: X \times_S Y \rightarrow X$ the first projection and by $p_2: X \times_S Y \rightarrow Y$ the second projection.  

\begin{prop} \label{prop:equivcharofULA}
Let $f: X \rightarrow S$ be of finite type and $M$ in $\T(X)$. Then the following are equivalent:
\begin{enumerate}
\item $M$ is ULA with respect to $f$.
\item  For every $g: Y \rightarrow S$ in $\Sch^{\ft}_{/S}$ and $N$ in $\T(Y)$ the canonical map
\[
p_1^* \Dual_S(M) \otimes p_2^* N \rightarrow \Hom (p_1^* M, p_2^! N)
\]
is an equivalence.
\item  The canonical map
\[
p_1^* \Dual_S(M) \otimes p_2^* M \rightarrow \Hom (p_1^* M, p_2^! M)
\]
is an equivalence.
\end{enumerate}
\end{prop}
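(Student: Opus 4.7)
The plan is to prove the three conditions equivalent along the chain $(1) \Leftrightarrow (2) \Rightarrow (3) \Rightarrow (1)$. The implication $(2) \Rightarrow (3)$ is immediate by specialization to $(Y, N) = (X, M)$. The substance lies in $(1) \Leftrightarrow (2)$, which is a formal bicategorical statement, and in $(3) \Rightarrow (1)$, which reduces dualizability to a single test case.

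First I would translate the comparison map into the internal language of the closed symmetric monoidal bicategory $\C_{S, \T}$. By Lemma \ref{lem: C_STAdmitsInternalHom}, the object $\Hom_{\C_{S,\T}}((X, M), (S, \1_S))$ is $(X, \D_S(M))$, and more generally $\Hom_{\C_{S,\T}}((X, M), (Y, N)) \simeq (X \times_S Y, \Hom(p_1^* M, p_2^! N))$. Meanwhile the tensor product $(X, \D_S(M)) \otimes (Y, N)$ evaluates to $(X \times_S Y, p_1^* \D_S(M) \otimes p_2^* N)$. Unwinding the definitions, the canonical morphism in $\C_{S, \T}$ from this tensor to the internal Hom — realized via the diagonal correspondence on $X \times_S Y$ — corresponds to precisely the comparison map appearing in (2) and (3). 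With this identification in hand, both $(1) \Leftrightarrow (2)$ and $(3) \Rightarrow (1)$ follow from the general characterization of dualizable objects in a closed symmetric monoidal bicategory recorded in Appendix A: an object $A$ is dualizable iff the canonical map $\Hom(A, \1) \otimes B \to \Hom(A, B)$ is an equivalence for every $B$, and this is in turn equivalent to the single condition that $\Hom(A, \1) \otimes A \to \Hom(A, A)$ is an equivalence (the preimage of $\id_A$ then furnishes a coevaluation map satisfying the zig-zag identities).

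The main obstacle will be the bookkeeping in the identification step: one must verify that under Lemma \ref{lem: C_STAdmitsInternalHom}, the abstract comparison map between internal mapping objects in $\C_{S, \T}$ agrees with the concrete map $p_1^* \D_S(M) \otimes p_2^* N \to \Hom(p_1^* M, p_2^! N)$ built from the evaluation $\D_S(M) \otimes M \to f^! \1$, proper base change along the square formed by $p_1, p_2, f, g$, and the projection formula. Tracing this compatibility amounts to inspecting the composition law for correspondences in Construction \ref{noname:ConstrOfCS} together with the explicit description of the internal Hom; once this is in place, the appendix does the rest.
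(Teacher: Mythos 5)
Your proposal is correct and follows essentially the same route as the paper: the paper's proof likewise combines the explicit description of the tensor product and internal Hom in $\C_{S,\T}$ from Lemma \ref{lem: C_STAdmitsInternalHom} with the bicategorical characterization of dualizable objects in Lemma \ref{lem:StrongDualAndIntHom} from Appendix \ref{app:A}. The extra care you take in identifying the abstract comparison map with the concrete map $p_1^*\D_S(M)\otimes p_2^*N \to \Hom(p_1^*M, p_2^!N)$ is just a more detailed spelling-out of the same argument.
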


\begin{proof}
This follows from Lemma \ref{lem:StrongDualAndIntHom} using the descriptions of the symmetric product and internal Hom of $\C_{S, \T}$ of Lemma \ref{lem: C_STAdmitsInternalHom}.
\end{proof}

\begin{lem} \label{lem:ULAandProperSmooth}
Let $f: X \rightarrow S$ be of finite type and $M$ in $\T(X)$ ULA with respect to $f$. 
\begin{enumerate} 
\item If  $h:X \rightarrow Y$ in $\Sch^{\qcqs}_{/S}$ is proper, then $h_* M$ is $k$-ULA, where $k:Y \rightarrow S$ denotes the structure morphism of $Y$.
\item If  $h:Y \rightarrow X$ in $\Sch^{\qcqs}_{/S}$ is smooth, then $h^* M$ is $f \circ h$-ULA.
\item If  $g: S \rightarrow S'$ is smooth, then $M$ is $g \circ f$-ULA. 
\end{enumerate}
\end{lem}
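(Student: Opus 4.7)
The plan is to verify all three parts by means of the equivalent characterization of ULA given in Proposition~\ref{prop:equivcharofULA}(3): for every $g \colon Z \to S$ of finite type and every $N \in \T(Z)$, one needs the canonical comparison map
\[
p_1^* \D_S(M') \otimes p_2^* N \longrightarrow \Hom(p_1^* M', p_2^! N)
\]
to be an equivalence, where $M'$ denotes $h_*M$, $h^*M$, or $M$ (now viewed over $S'$) in the three cases. In each case I would reduce this comparison to the ULA equivalence already known for $(M, f)$, using only the six-functor properties of $\T(\_)$ listed in \ref{noname:MotivicInftyCat}.

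For (1), I would combine proper base change along the square with horizontal arrows $h$ and $h \times \id$ (which identifies $(p_1^Y)^* h_* M$ with $(h \times \id)_* (p_1^X)^* M$) with the adjunction identity $\D_S(h_* M) \simeq h_* \D_S(M)$, which holds because $h$ is proper so that $h_* = h_!$. Using then the projection formulas for $\otimes$ and $\Hom$ (both available since $h \times \id$ is proper), one rewrites both sides of the comparison map for $h_*M$ as $(h \times \id)_*$ applied to the comparison map for $M$, which is an equivalence by hypothesis.

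For (2), relative purity for the smooth map $h$ provides an invertible $\omega_h = h^!\1$ with $h^!(\_) \simeq h^*(\_) \otimes \omega_h$, and the projection formula $\Hom(h^*M, h^!\_) \simeq h^! \Hom(M, \_)$ gives $\D_S(h^*M) \simeq h^! \D_S(M) \simeq h^* \D_S(M) \otimes \omega_h$. I would then apply $(h \times \id)^!$ to the ULA equivalence for $M$; on the right, the projection formula for $\Hom$ produces $\Hom((p_1^Y)^* h^*M, (p_2^Y)^! N)$, and on the left, purity together with smooth base change $(h \times \id)^!\1 \simeq (p_1^Y)^* \omega_h$ produces exactly $(p_1^Y)^* \D_S(h^*M) \otimes (p_2^Y)^* N$, as required.

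For (3), observe that for any $g' \colon Z \to S'$ of finite type there is a canonical identification $X \times_{S'} Z \simeq X \times_S Z_S$, where $Z_S := S \times_{S'} Z$ and $\tilde g \colon Z_S \to Z$ is the smooth base change of $g$; under this identification the projection $p_2$ to $Z$ factors as $\tilde g \circ q_2$, with $q_i$ the projections of $X \times_S Z_S$. Since $g$ is smooth, $g^!\1$ is invertible; the projection formula for $f^!$ gives $\D_{S'}(M) \simeq \D_S(M) \otimes f^* g^!\1$, and smooth base change identifies the pullback of $f^* g^!\1$ to $X \times_S Z_S$ with $q_2^* \tilde g^!\1$. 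Applying the ULA equivalence for $(M, f)$ to the object $\tilde g^! N \in \T(Z_S)$ then yields precisely the required equivalence. The main obstacle throughout is the bookkeeping of Thom twists together with the exchange and projection-formula isomorphisms — especially in (2) and (3) — but each ingredient is standard in a motivic six-functor formalism, and the argument reduces to coherence of these natural transformations.
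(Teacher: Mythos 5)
Your proposal is correct and follows essentially the same route as the paper: both verify the criterion of Proposition \ref{prop:equivcharofULA} for the new object by reducing it, via proper base change, the projection formulas for $\otimes$ and $\Hom$, and relative purity, to the ULA equivalence already known for $(M,f)$ (the paper writes out only case (1), checking the self-pairing criterion through a chain involving $(h\times h)_*$, while you check the test-object form via $(h\times\id)_*$ — the same computation in mild disguise). The only blemish is cosmetic: the condition you quote ("for every $g\colon Z\to S$ and every $N$") is item (2) of Proposition \ref{prop:equivcharofULA}, not item (3), but since both are equivalent characterizations this does not affect the argument.
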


\begin{proof}
Let us prove (1). The claims (2) and (3) will follow similarly. We have
\begin{align*}
p_1^* \Dual_S(h_* M) \otimes p_2^* h_*M & \simeq (1 \times h_*) (p_1^* \Dual_S(h_*M) \otimes p_2^*M) \\
&\simeq (h \times h)_* (p_1^* \Dual_S(M) \otimes p_2^*M) \\
&\simeq (h \times h)_*\Hom (p_1^* M, p_2^! M) \\
&\simeq (h \times 1)_*\Hom (p_1^* M, (1 \times h)_* p_2^! M)\\
&\simeq \Hom (p_1^* h_*M,  p_2^! h_* M).
\end{align*}
Here the first two equivalences follow from the projection formula and proper base change, the third equivalence follows from Proposition \ref{prop:equivcharofULA} and the last two equivalences follow simply from the adjunctions $(1 \times h)^* \dashv (1 \times h)_*$ and $(h \times 1)_!\dashv (h \times 1)^!$. Using the characterization (3) of Proposition \ref{prop:equivcharofULA} this implies that $h_* M$ is $k$-ULA . \end{proof}

\begin{prop} \label{prop:propertiesOfULA}
Let $f: X \rightarrow S$ be of finite type and conisder a $M$ in $\T(X)$ which is ULA with respect to $f$.
\begin{enumerate} 
\item Let $g: S' \rightarrow S$ be a map of qcqs schemes, $f': X \times_S S' \rightarrow S'$ the base change of $f$ along $g$ and $M'$ the pullback of $M$ along $X \times_S S' \rightarrow X$. Then $M'$ is $f'$-ULA.
\item The canonical map
\[
M \rightarrow \Dual_{S} \Dual_{S} (M)
\]
is an equivalence.
\item For any $Y$ in $\Sch^{\ft}_{/S}$ and $N \in \T(Y)$ the canonical map
\[
M \boxtimes N \rightarrow \Hom( p_1^* \Dual_{S}M , p_2^! N)
\]
is an equivalence. 
\item For any $Y$ in $\Sch^{\ft}_{/S}$ and $N \in \T(Y)$ the canonical map
\[
\D_S(M \boxtimes N) \rightarrow \D_S(M) \boxtimes \D_S(N)
\]
is an equivalence.
\item  For any morphism $h: Y' \rightarrow Y$ in $\Sch^{\ft}_{/S}$ and $N \in \T(Y)$ the canonical map
\[
M \boxtimes h^! N \rightarrow (1 \times h)^! ( M \boxtimes N)
\] 
is an equivalence.
\item  For any morphism $h: Y' \rightarrow Y$ in $\Sch^{\ft}_{/S}$ and $N \in \T(Y')$ the canonical map
\[
M \boxtimes h_* N \rightarrow (1 \times h)_* ( M \boxtimes N)
\] 
is an equivalence.
\end{enumerate}
\end{prop}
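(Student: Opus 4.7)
The plan is to derive all six properties from the core fact that $(X,M) \in \C_{S,\T}$ is dualizable with dual $(X, \D_S M)$, together with the internal Hom description of Lemma \ref{lem: C_STAdmitsInternalHom} and the characterization in Proposition \ref{prop:equivcharofULA}.

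For (1), observe that the restriction functor $g^\diamondsuit : \C_{S,\T} \to \C_{T,\T|_T}$ of Construction \ref{noname:FuntcorOnC_SIndByAnyMap} is symmetric monoidal, hence preserves dualizable objects; since $g^\diamondsuit(X,M) = (X_{S'}, M')$, this gives (1). For (2), the dual of a dualizable object in any symmetric monoidal bicategory is itself dualizable, and the canonical biduality unit is an equivalence; applied to $(X,M)$ this simultaneously yields that $(X, \D_S M)$ is dualizable (so $\D_S M$ is ULA) and that the canonical map $M \to \D_S \D_S M$ is an equivalence. Part (3) then follows by applying Proposition \ref{prop:equivcharofULA}(2) to the now-ULA object $\D_S M$ and invoking biduality to identify $\D_S \D_S M$ with $M$.

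Properties (4), (5), (6) are all rewriting arguments using (3) together with the standard compatibilities of the internal Hom with the six functors listed in \ref{noname:MotivicInftyCat}. For (4), expand $\D_S(M \boxtimes N) = \Hom(p_1^* M \otimes p_2^* N, \, p_2^! g^! \1)$ via the tensor-Hom adjunction and then the compatibility $p_2^! \Hom(A,K) \simeq \Hom(p_2^* A, p_2^! K)$ to obtain $\Hom(p_1^* M, p_2^! \D_S N)$, which by Proposition \ref{prop:equivcharofULA}(2) (applied to the ULA $M$ and $\D_S N$) equals $\D_S M \boxtimes \D_S N$. For (5) and (6), use (3) to rewrite both sides as internal Homs of the shape $\Hom(\pi_1^* \D_S M, \pi_2^! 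N)$: then (5) becomes the internal-hom projection formula $f^! \Hom(A, C) \simeq \Hom(f^* A, f^! C)$ applied to $f = (1 \times h)$, while (6) combines the adjunction identity $f_* \Hom(f^* A, C) \simeq \Hom(A, f_* C)$ with the base-change equivalence $(1 \times h)_* \pi_2'^! \simeq \pi_2^! h_*$ coming from \ref{noname:MotivicInftyCat}(5).

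The only subtle point is to verify that the equivalences produced by these abstract manipulations agree with the canonical maps appearing in the statement. This is a naturality check for the projection and base-change formulas, routine in a six-functor formalism, so I do not expect any real obstacle; the main conceptual content is entirely carried by the dualizability of $(X,M)$ in $\C_{S,\T}$.
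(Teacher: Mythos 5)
Your proposal is correct and follows essentially the same route as the paper: (1) via symmetric monoidality of $g^\diamondsuit$ and preservation of dualizable objects, (2) via biduality in $\C_{S,\T}$, (3) by combining (2) with Proposition \ref{prop:equivcharofULA}(2), and (4)--(6) by the same internal-Hom rewritings using the projection formula and base change. No gaps.
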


\begin{proof}
This is essentially \cite[2.11.]{lu_zheng_2022}. We give the arguments for the sake of completeness. Indeed (1) follows from Corollary \ref{cor:1.ImageOfDualizableObjectisDualizablw2} since the induced functor $g^\diamondsuit: \C_{S, \T} \rightarrow \C_{S', \T|_{S'}}$ of \ref{noname:FuntcorOnC_SIndByAnyMap} is symmetric monoidal. (2) is clear since $(X, \D_S(M))$ is the dual of $(X,M)$ in $\C_{S, \T}$ and hence the canonical map $(X, M) \rightarrow (X, \D_S\D_S(M))$ is an equivalence in $\C_{S, \T}$. (3) is the combination of (2) and  the characterization (2) of Proposition \ref{prop:equivcharofULA}. The equivalence (4) follows from the chain of equivalences
\begin{align*}
\D_S(M) \boxtimes \D_S(N) &\simeq \Hom(p_1^* M, p_2^! \D_S(N)) \\
&\simeq \Hom(p_1^* M, \D_S(p_2^* N)) \\
& \simeq \Hom(p_1^* M \otimes p_2^* N, p_2^! g^! \1)\\
& \simeq \D_S( M \boxtimes N),
\end{align*}
where the first equivalence is given by (3). (5) follows from (3) via
\begin{align*}
M \boxtimes h^! N &\simeq \Hom(p_1^* \D_S(M), p_2^! h^! N) \\
&\simeq \Hom((1 \times h)^*p_1^* \D_S(M), (1 \times h)^! p_2^!  N) \\
&\simeq (1 \times h)^!\Hom(p_1^* \D_S(M), p_2^!  N) \\
&\simeq (1 \times h)^! (M \boxtimes N).
\end{align*}
Similarly (6) follows from (3): 
\begin{align*}
M \boxtimes h_* N &\simeq \Hom(p_1^* \D_S(M), p_2^! h_* N) \\
&\simeq \Hom(p_1^* \D_S(M), (1 \times h)_* p_2^!  N) \\
&\simeq (1 \times h)_*\Hom(p_1^* \D_S(M), p_2^!  N) \\
&\simeq (1 \times h)_* (M \boxtimes N).
\end{align*}
\end{proof}

\begin{lem} \label{lem:*Künnethvs!Künneth}
Let $f: X \rightarrow S$ be of finite type and consider $M \in \T(X)$. Then the following are equivalent:
\begin{enumerate}
\item For any morphism $h: Y' \rightarrow Y$ in $\Sch^{\ft}_{/S}$ and $N \in \T(Y)$ the canonical map
\[
M \boxtimes h^! N \rightarrow (1 \times h)^! ( M \boxtimes N)
\] 
is an equivalence.
\item  For any morphism $h: Y' \rightarrow Y$ in $\Sch^{\ft}_{/S}$ and $N \in \T(Y')$ the canonical map
\[
M \boxtimes h_* N \rightarrow (1 \times h)_* ( M \boxtimes N)
\] 
is an equivalence.    
\end{enumerate}
\end{lem}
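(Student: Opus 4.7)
\noindent\textbf{Proof plan for Lemma \ref{lem:*Künnethvs!Künneth}.}

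The strategy is to reduce both conditions to a common ``core case'' handled by the localization triangle. First I will note two trivial cases: for a smooth morphism $\pi$ of relative dimension $d$, the functors $\pi^!$ and $(1\times\pi)^!$ are computed by the relative purity isomorphism $\pi^!\simeq\pi^*(d)[2d]$, so the $(1)$-map for $\pi$ is identified with the always-valid identity $M\boxtimes\pi^*N\simeq (1\times\pi)^*(M\boxtimes N)$, shifted and twisted; and for a proper morphism $\bar h$, the equality $\bar h_*=\bar h_!$ forces the $(2)$-map to coincide with the always-valid $!$-Künneth $M\boxtimes \bar h_! N\simeq (1\times\bar h)_!(M\boxtimes N)$ (itself an immediate consequence of base change and the projection formula).

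Next I would prove the localization bijection: for a closed immersion $i:Z\hookrightarrow W$ with open complement $j:U\hookrightarrow W$, apply $M\boxtimes(-)$ to the localization triangle $i_!i^!N'\to N'\to j_*j^*N'$ on $\T(W)$ and compare it with the localization triangle on $\T(X\times_S W)$ applied to $M\boxtimes N'$. Using the always-valid $!$-Künneth for $i$ and the always-valid equivalence $M\boxtimes j^*N'\simeq (1\times j)^*(M\boxtimes N')$, the outer terms of the first row are rewritten as $(1\times i)_!(M\boxtimes i^!N')$ and $M\boxtimes j_*j^*N'$. The resulting morphism of fiber sequences has identity as its middle map, while its left-hand map is $(1\times i)_!$ applied to the $(1)$-map for $i$ at $i^!N'$, and its right-hand map is the $(2)$-map for $j$ at $j^*N'$. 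Since $(1\times i)_!$ is fully faithful and every object of $\T(U)$ has the form $j^*N'$ (take $N'=j_*N''$), the two-out-of-three property in a fiber sequence gives the bijection: $(1)$ holds for $i$ on all inputs if and only if $(2)$ holds for $j$ on all inputs.

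I then need two reduction steps to upgrade this to the statement of the lemma. For $(1)$: working Zariski-locally on $Y'$ and $Y$ (using base change along open immersions to see that both sides of the $(1)$-map are Zariski-local on $Y'$), one factors $h:Y'\to Y$ locally as a closed immersion into $\mathbb A^n_Y$ followed by the smooth projection $\mathbb A^n_Y\to Y$; composing $(1)$ for closed immersions with the trivial case (a) and using that $(1)$ composes along compositions of morphisms gives that $(1)$ for all $h$ is equivalent to $(1)$ for all closed immersions. For $(2)$: Nagata compactification (Temkin) gives $h=\bar h\circ j$ with $\bar h$ proper and $j$ an open immersion; using $h_*=\bar h_!j_*$ and $(1\times h)_*=(1\times\bar h)_!(1\times j)_*$ together with the $!$-Künneth for $\bar h$, the $(2)$-map for $h$ unfolds as $(1\times\bar h)_!$ applied to the $(2)$-map for $j$, so $(2)$ for open immersions implies $(2)$ for all $h$. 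Chaining together the three equivalences
\[
(1)\text{ for all }h \;\Longleftrightarrow\; (1)\text{ for closed immersions} \;\Longleftrightarrow\; (2)\text{ for open immersions} \;\Longleftrightarrow\; (2)\text{ for all }h
\]
concludes.

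The hard part will be the bookkeeping in the localization step: one has to identify the canonical Beck–Chevalley transformations with the maps appearing in the comparison of the two localization triangles, i.e.\ verify that the natural transformations built from units/counits of the $h_!\dashv h^!$ and $h^*\dashv h_*$ adjunctions agree with the $(1)$- and $(2)$-maps as defined. All the other steps are formal consequences of the six-functor formalism once this coherence is in place.
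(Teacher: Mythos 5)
Your proposal is correct and follows essentially the same route as the paper: compare the localization triangle for $i_*i^!\to\id\to j_*j^*$ tensored with $M$ against the localization triangle on the product, use properness of closed immersions (projection formula/base change) and openness for the trivial cases, and reduce the general statement via Zariski-local factorization into a closed immersion followed by a smooth map for (1) and Nagata compactification for (2). Your extra bookkeeping (full faithfulness of $(1\times i)_!$, realizing every object of $\T(U)$ as $j^*j_*N''$) just makes explicit what the paper leaves implicit, so nothing further is needed.
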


\begin{proof}
Let us fix maps $g:Y \rightarrow S$ and $h: Y' \rightarrow Y$ in $\Sch^{\ft}_{/S}$. Assume that $M$ satisfies (1). Property (2) can be checked Zariski-local on $Y$. Hence we can assume that $h$ is separated and thus we may factor $h$ as an open immersion followed by a proper map by using Nagata compactification. Since the projection formula holds for for proper maps we are reduced to to check (2) for open immersions $j:U \rightarrow Y$. Let us denote the complement by $i: Z \rightarrow Y$. Consider the map of fiber sequences obtained from the localization sequence
\begin{equation}\label{eqn:mapofLocSeqKünneth}
\begin{tikzcd}
M \boxtimes i_*i^! N \arrow[r] \arrow[d]              & M \boxtimes N \arrow[r] \arrow[d] & M \boxtimes j_*j^* N \arrow[d]              \\
(1 \times i)_*(1 \times i)^!(M \boxtimes N) \arrow[r] & M \boxtimes N \arrow[r]           & (1 \times j)_*(1 \times j)^*(M \boxtimes N).
\end{tikzcd}
\end{equation}
It is straightforward to check that all squares above commute. Property (1) applied to the map $i: Z \rightarrow Y$ and the projection formula for the closed immersion $i: Z\rightarrow Y$ imply that the left vertical arrow is an equivalence. Hence the right vertical map is an equivalence which shows property (2) for $j: U \rightarrow Y$ as desired. 

Conversely assume that $M$ satisfies (2). We may check (1) Zariski locally on $Y$ and hence assume that $h$ is quasi projective. Thus we can factor $f$ into a closed immersion followed by a smooth morphism. This reduces (1) to the case where $h$ is a closed immersion $i: Z \rightarrow Y$. Let $j: U \rightarrow Y$ be the open complement of $i$. Then we may again consider diagram (\ref{eqn:mapofLocSeqKünneth}) for these $i$ and $j$. Now the right vertical map is an equivalence since $A$ satisfies (2) and since condition (1) is always satisfied for any $M$ whenever $h$ is an open immersion.
\end{proof}

\noname \label{noname:DefOfConstr} For any $X$ in $\Sch^{\qcqs}_{/S}$ we denote by $\T^{\cons}(X)$ the smallest idempotent complete full stable subcategory of $\T(X)$ containing the objects of the form $h_! \1(n)$ for all $n \in \mathbb{Z}$ and $h: Y \rightarrow X$ smooth. We call an object $M$ in $\T(X)$ \textit{constructible} if it belongs to  $\T^{\cons}(X)$.

\begin{lem} \label{lem:*KünnethimpliesCanmapisIso}
Let $f: X \rightarrow S$ be of finite type and $M \in \T(X)$. Assume that $M$ satisfies the two equivalent conditions of Lemma \ref{lem:*Künnethvs!Künneth}. Then for all $g:Y \rightarrow S$ and $N \in \T^{\cons}(Y)$ the canonical map
\[
\Dual_{S}(N) \boxtimes M \rightarrow \Hom(p_1^* N, p_2^! M)
\]
is an equivalence.
\end{lem}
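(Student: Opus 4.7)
The plan is to reduce the statement to checking it on generators of $\T^{\cons}(Y)$ and then to unwind both sides of the comparison using the two equivalent forms of the K\"unneth assumption in Lemma \ref{lem:*Künnethvs!Künneth}, together with relative purity. Both functors $N \mapsto p_1^* \Dual_S(N) \otimes p_2^* M$ and $N \mapsto \Hom(p_1^* N, p_2^! M)$, viewed as exact contravariant functors $\T^{\cons}(Y)^{\op} \to \T(X \times_S Y)$, preserve cofiber sequences, retracts, and Tate twists, and the proposed comparison map is natural in $N$. Since by definition $\T^{\cons}(Y)$ is the smallest idempotent complete stable subcategory generated by the objects $h_! \1(n)$ with $h \colon Y' \to Y$ smooth, it suffices to verify the claim when $N = h_! \1_{Y'}$ for some smooth $h$.

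Second, I will compute each side explicitly in this case. Setting $g' := g \circ h$ and using the $h_! \dashv h^!$ adjunction together with the projection formula for internal Hom, one gets $\Dual_S(h_! \1) = \Hom(h_! \1, g^! \1) \simeq h_* (g')^! \1$, so the left-hand side reads $p_1^* h_* (g')^! \1 \otimes p_2^* M$. For the right-hand side, proper base change identifies $p_1^* h_! \1 \simeq (1 \times h)_! \1$, where $1 \times h \colon X \times_S Y' \to X \times_S Y$ is the base change of $h$; combining $\Hom((1 \times h)_! \1, p_2^! M) \simeq (1 \times h)_* (1 \times h)^! p_2^! M$ with the identity $(1 \times h)^! p_2^! = q_2^!$ (where $q_2 \colon X \times_S Y' \to X$ denotes the projection) rewrites the right-hand side as $(1 \times h)_* q_2^! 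M$.

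Third, I will invoke the K\"unneth assumption twice. Form (2) of Lemma \ref{lem:*Künnethvs!Künneth}, applied to $h$ and to $N' := (g')^! \1 \in \T(Y')$, identifies the left-hand side with $(1 \times h)_*\bigl(q_1^* (g')^! \1 \otimes q_2^* M\bigr)$, where $q_1 \colon X \times_S Y' \to Y'$ is the other projection, so it reduces everything to proving $q_1^* (g')^! \1 \otimes q_2^* M \simeq q_2^! M$. For this I will apply form (1) of the same lemma to $g \colon Y \to S$ (viewed in $\Sch^{\ft}_{/S}$) and to $\1 \in \T(S)$, which yields $p_1^* g^! \1 \otimes p_2^* M \simeq p_2^! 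M$ on $X \times_S Y$; pulling this equivalence back along $1 \times h$ and using the relative purity identifications $h^! \simeq h^*(d)[2d]$ and $(1 \times h)^! \simeq (1 \times h)^*(d)[2d]$ for the smooth morphism $h$ of relative dimension $d$ converts the pulled-back statement into precisely $q_1^* (g')^! \1 \otimes q_2^* M \simeq q_2^! M$.

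The main technical obstacle will be to verify that the chain of abstract equivalences produced above actually composes to the canonical comparison map of the statement, rather than merely witnessing an abstract isomorphism between the two sides. This is a diagram chase involving the exchange morphisms, base change isomorphisms, and relative purity equivalences used above; it requires only the formal axioms of a motivic $\infty$-category, but care is needed to track how each K\"unneth identification interacts with the unit--counit yoga defining the canonical map.
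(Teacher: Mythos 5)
Your proof is correct and follows essentially the same route as the paper's: reduce to $N = h_!\1$ by generation under cofibers, retracts and Tate twists, compute $\Dual_S(h_!\1) \simeq h_*(g\circ h)^!\1$, apply form (2) of Lemma \ref{lem:*Künnethvs!Künneth} to move $\boxtimes\, M$ through $h_*$, and then apply form (1) of the same lemma to finish. The only deviation is in the final step: you apply form (1) to $g\colon Y\to S$ and then pull back along $1\times h$ with a relative-purity twist, whereas the paper applies form (1) directly to the composite $g' = g\circ h\colon Y'\to S$ with $N=\1$, which is shorter and sidesteps the purity-compatibility concern you yourself flag at the end.
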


\begin{proof} Since all operations involved commute with finite colimits and Tate twists it suffices to prove the claim for $N=h_! \1$, where $h: T \rightarrow Y$ is a smooth morphism. To fix notations consider the diagram
\[
\begin{tikzcd}
T \times_S X \arrow[r, "p_1"] \arrow[d, "1 \times h"'] \arrow[dd, "1 \times k"', bend right=60] & T \arrow[d, "h"] \arrow[dd, "k", bend left=60] \arrow[d] \\
Y \times_S X \arrow[d, "p_2"'] \arrow[r, "p_1"]                                                   & Y \arrow[d, "g"]                                         \\
X \arrow[r, "f"']                                                                                 & S.                                                       
\end{tikzcd}
\]
Then there are equivalences
\begin{align*}
\Dual_{S}(h_! \1) \boxtimes M & \simeq (h_* k^! \1) \boxtimes M \\
&\overset{(1)}\simeq ( h \times 1)_* (k^! \1 \boxtimes M) \\
& \overset{(2)}\simeq ( h \times 1)_* (1 \times k)^! M  \\
&\simeq ( h \times 1)_* \Hom( \1,(1 \times k)^! M ) \\
&\simeq \Hom ( p_1^* h_! \1 , p_2^! M),
\end{align*}
where $(1)$ and $(2)$ follow from equivalent conditions of Lemma \ref{lem:*Künnethvs!Künneth}.
\end{proof}

\begin{prop} \label{thm:EquivCharOfULA}
Let $f:X \rightarrow S$ be of finite type and $M \in \T^{\cons}(X) \subset\T(X)$. The following are equivalent:
\begin{enumerate}
\item $M$ is ULA with respect to $f$.
\item For any morphism $h: Y' \rightarrow Y$ in $\Sch^{\ft}_{/S}$ and $N \in \T(Y)$ the canonical map
\[
M \boxtimes h^! N \rightarrow (1 \times h)^! ( M \boxtimes N)
\] 
is an equivalence.
\item  For any morphism $h: Y' \rightarrow Y$ in $\Sch^{\ft}_{/S}$ and $N \in \T(Y')$ the canonical map
\[
M \boxtimes h_* N \rightarrow (1 \times h)_* ( M \boxtimes N)
\] 
is an equivalence. 
\end{enumerate}
\end{prop}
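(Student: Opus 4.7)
The equivalence (2) $\Leftrightarrow$ (3) is already the content of Lemma \ref{lem:*Künnethvs!Künneth}, and the implications (1) $\Rightarrow$ (2) and (1) $\Rightarrow$ (3) are exactly parts (5) and (6) of Proposition \ref{prop:propertiesOfULA}. So the whole point is to establish the converse (2) $\Rightarrow$ (1), and this is precisely where constructibility of $M$ enters.

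My plan is to check the ULA condition via the characterisation given in Proposition \ref{prop:equivcharofULA}(3): it suffices to show that the canonical comparison map
\[
p_1^* \D_S(M) \otimes p_2^* M \longrightarrow \Hom(p_1^* M, p_2^! M)
\]
is an equivalence in $\T(X \times_S X)$. Assume therefore that $M$ satisfies condition (2) (equivalently (3)). The decisive observation is that Lemma \ref{lem:*KünnethimpliesCanmapisIso} shows that under this hypothesis, the canonical map
\[
\D_S(N) \boxtimes M \longrightarrow \Hom(p_1^* N, p_2^! M)
\]
is an equivalence for \emph{every} constructible $N$ on any $Y$ of finite type over $S$.

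Since we have assumed $M$ itself is constructible, I will simply instantiate this statement at $Y = X$ and $N = M$. Unwinding $\D_S(M) \boxtimes M \simeq p_1^* \D_S(M) \otimes p_2^* M$, this yields exactly the desired equivalence, hence $(X, M)$ is dualizable in $\C_{S, \T}$, i.e.\ $M$ is ULA with respect to $f$.

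The only genuine work is in assembling the three building blocks (Proposition \ref{prop:propertiesOfULA}, Lemma \ref{lem:*Künnethvs!Künneth} and Lemma \ref{lem:*KünnethimpliesCanmapisIso}) into a short logical loop, and noting that constructibility of $M$ is precisely what is needed to feed $M$ itself into Lemma \ref{lem:*KünnethimpliesCanmapisIso} as the input $N$. There is no remaining obstacle.
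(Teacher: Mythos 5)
Your argument is correct and coincides with the paper's own proof: the implications (1) $\Rightarrow$ (2),(3) come from Proposition \ref{prop:propertiesOfULA}, the equivalence (2) $\Leftrightarrow$ (3) is Lemma \ref{lem:*Künnethvs!Künneth}, and the converse is obtained exactly as you describe, by specialising Lemma \ref{lem:*KünnethimpliesCanmapisIso} to $Y=X$, $N=M$ (using constructibility of $M$) and invoking criterion (3) of Proposition \ref{prop:equivcharofULA}. No gaps.
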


\begin{proof}
We have seen the implication (1) $\Rightarrow$ (2),(3) in Proposition \ref{prop:propertiesOfULA}. Lemma \ref{lem:*Künnethvs!Künneth} implies that (2) and (3) are equivalent. Assume that the equivalent conditions (2) and (3) hold. Then applying Lemma \ref{lem:*KünnethimpliesCanmapisIso} to the case $g=f$ and $M=N$ implies that the canonical map
\[
p_1^* \Dual_X(M) \otimes p_2^* N \rightarrow \Hom (p_1^* M, p_2^! N)
\]
is an equivalence. Hence $M$ is ULA with respect to $f$ by Proposition \ref{prop:equivcharofULA}. 
\end{proof}

\begin{remark}
Let $f: X \rightarrow S$ be of finite type and consider $M \in \T(X)$. Then in \cite[4.1.2]{JinYangMotNearbyPosChar} $M$ is called \textit{strongly locally acyclic} with respect to $f$ if for any $h:T \rightarrow S$ and any $ N \in \T(T)$ the canonical map
\[
M \boxtimes h_*N \rightarrow (1 \times h)_* ( M \boxtimes N)
\] 
is an equivalence. Further $M$ is called \textit{universally strongly locally acyclic} with respect to $f$ if the analogue holds true after pulling $f$ and $M$ back along every $S' \rightarrow S$. It is an easy check that $M$ is universally strongly locally acyclic with respect to $f$ if and only if it satisfies the equivalent conditions of Lemma \ref{lem:*Künnethvs!Künneth}. In particular a constructible $M$ is universally strongly locally acyclic with respect to $f$ if and only if it is ULA with respect to $f$.
\end{remark}

\noname \label{noname:DefOfConstGen} For $f:X \rightarrow S$ in $\Sch^{\qcqs}_{/S}$ we say $\T(X)$ is \textit{compactly generated by constructible objects} if $\T(X)$ is compactly generated and every constructible object is compact. Note that in this case an object is compact if and only if it is constructible (see \cite[1.4.11]{CisinskiDegliseBook}). 

\begin{prop} \label{prop:MULAAndConstrGenImpliesConstr}
Let $f: X \rightarrow S$ be of finite type and assume that $\T(X)$ and $\T(X \times_S X)$ are compactly generated by constructible objects. If $M$ in $\T(X)$ is ULA with respect to $f$, then $M$ is constructible.  
\end{prop}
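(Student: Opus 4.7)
The strategy is to show directly that $M$ is compact in $\T(X)$; since $\T(X)$ is compactly generated by constructibles, compact and constructible objects then coincide (see \ref{noname:DefOfConstGen}), which finishes the argument.

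The key step is to establish, for every $N\in\T(X)$, the formula
\[
\Hom(M,N)\simeq \Delta^!\bigl(\D_S M\boxtimes N\bigr),
\]
where $\Delta\colon X\to X\times_S X$ is the diagonal. Since $(X,M)$ is dualizable in the symmetric monoidal bicategory $\C_{S,\T}$, its dual $(X,\D_S M)$ is again dualizable (see Appendix A), so $\D_S M$ is itself ULA with respect to $f$. Applying Proposition \ref{prop:propertiesOfULA}(3) to $\D_S M$ together with the biduality of Proposition \ref{prop:propertiesOfULA}(2) yields
\[
\D_S M\boxtimes N \simeq \Hom(p_1^*M,\,p_2^!N)
\]
in $\T(X\times_S X)$. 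Applying $\Delta^!$ and using the compatibility $\Delta^!\Hom(A,B)\simeq \Hom(\Delta^*A,\Delta^!B)$ from \ref{noname:MotivicInftyCat}(6), combined with $p_1\Delta=p_2\Delta=\id_X$ (so $\Delta^*p_1^*M=M$ and $\Delta^!p_2^!N=N$), produces the claimed formula.

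With this formula in hand, we compute
\[
\map_{\T(X)}(M,N)\simeq \map_{\T(X)}\bigl(\1_X,\,\Delta^!(\D_S M\boxtimes N)\bigr),
\]
and observe that each of the three pieces on the right is filtered-colimit-preserving in $N$. First, $\D_S M\boxtimes(-)=p_1^*\D_S M\otimes p_2^*(-)$ preserves all small colimits. Second, $\Delta$ is of finite type between qcqs schemes (it is a section of the finite-type morphism $p_1$), so $\Delta_!$ preserves constructibility and, by the hypothesis on $\T(X\times_S X)$, preserves compact objects; equivalently, $\Delta^!$ preserves filtered colimits. Finally, $\1_X$ is constructible, hence compact in $\T(X)$ by hypothesis, so $\map_{\T(X)}(\1_X,-)$ preserves filtered colimits. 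Composing these three facts, $\map_{\T(X)}(M,-)$ preserves filtered colimits, so $M$ is compact in $\T(X)$, hence constructible.

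The only somewhat delicate point is the internal-Hom pullback compatibility used to derive the key formula; this is already part of the axioms of a motivic $\infty$-category, so no real obstacle arises. Everything else is routine bookkeeping with the six operations combined with the two ULA identities of Proposition \ref{prop:propertiesOfULA}.
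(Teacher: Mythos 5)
Your proof is correct and follows essentially the same route as the paper's: both establish the K\"unneth-type identity $\Dual_S M\boxtimes N\simeq\Hom(p_1^*M,p_2^!N)$, apply $\Delta^!$ and $\map_{\T(X)}(\1_X,-)$, and then show compactness of $M$ by noting that $\1_X$ is compact and that $\Delta^!$ preserves filtered colimits (because $\Delta_!$ preserves constructible objects). The only cosmetic difference is that the paper invokes the formula directly via Proposition \ref{prop:equivcharofULA}(2), whereas you re-derive it from Proposition \ref{prop:propertiesOfULA}(3) applied to $\Dual_S M$ together with biduality; the two are interchangeable.
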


\begin{proof}
This is essentially \cite[3.4 (iii)]{HansenScholzeRelative}. Since $M$ is ULA with respect to $f$ we have for any $N$ in $\T(X)$
an equivalence
\[
p_1^*\D_{S}(M) \otimes p_2^* N \simeq \Hom(p_1^*M, p_2^!N). 
\]
Applying $\Delta_X^!$ and $\map_{\T(X)}( \1_X, \_)$ to this equivalence we get
\begin{align*}
\map_{\T(X)}(\1_X, \Delta^!(p_1^*\D_{S}(M) \otimes p_2^* N)) &\simeq \map_{\T(X)}(\1_X, \Delta^!(\Hom(p_1^*M, p_2^!N)) \\
& \simeq \map_{\T(X)}(\1_X, \Hom(M, N)) \\
& \simeq \map_{\T(X)}(M, N). 
\end{align*}
Note that $\1_X$ is by assumption compact and $\Delta^!$ commutes with small filtered colimits: For every compact generator $A$ in $\T(X)$ (i.e. constructible $A$ by above) and every small filtered diagram $F: I \rightarrow \T(X \times_S X)$ we have 
\begin{align*}
\map_{\T(X)}(A, \Delta^! (\colim_I F(i))) & \simeq \map_{\T(X \times_S X)}( \Delta_! A, \colim_I F(i)) \\
& \simeq \colim_I \map_{\T(X \times_S X)}( \Delta_! A, F(i)) \\
& \simeq \colim_I \map_{\T(X)}( A, \Delta^! F(i)) \\
& \simeq \map_{\T(X)}( A, \colim_I \Delta^! F(i)).
\end{align*}
Here we used that $\Delta_!$ preserves constructible objects (see \cite[2.60]{Adeel6Functor}). This shows  that $\map_{\T(X)}(\1_X, \Delta^!(p_1^*M \otimes p_2^* N))$ commutes with small filtered colimits in $N$ and hence $M$ is a compact object of $\T(X).$ 
\end{proof}

\noname \label{noname:FinitaryFunctor} We call a functor
\[
\mathcal{F}: (\Sch^{\qcqs}_{/S})^{op} \longrightarrow \widehat{\Cat}_\infty
\]
\textit{\'etale-continuous} if for every small cofiltered diagram $\{Y_i\}_I: I \rightarrow \Sch^{\qcqs}_{/S}$ with affine and \'etale transition maps the canonical functor
\[
\colim_{i \in I} \mathcal{F}(Y_i) \longrightarrow  \mathcal{F}(\slim_{i \in I} Y_i)
\]
is an equivalence.

\noname Given a morphism $f:X \rightarrow S$ of finite type and a map of qcqs schemes $g: T \rightarrow S$ let us denote by 
\[
\T^{\text{ULA}}(X \times_S T / T) \subset \T(X \times_S T)
\]
the full subcategory consisting of elements which are ULA with respect to the pullback of $f$ along $g$. This gives rise to a subfunctor
\[
\T^{\text{ULA}}(X \times_S \_ / \_) \subset \T(X \times_S \_).
\]

\begin{lem} \label{lem:DULAisFinitary}
Let $f:X \rightarrow S$ be of finite type and assume that $\T^{\cons}(\_)$ is \'etale-continuous. Then $\T^{\text{ULA}}(X \times_S \_ / \_) \cap \T^{\cons}(X \times_S \_) \subset \T^{\cons}(X \times_S \_)$ is an \'etale-continuous subfunctor.
\end{lem}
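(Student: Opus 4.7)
The plan is as follows. Let $\{Y_i\}_I$ be a cofiltered diagram with affine \'etale transition maps, $Y = \slim_i Y_i$, and write $X_i := X \times_S Y_i$, $X_Y := X \times_S Y$. By hypothesis the natural functor $\colim_i \T^{\cons}(X_i) \overset{\sim}\longrightarrow \T^{\cons}(X_Y)$ is an equivalence. Since ULA is preserved under arbitrary base change (Proposition \ref{prop:propertiesOfULA}(1)), the subcategories $\T^{\text{ULA}}(X_i/Y_i) \cap \T^{\cons}(X_i)$ are compatible with the transition functors, giving a subfunctor; the induced comparison
\[
\Phi \colon \colim_i \bigl( \T^{\text{ULA}}(X_i/Y_i) \cap \T^{\cons}(X_i) \bigr) \longrightarrow \T^{\text{ULA}}(X_Y/Y) \cap \T^{\cons}(X_Y)
\]
is automatically fully faithful (a filtered colimit of fully faithful functors in $\Cat_\infty$ is fully faithful), so the content reduces to showing that $\Phi$ is essentially surjective.

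For essential surjectivity, given $M \in \T^{\text{ULA}}(X_Y/Y) \cap \T^{\cons}(X_Y)$, I would invoke \'etale continuity of $\T^{\cons}$ to lift $M$ to some $M_{i_0} \in \T^{\cons}(X_{i_0})$, write $M_j$ for its pullback to $X_j$ for $j \le i_0$, and aim to produce some $j$ such that $M_j$ is ULA over $Y_j$. By Proposition \ref{prop:equivcharofULA}(3) this ULA condition is equivalent to invertibility of a single canonical map
\[
\alpha_j \colon p_1^* \Dual_{Y_j}(M_j) \otimes p_2^* M_j \longrightarrow \Hom(p_1^* M_j, p_2^! M_j)
\]
in $\T(X_j \times_{Y_j} X_j)$, and likewise $M$ being ULA amounts to the analogous map $\alpha$ on $X_Y \times_Y X_Y$ being an equivalence. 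Compatibility of the six operations with \'etale base change --- concretely $g^* f^! \simeq f'^! g^*$ for smooth $g$ (via $g^! \simeq g^*(d)[2d]$), $g^* \1 \simeq \1$, and $g^* \Hom(P,Q) \simeq \Hom(g^*P, g^*Q)$ for constructible $P$ from the projection formula for $g_\#$ --- shows that the $\alpha_j$ form a coherent system whose pullback to the limit is $\alpha$.

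Since the source and target of each $\alpha_j$ remain constructible, using preservation of constructibility by $\otimes$, $\Dual$, $\Hom$, $p_i^*$ and $p_i^!$ in the six-functor formalism (Theorem \ref{thm:SixFunctorsPresConstr} in the $\DA_{\et}$ setting), the family $\{\alpha_j\}$ represents a morphism in $\colim_j \T^{\cons}(X_j \times_{Y_j} X_j) \simeq \T^{\cons}(X_Y \times_Y X_Y)$ whose image is $\alpha$. Invoking the standard fact that a morphism in a filtered colimit of $\infty$-categories is an equivalence if and only if some lift is already an equivalence at a finite stage (a consequence of the formula $\map_{\colim_j \C_j}(X,Y) \simeq \colim_j \map_{\C_j}(X_j, Y_j)$ for mapping spaces), the hypothesis that $\alpha$ is an equivalence yields some $j \le i_0$ with $\alpha_j$ an equivalence, hence $M_j$ ULA. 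The principal obstacle I expect is the careful bookkeeping of constructibility and \'etale base-change compatibility for $\Dual_Y$ and $\Hom$; once these are in hand, the argument reduces conceptually to the observation that a single equivalence between constructible objects in an \'etale-continuous system is always detected at a finite stage.
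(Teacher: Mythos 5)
Your proposal is correct and follows essentially the same route as the paper: descend $M$ to a finite stage via \'etale-continuity of $\T^{\cons}$, reduce ULA to invertibility of the single canonical map of Proposition \ref{prop:equivcharofULA}(3), note that this map is compatible with pullback along the (\'etale) transition maps, and detect the equivalence at a finite stage of the system $\{X_i \times_{Y_i} X_i\}$. The only cosmetic difference is that you phrase the conclusion via essential surjectivity of the comparison functor $\Phi$, whereas the paper argues directly with the object $M$, but the mathematical content is identical.
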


\begin{proof}
Let $T = \lim_I T_i$ be an inverse limit in $\Sch^{\qcqs}_{/S}$ with affine \'etale transition maps and write $f_T: X_T \rightarrow T$ and $f_{T_i}: X_{T_i} \rightarrow T_i$ for the pullbacks of $f$ respectively. Moreover for any $\varphi: j \rightarrow i$ in $I$ we write $\varphi: X_{T_j} \rightarrow X_{T_i}$ for the induced morphism of schemes.

Consider an $M$ in $\T^{\text{ULA}}(X_T / T) \cap \T^{\cons}(X_T) \subset \T^{\cons}(X_T)$. Since $\T^{\cons}(\_)$ is continuous there exists an $i \in I$ and an $M_i$ in $\T^{\cons}(X_{T_i})$ such that its pullback to $X_T$ is $M$. Consider the canonical map
\[
\theta_{M_i}: p_1^* \Dual_S(M_i) \otimes p_2^* M_i \rightarrow \Hom (p_1^* M_i, p_2^! M_i)
\]
in $\T(X_{T_i} \times_{T_i} X_{T_i})$. Since the transition maps are \'etale, the vertical maps of the commutative diagram
\[
\begin{tikzcd}
\varphi^*(p_1^* \Dual_S(M_i) \otimes p_2^* M_i) \arrow[r, "\theta_{M_i}"] \arrow[d] & {\varphi^*\Hom (p_1^* M_i, p_2^! M_i)} \arrow[d] \\
p_1^*\D_S(\varphi^*M_i) \otimes p_2^* \varphi^* M_i \arrow[r, "\theta_{\varphi^*M_i}"]         & {\Hom (p_1^* \varphi^*M_i, p_2^! \varphi^*M_i)}           
\end{tikzcd}
\]
are equivalences for any $\varphi: j \rightarrow i$ in $I$ . Since $M$ is $f-ULA$, $\theta_M$ is an equivalence by criterion (3) of Proposition \ref{prop:equivcharofULA}. Hence there exists a  $\varphi: j \rightarrow i$ in $I$ such that $\theta_{\varphi^* M_i}$ is an equivalence. In particular $\varphi^*M_i$ is ULA with respect to $f_{T_j}$ again by criterion (3) of Proposition \ref{prop:equivcharofULA}.
\end{proof}

\noname We say that $\T(\_)$ \textit{satisfies the K\"unneth-formula} if the following holds: Consider a pair of commutative triangles
\[
\begin{tikzcd}
X \arrow[r, "k"] \arrow[rd] & X' \arrow[d] &  & Y \arrow[r, "l"] \arrow[rd] & Y' \arrow[d] \\
                            & \Spec(K)     &  &                             & \Spec(K)    
\end{tikzcd}
\]
in $\Sch^{\ft}_{/ S}$ where $K$ is a field. Then for every $M$ in $\T^{\cons}(X)$ and $N$ in $\T^{\cons}(Y)$ the canonical map
\[
k_*M \boxtimes l_*N \longrightarrow (k \times l)_*(M \boxtimes N)
\]
is an equivalence.

\begin{prop}
[Generic universal local acyclicity] \label{thm:GenericULA}
Assume that $\T(\_)$ satisfies the K\"unneth-formula and $\T^{\cons}(\_)$ is \'etale-continuous. Let $f: X \rightarrow S$ be of finite type and $M$ in $\T^{\cons}(X)$. Then there exists a dense open subscheme $U \subset S$ such that $M|_{X_U}$ is ULA with respect to $f_U: X \times_S U \rightarrow U$. 
\end{prop}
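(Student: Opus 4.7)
The plan is to establish ULA over the generic points of $S$ using the Künneth hypothesis, and then spread this out to open neighborhoods via Lemma \ref{lem:DULAisFinitary}. By working locally on $S$, I may fix a generic point $\eta = \Spec K$ of $S$ and seek an open neighborhood $U$ of $\eta$ on which $M$ becomes ULA; the union of such opens over all generic points is dense in $S$.

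The heart of the argument is to show that $M_\eta := M|_{X_\eta}$ is ULA with respect to $f_\eta: X_\eta \to \eta$. By Proposition \ref{thm:EquivCharOfULA} and Lemma \ref{lem:*Künnethvs!Künneth}, this amounts to verifying, for every $h: Y' \to Y$ in $\Sch^{\ft}_{/\eta}$ and every $N \in \T(Y')$, the Künneth equivalence
\[
M_\eta \boxtimes h_* N \longrightarrow (1 \times h)_*(M_\eta \boxtimes N).
\]
Applying the Künneth-formula hypothesis to the cospans $[X_\eta \overset{\id}{\to} X_\eta \to \eta]$ and $[Y' \overset{h}{\to} Y \to \eta]$ yields this equivalence when $N$ is constructible. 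The extension to arbitrary $N$ proceeds via a filtered-colimit argument: $\T(Y')$ is compactly generated by constructibles in the motivic setting (cf.\ Proposition \ref{prop:SstrLocImpliesCompactGen}), and both sides commute with filtered colimits in $N$ because $h_*$ and $(1 \times h)_*$ are right adjoints whose left adjoints preserve constructible and hence compact objects.

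With ULA over $\eta$ established, the final step is to apply Lemma \ref{lem:DULAisFinitary}. The generic point $\eta$ is the cofiltered limit of its open neighborhoods $U \subset S$ along open immersions, which are affine and étale. Since $M_\eta$ lies in $\T^{\text{ULA}}(X_\eta/\eta) \cap \T^{\cons}(X_\eta)$ and is the restriction of the globally defined constructible object $M \in \T^{\cons}(X)$, étale-continuity of the subfunctor $\T^{\text{ULA}}(X \times_S \_/\_) \cap \T^{\cons}(X \times_S \_)$ furnishes an open neighborhood $U$ of $\eta$ in $S$ such that $M|_{X_U}$ lies in $\T^{\text{ULA}}(X_U/U) \cap \T^{\cons}(X_U)$.

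The main technical obstacle is the extension of the Künneth equivalence from constructible $N$ to arbitrary $N$ in the second step, which rests on compact generation of $\T(Y')$ by constructibles and on the preservation of filtered colimits by direct images. These properties are standard in the motivic settings of the paper but must be invoked explicitly depending on the formalism $\T(\_)$ at hand.
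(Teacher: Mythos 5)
Your proposal is correct and follows the paper's proof essentially verbatim: reduce to a generic point $\eta$ of $S$, deduce that $M|_{X_\eta}$ is ULA over $\eta$ from the K\"unneth hypothesis together with Proposition \ref{thm:EquivCharOfULA}, and then spread out to an open neighbourhood of $\eta$ by \'etale-continuity via Lemma \ref{lem:DULAisFinitary} (the paper realizes the limit as $k(\eta)\simeq \colim_I A_i$ over \emph{affine} open neighbourhoods, which you should also do so that the transition maps are affine). The one place you go beyond the paper is the filtered-colimit argument extending the K\"unneth equivalence from constructible $N$ to arbitrary $N$, a step the paper leaves implicit; note that the compact generation of $\T(Y')$ by constructibles which this uses is not among the proposition's stated hypotheses (Proposition \ref{prop:SstrLocImpliesCompactGen} concerns strictly local noetherian bases, and over the residue field of a generic point of a general $S$ this can fail, e.g.\ for \'etale motives over fields of infinite cohomological dimension), though it does hold in the examples the paper intends, and alternatively one can observe that the proofs of Lemma \ref{lem:*Künnethvs!Künneth}, Lemma \ref{lem:*KünnethimpliesCanmapisIso} and Proposition \ref{thm:EquivCharOfULA} only ever invoke the K\"unneth equivalences for constructible coefficient objects.
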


\begin{proof}
It suffices to find for any irreducible component $S_i$ of $S$ an open dense subset $U_i \subset S_i$ satisfying the claim of the Theorem. Hence we can assume that $S$ is irreducible with generic point $\eta.$ Then $M$ restricted to $f_\eta: X \times_S \eta \rightarrow \eta$ is ULA with respect to $f_\eta$ by the K\"unneth formula combined with Proposition \ref{thm:EquivCharOfULA}. Let $k(\eta)$ denote the residue field of $\eta$. Then $k(\eta) \simeq \colim_I A_i$, where the $A_i$ run over all affine open neighbourhoods of $\eta$ in $S$. Lemma \ref{lem:DULAisFinitary} implies that there exists an open affine neighborhood $A_i$ of $\eta$ such that $M$ restricted to $f_{A_i}: X \times_S \Spec(A_i) \rightarrow \Spec(A_i)$ is ULA with respect to $f_{A_i}$. Since $\Spec(A_i)$ contains the generic point it is open dense in $S$ as desired. 
\end{proof}

\begin{remark} The conditions of Proposition $\ref{thm:GenericULA}$ are for example satisfied in the following two cases: 
\begin{enumerate}
\item $\T(\_)$ is the motivic $\infty$-category $\SH(\_)[\mathcal{P}^{-1}]$, where $\mathcal{P}$ is the set consisting of those prime numbers appearing as residue characteristics of $S$. Indeed $\SH(\_)$ is continuous: This follows from \cite[2.18]{Adeel6Functor} and the fact that constructible objects and compact objects agree. Moreover after inverting the residue characteristics of $S$ it satisfies the K\"unneth-formula \cite[2.1.14]{EnlinFangzouKünneth}.

\item $\T(\_)$ is the motivic $\infty$-category of $h$-motives $\DM_{h}(\_, \Lambda)$ for a "good enough" ring $\Lambda$ (see \cite[6.3.6]{CisinskiDegliseEtale}). It is continuous by \cite[6.3.9]{CisinskiDegliseEtale} and satisfies the K\"unneth-formula by \cite[3.1.12]{Cisinski2021CohomologicalMI}. In particular whenever $S$ is locally noetherian and $\Lambda$ is a torsion ring with characteristic invertible in $S$ we recover Deligne's classical result \cite[Th. finitude, 2.13]{SGA4.5} by rigidity \cite[5.5.4]{CisinskiDegliseEtale}. Moreover whenever $S$ is noetherian of finite dimension Theorem \ref{thm:CompDAvsDM} implies that Proposition $\ref{thm:GenericULA}$ also applies to $\DA_{\et}(\_, \Lambda)$.
\end{enumerate} 
\end{remark}

\section{Detecting universal local acyclicity with the nearby cycles functor}

\noname Throughout this section let us fix a ring $\Lambda$. For the remainder of this chapter we will restrict ourselves to the motivic $\infty$-category $\T(\_)= \DA_{\et}(\_, \Lambda)$. Moreover let $S$ be the spectrum of an excellent strictly henselian discrete valuation ring. We fix a uniformizer $\pi$ of $S$ and a section $\tau$ of the short exact sequence in \ref{noname:ConstrOfPsi}. We construct all nearby cycles functors with respect to these choices of $\pi$ and $\tau$.

\noname \label{noname:setupForComparionSquareofDualityAndRecollement} Let $f: X \rightarrow S$ be a morphism of finite type and consider the decomposition
\[
\begin{tikzcd}
X_\eta \arrow[d, "f_\eta"] \arrow[r, "j"] & X \arrow[d, "f"] & X_\sigma \arrow[d, "f_\sigma"] \arrow[l, "i"'] \\
\eta \arrow[r, "j"']                      & S                & \sigma. \arrow[l, "i"]                         
\end{tikzcd}
\]

For any $M$ in $\DA_{\et}(X, \Lambda)$ there is a fiber sequence
\begin{equation} \label{eqn:i*appliedToLocSeq}
i^! M \rightarrow i^* M \rightarrow i^*j_*j^*M
\end{equation}
in $\DA_{\et}(X_\sigma, \Lambda)$ obtained by applying $i^*$ to the localization sequence. Furthermore there is a chain of equivalences
\begin{equation} \label{eqn:CompositionVarphi}
\begin{split}
 i^! \D_S(M) &= i^! \Hom(M, f^!\1) \\
&\simeq \Hom(i^*M, i^!f^!\1) \\
&\simeq \Hom(i^*M, f_\sigma^!i^!\1) \\
&\simeq \Hom(i^*M, f_\sigma^!\1)(-1)[-2]\\
&\simeq \D_\sigma(i^* M)(-1)[-2].
\end{split}
\end{equation}
Here we used that for $i: \sigma \rightarrow S $ we have $i^! \1 \simeq \1(-1)[-2]$ by relative purity (see\cite[5.6.2]{CisinskiDegliseEtale}). Moreover note that the equivalence in the second line of (\ref{eqn:CompositionVarphi}) is induced by the projection formula via the canonical equivalences
\begin{align*}
\map_{{\DA_{\et}(X_\sigma, \Lambda)}}(N, \Hom(i^*M, i^!f^!\1)) &\simeq \map_{\DA_{\et}(X_\sigma, \Lambda)}(N \otimes i^*M, i^! f^! \1) \\
&\simeq \map_{\DA_{\et}(X, \Lambda)}(i_! (N \otimes i^*M),  f^! \1) \\
&\simeq \map_{\DA_{\et}(X, \Lambda)}( i_! N \otimes  M,  f^! \1) \\
&\simeq \map_{\DA_{\et}(X, \Lambda)}( i_! N, \Hom( M,  f^! \1)) \\
&\simeq \map_{\DA_{\et}(X_\sigma, \Lambda)}( N, i^!\Hom( M,  f^! \1)) \\
\end{align*}
of mapping spaces for any $N \in \DA_{\et}(X, \Lambda)$.

\noname \label{noname:ComparisonMapsDuality}
Let $f: X \rightarrow S$ be a morphism of finite type. Let $M$ be in $\DA_{\et}(X_\eta, \Lambda)$ and denote by
\[
\id^t: M \otimes \D_\eta(M) \longrightarrow f_\eta^!\1
\]
the transpose of $\id: \D_\eta(M) \rightarrow \D_\eta(M) $ with respect to the $\otimes \dashv \Hom$ adjunction. The lax-monoidal structure of $\chi_f$ induces a canonical morphism
\begin{align*}
\chi_f(\D_\eta(M)) \otimes \chi_f(M) &\longrightarrow \chi_f(\D_\eta(M) \otimes M) \\
&\overset{\chi_f \id^t}\longrightarrow \chi_f (f_\eta^! \1) \\
&\overset{\Ex^!}\longrightarrow f_\sigma^! \chi_{\id}( \1) \\
&\longrightarrow f_\sigma^! i^! \1 [1] \\ &\overset{\sim}\longrightarrow f_\sigma^! \1 (-1)[-1].
\end{align*}
Here the second to last arrow is induced by the sequence (\ref{eqn:i*appliedToLocSeq}) and the last equivalence is relative purity.

Transposing this morphism with respect to the $\otimes \dashv \Hom$ adjunction we get a comparison morphism
\[
\comp_\chi: \chi_f \D_\eta(M) \rightarrow \D_\sigma\chi_f(M) (-1)[-1].
\]

\begin{lem} \label{lem:LocalizationSequCommWDuality}
Let $f:X \rightarrow S$ be a morphism of finite type. For every $M$ in $\DA_{\et}(X, \Lambda)$ the square 
\[
\begin{tikzcd}
i^*j_*j^* \D_S(M) \arrow[r] \arrow[d, "\comp_{\chi}"']        & i^! \D_S(M)[1] \arrow[d, "\varphi"]           \\
{\D_\sigma ( i^*i_*j^*M) (-1)[-1]} \arrow[r] & {\D_\sigma(i^*M)(-1)[-1]}
\end{tikzcd}
\]
commutes. Here the horizontal arrows are induced by the sequence (\ref{eqn:i*appliedToLocSeq}), the left vertical map is the comparison map constructed in \ref{noname:ComparisonMapsDuality} and the right vertical map $\varphi$ is the composition of the chain of equivalences (\ref{eqn:CompositionVarphi}). 
\end{lem}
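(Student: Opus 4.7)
The plan is to transpose the whole square under the adjunction $\_\otimes i^*M \dashv \Hom(i^*M,\_)$, reducing the claim to the equality of two natural pairings
\[
i^*j_*j^*\D_S(M) \otimes i^*M \longrightarrow f_\sigma^!\1(-1)[-1]
\]
in $\DA_{\et}(X_\sigma,\Lambda)$.

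Using the chain (\ref{eqn:CompositionVarphi}), the shifted map $\varphi[-1] : i^!\D_S(M) \to \D_\sigma(i^*M)(-1)[-2]$ is precisely the transpose of the canonical pairing
\[
i^!\D_S(M) \otimes i^*M \;\overset{\text{PF}}\simeq\; i^!(\D_S(M)\otimes M) \;\overset{i^!\ev}\longrightarrow\; i^!f^!\1 \;\simeq\; f_\sigma^!i^!\1 \;\simeq\; f_\sigma^!\1(-1)[-2]
\]
built from the projection formula, the canonical equivalence $i^!f^! \simeq f_\sigma^!i^!$, and relative purity. Hence the clockwise composite $\varphi \circ \partial$, where $\partial$ denotes the boundary map of the localization triangle applied to $\D_S(M)$, transposes to the pre-composition of the above pairing with $\partial\otimes \id$ (after the evident shift by $[1]$ on the target). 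On the other hand, unfolding the construction in \ref{noname:ComparisonMapsDuality} with $N=j^*M$ and using the smooth base change equivalence $j^*\D_S(M) \simeq \D_\eta(j^*M)$, the map $\comp_\chi$ is by definition the transpose of
\[
\chi_f(\D_\eta j^*M) \otimes \chi_f(j^*M) \to \chi_f\bigl(\D_\eta j^*M \otimes j^*M\bigr) \to \chi_f f_\eta^!\1 \overset{\Ex^!}\longrightarrow f_\sigma^!\chi_{\id}\1 \longrightarrow f_\sigma^!i^!\1[1] \simeq f_\sigma^!\1(-1)[-1],
\]
so the counter-clockwise composite $\D_\sigma(\alpha)(-1)[-1] \circ \comp_\chi$ transposes to the pre-composition of this composite with $\id \otimes \alpha$, where $\alpha : i^*M \to i^*j_*j^*M$ is the unit map from (\ref{eqn:i*appliedToLocSeq}).

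After these identifications, both transposed pairings factor through the evaluation $\D_S(M)\otimes M \to f^!\1$ by naturality of the projection formula and of the lax-monoidal structure of $\chi_f$ with respect to the unit $\id \to j_*j^*$. The commutativity of the original square then reduces to that of
\[

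\begin{tikzcd}[column sep=small]
\chi_f(f^!\1) \arrow[r,"\Ex^!"] \arrow[d,"\partial"'] & f_\sigma^!\chi_{\id}\1 \arrow[d] \\
i^!(f^!\1)[1] \arrow[r,"\sim"]                        & f_\sigma^!i^!\1[1]
\end{tikzcd}
\]
in $\DA_{\et}(X_\sigma,\Lambda)$, where the vertical maps are the boundary maps of the localization cofiber sequences $i^! \to i^* \to \chi_f$ on $X$ and on $\sigma$ respectively, and the bottom horizontal map is induced by $i^!f^! \simeq f_\sigma^!i^!$.

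The main obstacle is this last square, which expresses the compatibility of the exchange $\Ex^!$ with the boundary of the localization triangle. It follows from the naturality of $\Ex^!$ in its argument together with the pasting property of exchange morphisms recorded in \ref{noname:ExchangeMaps}, applied to the two factorizations of the composite $X_\sigma \to S$ as $X_\sigma \overset{i}\to X \overset{f}\to S$ and $X_\sigma \overset{f_\sigma}\to \sigma \overset{i}\to S$. Once this commutativity is verified, the remainder of the argument is a routine diagram chase using the naturality of the units and counits of the adjunctions $(j_!,j^*,j_*)$, $(i^*,i_*,i^!)$, and $(f_!,f^!)$.
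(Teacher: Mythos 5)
Your proposal is correct and takes essentially the same route as the paper: transpose the square under the $\otimes \dashv \Hom$ adjunction, identify both composites with pairings that factor through the evaluation $\D_S(M)\otimes M \to f^!\1$ via the canonical map $i^!\D_S(M)\otimes i^*M \to i^!(\D_S(M)\otimes M)$ (note this map need not be an equivalence, but only the map itself is used), and reduce to the compatibility of the localization boundary maps with the exchange maps. The only difference is one of emphasis: the paper writes out in detail precisely the steps you call routine (the morphism of fiber sequences built from $\mathrm{proj}_i$, $\mathrm{proj}_j$ and the map $\theta$, checked with the triangle identities), whereas the square you single out as the main obstacle is the bottom square of diagram (\ref{eqn:ObvCommForLocSequAndDualLemma}), which the paper asserts without further comment.
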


\begin{proof}
By construction of the two vertical maps the transposed diagram factors as
\begin{equation} \label{eqn:TransposedDiagramOfLEmLocSeqAndDuality}
\begin{tikzcd}
i^*j_*j^* \D_S(M) \otimes i^*M \arrow[r] \arrow[dd]            & i^*j_*j^* \D_S(M) \otimes i^*j_*j^*M \arrow[d] \\
                                                               & i^*j_*j^* (\D_S(M) \otimes M) \arrow[d]        \\
{i^! \D_S(M)[1] \otimes i^*M} \arrow[d, "\varphi \otimes \id"']                        & i^*j_*j^* (f^! \1) \arrow[d]                   \\
{\Hom(i^*M, i^!f^! \1)[1] \otimes i^*M} \arrow[d]             & f_\sigma^! i^*j_*j^*\1 \arrow[d]               \\
{\Hom(i^*M, f_\sigma^! i^! \1)[1] \otimes i^*M} \arrow[r] & {f_\sigma^! i^! \1 [1]}.                        
\end{tikzcd}              
\end{equation}
For a morphism $g: Y \rightarrow X$ and objects $A$ in $\DA_{\et}(X, \Lambda)$ and $B$ in $\DA_{\et}(Y, \Lambda)$ we denote the composition
\[
(g_* B) \otimes A \overset{\text{unit}}\longrightarrow g_*g^*((g_* B) \otimes A)  \simeq  g_*(g^*g_* B \otimes g^*A) \overset{\text{counit}}\longrightarrow g_*( B \otimes g^*A)
\]    
by $\text{proj}_g$. Define a map 
\begin{equation} \label{eqn:ConstOfTheta}
\theta: i^! \D_S(M)[1] \otimes i^*M \longrightarrow i^! (\D_S(M) \otimes M)[1]
\end{equation}
as the transpose of 
\[
i_*(i^! \D_S(M)[1] \otimes i^*M) \overset{\text{proj}_i}\longleftarrow (i_*i^! \D_S(M)) \otimes M[1] \overset{\text{counit}}\longrightarrow \D_S(M) \otimes M [1].
\]
Here the first map is an equivalence by the projection formula for proper maps and the second map is induced by the counit map $\varepsilon: i_*i^! \rightarrow \id$ . The diagram
\begin{equation} \label{eqn:ObvCommForLocSequAndDualLemma}
\begin{tikzcd}
i^*j_*j^* (\D_S(M) \otimes M) \arrow[r] \arrow[d] & {i^! (\D_S(M) \otimes M)[1]} \arrow[d] \\
i^*j_*j^* (f^! \1) \arrow[d] \arrow[r]                                & {i^!f^! \1[1]} \arrow[d]               \\
f_\sigma^! i^*j_*j^*\1 \arrow[r]                                      & f_\sigma^! i^! \1[1]                     
\end{tikzcd}
\end{equation}
commutes. Therefore by pasting the diagrams (\ref{eqn:TransposedDiagramOfLEmLocSeqAndDuality}) and (\ref{eqn:ObvCommForLocSequAndDualLemma}) together it suffices to prove that the diagram
\[
\begin{tikzcd}
i^*j_*j^* \D_S(M) \otimes i^*M \arrow[r] \arrow[dd] \arrow[rdd, phantom, "
(1)"]         & i^*j_*j^* \D_S(M) \otimes i^*j_*j^*M \arrow[d] \\
                                                                        & i^*j_*j^* (\D_S(M) \otimes M) \arrow[d]        \\
{i^! \D_S(M)[1] \otimes i^*M} \arrow[d, "\varphi \otimes \id"'] \arrow[r, "\theta"] \arrow[rdd, phantom, "(2)"] & {i^! (\D_S(M) \otimes M)[1]} \arrow[d]         \\
{\Hom(i^*M, i^!f^! \1)[1] \otimes i^*M} \arrow[d]                      & {i^!f^! \1[1]} \arrow[d]                       \\
{\Hom(i^*M, f_\sigma^! i^!\1 )[1] \otimes i^*M} \arrow[r]          & f_\sigma^! i^! \1        [1]                     
\end{tikzcd}
\]
commutes. 

The diagram 
\[
\begin{tikzcd}
(j_*j^* \D_S(M)) \otimes M \arrow[r, "\id \otimes \text{unit}"] \arrow[rd, "\text{proj}_j"'] & (j_*j^* \D_S(M)) \otimes (j_* j^*M) \arrow[d] \\
                                                                                             & j_*j^*(\D_S(M) \otimes M)                    
\end{tikzcd}\]
commutes by an easy argument using the triangle equalities. Hence we may show that 
\[
\begin{tikzcd}
i^*j_*j^* \D_S(M) \otimes i^*M \arrow[r, "i^* \text{proj}_j"] \arrow[d] \arrow[rd, phantom, "(1)'"] & i^*j_*j^*(\D_S(M) \otimes M) \arrow[d] \\
i^! \D_S(M) \otimes i^*M \arrow[r, "\theta"]                                   & {i^!(\D_S(M) \otimes M)[1]}           
\end{tikzcd}
\]
commutes in order to show that (1) commutes. Consider the morphism of fiber sequences
\begin{equation} \label{eqn:MorphismOfLocSequAndTensorUsingTheta}
\begin{tikzcd}
(i_*i^!\D_S(M)) \otimes M \arrow[d, "(i_*\theta) \circ \text{proj}_i"'] \arrow[r] & \D_S(M) \otimes M \arrow[d, "\id"] \arrow[r] & (j_*j^*\D_S(M)) \otimes M \arrow[d, "\text{proj}_j"] \\
i_*i^!(\D_S(M) \otimes M) \arrow[r]                                               & \D_S(M) \otimes M \arrow[r]                  & j_*j^*(\D_S(M) \otimes M).                           
\end{tikzcd}
\end{equation}
Indeed showing that the right square commutes amounts to an easy check using the triangle equalities and the left square commutes directly by construction of $\theta.$  Then shifting (\ref{eqn:MorphismOfLocSequAndTensorUsingTheta}) by one square and applying $i^*$ gives commutativity of $(1)'$.

Clearly commutativity of $(2)$ reduces to commutativity of  
\[
\begin{tikzcd}
{i^! \D_S(M)[1] \otimes i^*M} \arrow[r, "\theta"] \arrow[d, "\varphi \otimes \id"'] \arrow[rd, phantom, "(2)'"] & {i^! (\D_S(M) \otimes M)[1]} \arrow[d, "i^! \id^t"] \\
{\Hom(i^*M, i^!f^! \1)[1] \otimes i^*M} \arrow[r, "\id^t "']                                      & {i^!f^! \1[1]}                                                                   
\end{tikzcd}
\]
where $\id^t$ denotes the respective transpose of the identity. This amounts to showing that 
\begin{equation} \label{eqn:lastTriangleOfPfOfLEmmaOFRecollemtAndDuality}
\begin{tikzcd}
{i^! \D_S(M)[1] \otimes i^*M} \arrow[r, "\theta"] \arrow[rd, "\varphi^t"'] & {i^! (\D_S(M) \otimes M)[1]} \arrow[d, "i^! \id^t"] \\
                                                                           & {i^!f^! \1[1]}                                               
\end{tikzcd}
\end{equation}
is commutative, where $\varphi^t$ denotes the transpose of $\varphi$. One can check directly that 
\[
i_*(i^! \D_S(M) \otimes M ) \overset{\text{proj}_i}\longleftarrow  (i_*i^! \D_S(M)) \otimes M\overset{\text{counit}}\longrightarrow \D_S(M) \otimes M \overset{\id^t}\longrightarrow f^! \1
\]
corresponds to $\varphi^t$ via
\[
\map_{\DA_{\et}(X_\sigma, \Lambda)}(i^! \D_S(M)\otimes i^*M, i^!f^!\1) \simeq \map_{{\DA_{\et}(X, \Lambda)}}(i_*(i^! \D_S(M) \otimes i^*M), f^! \1).
\]
By definiton of $\theta$ this means precisely that (\ref{eqn:lastTriangleOfPfOfLEmmaOFRecollemtAndDuality}) commutes.

\end{proof}

\noname Consider a morphism of schemes $f: X \rightarrow S$. Then as observed in \ref{remark:AllNearbyCyclesCoincWAyoubAndAreSpecSystems} (2) there are canonical natural transformations $\mu: i^*j_* \rightarrow \Upsilon_f$ and $\gamma: \Upsilon_f  \rightarrow \Psi_f$ giving rise to morphisms of specialization systems. Let us write for any $M$ in $\DA_{\et}(X, \Lambda)$
\[
\beta_M: i^*M \overset{\unit}\longrightarrow i^*j_*j^* M \overset{\mu_{j^*M}}\longrightarrow \Upsilon_f(j^*M)
\]
and
\[
\alpha_M: i^*M \overset{\beta_M} \longrightarrow \Upsilon_f(j^*M) \overset{\gamma_{j^*M}}\longrightarrow \Psi_f(j^*M)
\]
for the compositions.

\begin{lem} \label{lem:AlphaEquImplBetaEqu}
Let  $f: X \rightarrow S$  be a morphism of finite type and $M$ in $\DA^{\cons}_{\et}(X, \Lambda)$. Assume that $\Lambda$ is a $\Q$-algebra. If 
\[
\alpha_M: i^* M \longrightarrow \Psi_f(j^* M)
\]
is an equivalence, then
\[
\beta_M: i^*M \longrightarrow \Upsilon_f(j^*M)
\]
and
\[
\gamma_{j^*M}: \Upsilon_f(j^*M) \longrightarrow \Psi_f(j^*M)
\]
are both equivalences.
\end{lem}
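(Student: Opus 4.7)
The plan is to exploit the factorization $\alpha_M = \gamma_{j^*M} \circ \beta_M$ coming directly from the definitions of $\alpha_M$ and $\beta_M$, together with the key structural input from Corollary \ref{cor:UpsilonRetractOfPsi}. Specializing that corollary to $L = K$ and $n = 1$, and using $\Lambda$ a $\Q$-algebra together with the excellency of $S$ and the finite-type hypothesis on $f$, the canonical natural transformation $\Upsilon_f \to \Psi_f$ evaluated at $j^*M$ realizes $\gamma_{j^*M}$ as the inclusion of a direct summand in $\DA_{\et}(X_\sigma, \Lambda)$. In particular $\gamma_{j^*M}$ admits a retraction $\rho$ with $\rho \circ \gamma_{j^*M} \simeq \id$.

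Given this, the remainder is formal. Since $\alpha_M$ is assumed to be an equivalence and $\alpha_M \simeq \gamma_{j^*M} \circ \beta_M$, composing with $\alpha_M^{-1}$ on the right yields
\[
\gamma_{j^*M} \circ (\beta_M \circ \alpha_M^{-1}) \simeq \id_{\Psi_f(j^*M)},
\]
so $\gamma_{j^*M}$ also admits a right inverse. A morphism admitting both a left and a right inverse in any category is an equivalence (the two inverses coincide), so $\gamma_{j^*M}$ is an equivalence. Then $\beta_M \simeq \gamma_{j^*M}^{-1} \circ \alpha_M$ is a composition of equivalences, hence an equivalence itself.

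There is no real obstacle here: the entire argument is a two-out-of-three observation once one identifies the correct input, namely the splitting of $\gamma$ provided by Corollary \ref{cor:UpsilonRetractOfPsi}. The only point to double-check is that the specialization $L = K$, $n = 1$ in that corollary indeed produces exactly the natural transformation $\gamma_{j^*M}$ appearing in the statement of the lemma, which is immediate from its construction as the canonical map into the colimit defining $\Psi_f$ described in Proposition \ref{prop:NearbyCyclesViaLogAndColimits} and Remark \ref{remark:MapsOfSpezSystemsForDA}.
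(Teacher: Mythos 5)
Your proof is correct and follows essentially the same route as the paper, which deduces the lemma precisely from the fact that $\gamma_{j^*M}$ is the inclusion of a direct summand by Corollary \ref{cor:UpsilonRetractOfPsi} (the case $L=K$, $n=1$); you merely spell out the formal two-sided-inverse argument that the paper leaves implicit.
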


\begin{proof}
This follows from the fact that $\gamma_{j^*M}$ is the inclusion of a direct summand by Corollary \ref{cor:UpsilonRetractOfPsi}.
\end{proof}

\noname \label{noname:MonodromySequenceAndComparisonDualitySpecSystems} Assume that $\Lambda$ is a $\Q$-algebra. Then for any morphism of schemes $f:X \rightarrow S$ there is a fiber sequence
\begin{equation} \label{eqn:monodromy sequence}
\chi_f \longrightarrow \Upsilon_f \overset{N}\longrightarrow \Upsilon_f(-1)
\end{equation}
by \cite[11.16]{AyoubRealizationEtale}. We call $N$ the \textit{monodromy operator}. 

If $f:X \rightarrow S$ is a morphism of finite type and $M$ is in $\DA_{\et}(X_\eta, \Lambda)$ we obtain as in \ref{noname:CompPsi} a comparison map
\[
\comp_\Upsilon: \Upsilon_f(\D_\eta(M)) \longrightarrow \D_\sigma (\Upsilon_f (M))
\]
as the transpose of the composition
\begin{align*}
\Upsilon_f(\D_\eta(M)) \otimes \Upsilon_f(M) &\longrightarrow \Upsilon_f(\D_\eta(M) \otimes M) \\
&\overset{\Upsilon_f \id^t}\longrightarrow \Upsilon_f (f_\eta^!\1) \\
&\overset{\Ex^!}\longrightarrow f_\sigma^! \Upsilon_{\id} (\1) \\
&\overset{\sim}\longrightarrow f_\sigma^! \1.
\end{align*}

By \cite[11.16]{AyoubRealizationEtale} this comparison map fits together with the comparison map from \ref{noname:ComparisonMapsDuality} into a morphism of fiber sequences
\begin{equation} \label{eqn:monodromySequAndDuality}
\begin{tikzcd}
\Upsilon_f \D_\eta(M) (1) \arrow[r, "-N_{\D_\eta(M)}(1)"] \arrow[d, "\comp_\Upsilon (1)"] & \Upsilon_f \D_\eta(M) \arrow[d,"\comp_\Upsilon"] \arrow[r] & {\chi_f \D_\eta(M)(1)[1]} \arrow[d, "\comp_\chi(1) {[1]} "] \arrow[r] & {\Upsilon_f \D_\eta(M)(1)[1]} \arrow[d, "\comp_\Upsilon(1) {[1]} "] \\
\D_\sigma(\Upsilon_f(M)))(1) \arrow[r, "\D_\sigma(N_M)"] & \D_\sigma(\Upsilon_f(M))) \arrow[r]       & \D_\sigma(\chi_f(M))) \arrow[r]               & {\D_\sigma(\Upsilon_f(M)))(1)[1]}.      
\end{tikzcd}
\end{equation}

Whenever $M$ is constructible 
\[
\comp_\Upsilon: \Upsilon_f(\D_\eta(M)) \longrightarrow \D_\sigma (\Upsilon_f (M))
\]
is an equivalence by \cite[10.21]{AyoubRealizationEtale}. In particular in this case the map of fiber sequences (\ref{eqn:monodromySequAndDuality}) implies that 
\[\comp_\chi: \chi_f \D_\eta(M) \longrightarrow \D_\sigma \chi_f( M)(-1)[-1] \]
is an equivalence as well.

\begin{lem} \label{lem:Beta_MequivImpliesBeta_DMequiv}
Let  $f: X \rightarrow S$  be a morphism of finite type and $M$ in $\DA^{\cons}_{\et}(X, \Lambda)$. Assume that $\Lambda$ is a $\Q$-algebra. If 
\[
\alpha_M: i^*M \longrightarrow \Psi_f(j^*M)
\]
is an equivalence, then so is
\[
\alpha_{\D_S(M)}: i^*\D_S(M) \longrightarrow \Psi_f(j^*\D_S(M)).
\]
\end{lem}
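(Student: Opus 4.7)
The plan is to reduce the claim to showing that both factors of the composition $\alpha_{\D_S(M)}=\gamma_{\D_\eta(j^*M)}\circ\beta_{\D_S(M)}$ are equivalences. First, some preliminaries: since $M$ is constructible and $S$ is excellent, $\D_S(M)$ is constructible by Theorem \ref{thm:SixFunctorsPresConstr}(5); as $j$ is an open immersion, $j^*$ commutes with Verdier duality on constructible objects and so $j^*\D_S(M)\simeq\D_\eta(j^*M)$. By Lemma \ref{lem:AlphaEquImplBetaEqu} applied to the hypothesis, $\beta_M$ and $\gamma_{j^*M}$ are both equivalences. Moreover, since $\Lambda$ is a $\Q$-algebra, $j^*M$ is constructible and $S$ is excellent, the comparison maps $\comp_\chi$ (see \ref{noname:ComparisonMapsDuality}), $\comp_\Upsilon$ and $\comp_\Psi$ are equivalences when evaluated at $j^*M$ (by \ref{noname:MonodromySequenceAndComparisonDualitySpecSystems} and Theorem \ref{thm:PropertiesOfEtaleMotivicNearby}(5)), and the map $\varphi$ of (\ref{eqn:CompositionVarphi}) is always an equivalence.

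For $\gamma_{\D_\eta(j^*M)}$: the morphism $\gamma\colon\Upsilon\to\Psi$ of lax-monoidal specialization systems (Remark \ref{remark:AllNearbyCyclesCoincWAyoubAndAreSpecSystems}(3)) is compatible with the lax-monoidal structures entering the constructions of $\comp_\Upsilon$ and $\comp_\Psi$, yielding a commutative square
\[
\begin{tikzcd}
\Upsilon_f(\D_\eta(j^*M))\arrow[r,"\gamma_{\D_\eta(j^*M)}"]\arrow[d,"\comp_\Upsilon"',"\sim"] & \Psi_f(\D_\eta(j^*M))\arrow[d,"\comp_\Psi","\sim"'] \\
\D_\sigma(\Upsilon_f(j^*M)) & \D_\sigma(\Psi_f(j^*M))\arrow[l,"\D_\sigma\gamma_{j^*M}"',"\sim"]
\end{tikzcd}
\]
whose three non-top arrows are equivalences, forcing $\gamma_{\D_\eta(j^*M)}$ to be an equivalence too.

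For $\beta_{\D_S(M)}\colon i^*\D_S(M)\xrightarrow{\unit}\chi_f(\D_\eta(j^*M))\xrightarrow{\mu}\Upsilon_f(\D_\eta(j^*M))$, the octahedral axiom produces a fiber sequence
\[
i^!\D_S(M)\longrightarrow\operatorname{fib}(\beta_{\D_S(M)})\longrightarrow\Upsilon_f(\D_\eta(j^*M))(-1)[-1]
\]
in which the outer terms are identified as the fibers of $\unit$ (via the localization fiber sequence (\ref{eqn:i*appliedToLocSeq})) and of $\mu$ (via the monodromy fiber sequence (\ref{eqn:monodromy sequence})). Hence $\beta_{\D_S(M)}$ is an equivalence iff the associated connecting map $\Upsilon_f(\D_\eta(j^*M))(-1)[-1]\to i^!\D_S(M)[1]$ is, and by construction this factors as
\[
\Upsilon_f(\D_\eta(j^*M))(-1)[-1]\longrightarrow \chi_f(\D_\eta(j^*M))\longrightarrow i^!\D_S(M)[1],
\]
the first arrow coming from the rotated monodromy for $\D_\eta(j^*M)$ and the second from the rotated localization for $\D_S(M)$. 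Using $\comp_\chi$ and $\comp_\Upsilon$, diagram (\ref{eqn:monodromySequAndDuality}) identifies the first arrow with the $(-1)[-1]$-twist-shift of $\D_\sigma(\mu_{j^*M})$; using $\comp_\chi$ and $\varphi$, Lemma \ref{lem:LocalizationSequCommWDuality} identifies the second arrow with the $(-1)[-1]$-twist-shift of $\D_\sigma(\unit_M)$. Their composition is the $(-1)[-1]$-twist-shift of $\D_\sigma(\unit_M)\circ\D_\sigma(\mu_{j^*M})=\D_\sigma(\mu_{j^*M}\circ\unit_M)=\D_\sigma(\beta_M)$, which is an equivalence since $\beta_M$ is.

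The hard part is the final diagram chase: one must align the rotations, twists, and sign conventions used in Lemma \ref{lem:LocalizationSequCommWDuality} and diagram (\ref{eqn:monodromySequAndDuality}) so that the composite connecting map is genuinely identified (up to the $(-1)[-1]$-twist-shift) with $\D_\sigma(\beta_M)$. This verification is formal bookkeeping but nontrivial in its details.
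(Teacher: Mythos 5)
Your proof is correct and rests on the same ingredients as the paper's: Lemma \ref{lem:AlphaEquImplBetaEqu}, the compatibility of $\comp_\chi$ with the localization triangle (Lemma \ref{lem:LocalizationSequCommWDuality}), the compatibility with the monodromy triangle (diagram (\ref{eqn:monodromySequAndDuality})), and the fact that all comparison maps involved are equivalences in the constructible setting. Where you diverge is the organization of the reduction step for $\beta_{\D_S(M)}$: the paper assembles the $2\times 3$ square (\ref{eqn:TopCompisbeta_DM}) whose top row is $\beta_{\D_S(M)}$ and vertical arrows are equivalences, and then argues that the bottom composite is an equivalence by producing the induced map $\omega$ from a separate morphism of fiber sequences (built out of the $\beta_M$-splitting of the monodromy triangle). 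You instead apply the octahedral axiom to the composition $\beta_{\D_S(M)}=\mu\circ\unit$, reduce to showing the connecting map $\operatorname{fib}(\mu)\to\operatorname{cofib}(\unit)$ is an equivalence, factor it through $\chi_f(\D_\eta(j^*M))$, and identify the two factors (via the very same two compatibility diagrams) with $\D_\sigma(\mu_{j^*M})(-1)[-1]$ and $\D_\sigma(\unit_M)(-1)[-1]$, whose composite is $\D_\sigma(\beta_M)(-1)[-1]$. This buys a somewhat more transparent statement of what the bottom composite actually \emph{is}, as opposed to exhibiting it as an abstractly induced arrow; on the other hand the paper's route sidesteps having to pin down the octahedral connecting map up to sign, which is exactly the bookkeeping you flag at the end as the remaining verification. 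Both routes are sound; your version makes the role of $\D_\sigma(\beta_M)$ more explicit. (One small remark: you should also check, as the paper does via the running section hypotheses, that excellency of $S$ is in force so that $\D_S(M)$ is constructible and the cited duality statements apply.)
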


\begin{proof}
By Lemma \ref{lem:AlphaEquImplBetaEqu} the maps $\beta_M$ and $\gamma_{j^*M}$ are equivalences. Consider the monodromy sequence of $j^*M$:
\begin{equation} \label{eqn:MonSeqMLemma}
i^*j_*j^* M \longrightarrow \Upsilon_f(j^*M) \overset{N_{j^*M}}\longrightarrow \Upsilon_f(j^*M)(-1).  
\end{equation}
Since the equivalence $\beta_M$ factors as
\[
i^*M \longrightarrow i^*j_*j^* M \longrightarrow \Upsilon_f(j^*M)
\]
it gives rise to a splitting of (\ref{eqn:MonSeqMLemma}). Lemma  \ref{lem:LocalizationSequCommWDuality} implies that there is a map of fiber sequences
\[
\begin{tikzcd}
i^*\D_S(M) \arrow[r] \arrow[d, "\vartheta", dashed] & i^* j_*j^* \D_S(M)  \arrow[r] \arrow[d, "\comp_\chi"]    & {i^!\D_S(M)[1]} \arrow[d, "\varphi"] \\
{\D_\sigma(i^!M)(-1)[-2]} \arrow[r]    & {\D_\sigma(i^*j_*j^* M)(-1)[-1]} \arrow[r] & {\D_\sigma(i^* M)(-1)[-1]}          
\end{tikzcd}
\]
for some $\vartheta$ (which is an equivalence since $\comp_\chi$ and $\varphi$ are equivalences). Consider the commutative diagram
\begin{equation} \label{eqn:TopCompisbeta_DM}
\begin{tikzcd}
 i^* \D_S(M) \arrow[d, "\vartheta"] \arrow[r] & i^*j_*j^* \D_S(M) \arrow[d, "\comp_\chi"'] \arrow[r] & \Upsilon_f(j^* \D_S(M)) \arrow[d, "\comp_\Upsilon"] \\
{\D_\sigma(i^! M)(-1)[-2]} \arrow[r]      & {\D_\sigma ( i^*i_*j^*M)(-1)[-1]} \arrow[r]      & \D_\sigma(\Upsilon_f(j^*M)),                
\end{tikzcd}
\end{equation}
where the right square is part of (\ref{eqn:monodromySequAndDuality}). Note that the top composition of (\ref{eqn:TopCompisbeta_DM}) is precisely $\beta_{\D_S(M)}: i^* \D_S(M) \longrightarrow \Upsilon_f(j^* \D_S(M))$ and the vertical maps are equivalences. Thus in order to show that $\beta_{\D_S(M)}$ is an equivalence it suffices to show that the  bottom composition of (\ref{eqn:TopCompisbeta_DM}) is an equivalence. 

For this consider the morphism of fiber sequences
\[
\begin{tikzcd}
{\D_\sigma(i^!M)(-1)[-2]} \arrow[r] \arrow[d, "\omega"', dashed] & {\D_\sigma(i^*j_*j^*M)(-1)[-1]} \arrow[d,"\id"] \arrow[r] & {\D_\sigma(i^*M)(-1)[-1]} \arrow[d, "{\D_\sigma(\beta_M^{-1})(-1)[-1]}"] \\
\D_\sigma(\Upsilon_f(j^*M)) \arrow[r]                            & {\D_\sigma(i^*j_*j^*M)(-1)[-1]} \arrow[r, ""]                                & {\D_\sigma(\Upsilon_f(j^*M))(-1)[-1]},                                  
\end{tikzcd}
\]
where $\omega$ is the induced map and hence an equivalence. Here the top sequence is obtained from (\ref{eqn:i*appliedToLocSeq}) and the bottom sequence is obtained by applying $\D_\sigma (\_)$ to the fiber sequence
\[
\Upsilon_f(j^*M)(1)[1] \rightarrow i^*j_*j^* M (1)[1] \simeq \Upsilon_f(j^*M)(1)[1] \oplus \Upsilon_f(j^*M) \rightarrow \Upsilon_f(j^*M)
\]
obtained by splitting (\ref{eqn:MonSeqMLemma}) via $\beta_M$. By construction the bottom composition of (\ref{eqn:TopCompisbeta_DM}) is precisely $\omega$ and therefore an equivalence. This shows that $\beta_{\D_S(M)}$ is an equivalence. 

Since $\Upsilon \rightarrow \Psi$ is a morphism of lax-monoidal specialization systems, it is straightforward to check that the diagram
\[
\begin{tikzcd}
\Upsilon_f(\D_\eta j^*M) \arrow[d, "\comp_\Upsilon"'] \arrow[r, "\gamma_{\D_\eta j^*M}"] & \Psi_f(\D_\eta j^*M) \arrow[d, "\comp_\Psi"]                  \\
\D_\sigma \Upsilon_f(j^*M)                                                               & \D_\sigma \Psi_f(j^*M) \arrow[l, "\D_\sigma(\gamma_{j^*M})"']
\end{tikzcd}
\]
commutes. The vertical comparison maps are equivalences and, as observed in the very beginning of the proof, $\gamma_{j^*M}$ is an equivalence. Hence $\gamma_{\D_\eta j^*M}$ is an equivalence which implies that
\[
\alpha_{\D_S(M)}: i^* \D_S(M) \overset{\beta_{\D_S(M)}}\longrightarrow \Upsilon_{f} (j^*\D_S(M)) \overset{\gamma_{j^* \D_S M}}\longrightarrow \Psi_{f} (j^*\D_S(M))
\]
is an equivalence.
\end{proof}

\begin{lem} \label{lem:a_MisoImpliesa_MxDMiso} Let $f: X \rightarrow S$ be a morphism of finite type and $M$ in $\DA_{\et}(X, \Lambda)$. If the canonical maps
\[\can_{M} : i ^*M  \longrightarrow \Psi_{f}(j^* M )\]
and
\[ \can_{\D_S(M)} : i ^*\D_S(M)  \rightarrow \Psi_{f}(j^* \D_S(M) )
\]
are equivalences, then 
\begin{align*}\can_{M \boxtimes \D_S(M)} : i^*(M \boxtimes \Dual_S(M)) \longrightarrow \Psi_{f\times f}( j^* (M \boxtimes \Dual_S(M))) 
\end{align*}
is an equivalence. 
\end{lem}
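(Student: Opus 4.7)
The plan is to reduce the claim to two ingredients: (i) the K\"unneth formula for the motivic nearby cycles functor (Theorem \ref{thm:PropertiesOfEtaleMotivicNearby}(4)), and (ii) the compatibility of the canonical comparison map $\can$ with external tensor products, the latter coming from the fact that $\chi \to \Upsilon \to \Psi$ is a morphism of lax-monoidal specialization systems (Remark \ref{remark:AllNearbyCyclesCoincWAyoubAndAreSpecSystems}(3)).

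First I would record that $i^{*}$ and $j^{*}$ commute with external tensor products: since $\boxtimes$ is defined as $p_1^{*}(-)\otimes p_2^{*}(-)$ and pullback squares identify the generic and special fibers of $f\times f$ with $X_\eta\times_\eta X_\eta$ and $X_\sigma\times_\sigma X_\sigma$ respectively, one has canonical equivalences $i^{*}(M\boxtimes \D_S M)\simeq i^{*}M\boxtimes i^{*}\D_S M$ and $j^{*}(M\boxtimes \D_S M)\simeq j^{*}M\boxtimes j^{*}\D_S M$. Next, by Theorem \ref{thm:PropertiesOfEtaleMotivicNearby}(4) applied to the pair $(f,f)$, the K\"unneth comparison
\[
\Psi_f(j^{*}M)\boxtimes \Psi_f(j^{*}\D_S M)\longrightarrow \Psi_{f\times f}\bigl(j^{*}M\boxtimes j^{*}\D_S M\bigr)
\]
is an equivalence.

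The plan then is to assemble these into the commutative square
\[
\begin{tikzcd}
i^{*}(M\boxtimes \D_S M) \arrow[r,"\sim"] \arrow[d,"\can_{M\boxtimes \D_S M}"'] & i^{*}M\boxtimes i^{*}\D_S M \arrow[d,"\can_M\boxtimes \can_{\D_S M}"] \\
\Psi_{f\times f}\bigl(j^{*}(M\boxtimes \D_S M)\bigr) & \Psi_f(j^{*}M)\boxtimes \Psi_f(j^{*}\D_S M) \arrow[l,"\sim"']
\end{tikzcd}
\]
whose right vertical arrow is an equivalence by hypothesis, whose top horizontal arrow is the symmetric-monoidal identification, and whose bottom horizontal arrow is the K\"unneth equivalence. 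The conclusion then follows formally, since three sides of the square are equivalences.

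The only nontrivial point in this strategy is the commutativity of the square. Since $\can$ factors as the unit $\id \to j_{*}j^{*}$ followed by the composition $\chi_f \to \Upsilon_f \to \Psi_f^{\tame} \to \Psi_f$ of morphisms of lax-monoidal specialization systems, this compatibility unravels into two assertions: that the unit $\id\to j_{*}j^{*}$ is compatible with the lax-monoidal structures (automatic for any right adjoint of a symmetric-monoidal functor) and that each arrow in $\chi\to\Upsilon\to\Psi^{\tame}\to\Psi$ induces a monoidal natural transformation on the underlying lax-monoidal functors, which is precisely the content of their being lax-monoidal morphisms of specialization systems. I expect this verification of the square to be the main (bookkeeping) obstacle; everything else is a formal consequence of the cited structural results.
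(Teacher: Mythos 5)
Your proposal is correct and follows essentially the same route as the paper: the paper also assembles the square whose top is the monoidal identification $i^*(M\boxtimes \D_S M)\simeq i^*M\boxtimes i^*\D_S M$, whose right vertical is $\can_M\boxtimes\can_{\D_S M}$, whose bottom is the K\"unneth equivalence of Theorem \ref{thm:PropertiesOfEtaleMotivicNearby}, and checks commutativity by splitting it into a square governed by the lax-monoidal (pseudo-monoidal) structure of $j_*$ and a square governed by $\chi\to\Psi$ being a morphism of lax-monoidal specialization systems, exactly as you describe. The only cosmetic difference is that the paper phrases the first step via $\chi_f=i^*j_*$ rather than via the unit $\id\to j_*j^*$, which amounts to the same verification.
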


\begin{proof}
Consider the diagram
\[
\begin{tikzcd}
i^*M \boxtimes i^*\D_S(M) \arrow[d, "\sim"'] \arrow[r] & \chi_f(j^*M) \boxtimes \chi_f(j^*\D_S(M)) \arrow[d] \arrow[r] \arrow[ld, phantom, "(1)"] & \Psi_f(j^*A) \boxtimes \Psi_f(j^*\D_S(M)) \arrow[d, "\sim"] \arrow[ld, phantom, "(2)"] \\
(i \times i)^*(M \boxtimes \D_S(M)) \arrow[r]            & \chi_{f \times f}(j^*M \boxtimes j^*\D_S(M)) \arrow[r]                   & \Psi_{f \times f}(j^*M \boxtimes j^*\D_S(M)).                            
\end{tikzcd}
\] 
The right vertical map is an equivalence by Theorem \ref{thm:PropertiesOfEtaleMotivicNearby} (3). The commutativity of (1) is an easy check considering the pseudo monoidal structure of $j_*$. The commutativity of (2) follows from the fact that $\chi \rightarrow \Psi$ is a morphism of lax-monoidal specialization systems (see Remark \ref{remark:AllNearbyCyclesCoincWAyoubAndAreSpecSystems} (3)). By assumption the top horizontal map is an equivalence. Hence the bottom horizontal map is an equivalence as desired. 
\end{proof}

\noname \label{noname:AlphaEquivGivesSection}Let $f: X \rightarrow S$ be of finite type, $M$ in $\DA^{\cons}_{\et}(X, \Lambda)$ and assume that $\Lambda$ is a $\Q$-algebra. Recall what we observed in the beginning of the proof of Lemma \ref{lem:Beta_MequivImpliesBeta_DMequiv}: If 
\[
\alpha_M: i^* M \longrightarrow \Psi_f(j^*M)
\]
is an equivalence this implies by Lemma \ref{lem:AlphaEquImplBetaEqu} that 
\[\beta_M: i^* M \overset{\unit}\longrightarrow \Upsilon_f(j^*M) \overset{\mu_{j^*M}}\longrightarrow \Upsilon_f(j^*M) \]
is an equivalence. In particular $\beta_M$ splits the monodromy sequence
\[
\chi_f(j^*M) \overset{\mu_{j^*M}}\longrightarrow \Upsilon_f(j^*M) \overset{N}\longrightarrow \Upsilon_f(j^*M) (-1). 
\]
and hence there is an equivalence
\begin{equation} \label{eqn:SplitMonodromySeq}
\chi_f(j^*M) \simeq \Upsilon_f(j^*M) \oplus \Upsilon_f(j^*M)(-1)[-1]. 
\end{equation}
In particular there are fiber sequences
\[
\Upsilon_f(j^*M) \overset{i_1}\longrightarrow \chi_f(j^*M) \overset{p_2}\longrightarrow \Upsilon_f(j^*M) (-1)[-1]
\]
and
\[
\Upsilon_f(j^*M)(-1)[-1] \overset{i_2}\longrightarrow \chi_f(j^*M) \overset{p_1}\longrightarrow \Upsilon_f(j^*M),
\]
where $i_1,i_2$ denote the inclusions and $p_1,p_2$ denote the projections with respect to the direct sum decomposition (\ref{eqn:SplitMonodromySeq}). 

Note that in the situation above $\alpha_{\D_S(M)}$ is also an equivalence by Lemma \ref{lem:Beta_MequivImpliesBeta_DMequiv} and thus $\beta_{\D_S(M)}$ induces a splitting of the monodromy sequence of $\D_S(M)$ analogue as above.

\begin{lem} \label{lem:DChi=ChiDCompatibleWithSplitting}
Let  $f: X \rightarrow S$  be a morphism of schemes and $M$ in $\DA^{\cons}_{\et}(X, \Lambda)$. Assume that $\Lambda$ is a $\Q$-algebra and that $\alpha_M: i^*M \rightarrow \Psi_f(j^*M)$ is an equivalence. Then the diagram
\[
\begin{tikzcd}
\chi_f(\D_\eta(j^*M)) \arrow[rr, "\sim", no head] \arrow[dd, "\comp_\chi"'] &  & \Upsilon_f(\D_\eta(j^*M)) \oplus \Upsilon_f(\D_\eta(j^*M)) (-1)(-1)  \arrow[dd, "\MatrixForThm"] \\
                                                                            &  &                                                                                                  \\
{\D_\sigma(\chi_f(j^*M))(-1)[-1]} \arrow[rr, "\sim", no head]                 &  & {\D_\sigma(\Upsilon_f(j^*M))(-1)[-1] \oplus \D_\sigma(\Upsilon_f(j^*M))}                            
\end{tikzcd}
\]
commutes. Here the horizontal equivalences are the direct sum decompositions induced by $\beta_M$ and $\beta_{\D_S(M)}$ as above.
\end{lem}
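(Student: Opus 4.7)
The plan is to compute the matrix of $\comp_\chi$ entry by entry, using the morphism of fiber sequences \eqref{eqn:monodromySequAndDuality} together with the explicit form of the splittings. First I will shift the whole diagram \eqref{eqn:monodromySequAndDuality} by $(-1)[-1]$, which places $\comp_\chi$ in the middle vertical position; each row then becomes a fiber sequence whose middle term is exactly $\chi_f(\D_\eta j^*M)$ (top) respectively $\D_\sigma \chi_f(j^*M)(-1)[-1]$ (bottom). Under the splittings stated in the lemma---induced by the sections $s_{\D_S M} = \eta_{\D_\eta j^*M} \circ \beta_{\D_S M}^{-1}$ and $s_M = \eta_{j^*M} \circ \beta_M^{-1}$ from \ref{noname:AlphaEquivGivesSection}---the $\mu$-maps correspond to projections onto the first factors while the monodromy connecting maps correspond to inclusions of the second factors; dually, $\D_\sigma s_M(-1)[-1]$ becomes the projection onto the first factor $\D_\sigma \Upsilon_f(j^*M)(-1)[-1]$ of the bottom decomposition.

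Three of the four matrix entries fall out of the two middle commuting squares of the shifted \eqref{eqn:monodromySequAndDuality}. Commutativity against the inclusion $\Upsilon_f(\D_\eta j^*M)(-1)[-1] \hookrightarrow \chi_f(\D_\eta j^*M)$ identifies the second column of $\comp_\chi$ with $(\comp_\Upsilon(-1)[-1], 0)^T$, giving $(1,2) = \comp_\Upsilon(-1)[-1]$ and $(2,2) = 0$; commutativity against the projection $\chi_f(\D_\eta j^*M) \twoheadrightarrow \Upsilon_f(\D_\eta j^*M)$ identifies the second row with $(\comp_\Upsilon, 0)$, consistently giving $(2,1) = \comp_\Upsilon$.

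It remains to show that the $(1,1)$ entry vanishes, i.e.\ that $\D_\sigma s_M(-1)[-1] \circ \comp_\chi \circ s_{\D_S M} = 0$. The key observation is that $s_{\D_S M}$ factors through the unit map $\eta_{\D_\eta j^*M}: i^*\D_S(M) \to \chi_f(\D_\eta j^*M)$ on the input side, while $\D_\sigma s_M(-1)[-1]$ factors through $\D_\sigma \eta_{j^*M}(-1)[-1]$ on the output side. Applying Lemma \ref{lem:LocalizationSequCommWDuality} rewrites $\D_\sigma \eta_{j^*M}(-1)[-1] \circ \comp_\chi$ as $\varphi \circ \partial_{\D_S M}$, where $\partial_{\D_S M}: \chi_f(\D_\eta j^*M) \to i^!\D_S(M)[1]$ is the connecting map of \eqref{eqn:i*appliedToLocSeq} for $\D_S(M)$. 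Since $\partial_{\D_S M} \circ \eta_{\D_\eta j^*M} = 0$ (two consecutive arrows in a fiber sequence compose to zero), the whole composition collapses to zero. The genuinely delicate step is precisely this vanishing of the $(1,1)$ entry: it is not forced by the morphism of fiber sequences alone, and Lemma \ref{lem:LocalizationSequCommWDuality} is the extra input that lets the fiber-sequence null-composition axiom finish the job.
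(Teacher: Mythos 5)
Your proof is correct, and although it consumes exactly the same two inputs as the paper --- the morphism of fiber sequences (\ref{eqn:monodromySequAndDuality}) and Lemma \ref{lem:LocalizationSequCommWDuality} --- it organizes them along a genuinely more direct route. The paper does not compute matrix entries: it assembles the four-row diagram (\ref{eqn:BigDiagramLemmaSplittingAndDuality}) relating the split monodromy sequence of $\D_S(M)$, the localization sequence (\ref{eqn:i*appliedToLocSeq}) and its dual, and the dual split sequence for $M$, proves (via an auxiliary square that combines (\ref{eqn:monodromySequAndDuality}) with Lemma \ref{lem:LocalizationSequCommWDuality}) that the outer vertical composites are $\comp_\Upsilon$ and $\comp_\Upsilon(-1)[-1]$, and then reads the shape of $\comp_\chi$ off this morphism of fiber sequences. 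Your entry-by-entry computation buys two things. First, it avoids the non-canonical dashed arrows (induced maps on cofibers) of (\ref{eqn:BigDiagramLemmaSplittingAndDuality}), whose compatibility with the chosen splittings has to be checked by hand. Second, it makes the vanishing of the $(2,2)$ entry explicit: the paper's displayed morphism from its first to its fourth row only constrains the compositions involving the section $s_{\D_S(M)}=\unit\circ\beta_{\D_S(M)}^{-1}$ or the dual projection $\D_\sigma(s_M)(-1)[-1]$, i.e.\ the first row and first column of the matrix, so $(2,2)=0$ ultimately rests on precisely the square of (\ref{eqn:monodromySequAndDuality}) containing $\comp_\chi$ and the boundary map of the monodromy triangle --- the step you state, and the paper leaves implicit in its closing sentence. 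Your handling of the delicate $(1,1)$ entry is at bottom the same mechanism the paper uses (Lemma \ref{lem:LocalizationSequCommWDuality} followed by the null composite of two consecutive maps of the localization triangle), just without the detour through induced maps; the only point worth recording explicitly is the standard fact that a map between direct sums in a stable $\infty$-category is determined up to homotopy by its four components, so verifying the entries really does prove the asserted commutativity.
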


\begin{proof}
Consider the diagram 
\begin{equation} \label{eqn:BigDiagramLemmaSplittingAndDuality}
\begin{tikzcd}
\Upsilon_f(j^*\D_S(M))) \arrow[r, "i_1"] \arrow[d, "\beta_{\D_S(M)}^{-1}"] & \chi_f(j^*\D_S(M)) \arrow[r, "p_2"] \arrow[d, "\id"]                            & {\Upsilon_f(j^*\D_S(M)))(-1)[-1]} \arrow[d, dashed]                    \\
i^*\D_S(M) \arrow[r] \arrow[d, dashed]                                     & \chi_f(j^*\D_S(M)) \arrow[d, "\comp_{\chi}"] \arrow[r]                          & {i^!\D_S(M)[1]} \arrow[d, "\varphi"]                                   \\
{\D_\sigma(i^!M)(-1)[-2]} \arrow[r]                                        & {\D_\sigma(\chi_f(j^*M))(-1)[-1]} \arrow[r, "\D_\sigma(\unit)"]                 & {\D_\sigma(i^*M)(-1)[-1]}                                              \\
\D_\sigma(\Upsilon_f(j^*M)) \arrow[u, dashed] \arrow[r, "\D_\sigma(p_2)"]  & {\D_\sigma(\chi_f(j^*M))(-1)[-1]} \arrow[u, "\id"'] \arrow[r, "\D_\sigma(i_1)"] & {\D_\sigma(\Upsilon_f(j^*M))(-1)[-1]} \arrow[u, "{\D_\sigma(\beta_M)(-1)[-1]}"']
\end{tikzcd}
\end{equation}

The horizontal rows are fiber sequences, the solid squares commute (the right middle square commutes by
Lemma \ref{lem:LocalizationSequCommWDuality}) and the dashed arrows are the ones induced by completing the maps of fiber sequences. Note that all vertical maps are equivalences. We claim that the right vertical top to bottom composition of (\ref{eqn:BigDiagramLemmaSplittingAndDuality}) is equivalent to $\comp_\Upsilon (-1)[-1]$. Indeed this follows by careful inspection from the commutativity of the outer square of
\[
\begin{tikzcd}
{\Upsilon_f(j^*\D_S(M))(-1)[-1]} \arrow[d, "i_2"] \arrow[rr, "{\comp_{\Upsilon}(-1){[-1]}}"] &  & {\D_\sigma(\Upsilon_f(j^*M))(-1)[-1]} \arrow[d, "{\D_\sigma(\mu_{j^*M})(-1){[-1]}}"] \\
\chi_f(j^*\D_S(M)) \arrow[d] \arrow[rr, "\comp_\chi"]                                 &  & {\D_\sigma(\chi_f(j^*M))(-1)[-1]} \arrow[d, "{\D_\sigma(\unit)(-1){[-1]}}"]         \\
i^!\D_S(M) \arrow[rr, "\varphi"]                                                      &  & {\D_\sigma(i^*M)(-1)[-1].}                                                          
\end{tikzcd}
\]
Note for this that the right vertical composition is $\D_\sigma(\beta_M)(-1)[-1]$. The top square commutes by (\ref{eqn:monodromySequAndDuality}) and commutativity of the bottom square is Lemma \ref{lem:LocalizationSequCommWDuality}. Using the map of fiber sequences (\ref{eqn:monodromySequAndDuality}) we see that the left vertical composition of (\ref{eqn:BigDiagramLemmaSplittingAndDuality}) must already be equivalent to $\comp_\Upsilon$ which finishes the proof.  
\end{proof}

\begin{lem} \label{lem:constrOverFieldIsULA}
Let $k$ be a field of finite \'etale cohomological dimension for $\Lambda$-coefficients and $g: Y \rightarrow \Spec k$ a morphism of finite type. Then for $M$ in $\DA_{\et}(Y, \Lambda)$ the following are equivalent:
\begin{enumerate}
\item $M$ is constructible.
\item $M$ is ULA with respect to $g$.
\end{enumerate}
In particular this holds true if $k$ is the residue field of a point of a strictly local noetherian scheme. 
\end{lem}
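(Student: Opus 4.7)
For the direction (2) $\Rightarrow$ (1), my plan is to invoke Proposition \ref{prop:MULAAndConstrGenImpliesConstr}. Its hypotheses require $\DA_{\et}(Y,\Lambda)$ and $\DA_{\et}(Y \times_k Y,\Lambda)$ to be compactly generated by their constructible objects. Since $k$ has finite \'etale cohomological dimension for $\Lambda$-coefficients, any scheme of finite type over $k$ (in particular $Y$ and $Y \times_k Y$) is of finite \'etale cohomological dimension by \cite[1.1.5]{CisinskiDegliseEtale}. Then \cite[5.2.4]{CisinskiDegliseEtale} supplies the required compact generation, with compact objects agreeing with constructible ones, and Proposition \ref{prop:MULAAndConstrGenImpliesConstr} concludes.

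For the direction (1) $\Rightarrow$ (2), the plan is to apply the generic universal local acyclicity theorem (Proposition \ref{thm:GenericULA}) with base $S = \Spec k$. Its two hypotheses — that $\DA_{\et}(\_,\Lambda)$ satisfies the K\"unneth-formula, and that $\DA_{\et}^{\cons}(\_,\Lambda)$ is \'etale-continuous — hold over finite-dimensional noetherian bases, via Theorem \ref{thm:CompDAvsDM} and the corresponding statements for $h$-motives, as noted in the remark following Proposition \ref{thm:GenericULA}. Proposition \ref{thm:GenericULA} then produces a dense open $U \subset \Spec k$ over which $M|_{Y_U}$ is ULA with respect to $g_U$. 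But $\Spec k$ admits no proper dense open subscheme, so $U = \Spec k$ and $M$ itself is ULA with respect to $g$.

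The final assertion is immediate from \cite[1.1.5]{CisinskiDegliseEtale}: the residue field at any point of a strictly local noetherian scheme is (uniformly) of finite \'etale cohomological dimension, so the main body of the lemma applies verbatim. There is no substantial obstacle in this argument — essentially all the content has already been packaged into Propositions \ref{prop:MULAAndConstrGenImpliesConstr} and \ref{thm:GenericULA}, and the proof amounts to observing that the ``generic'' behaviour over a point is automatically global.
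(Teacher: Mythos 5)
Your direction (2) $\Rightarrow$ (1) and the final assertion coincide with the paper's argument: finite cohomological dimension of $Y$ and $Y\times_k Y$ (the paper reduces to $\Lambda=\Z/n\Z$ as in \cite[1.1.5]{CisinskiDegliseEtale} and cites the Stacks Project for the bound), then \cite[5.2.4]{CisinskiDegliseEtale} for compact generation by constructibles, then Proposition \ref{prop:MULAAndConstrGenImpliesConstr}. For (1) $\Rightarrow$ (2) you take a detour: the paper does \emph{not} invoke Proposition \ref{thm:GenericULA}, but simply combines the K\"unneth formula over a field (\cite[3.1.12]{Cisinski2021CohomologicalMI}) with the characterization of universal local acyclicity in Proposition \ref{thm:EquivCharOfULA} -- which is exactly the first step inside the proof of Proposition \ref{thm:GenericULA}, before any spreading out. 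Your route is logically sound, since over $\Spec k$ the dense open must be everything, but it is heavier than needed: invoking Proposition \ref{thm:GenericULA} formally obliges you to verify \'etale-continuity of $\DA_{\et}^{\cons}(\_,\Lambda)$, a hypothesis that is entirely vacuous here (no spreading out occurs) and whose justification in the remark after that proposition carries a ``good enough ring'' caveat on $\Lambda$, whereas the lemma is stated for an arbitrary ring $\Lambda$ with $k$ of finite \'etale cohomological dimension. The paper's direct argument avoids this caveat; if you keep your phrasing, you should either check the continuity hypothesis for your $\Lambda$ or, better, bypass the generic ULA theorem and cite the K\"unneth formula together with Proposition \ref{thm:EquivCharOfULA} directly.
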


\begin{proof}
We claim that $Y$ and $Y \times_{k} Y$ are of finite $\Lambda$-cohomological dimension. Indeed as in the proof of \cite[1.1.5]{CisinskiDegliseEtale} we may assume that $\Lambda= \Z/n\Z$ for some positive integer $n$.  Since $k$ is of finite $\Lambda$-cohomological dimension it follows from \cite[0F0V]{stacks-project} that $Y$ and $Y \times_{k} Y$ are of finite $\Lambda$-cohomological dimension. Hence $\DA_{\et}(Y, \Lambda)$ and $\DA_{\et}(Y \times_k X, \Lambda)$ are compactly generated by constructible objects by \cite[5.2.4]{CisinskiDegliseEtale}. Now \ref{prop:MULAAndConstrGenImpliesConstr} implies that every $M$ which is ULA with respect to $g$ is already constructible. 

Conversely any constructible $M$ is $g$-ULA: This follows from the fact that the K\"unneth formula holds for \'etale motives over a field (see \cite[3.1.12]{Cisinski2021CohomologicalMI}) and Proposition \ref{thm:EquivCharOfULA}. 

The last sentence of the Lemma follows from the fact that such $k$ have finite \'etale cohomological dimension by \cite[1.1.5]{CisinskiDegliseEtale}.
\end{proof}

\begin{remark}
Let $g: Y \rightarrow \Spec k$ be of finite type where $k$ is a field of finite $\Lambda$-cohomological dimension. Let us write $\D(\_):= \D_{\Spec k}(\_)= \Hom(\_, g^!\1)$. Then by Lemma  \ref{lem:constrOverFieldIsULA} we have an equivalence  
\[
\DA_{\et}^{\cons}(Y, \Lambda) \simeq \DA_{\et}^{\ULA}(Y/ \Spec k)
\]
of subcategories of $\DA_{\et}(Y, \Lambda)$. Hence Proposition \ref{prop:propertiesOfULA} implies that for any $M$ in $\DA_{\et}^{\cons}(Y, \Lambda)$ the canonical map $M \rightarrow \D \D M$ is an equivalence. In particular the functor
\[
\D(\_) : \DA_{\et}^{\cons}(Y, \Lambda)^{\op} \longrightarrow \DA_{\et}^{\cons}(Y, \Lambda)
\]
is an involution (i.e. satisfies $\D \circ \D \simeq \id$). This is sometimes referred to as \textit{Verdier duality}.

Let $h: Z \rightarrow Y$ be a morphism of finite type and $M$ in $\DA_{\et}(Y, \Lambda)$. The canonical map
\begin{equation} \label{eqn:ComparisonIsoShriekandD}
h^!\D M  \longrightarrow \D h^* M
\end{equation}
obtained as the transpose of
\[
h^!\D M  \otimes  h^* M \overset{\theta}\longrightarrow h^!(\D M  \otimes M) \longrightarrow h^! f^! \1
\]
(where $\theta$ is defined as in (\ref{eqn:ConstOfTheta})) is always an equivalence (see for example \cite[2.34 (iv)]{Adeel6Functor}). If $M$ is moreover constructible we get by Verdier duality an equivalence
\begin{equation} \label{eqn:ComparisonIsoShriekandDViaVerdierDuality}
h^* \D M \overset{\sim}\longrightarrow \D \D h^* \D M \overset{\sim}\longrightarrow \D h^! \D  \D M \overset{\sim}\longrightarrow \D h^! M.  
\end{equation}
\end{remark}

\begin{lem} \label{lem:CompAndPullback}
Let $f: X \rightarrow S$ and $g: Y \rightarrow X$ be of finite type and let $h: Y \overset{g}\rightarrow X \overset{f}\rightarrow  S$ denote the composition. For any $M$ in $\DA_{\et}(X_\eta, \Lambda)$  the diagram
\begin{equation} \label{eqn:FirstDiagOfLemCompAndPullback}
\begin{tikzcd}
\chi_h(g_\eta^!\D_\eta(M)) \arrow[r, "\Ex^!"] \arrow[d, "\sim"']     & g_\sigma^! \chi_f(\D_\eta(M)) \arrow[d, "g_\sigma^!\comp_\chi"] \\
\chi_h(\D_\eta(g_\eta^* M)) \arrow[d, "\comp_\chi"']      & {g_\sigma^! \D_\sigma(\chi_f(M))(-1)[-1]} \arrow[d, "\sim"]             \\
{ \D_\sigma(\chi_h(g_\eta^* M))(-1)[-1]} \arrow[r, "\D_\sigma \Ex^*"] & { \D_\sigma(g_\sigma^*\chi_f(M))(-1)[-1]}                      
\end{tikzcd}
\end{equation}
commutes. If  $M$ is moreover assumed to be constructible, then the diagram
\begin{equation} \label{eqn:SecondDiagOfLemCompAndPullback}
\begin{tikzcd}
g_\sigma^* \chi_f(\D_\eta(M)) \arrow[d, "g_\sigma^*\comp_\chi"'] \arrow[r, "\Ex^*"] & \chi_h(g_\eta^*\D_\eta(M)) \arrow[d, "\sim"]                 \\
{g_\sigma^* \D_\sigma(\chi_f(M))(-1)[-1]} \arrow[d, "\sim"']                               & \chi_h(\D_\eta(g_\eta^! M)) \arrow[d, "\comp_\chi"] \\
{ \D_\sigma(g_\sigma^!\chi_f(M))(-1)[-1]} \arrow[r, "\D_\sigma \Ex^!"]                               & { \D_\sigma(\chi_f(g_\eta^! M))(-1)[-1]}            
\end{tikzcd}
\end{equation}
also commutes. 
\end{lem}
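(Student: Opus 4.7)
The plan is to prove the first diagram by unwinding $\comp_\chi$ as a transpose under the $\otimes \dashv \Hom$ adjunction, and to prove the second diagram by the same strategy, using constructibility only to identify $g_\eta^*\D_\eta M \simeq \D_\eta g_\eta^! M$ via \eqref{eqn:ComparisonIsoShriekandDViaVerdierDuality}. Recall from \ref{noname:ComparisonMapsDuality} that $\comp_\chi$ is defined as the transpose of a composition built from the lax-monoidal structure of $\chi$, the map $\chi_f(\id^t)$, the exchange map $\Ex^! : \chi_f f_\eta^! \to f_\sigma^! \chi_{\id}$, and the identification $\chi_{\id}(\1) \simeq i^!\1[1] \simeq \1(-1)[-1]$ coming from the localization sequence and relative purity.

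First I would transpose the first diagram under the $\otimes \dashv \Hom$ adjunction together with the projection-formula equivalence $g_\sigma^! \Hom(A,B) \simeq \Hom(g_\sigma^* A, g_\sigma^! B)$ (property (6) of a motivic $\infty$-category as recalled in \ref{noname:MotivicInftyCat}). Both composites then transpose to morphisms of the form $\chi_h(g_\eta^! \D_\eta M) \otimes g_\sigma^* \chi_f(M) \to h_\sigma^!\1(-1)[-1]$, and the task becomes to verify that these two morphisms agree. Decomposing the transposed diagram into elementary squares, each square is an instance of one of the following standard coherences: the compatibility of the lax-monoidal structure of $\chi$ with the exchange maps $\Ex^*$ and $\Ex^!$ along $g$ (which is part of $\chi$ being a lax-monoidal specialization system); the pasting law for exchange maps recalled in \ref{noname:ExchangeMaps}; the naturality of the equivalence $\chi_{\id}(\1) \simeq \1(-1)[-1]$ under pullback along $g_\sigma$; and triangle-identity manipulations involving units and counits.

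For the second diagram, the constructibility of $M$ is needed only for the vertical equivalence $g_\eta^*\D_\eta M \simeq \D_\eta g_\eta^! M$ provided by \eqref{eqn:ComparisonIsoShriekandDViaVerdierDuality}. The argument is then formally parallel: transposing the diagram under the same adjunctions and decomposing the result into elementary squares reduces its commutativity to the same catalogue of coherences, with the roles of $\Ex^*$ and $\Ex^!$ interchanged wherever they appear. Alternatively, one can obtain the second diagram from the first by applying $\D_\sigma$ termwise, using that $\comp_\chi$ is an equivalence on constructibles (end of \ref{noname:MonodromySequenceAndComparisonDualitySpecSystems}) and that $\D_\sigma g_\sigma^! \simeq g_\sigma^* \D_\sigma$ on constructible objects.

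The hard part is purely combinatorial: the transposed form of the first diagram consists of roughly a dozen elementary squares involving units, counits, projection formulas, base change, and the purity shift $(-1)[-1]$ hidden inside $\comp_\chi$. Keeping the direction of each exchange map and the location of each shift straight is tedious, but no categorical input beyond the pasting law of exchange maps and the standard coherences of the six-functor formalism is required.
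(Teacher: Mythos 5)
Your treatment of the first diagram is fine and follows essentially the paper's own route: transpose $\comp_\chi$ via the $\otimes\dashv\Hom$ adjunction and the identification $g_\sigma^!\Hom(A,B)\simeq\Hom(g_\sigma^*A,g_\sigma^!B)$, then decompose the transposed square into pieces governed by the lax-monoidal/exchange compatibilities of the specialization system $\chi$ (the paper cites Ayoub's compatibility of $\Ex^!$ with the pairing for the one non-obvious square), the construction of \eqref{eqn:ComparisonIsoShriekandD}, and the pasting law for exchange maps.

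For the second diagram, however, there is a genuine gap. The vertical identifications there, $g_\eta^*\D_\eta M\simeq \D_\eta g_\eta^!M$ and $g_\sigma^*\D_\sigma(\chi_f M)\simeq \D_\sigma(g_\sigma^!\chi_f M)$, are not primitive exchange maps: by \eqref{eqn:ComparisonIsoShriekandDViaVerdierDuality} they are composites of the unconditional map \eqref{eqn:ComparisonIsoShriekandD} with the biduality equivalences $N\xrightarrow{\sim}\D\D N$, and this asymmetry is exactly why constructibility enters. Consequently the argument is \emph{not} ``formally parallel with $\Ex^*$ and $\Ex^!$ interchanged'': once you unravel these composites (whether in your transposition route or in your alternative ``apply $\D_\sigma$ termwise'' route), you are forced to know that $\comp_\chi$ intertwines the biduality equivalences, i.e.\ that the square
\[
\begin{tikzcd}
\chi_k(N) \arrow[r, "\sim"] \arrow[d, "\sim"'] & \D_\sigma\D_\sigma\chi_k(N) \arrow[d, "{\D_\sigma(\comp_\chi)(-1)[-1]}"] \\
\chi_k(\D_\eta\D_\eta N) \arrow[r, "\comp_\chi"'] & {\D_\sigma\chi_k(\D_\eta N)(-1)[-1]}
\end{tikzcd}
\]
commutes. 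This is precisely the auxiliary claim \eqref{eqn:PrelimClaimInCompBasechangeLemma} that the paper isolates and proves by a separate transposition argument, and it is what makes the two interface squares between ``$\D_\sigma$ of the first diagram (with $M$ replaced by $\D_\eta M$)'' and the second diagram commute. Your proposal never identifies this ingredient: invoking that $\comp_\chi$ is an equivalence on constructibles (end of \ref{noname:MonodromySequenceAndComparisonDualitySpecSystems}) and that $\D_\sigma g_\sigma^!\simeq g_\sigma^*\D_\sigma$ lets you match the corners of the two diagrams object-wise, but it does not show that the maps agree under these identifications, which is the whole content of the statement. To repair the proof you should state and prove this self-duality compatibility of $\comp_\chi$ (and check the resulting two pasting squares), or else carry out the direct transposition of the second diagram while explicitly expanding \eqref{eqn:ComparisonIsoShriekandDViaVerdierDuality} into its biduality constituents --- either way the extra coherence above is unavoidable.
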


\begin{proof} It is straightforward to check that the composition
\[
\chi_h(g_\eta^!\D_\eta(M)) \overset{\Ex^!}\longrightarrow g_\sigma^! \chi_f(\D_\eta(M)) \overset{g_\sigma^! \comp_\chi}\longrightarrow g_\sigma^! \D_\sigma(\chi_f(M))(-1)[-1] \overset{\sim}\longrightarrow \D_\sigma(g_\sigma^*\chi_f(M))(-1)[-1]
\]
is the transpose of the composition
\begin{align*}
\chi_h(g_\eta^!\D_\eta(M)) \otimes g_\sigma^* \chi_f(M) &\overset{\Ex^! \otimes \id}\longrightarrow g_\sigma^!\chi_f(\D_\eta(M)) \otimes g_\sigma^* \chi_f(M) \\
&\overset{\theta}\longrightarrow g_\sigma^!(\chi_f(\D_\eta(M) \otimes \chi_f(M)) \\
&\longrightarrow h_\sigma^! \1 (-1)[-1].
\end{align*}
Here the last map is the composition
\begin{align*}
g_\sigma^!(\chi_f(\D_\eta(M) \otimes \chi_f(M)) &\longrightarrow g_\sigma^!(\chi_f(\D_\eta(M) \otimes M)) \\
&\overset{g_\sigma^!(\chi_f \id^t)}
\longrightarrow g_\sigma^!(\chi_f(f_\eta^! \1))\\
&\overset{g_\sigma^!\Ex^!}\longrightarrow g_\sigma^!f_\sigma^! \chi_{\id}(\1) \\
&\longrightarrow h_\sigma^! \1(-1)[-1].
\end{align*}
Hence in order to show that (\ref{eqn:FirstDiagOfLemCompAndPullback}) commutes it suffices to show that its transposed diagram
\begin{equation} \label{eqn:FirstDiagOfLemCompAndPullbackTransposed}
\begin{tikzcd}
\chi_h(g_\eta^!\D_\eta(M)) \otimes g_\sigma^* \chi_f(M) \arrow[r, "\Ex^! \otimes \id"] \arrow[d, "\sim"] & g_\sigma^!\chi_f(\D_\eta(M)) \otimes g_\sigma^* \chi_f(M) \arrow[d, "\theta"] \\
\chi_h(\D_\eta(g_\eta^*M)) \otimes g_\sigma^* \chi_f(M) \arrow[d, "\id \otimes \Ex^*"]                   & g_\sigma^!(\chi_f(\D_\eta(M) \otimes \chi_f(M)) \arrow[d]           \\
\chi_h(\D_\eta(g_\eta^*M)) \otimes \chi_h(g_\eta^* M) \arrow[r]                                        & h_\sigma^! \1(-1)[-1]                                                      
\end{tikzcd}
\end{equation}
commutes. Note that
\[
\begin{tikzcd}
\chi_h(g_\eta^!\D_\eta(M)) \otimes g_\sigma^* \chi_f(M) \arrow[d, "\sim"] \arrow[r, "\id \otimes \Ex^*"]             & \chi_h(g_\eta^!\D_\eta(M)) \otimes  \chi_f(g_\eta^*M) \arrow[r]     & \chi_h(g_\eta^!\D_\eta(M) \otimes g_\eta^* M) \arrow[d, "\chi_h(\theta)"] \\
\chi_h(\D_\eta(g_\eta^*M)) \otimes g_\sigma^* \chi_f(M) \arrow[d, "\id \otimes \Ex^*"]            & {} & \chi_h(g_\eta^!(\D_\eta(M)\otimes M)) \arrow[d]           \\
\chi_h(\D_\eta(g_\eta^*M)) \otimes \chi_h(g_\eta^* M) \arrow[r] \arrow[rruu, "(1)" description, phantom] & \chi_h(\D_\eta(g_\eta^*M) \otimes g_\eta^* M) \arrow[ruu, "\sim"]  \arrow[ru, "(2)" description, phantom]                                    \arrow[r] & \chi_h(h^! \1)                                            
\end{tikzcd}
\]
commutes. Indeed (1) follows straight from functoriality of the lax-monoidal structure of $\chi_h$ and (2) commutes by construction of $(\ref{eqn:ComparisonIsoShriekandD})$. Hence in order to show that (\ref{eqn:FirstDiagOfLemCompAndPullbackTransposed}) commutes it suffices to show that 
\[
\begin{tikzcd}
\chi_h(g_\eta^!\D_\eta(M)) \otimes g_\sigma^* \chi_f(M) \arrow[d, "\sim"] \arrow[rr, "\Ex^! \otimes \id"]    &                                                                    & g_\sigma^!\chi_f(\D_\eta(M)) \otimes g_\sigma^* \chi_f(M) \arrow[d, "\theta"] \\
\chi_h(g_\eta^!\D_\eta(M)) \otimes \chi_h(g_\eta^* M) \arrow[dd]                                           &                                                                    & g_\sigma^!(\chi_f(\D_\eta(M) \otimes \chi_f(M)) \arrow[d]                     \\
                                                                                                           &                                                                    & g_\sigma^!\chi_h(\D_\eta(M) \otimes M) \arrow[d]                              \\
\chi_h(g_\eta^!\D_\eta(M) \otimes g_\eta^* M) \arrow[r, "\chi_h(\theta)"] \arrow[rruuu, "(3)" description, phantom] & \chi_h(g_\eta^!(\D_\eta(M) \otimes M)) \arrow[ru, "\Ex^!"] \arrow[r] & f_\sigma^! \1(-1)[-1]                                                                
\end{tikzcd}
\]
commutes. The commutativity of (3) is \cite[3.1.15]{AyoubThesisII} and the triangle is obvious.

Since $S$ is excellent and all schemes considered are of finite type over $S$ all functors preserve constructibility by Theorem \ref{thm:SixFunctorsPresConstr}. Assume that $M$ is constructible. We want to deduce the commutativity of (\ref{eqn:SecondDiagOfLemCompAndPullback}) from the commutativity of  (\ref{eqn:FirstDiagOfLemCompAndPullback}) using the involutions $\D_\sigma(\_)$ and $\D_\eta(\_)$. First we want to show that for every $N$ in $\DA_{\et}(X_\eta, \Lambda)$ and $k: Z \rightarrow S$ of finite type the square
\begin{equation} \label{eqn:PrelimClaimInCompBasechangeLemma}
\begin{tikzcd}
\chi_k(N) \arrow[r, "\sim"] \arrow[d, "\sim"']    & \D_\sigma \D_\sigma\chi_k(N) \arrow[d, "{\D_\sigma(\comp_\chi)(-1)[-1]}"] \\
\chi_k(\D_\eta \D_\eta N) \arrow[r, "\comp_\chi"] & {\D_\sigma \chi_k(\D_\eta N)(-1)[-1]}                                    
\end{tikzcd}
\end{equation}
commutes. The outer diagram of
\[
\begin{tikzcd}
\chi_k(N) \otimes \chi_k(\D_\eta N ) \arrow[dd, "\sim"] \arrow[rd] \arrow[rr,"\id \otimes \comp_\chi"] &                                                                                         & \chi_k(N) \otimes \D_\eta\chi_k(N)(-1)[-1] \arrow[dd,"(\id_{\D_\eta\chi_k(N)(-1){[-1]}})^t"] \\
{}                                                                            & \chi_k( N \otimes \D_\eta N) \arrow[d, "\sim"] \arrow[""'{name=foo}]{rd} \arrow[ru, "(4)" description, phantom] &                                               \\
\chi_k(\D_\eta \D_\eta N) \otimes \chi_k(\D_\eta N) \arrow[r]                 & \chi_k(\D_\eta \D_\eta N \otimes \D_\eta N) \arrow[r] \arrow[lu, "(5)" description, phantom] \arrow[rightarrow, to=foo, phantom, "(6)" description]     & {k^!\1(-1){[-1]}}                              
\end{tikzcd}
\]
is the transpose of (\ref{eqn:PrelimClaimInCompBasechangeLemma}). (4) commutes since both compositions are by construction transposed to
\[
\comp_\chi: \chi_k(\D_\eta N) \longrightarrow \D_\sigma \chi_k(N)(-1)[-1].
\]
Clearly (5) commutes. Finally in order to show that (6) commutes it suffices to show that 
\[
\begin{tikzcd}
N \otimes \D_\eta N \arrow[d, "\sim"'] \arrow[rrd, "(\id_{\D_\eta N})^t"]      &  &    \\
\D_\eta \D_\eta N \otimes \D_\eta N \arrow[rr, "(\id_{\D_\eta \D_\eta N})^t"'] &  & k^!\1
\end{tikzcd}
\]
commutes. This is true since both compositions are transposed to the canonical map $N \overset{\sim}\rightarrow \D_\eta \D_\eta N.$

We obtain a commutative diagram
\begin{equation} \label{eqn:DsigmaOfFirstDiagOfLemCompAndPullback}
\begin{tikzcd}
\D_\sigma \D_\sigma g_\sigma^* \chi_f(\D_\eta M) \arrow[r, "\D_\sigma \D_\sigma \Ex^*"] \arrow[d, "\sim"'] & \D_\sigma \D_\sigma \chi_f(g_\eta^*\D_\eta M) \arrow[d, "\D_\sigma \comp_\chi"] \\
\D_\sigma g_\sigma^! \D_\sigma \chi_f(\D_\eta M) \arrow[d, "\D_\sigma g_\sigma^! \comp_\chi"']           & {\D_\sigma \chi_f(\D_\eta g_\eta^*\D_\eta M)(-1)[-1]} \arrow[d, "\sim"]          \\
{\D_\sigma g_\sigma^! \chi_f(\D_\eta\D_\eta M)(-1)[-1]} \arrow[r, "\D_\sigma \Ex^!"]                       & {\D_\sigma \chi_f( g_\eta^! \D_\eta\D_\eta M)(-1)[-1]}                          
\end{tikzcd}
\end{equation}
by replacing $M$ with $\D_\eta M$ in (\ref{eqn:FirstDiagOfLemCompAndPullback}) an applying $\D_\sigma(\_)(-1)[-1]. $ Hence it suffices to show that 
\[
\begin{tikzcd}
g_\sigma^* \chi_f(\D_\eta M) \arrow[d, "g_\sigma^* \comp_\chi"'] \arrow[r, "\sim"] & \D_\sigma \D_\sigma g_\sigma^* \chi_f(\D_\eta M) \arrow[d, "\sim"] \arrow[ldd, "(7)" description, phantom] \\
{g_\sigma^* \D_\sigma \chi_f(M)(-1)[-1]} \arrow[d, "\sim"']                        & \D_\sigma g_\sigma^! \D_\sigma \chi_f(\D_\eta M) \arrow[d, "\D_\sigma g_\sigma^! \comp_\chi"]     \\
{\D_\sigma g_\sigma^! \chi_f( M)(-1)[-1]} \arrow[r, "\sim"]                        & {\D_\sigma g_\sigma^! \chi_f(\D_\eta\D_\eta M)(-1)[-1]}                                          
\end{tikzcd}
\]
and
\[
\begin{tikzcd}
\D_\sigma \D_\sigma \chi_h(g_\eta^*\D_\eta M) \arrow[d, "\D_\sigma(\comp_\chi)"'] &  \chi_h(g_\eta^*\D_\eta M) \arrow[l, "\sim"'] \arrow[d, "\sim"] \arrow[ldd, "(8)" description, phantom] \\
{\D_\sigma \chi_h(\D_\eta g_\eta^*\D_\eta M)(-1)[-1]} \arrow[d, "\sim"']          &  \chi_h(\D_\eta g_\eta^! M) \arrow[d, "\comp_\chi"]                                            \\
{\D_\sigma \chi_h( g_\eta^! \D_\eta\D_\eta M)(-1)[-1]}                            & {\D_\sigma \chi_f( g_\eta^!  M)(-1)[-1]} \arrow[l, "\sim
"]                                            
\end{tikzcd}
\]
commute. Then (\ref{eqn:SecondDiagOfLemCompAndPullback}) is the outer square of
\[
\begin{tikzcd}
\bullet \arrow[d] \arrow[r] & \bullet \arrow[d] \arrow[rr] \arrow[ldd, "(7)" description, phantom] &  & \bullet \arrow[d] \arrow[lldd, "(\ref{eqn:DsigmaOfFirstDiagOfLemCompAndPullback})" description, phantom] & \bullet \arrow[d] \arrow[ldd, "(8)" description, phantom] \arrow[l] \\
\bullet \arrow[d]           & \bullet \arrow[d]                                           &  & \bullet \arrow[d]                                      & \bullet \arrow[d]                                          \\
\bullet \arrow[r]           & \bullet \arrow[rr]                                          &  & \bullet                                                & \bullet \arrow[l]                                         
\end{tikzcd}
\]
and therefore commutes.

The commutativity of (7) follows from the commutativity of
\[
\begin{tikzcd}
                                                                                      &                                                                                                                                             & \D_\sigma\D_\sigma g_\sigma^* \chi_f(\D_\eta M) \arrow[d, "\sim"]                            \\
g_\sigma^* \chi_f(\D_\eta M) \arrow[r, "\sim"] \arrow[d, "\sim"'] \arrow[rru, "\sim"] & g_\sigma^* \D_\sigma\D_\sigma\chi_f(\D_\eta M) \arrow[d, "g_\sigma^* \D_\sigma \comp_\chi"] \arrow[r, "\sim"] \arrow[ld, "(9)" description, phantom] &  \D_\sigma g_\sigma^!\D_\sigma\chi_f(\D_\eta M) \arrow[d, "\D_\sigma g_\sigma^! \comp_\chi"] \\
g_\sigma^* \chi_f(\D_\eta \D_\eta \D_\eta M) \arrow[r, "g_\sigma^* \comp_\chi"]       & g_\sigma^* \D_\sigma\chi_f(\D_\eta\D_\eta M)(-1)[-1] \arrow[r, "\sim"]                                                                              & \D_\sigma g_\sigma^! \chi_f(\D_\eta\D_\eta M)(-1)[-1].                                               
\end{tikzcd}
\]
Indeed commutativity of (9) is (\ref{eqn:PrelimClaimInCompBasechangeLemma}), the rest is obvious and one checks easily that the composition
\[
\begin{tikzcd}
g_\sigma^* \chi_f(\D_\eta M) \arrow[d, "\sim"']                                 &                                                                &                                               \\
g_\sigma^* \chi_f(\D_\eta \D_\eta \D_\eta M) \arrow[r, "g_\sigma^* \comp_\chi"] & g_\sigma^* \D_\sigma\chi_f(\D_\eta\D_\eta M)(-1)[-1] \arrow[r, "\sim"] & \D_\sigma g_\sigma^! \chi_f(\D_\eta\D_\eta M)(-1)[-1]
\end{tikzcd}
\]
is equivalent to
\[
\begin{tikzcd}
g_\sigma^* \chi_f(\D_\eta M) \arrow[d, "g_\sigma^* \comp_\chi"'] &                                                         \\
{g_\sigma^* \D_\sigma \chi_f(M)(-1)[-1]} \arrow[d, "\sim"']      &                                                         \\
{\D_\sigma g_\sigma^! \chi_f( M)(-1)[-1]} \arrow[r, "\sim"]      & {\D_\sigma g_\sigma^! \chi_f(\D_\eta\D_\eta M)(-1)[-1].}
\end{tikzcd}
\]

Finally commutativity of (8) follows from the commutativity of 
\[
\begin{tikzcd}
\D_\sigma \D_\sigma \chi_h(g_\eta^*\D_\eta M) \arrow[d, "\D_\sigma(\comp_\chi)"'] &  \chi_h(g_\eta^*\D_\eta M) \arrow[l, "\sim"'] \arrow[d, "\sim"] \arrow[ld, "(10)" description, phantom] \\
{\D_\sigma \chi_h(\D_\eta g_\eta^*\D_\eta M)(-1)[-1]} \arrow[d, "\sim"']          &  \chi_h( \D_\eta \D_\eta g_\eta^*\D_\eta M) \arrow[d, "\sim"] \arrow[l, "\comp_\chi"']          \\
{\D_\sigma \chi_h( g_\eta^! \D_\eta\D_\eta M)(-1)[-1]}                            &  \chi_h( \D_\eta \D_\eta \D_\eta g_\eta^* M). \arrow[l, "\comp_\chi"']                          
\end{tikzcd}
\]
Here (10) commutes again by (\ref{eqn:PrelimClaimInCompBasechangeLemma}) and one checks that the compositions 
\[
\begin{tikzcd}
                                                       &  \chi_h(g_\eta^*\D_\eta M) \arrow[d, "\sim"]                         \\
                                                       &  \chi_h( \D_\eta \D_\eta g_\eta^*\D_\eta M) \arrow[d, "\sim"]        \\
{\D_\sigma \chi_h( g_\eta^! \D_\eta\D_\eta M)(-1)[-1]} &  \chi_h( \D_\eta \D_\eta \D_\eta g_\eta^* M) \arrow[l, "\comp_\chi"]
\end{tikzcd}
\]
and
\[
\begin{tikzcd}
                                                       &  \chi_h(g_\eta^*\D_\eta M) \arrow[d, "\sim"]        \\
                                                       &  \chi_h(\D_\eta g_\eta^! M) \arrow[d, "\comp_\chi"] \\
{\D_\sigma \chi_h( g_\eta^! \D_\eta\D_\eta M)(-1)[-1]} & {\D_\sigma \chi_f( g_\eta^!  M)(-1)[-1]} \arrow[l, "\sim"] 
\end{tikzcd}
\]
are equivalent.

\end{proof}

\begin{remark} \label{rem:BigRemBeforeBigProp}
\begin{enumerate}
\item Let us abbreviate $\DA_{\et}(\_ ,\Lambda)$ by $\DA(\_)$ and denote the restriction of $\DA(\_)$ to $\Sch^{\qcqs}_{/ \sigma}$ and $\Sch^{\qcqs}_{/ \eta}$ by $\DA|_\sigma(\_)$ and $\DA|_\eta(\_)$ respectively. We write $\C_{S, \DA_\eta}$ and $\C_{S, \DA_\sigma}$ for the categories obtained by applying Construction \ref{noname:ConstrOfCS} to 
\[
\T(\_)= \DA_\eta(\_): \Sch^{\qcqs}_{/S} \overset{\_ \times_S \eta }\longrightarrow \Sch^{\qcqs}_{/ \eta} \overset{\DA|_\eta}\longrightarrow \Pr^{L,\otimes, \st}
\]
and 
\[
\T(\_)= \DA_\sigma(\_): \Sch^{\qcqs}_{/S} \overset{\_ \times_S \sigma }\longrightarrow \Sch^{\qcqs}_{/ \sigma} \overset{\DA|_\sigma}\longrightarrow \Pr^{L, \otimes, \st}
\]
respectively. This means for example that an object in $\C_{S, \DA_\eta}$ is a pair $(f:X \rightarrow S, M)$, where $f$ is of finite type over $S$ and $M$ is a object of $\DA_{\et}(X_\eta, \Lambda)$.

For any specialization system $\sp$ over $(S, \eta, \sigma)$ the functoriality results in \cite[3.1]{AyoubThesisII} imply that there is a well defined functor $\sp: \C_{S, \DA_\eta} \rightarrow \C_{S, \DA_\sigma}$ of bicategories given as follows:  A morphism $(C, \alpha): (f:X \rightarrow S,M) \rightarrow (g: Y \rightarrow S,N)$ is sent to $(C, \sp (\alpha)): (f: X \rightarrow S, \sp_f(M)) \rightarrow (g: Y\rightarrow S, \sp_g(N))$ where $\sp (\alpha)$ denotes the composition
\[
\overleftarrow{c_\sigma}^* \sp_f(M) \overset{\Ex^*}\longrightarrow  \sp_{f \circ \overleftarrow{c}} (\overleftarrow{c_\eta}^*M) \overset{\sp_{f \circ \overleftarrow{c}} \alpha}\longrightarrow \sp_{g \circ \overrightarrow{c}} (\overrightarrow{c_\eta }^! N) \overset{\Ex^!}\longrightarrow \overrightarrow{c_\sigma}^!\sp_g (N).
\]

\item Assume that the residue characteristic of $S$ is invertible in $\Lambda$. The equivalences
\[
\1 \overset{\sim}\longrightarrow \Psi_{\id}(\1)
\]
and 
\[
\Psi_f(M) \boxtimes \Psi_g(N) \overset{\sim}\longrightarrow \Psi_{f\times g}(M\boxtimes N)
\]
for any two objects $(f:X \rightarrow S,M), (g: Y \rightarrow S,N)$ in $\C_{S,\DA_{\eta}}$ (see Theorem \ref{thm:PropertiesOfEtaleMotivicNearby}) imply that the specialization system $\Psi$ gives rise to a symmetric monoidal functor $\Psi: \C_{S, \DA_\eta} \rightarrow \C_{S,\DA_\sigma}$. In particular this implies that the canonical map
\[
\Psi_f(\D_\eta(M)) \rightarrow \D_\sigma(\Psi_f(M))
\]
is an equivalence for any dualizable object $(X,M)$ of $\C_{S,\DA_\eta}$. Note that this observation combined with Lemma \ref{lem:constrOverFieldIsULA} gives an alternative proof of Theorem \ref{thm:PropertiesOfEtaleMotivicNearby}(5). 

\item Let $k: W \rightarrow S$ be either $j: \eta \rightarrow S$ or $i:\sigma \rightarrow S$. Consider the functor
\[
\DA_W(\_ ): \Sch^{\qcqs}_{/S} \overset{\_ \times_S W}\longrightarrow \Sch^{\qcqs}_{/W} \overset{\DA|_{W}}\longrightarrow \Pr^{L, \st, \otimes}
\]
and let $\C_{S, \DA_W}$ be the category obtained by applying the Construction \ref{noname:ConstrOfCS} to $\T(\_)=\DA_W(\_)$. Then we can define a functor of bicategories
\[
k^*: \C_{S, \DA} \rightarrow \C_{S, \DA_W}
\]
by sending a morphism $(C, \alpha): (X,M) \rightarrow (Y,N)$ to $(C, k^*\alpha): (X, k^*M) \rightarrow (Y, k^* N)$ where $k^* \alpha$ denotes (by slight abuse of notation) the composition
\[
\overleftarrow{c_W}^* k^* M \simeq k^* \overleftarrow{c}^*M \overset{k^* \alpha}\longrightarrow k^* \overrightarrow{c}^! N \overset{\Ex}\longrightarrow \overrightarrow{c_W}^!k^*  N.
\]

Consider two morphisms $(C,\alpha),(D, \beta): (X, M) \rightarrow (Y,N)$ and a 2-cell $(\Theta,h): (C,\alpha) \rightarrow (D, \beta)$ given by a proper morphism $h: C \rightarrow D$. Then the proper morphism $h_W: C_W \rightarrow D_W$ obtained by base change along $k$ gives rise to a 2-cell $(k^*\Theta,h): (C,k^*\alpha) \rightarrow (D, k^*\beta)$. It is straightforward to check that this gives a well defined functor. The equivalences 
\[
(X\times_S Y, k^*(M \boxtimes N)) \rightarrow (X \times_S Y, k^*M \boxtimes k^*N)
\]
and 
\[
(S , k^* \1) \rightarrow (S, \1)
\]
are natural in $(X,M)$ and $(Y,N)$ and equip $k^*: \C_{S, \DA} \rightarrow \C_{S, \DA_W}$ with the structure of a symmetric monoidal functor.

The functor $k^\diamondsuit: \C_{S, \DA} \rightarrow \C_{W, \DA|_W}$ as defined in \ref{noname:FuntcorOnC_SIndByAnyMap} is not to be confused with $k^*$ above. Note though that for $f:X \rightarrow S$ of finite type and $M$ in $\DA_{\et}(X_W, \Lambda)$ the object $(X_W, M)$ is strongly dualizable in $\C_{W, \DA|_W}$ if and only if $(X, M)$ is strongly dualizable in $\C_{S, \DA_W}$.

\item It is straightforward to check that the natural transformations
\[
i^* \longrightarrow  \chi_f j^* \longrightarrow \Upsilon_f j^* \longrightarrow \Psi_f j^*
\]
for all morphisms of schemes $f: X \rightarrow S$ give rise to natural transformations
\[
i^* \longrightarrow  \chi j^* \longrightarrow \Upsilon j^* \longrightarrow \Psi j^*
\]
in $\Fun(\C_{S, \DA}, \C_{S, \DA_{\sigma}})$. Moreover the composition $\alpha: i^* \rightarrow \Psi j^*$ is a symmetric monoidal natural transformation. 
\item For all $X$ in $\Sch^{\qcqs}_{/S}$ the functors
\[
i_* : \DA_{\et}(X_\sigma, \Lambda) \rightarrow \DA_{\et}(X, \Lambda)
\]
and
\[
j_* : \DA_{\et}(X_\eta, \Lambda) \rightarrow \DA_{\et}(X, \Lambda)
\]
are fully faithful, admit left exact left adjoints $i^*$ and $j^*$ which are jointly conservative and $j^*i_* \simeq 0$ (see \cite[ $\S$ 2]{Adeel6Functor}). Hence $\DA_{\et}(X, \Lambda)$ is a recollement of $\DA_{\et}(X_\sigma, \Lambda)$ and $\DA_{\et}(X_\eta, \Lambda)$ in the sense of \cite[A.8.1]{lurie2016higher}. By \cite[A.8.11]{lurie2016higher} there exists a left exact correspondence $p: \mathcal{M} \rightarrow \Delta^1$ such that
$\DA(X)$ is equivalent to $\Fun_{\Delta^1}(\Delta^1, \mathcal{M})$. Unravelling the construction of $p: \mathcal{M} \rightarrow \Delta^1$ we can observe that an object of $\Fun_{\Delta^1}(\Delta^1, \mathcal{M})$ is a triple
\[
(F_\sigma, F_\eta, \varphi: F_\sigma \rightarrow i^*j_* F_\eta),
\]
where $F_\sigma$ is in $\DA_{\et}(X_\sigma, \Lambda)$ and $F_\eta$ is in $\DA_{\et}(X_\eta, \Lambda)$. The datum of a morphism $\Fun_{\Delta^1}(\Delta^1, \mathcal{M})$ is precisely the datum of a morphism $a_\sigma: \F_\sigma \rightarrow G_\sigma$ in $\DA_{\et}(X_\sigma, \Lambda)$ and a morphism $a_ \eta: F_\eta \rightarrow G_\eta$ in $\DA_{\et}(X_\eta, \Lambda)$ such that the diagram
\[
\begin{tikzcd}
F_\sigma \arrow[r] \arrow[d, "a_\sigma"] & i^*j_*F_\eta \arrow[d, "i^*j_* a_\eta"] \\
G_\sigma \arrow[r]           & i^*j_* G_\eta,         
\end{tikzcd}
\]
in $\DA_{\et}(X_\sigma, \Lambda)$ commutes. In particular the pair  
\begin{align*}
&i^*: \DA_{\et}(X, \Lambda) \rightarrow \DA_{\et}(X_\sigma, \Lambda), \\
&j^*: \DA_{\et}(X, \Lambda) \rightarrow \DA_{\et}(X_\eta, \Lambda)
\end{align*}
jointly detects identities. This implies that the pair of symmetric monoidal functors
\begin{align*}
&i^*: \C_{S, \DA} \rightarrow \C_{S, \DA_\sigma}, \\
&j^*: \C_{S, \DA} \rightarrow \C_{S, \DA_\eta} 
\end{align*}
jointly detect identities. In other words, map $(C, \alpha): (X,M) \rightarrow (X,M)$ is an identity in $\C_{S, \DA}$ if and only if $i^*(C, \alpha)$ as well as $j^*(C, \alpha)$ are identities of $(X, i^*M)$ in $\C_{S, \DA_\sigma}$ and $(X, j^*M)$ in $\C_{S, \DA_\eta}$ respectively. 
\end{enumerate}
\end{remark}

\begin{prop}\label{prop:InQsetupCanIsoImplULA}
Let $f: X \rightarrow S$ be a morphism of finite type and $M$ in $\DA_{\et}^{\cons}(X, \Lambda)$. Assume that $\Lambda$ is a $\Q$-algebra. If the canonical map
\[
\can_M: i^*M \longrightarrow \Psi_f(j^*M)
\]
is an equivalence, then $M$ is $f-$ULA. 
\end{prop}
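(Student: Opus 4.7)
The plan is to combine the characterization of ULA in Proposition \ref{prop:equivcharofULA}(3) with a recollement argument on $X\times_S X$. By that proposition, $M$ being $f$-ULA is equivalent to the canonical map
\[
\theta_M : p_1^*\Dual_S(M) \otimes p_2^*M \longrightarrow \Hom(p_1^*M, p_2^!M)
\]
being an equivalence in $\DA_{\et}(X \times_S X, \Lambda)$. Since $(X \times_S X)_\eta = X_\eta \times_\eta X_\eta$ and $(X \times_S X)_\sigma = X_\sigma \times_\sigma X_\sigma$, the localization sequence on $X\times_S X$ provides a jointly conservative pair $(i^*, j^*)$, so I would verify $j^*\theta_M$ and $i^*\theta_M$ separately.

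For $j^*\theta_M$, since $S$ is strictly henselian the generic point $\eta = \Spec K$ has finite $\Lambda$-cohomological dimension by \ref{noname:DAoverDVRconstGen}; consequently $j^*M$, being constructible, is $f_\eta$-ULA by Lemma \ref{lem:constrOverFieldIsULA}. As $j$ is a smooth open immersion, $j^*$ commutes with $\Dual_S$ (so that $j^*\Dual_S M \simeq \Dual_\eta j^*M$) and with $\Hom$ on constructible arguments, so $j^*\theta_M$ matches $\theta_{j^*M}$, which is an equivalence by Proposition \ref{prop:equivcharofULA} applied in the $\eta$-setting.

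The main work is to verify that $i^*\theta_M$ is an equivalence. The hypothesis propagates through Lemma \ref{lem:Beta_MequivImpliesBeta_DMequiv} to give $\alpha_{\Dual_S M}$ equivalence, and then through Lemma \ref{lem:a_MisoImpliesa_MxDMiso} combined with the K\"unneth formula Theorem \ref{thm:PropertiesOfEtaleMotivicNearby}(4) to yield an equivalence
\[
i^*(\Dual_S M \boxtimes M) \overset{\sim}\longrightarrow \Psi_{f \times f}(\Dual_\eta j^*M \boxtimes j^*M) \simeq \Psi_f(\Dual_\eta j^*M) \boxtimes \Psi_f(j^*M).
\]
Combining this with the comparison $\comp_\Psi : \Psi_f(\Dual_\eta j^*M) \simeq \Dual_\sigma \Psi_f(j^*M)$ (Theorem \ref{thm:PropertiesOfEtaleMotivicNearby}(5)) and the given equivalence $\alpha_M : i^*M \simeq \Psi_f(j^*M)$, I would identify the source of $i^*\theta_M$ with $p_1^*\Dual_\sigma(i^*M) \otimes p_2^*(i^*M)$. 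An analogous chain of equivalences, exploiting that $\Psi$ is a symmetric monoidal functor $\C_{S,\DA_\eta} \to \C_{S,\DA_\sigma}$ on dualizable objects (Remark \ref{rem:BigRemBeforeBigProp}(2)) so that the internal $\Hom$ of Lemma \ref{lem: C_STAdmitsInternalHom} is preserved, identifies the target $i^*\Hom(p_1^*M, p_2^!M)$ with $\Hom(p_1^*i^*M, p_2^!i^*M)$. Under these identifications $i^*\theta_M$ becomes $\theta_{i^*M}$, which is an equivalence because $i^*M$ is constructible over the residue field $\sigma$ (of finite $\Lambda$-cohomological dimension by \ref{noname:DAoverDVRconstGen}), hence $f_\sigma$-ULA by Lemma \ref{lem:constrOverFieldIsULA}.

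The hard part will be the coherence step in the previous paragraph: verifying that, under the combined equivalences built from $\alpha_{\Dual_S M}$, $\comp_\Psi$, the K\"unneth isomorphism for $\Psi$, and $\alpha_M$, the map $i^*\theta_M$ really corresponds to $\theta_{i^*M}$ as a morphism, and not merely that the two endpoints agree. This reduces to the commutativity of a diagram of exchange maps and monoidal constraints for the specialization systems $\chi, \Upsilon, \Psi$ together with the duality comparison maps; Lemma \ref{lem:CompAndPullback}, combined with the lax-monoidality of the morphisms of specialization systems (Remark \ref{remark:AllNearbyCyclesCoincWAyoubAndAreSpecSystems}(3)) and the symmetric monoidal naturality of $\alpha: i^* \to \Psi \circ j^*$ (Remark \ref{rem:BigRemBeforeBigProp}(4)), should provide the necessary coherence inputs to carry this through.
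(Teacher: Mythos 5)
Your overall strategy---reduce to checking after the jointly conservative pair $(i^*, j^*)$ on $X\times_S X$---is reasonable, and the $j^*$-step is correct: since $j$ is an open immersion one has $j^*\simeq j^!$, so $j^*$ commutes with $\Hom$, $p_2^!$ and Verdier duality, whence $j^*\theta_M \simeq \theta_{j^*M}$, which is an equivalence because $j^*M$ is $f_\eta$-ULA by Lemma \ref{lem:constrOverFieldIsULA}.

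The $i^*$-step, however, contains a genuine circularity. You propose to identify the target $i^*\Hom(p_1^*M, p_2^!M)$ with $\Hom(p_1^*i^*M, p_2^!i^*M)$ "exploiting that $\Psi$ is a symmetric monoidal functor \dots so that the internal $\Hom$ \dots is preserved." But $i$ is a closed immersion, so $i^*\neq i^!$: there is no equivalence $i^*\Hom(p_1^*M, p_2^!M)\simeq\Hom(p_1^*i^*M, p_2^!i^*M)$ in general. What one does have is the canonical map supplied by the lax-monoidal structure of $i^*\colon \C_{S,\DA}\to\C_{S,\DA_\sigma}$,
\[
i^*\Hom_{\C_{S,\DA}}((X,M),(X,M)) \longrightarrow \Hom_{\C_{S,\DA_\sigma}}\bigl((X,i^*M),(X,i^*M)\bigr),
\]
but this map is an equivalence precisely when $(X,M)$ is dualizable in $\C_{S,\DA}$, i.e.\ precisely when $M$ is $f$-ULA --- which is the conclusion you want, not a hypothesis. (Equivalently: the route through $\Psi$ requires knowing that $\can_{\Hom(p_1^*M,p_2^!M)}$ is an equivalence, which is not among the cases $\can_M$, $\can_{\D_S M}$, $\can_{M\boxtimes\D_S M}$ you have produced, and would again amount to $M$ being ULA.) The "coherence step" you flag as the hard part is therefore not the real obstruction; the equivalence of objects you would be coherently matching does not yet exist.

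The paper sidesteps this by \emph{not} working with the criterion $\theta_M$ at all. Instead it constructs a candidate duality datum $(\Delta,\eta_M)$, $(\Delta,\varepsilon_M)$ for $(X,M)$ in $\C_{S,\DA}$ directly: $\varepsilon_M$ is the canonical counit, and $\eta_M^t\colon \Delta_*f^*\1\to M\boxtimes\D_S(M)$ is glued from $\eta_j^t$ (the unit from $j^*M$ being $f_\eta$-ULA) and $\eta_i^t$ (built from $\alpha_{M\boxtimes\D_S(M)}$ being an equivalence, Lemma \ref{lem:a_MisoImpliesa_MxDMiso}) via the recollement description of $\DA_{\et}(X\times_S X,\Lambda)$. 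Because duality data are detected by any family of strict monoidal functors that jointly detect identities (Corollary \ref{cor:2.ConservativeFamilyOfFunctorsDetectsDualizability}), and $i^*,j^*$ jointly detect identities on $\C_{S,\DA}$ by the recollement structure (Remark \ref{rem:BigRemBeforeBigProp}(5)), it then suffices to check the triangle identities \emph{after} $i^*$ and $j^*$. These are assertions about endomorphisms being equivalent to the identity --- nowhere does one need to compute $i^*$ of an internal Hom. The delicate part of the paper's proof is then the commutativity of the gluing square, which is where Lemmas \ref{lem:DChi=ChiDCompatibleWithSplitting} and \ref{lem:CompAndPullback} and the monodromy splitting enter; you would need some analogue of this work in any case. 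I would recommend reformulating your $i^*$-step in terms of a glued duality datum and triangle identities rather than the internal-Hom criterion.
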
 

\begin{proof} We want to prove this by constructing an explicit duality datum for the object $(X,M):=(f:X \rightarrow S,M)$ in $\C_{S,\DA}$. We refer to Appendix \ref{app:A} for details about dualizability in a bicategory.

From the characterisation of universal local acyclicity in Proposition \ref{prop:equivcharofULA} we can see that the question whether $M$ is $f-$ULA is Zariski local on $X$. Since the map $\can_M$ is also compatible with inverse image along open immersions we may assume that $f:X \rightarrow S$ is separated. In particular the diagonal morphism $\Delta: X \rightarrow X \times_S X$ is a closed immersion. 

Let us write
\begin{equation} \label{eqn:EpsilonM}
(\Delta, \varepsilon_{M}): (X \times_S X, \D_S(M) \boxtimes M) \longrightarrow (S, \1) 
\end{equation}
for the maps transpose to $(\Delta,\id_{\D_S(M)}): (\D_S(M),M) \rightarrow  (\D_S(X),M)$ in $\C_{S, \DA}$ and set 
\[
(\Delta, \varepsilon_{j^*M}): (X \times_S X, \D_\eta(j^*M)) \boxtimes  j^*M ) \simeq (X \times_S X, j^*(\D_S(M)) \boxtimes M) ) \overset{j^* \varepsilon_M}\longrightarrow (S, \1). 
\]

Note that $j^*M$ is $f_\eta$-ULA by Lemma \ref{lem:constrOverFieldIsULA}. Thus by Lemma \ref{lem:NewRigidDualDataForDualizableObject} we see that $(\Delta, \varepsilon_{j^*M})$ and 
\begin{align*}
(\Delta, \eta_{j^*M}): (S, \1)  &\overset{\varepsilon_{j^*M}^t}\longrightarrow (X \times_S X,\D_\eta( \D_\eta(j^*M) \boxtimes j^*M))) \\
& \overset{\sim}\longrightarrow (X \times_S X,j^*M \boxtimes \D_\eta(j^*M))
\end{align*}
are a duality datum for $(X,j^*M)$ in $\C_{S, \DA_\eta}$. Let us write
$\Delta_\eta: X_\eta \rightarrow X_\eta \times_\eta X_\eta$
and $
\Delta_\sigma: X_\sigma \rightarrow X_\sigma \times_\sigma X_\sigma$ for the diagonal morphisms. Moreover we write 
\[
\eta_{j^*M}^t: \Delta_{\eta*}f_\eta^* \1 \rightarrow j^*M \boxtimes \D_\eta(j^*M)
\]
for the transpose of
\[
\eta_{j^*M}: f_\eta^* \1 \rightarrow \Delta_\eta^!(j^*M \boxtimes \D_\eta(j^*M)).
\]
Consider the diagram
\begin{equation} \label{equ:Big diagramForUnit}
\begin{tikzcd}
\Delta_{\sigma *} i^*f^* \1 \arrow[r] \arrow[d, "\simeq"'] & \Delta_{\sigma *} \chi_{f}( j^* f^*\1) \arrow[r] \arrow[d, "\simeq"] & \Delta_{\sigma *} \Psi_{f}(j^*f^* \1) \arrow[d, "\simeq"] \\
{i^* \Delta_*f^* \1} \arrow[dd, dashed, "\eta^t_i"'] \arrow[r] \arrow[rdd, phantom, "(1)"] & {\chi_{f \times f}(\Delta_{\eta *} j^* f^* \1)} \arrow[d, "\chi_{f \times f} (\eta_{j^*M}^t)"] \arrow[r]          & {\Psi_{f \times f }(\Delta_{\eta *} j^* f^* \1)} \arrow[d, "\Psi_{f \times f} (\eta^t_{j^*M})"]         \\
                                                                     & \chi_{f \times f}( j^*M \boxtimes \D(j^*M)) \arrow[d, "\simeq"] \arrow[r]                 & \Psi_{f \times f}( j^*M \boxtimes \D(j^*M)) \arrow[d, "\simeq"]                 \\
i^*(M \boxtimes \D(M)) \arrow[r]                                     & \chi_{f \times f}j^* ( M \boxtimes \D(M)) \arrow[r]                                       & \Psi_{f \times f}j^* ( M \boxtimes \D(M))                                      
\end{tikzcd}        
\end{equation}
where the top horizontal composition is $\Delta_*\can_{ i^*f^* \1}$ and the bottom horizontal composition is $\can_{M \boxtimes \D(M)}$. It is straightforward to check that the solid diagram commutes. By Lemma \ref{lem:a_MisoImpliesa_MxDMiso} the canonical map $\can_{M \boxtimes \D(M)}$ is an equivalence and hence there is a unique dotted map which we call (slightly suggestive) $\eta_i^t$ making the outer diagram commute.

We claim that the square (1) commutes. Let us explain how we can deduce the Proposition from this. In order to construct a duality datum for $(X,M)$ in $\C_{S,\DA}$ we want to construct a map
\[
\eta_M : f^*\1 \rightarrow \Delta^!( M \boxtimes \D(M))  
\]
or equivalently a map
\begin{equation} \label{equ:EtaOfMTranspose}
\eta_M^t : \Delta_* f^*\1 \rightarrow  M \boxtimes \D(M).  
\end{equation}
This gives rise to a map $(\Delta, \eta_M): (S, \1) \rightarrow (X \times X, M \boxtimes  \D(M))$ in $\C_{S,\DA}$. Consider $\DA_{\et}(X \times_S X, \Lambda)$ as the recollement of $\DA_{\et}(X_ \eta \times_\eta X_\eta, \Lambda)$ and $\DA_{\et}(X_\sigma \times_\sigma X_\sigma, \Lambda)$ via the left exact functor $\chi_{f\times f} = i^*j_*$. Then constructing a map (\ref{equ:EtaOfMTranspose}) is equivalent to constructing maps $\eta_i^t$ and $\eta_j^t$ which make the diagram
\begin{equation} \label{equ:EtaOfMAsREcollement}
\begin{tikzcd}
i^*\Delta_* f^*\1 \arrow[d, "\unit"] \arrow[r, "\eta_i^t"]     & i^*(M \boxtimes \D_S(M)) \arrow[d,"\unit"] \\
i^*j_*j^* \Delta_* f^* \1 \arrow[r, "i^*j_*(\eta_j^t)"'] & i^*j_*j^*(M \boxtimes \D_S(M))    
\end{tikzcd}
\end{equation}
commute. Hence commutativity of (1) gives rise to a map (\ref{equ:EtaOfMTranspose}). We want to show that the pair $(\Delta, \eta_M)$ and $(\Delta, \varepsilon_M)$ is a duality datum of $(X,M)$ in $\C_{S,\DA}$.

As observed in Remark \ref{rem:BigRemBeforeBigProp}(5) the functors $i^*: \C_{S, \DA} \rightarrow \C_{S, \DA_\sigma}$ and $j^*: \C_{S, \DA} \rightarrow \C_{S, \DA_\eta}$ jointly detect equivalences. Hence by Corollary \ref{cor:2.ConservativeFamilyOfFunctorsDetectsDualizability} it suffices to check that the two pairs $((\Delta, \eta_M)_{i^*}$, $(\Delta,\varepsilon_M)_{i^*})$ and $((\Delta,\eta_M)_{j^*}$, $(\Delta,\varepsilon_M)_{j^*})$ (using the notations of \ref{noname:EtaAndEpsilonIndBySymmMonFunctor}) give rise to duality data in $\C_{S, \DA_\sigma}$ and $\C_{S, \DA_\eta}$ respectively. Since $j:X_\eta \rightarrow X$ is an open immersion the canonical map
\[
j^*\D_S(M) \rightarrow \D_\eta(j^*M)
\] 
obtained as the transpose of
\[
j^* \D_S(M) \otimes j^*M \overset{\simeq}\longleftarrow j^*(\D_S(M) \otimes M) \overset{j^*(\id_{\D(M)}^t)}\longrightarrow j^*f^! \1 \longrightarrow f_\eta^! j^*\1. 
\]
is an equivalence. We know that the pair $((\Delta,\eta_{j^*M})$, $(\Delta,\varepsilon_{j^*M}))$ is a duality datum for $(X, j^*M)$ in $\C_{S, \DA_\eta}$. We claim that $(\Delta,\eta_M)_{j^*}$ is equivalent to $(\Delta,\eta_{j^*M})$ and $(\Delta,\varepsilon_M)_{j^*}$ is equivalent to $(\Delta,\varepsilon_{j^*M})$, which implies that $((\Delta,\eta_M)_{j^*}$, $(\Delta,\varepsilon_M)_{j^*})$ is a duality datum of $(X, j^*M)$ in $\C_{S, \DA_\eta}$. The first claim is equivalent to $\eta_{j^*M}^t$ being equivalent to 
\[
 \Delta_* f_\eta^* j^* \1 \simeq j^*\Delta_*f^* \1 \overset{j^* \eta_M^t}\longrightarrow j^*(M \boxtimes \D(M)) \overset{\simeq}\longrightarrow j^*M \boxtimes \D(j^*M)
\]
which is true by the construction of $\eta_M$ via the commutative square (1) in (\ref{equ:Big diagramForUnit}) and the second claim is true by the construction of $\varepsilon_{j^*M}$ above. 

It remains to show that $((\Delta, \eta_M)_{i^*}$, $(\Delta, \varepsilon_M)_{i^*})$ is a duality datum of $(X, i^*M)$. Since the natural transformation $\alpha: i^* \rightarrow \Psi j^*$ is symmetric monoidal there are commutative diagrams
\begin{equation} \label{eqn:mapEta_i^*toEta_psij^*}
\begin{tikzcd}
{(S, \1)} \arrow[d, "{(\Delta, (\eta_M)_{i^*})}"'] \arrow[rr, "\id"]                                      &  & {(S, \1)} \arrow[d, "{(\Delta, (\eta_M)_{\Psi j^*})}"]      \\
{(X \times_S X, i^*M \boxtimes i^*\D_S(M))} \arrow[rr, "{(\Delta, \alpha_M \boxtimes \alpha_{\D_S(M)})}"] &  & {(X \times_S X, \Psi_f(j^*M) \boxtimes \Psi_f(j^*\D_S(M)))}
\end{tikzcd}
\end{equation}
and
\begin{equation} \label{eqn:epsiloni*toEpsilonPsij*}
\begin{tikzcd}
{(X \times_S X, i^*M \boxtimes i^*\D_S(M))} \arrow[rr, "{(\Delta, \alpha_M \boxtimes \alpha_{\D_S(M)})}"] \arrow[d, "{(\Delta, (\varepsilon_M)_{i^*})}"'] &  & {(X \times_S X, \Psi_f(j^*M) \boxtimes \Psi_f(j^*\D_S(M)))} \arrow[d, "{(\Delta, (\varepsilon_M)_{\Psi j^*})}"] \\
{(S, \1)} \arrow[rr, "\id"]                                                                                                                                    &  & {(S, \1)}                                                                                                      
\end{tikzcd}
\end{equation}
in $\C_{S, \DA_\sigma}$. Note that $(\eta_M)_{\Psi j^*}$ and $(\varepsilon_M)_{\Psi j^*}$ is a duality datum for $(X, \Psi_f j^* M)$ in $\C_{S, \DA_\sigma}$: We saw above that $(\eta_M)_{j^*}$ and $(\varepsilon_M)_{j^*}$ is a duality datum for $(X,j^*M)$ in $\C_{S, \DA_\eta}$ and hence by Corollary \ref{cor:1.ImageOfDualizableObjectisDualizablw2} the pair $((\eta_M)_{j^*})_{\Psi}\simeq(\eta_M)_{\Psi j^*}$ and $((\varepsilon_M)_{j^*})_\Psi \simeq (\varepsilon_M)_{\Psi j^*}$ is a duality datum for $\Psi_f j^*M$. By assumption and Lemma \ref{lem:Beta_MequivImpliesBeta_DMequiv} the map $\alpha_M \boxtimes \alpha_{\D_S(M)}$ is an equivalence. Hence we may apply Lemma \ref{lem:IsoOfDualObjectsAndData} to the diagrams (\ref{eqn:mapEta_i^*toEta_psij^*}) and  (\ref{eqn:epsiloni*toEpsilonPsij*}). This shows that the pair $((\eta_M)_{i^*}$, $(\varepsilon_M)_{i^*})$ is a duality datum of $(X,i^*M)$.

Finally let us proof that the square $(1)$ in (\ref{equ:Big diagramForUnit}) commutes. Note that using the pasting property for exchange maps (see \ref{noname:ExchangeMaps}) one observes that
\begin{equation} \label{eqn:SmallBasechangeDiagramForProp}
\begin{tikzcd}
\Delta_{\sigma *} f_\sigma^* i^* \1 \arrow[dd, "\sim"] \arrow[r, "\unit"] & \Delta_{\sigma *} f_\sigma^*\chi_{\id} (j^* \1) \arrow[d, "\Ex"] \\
                                                                          & \Delta_{\sigma *} \chi_f (f_\eta^*j^* \1) \arrow[d, "\sim"]    \\
\Delta_{\sigma *} i^* f^* \1 \arrow[r, "\unit"]                           & \Delta_{\sigma *} \chi_f (j^* f^* \1)                         
\end{tikzcd}
\end{equation}
commutes. The maps 
\[
\alpha_\1: i^*\1 \rightarrow \chi_{\id} (j^* \1) \rightarrow \Psi_{\id}(j^* \1)
\]
and
\[
\can_{M \boxtimes \D_S (M)}: i^*(M \boxtimes \Dual_S(M)) \rightarrow \chi_{f\times f}( j^* (M \boxtimes \Dual_S(M))) \rightarrow \Psi_{f\times f}( j^* (M \boxtimes \Dual_S(M))) 
\]
are equivalences by Theorem \ref{thm:PropertiesOfEtaleMotivicNearby} (1) and Lemma \ref{lem:a_MisoImpliesa_MxDMiso} respectively. This implies by Lemma \ref{lem:AlphaEquImplBetaEqu} that $\Upsilon_{\id}(j^*\1) \simeq \Psi_{\id}(j^*\1)$ and $\Upsilon_{f \times f}(j^*(M \boxtimes \D_S(M))) \simeq \Psi_{f \times f}(j^*(M \boxtimes \D_S(M)))$. Moreover as explained in \ref{noname:AlphaEquivGivesSection} the maps $\beta_{\1}$ and $\beta_{M \boxtimes \D_SM}$ split the respective monodromy sequences which give rise to direct sum decompositions
\[
\chi_{\id}(j^* \1) \simeq \Upsilon_{\id}(j^*\1) \oplus \Upsilon_{\id}(j^*\1)(-1)[-1] ( \simeq \1 \oplus \1 (-1)[-1])
\]
and
\[
\chi_{f \times f}(j^*(M \boxtimes \D_S(M))) \simeq \Upsilon_{f \times f}(j^*(M \boxtimes \D_S(M)))  \oplus \Upsilon_{f \times f}(j^*(M \boxtimes \D_S(M))) (-1)[-1].
\]
Using the commutativity of (\ref{eqn:SmallBasechangeDiagramForProp}) we can observe that (1) commutes precisely if the composition 
\begin{equation} \label{eqn:CompositionNeedsToBeDiagMAtrix}
\Delta_{\sigma *} f_\sigma^*\chi_{\id} (j^* \1) \overset{\Ex^*}\longrightarrow \Delta_{\sigma *} \chi_f (f_\eta^*j^* \1) 
 \overset{\sim}\longrightarrow \chi_{f \times f}(\Delta_{\eta* } j^* f^* \1) 
 \overset{\chi_{f \times f}(\eta_{j^*M}^t)}\longrightarrow \chi_{f \times f}(j^*M \boxtimes \D_\eta(j^*M)) 
\end{equation}
is given by a diagonal matrix with respect to the direct sum decompositions above. Equivalently we may show that the composition 
\begin{equation} \label{eqn:CompositionNeedsToBeDiagMAtrixTransposed}
f_\sigma^* \chi_{\id}(\1) \overset{\Ex^*}\longrightarrow \chi_f(f_\eta^* \1) \overset{\chi_f(\eta_{j^*M})}\longrightarrow \chi_f(\Delta_\eta^! (j^*M \boxtimes \D_\eta(j^*M))) 
\overset{\Ex^!}\longrightarrow \Delta_\sigma^! \chi_{f\times f}(j^*M \boxtimes \D_\eta(j^*M))
\end{equation}
is given by a diagonal matrix with respect to the direct sum decompositions above. 

Recall the map $\varepsilon_M = \id^t: \D_S(M) \otimes M \rightarrow f^! \1 $ (\ref{eqn:EpsilonM}) from the very beginning of the proof. The diagram
\begin{equation} \label{eqn:CounitDiagram}
\begin{tikzcd}
\Delta_\sigma^*i^*(\D_S(M)\boxtimes M)) \arrow[d] \arrow[r, "\sim"]           & i^* \Delta^*(\D_S(M)\boxtimes M)) \arrow[d] \arrow[r, "i^* \varepsilon_M"]                & i^* f^! \1 \arrow[r, "\Ex"] \arrow[d]            & f_\sigma^! i^* \1 \arrow[d]             \\
\Delta_\sigma^*\chi_f (j^*(\D_S(M)\boxtimes M))) \arrow[d] \arrow[r, "\Ex^*"] & \chi_f (j^* \Delta^*(\D_S(M)\boxtimes M))) \arrow[d] \arrow[r, "\chi_fj^* \varepsilon_M"] & \chi_f (j^* f^! \1) \arrow[r, "\Ex^!"] \arrow[d] & f_\sigma^! \chi_{\id} (j^*\1) \arrow[d] \\
\Delta_\sigma^*\Psi_f (j^*(\D_S(M)\boxtimes M))) \arrow[r, "\Ex^*"]           & \Psi_f (j^*\Delta^*(\D_S(M)\boxtimes M))) \arrow[r, "\Psi_fj^* \varepsilon_M"]            & \Psi_f (j^* f^! \1) \arrow[r, "\Ex^!"]           & f_\sigma^! \Psi_{\id} (j^*\1)          
\end{tikzcd}
\end{equation}
is commutative. Let us denote the left to right composition of the middle row by $\theta$. Note that the left vertical composition is $\Delta_\sigma^* \can_{\D_S(M) \boxtimes M}$ and the right vertical composition is $f_\sigma^! \can_\1$. In particular they are both equivalences and induce direct sum decompositions
\[
\Delta_\sigma^* \chi_f(\D_\eta(j^*M) \boxtimes j^*M) \simeq \Delta_\sigma^* \Upsilon_f(\D_\eta(j^*M) \boxtimes j^*M) \oplus \Delta_\sigma^* \Upsilon_f(\D_\eta(j^*M) \boxtimes j^*M)(-1)[-1]
\]
and
\[
f_\sigma^! \chi_{\id} (j^*\1) \simeq f_\sigma^! \Upsilon_{\id} (j^*\1) \oplus f_\sigma^! \Upsilon_{\id} (j^*\1) (-1)[-1].
\]
Moreover commutativity of (\ref{eqn:CounitDiagram}) implies that the composition
\begin{equation} \label{eqn:CounitCompIsDiagonalMatrix}
\Delta_\sigma^* \chi_f(\D_\eta(j^*M) \boxtimes j^*M) \overset{\sim}\longrightarrow \Delta_\sigma^*\chi_f (j^*(\D_S(M)\boxtimes M)))
\overset{\theta}\longrightarrow f_\sigma^! \chi_{\id} (j^*\1)
\end{equation}
is a diagonal matrix with respect to these decompositions. Note that the composition
\[
\Delta_\sigma^* \chi_f(\D_\eta(j^*M) \boxtimes j^*M) \longrightarrow  \chi_f(\Delta_\eta^*(\D_\eta(j^*M) \boxtimes j^*M)) \overset{\chi_f(\varepsilon_{j^*M})}\longrightarrow \chi_f ( f_\eta^! \1) \overset{\Ex^!}\longrightarrow f_\sigma ^!\chi_{\id}( \1)
\]
is equivalent to the composition (\ref{eqn:CounitCompIsDiagonalMatrix}).

 We claim that the Proposition follows (i.e. (\ref{eqn:CompositionNeedsToBeDiagMAtrixTransposed}) is a diagonal matrix) if we can show that the diagram
\begin{equation} \label{eqn:KeyPropDiagramComparingDuality}
\begin{tikzcd}
f_\sigma^* \chi_{\id}(\1) \arrow[r] \arrow[d, "\sim"]                & \Delta_\sigma^!\chi_f(j^*M \boxtimes \D_\eta(j^*M)) \arrow[d, "\sim"]                               \\
f_\sigma^* \chi_{\id}(\D_\eta(\1)) \arrow[d, "f_\sigma^*\comp_\chi"] & \Delta_\sigma^!\chi_f(\D_\eta(\D_\eta(j^*M) \boxtimes j^*M)) \arrow[d, "\Delta_\sigma^!\comp_\chi"] \\
{f_\sigma^* \D_\sigma \chi_{\id}(\1) (-1)[-1]} \arrow[d, "\sim"]     & {\Delta_\sigma^! \D_\sigma ( \chi_{f}(\D_\eta(j^*M) \boxtimes j^*M )) (-1)[-1]} \arrow[d, "\sim"]   \\
{\D_\sigma (f_\sigma^! \chi_{\id}(\1)) (-1)[-1]} \arrow[r]           & {\D_\sigma ( \Delta_\sigma^*\chi_{f}(\D_\eta(j^*M) \boxtimes j^*M )) (-1)[-1]}                     
\end{tikzcd}
\end{equation}
commutes. Here the top horizontal map is the composition (\ref{eqn:CompositionNeedsToBeDiagMAtrixTransposed}) and the bottom horizontal map is $\D_\sigma(\_)(-1)[-1]$ applied to the composition (\ref{eqn:CounitCompIsDiagonalMatrix}). Indeed since all relevant direct sum decompositions are induced by the equivalences $\alpha_\1$, $\alpha_{M \boxtimes \D_S(M)}$ and $\alpha_{\D_S(M) \boxtimes M}$ it is easy to check that the left and right vertical maps are of the form
\[
\begin{psmallmatrix} 0 & *  \\ * & 0  \end{psmallmatrix}
\]
by Lemma \ref{lem:DChi=ChiDCompatibleWithSplitting} while the bottom horizontal map  (\ref{eqn:CounitCompIsDiagonalMatrix}) is of the form
\[
\begin{psmallmatrix} * & 0  \\ 0 & *  \end{psmallmatrix}
\]
with respect to these decompositions. This and the fact that the vertical compositions are equivalences implies that the top horizontal map of (\ref{eqn:KeyPropDiagramComparingDuality}) is a diagonal matrix as desired. 

In order to show that (\ref{eqn:KeyPropDiagramComparingDuality}) commutes we divide it in several small diagrams. First note that 
\[
\begin{tikzcd}
f_\sigma^* \chi_{\id}(\1) \arrow[d, "\sim"'] \arrow[r, "\Ex^*"] \arrow[d] \arrow[rd, "(4)" description, phantom] & \chi_f(f_\eta^* \1) \arrow[d, "\sim"] \\
f_\sigma^* \chi_{\id}(\D_\eta(\1)) \arrow[r, "\Ex^*"']                                                  & \chi_f(f_\eta^* \D_\eta(\1))         
\end{tikzcd}
\]
and
\[
\begin{tikzcd}
\chi_f(\Delta_\eta^!(j^*M \boxtimes \D_\eta(j^*M))) \arrow[r, "\Ex^!"] \arrow[d, "\sim"'] \arrow[r] \arrow[rd, "(5)" description, phantom] & \Delta_\sigma^!\chi_f(j^*M \boxtimes \D_\eta(j^*M)) \arrow[d, "\sim"] \\
\chi_f(\Delta_\eta^!(\D_\eta(\D_\eta(j^*M) \boxtimes j^*M))) \arrow[r, "\Ex^!"']                                                  & \Delta_\sigma^!\chi_f(\D_\eta(\D_\eta(j^*M) \boxtimes j^*M))         
\end{tikzcd}
\]
clearly commute. Essentially by construction of $\eta_{j^*M}$ (right at the beginning of the proof) the diagram
\[
\begin{tikzcd}
\chi_f(f_\eta^* \1) \arrow[rr, "\chi_f(\eta_{j^*M})"] \arrow[d, "\sim"'] \arrow[rrdd, "(6)", phantom] &  & \chi_f(\Delta_\eta^!(j^*M \boxtimes \D_\eta(j^*M))) \arrow[d, "\sim"]          \\
\chi_f(f_\eta^* \D_\eta(\1)) \arrow[d, "\sim"']                                                          &  & \chi_f(\Delta_\eta^!(\D_\eta(\D_\eta(j^*M) \boxtimes j^*M))) \arrow[d, "\sim"] \\
\chi_f( \D_\eta(f_\eta^!\1)) \arrow[rr, "\chi_f(\D_\eta(\varepsilon_{j^*M}))"]                           &  & \chi_f((\D_\eta(\Delta_\eta^*\D_\eta(j^*M) \boxtimes j^*M)))                  
\end{tikzcd}
\]
commutes. Next note that
\[
\begin{tikzcd}
f_\sigma^* \chi_{\id}(\D_\eta(\1)) \arrow[d, "f_\sigma^*\comp_\chi"] \arrow[r] \arrow[rdd, "(7)" description, phantom] & \chi_f(f_\eta^* \D_\eta(\1)) \arrow[d, "\sim"]          \\
{f_\sigma^* \D_\sigma \chi_{\id}(\1) (-1)[-1]} \arrow[d, "\sim"]                                              & \chi_{f}(\D_\eta (f_\eta^!\1))  \arrow[d, "\comp_\chi"] \\
{\D_\sigma (f_\sigma^! \chi_{\id}(\1)) (-1)[-1]} \arrow[r]                                                    & {\D_\sigma ( \chi_{f}(f_\eta^!\1)) (-1)[-1]}           
\end{tikzcd}
\]
and
\[
\begin{tikzcd}
\chi_f(\Delta_\eta^!(\D_\eta(\D_\eta(j^*M) \boxtimes j^*M))) \arrow[r] \arrow[d, "\sim"] \arrow[rdd, "(8)" description, phantom] & \Delta_\sigma^!\chi_f(\D_\eta(\D_\eta(j^*M) \boxtimes j^*M)) \arrow[d, "\Delta_\sigma^!\comp_\chi"] \\
\chi_f(\D_\eta (\Delta_\eta^*(\D_\eta(j^*M) \boxtimes j^*M))) \arrow[d, "\comp_\chi"]                                   & {\Delta_\sigma^! \D_\sigma ( \chi_{f}(\D_\eta(j^*M) \boxtimes j^*M )) (-1)[-1]} \arrow[d, "\sim"]   \\
{\D_\sigma ( \chi_{f}(\Delta_\eta^*(\D_\eta(j^*M) \boxtimes j^*M ))) (-1)[-1]} \arrow[r]                                & {\D_\sigma ( \Delta_\sigma^*\chi_{f}(\D_\eta(j^*M) \boxtimes j^*M )) (-1)[-1]}                     
\end{tikzcd}
\]
commute by Lemma \ref{lem:CompAndPullback}. Finally we claim that
\[
\begin{tikzcd}
\chi_{f}(\D_\eta (f_\eta^!\1))  \arrow[d, "\comp_\chi"'] \arrow[rr, "\chi_f(\D_\eta(\varepsilon_{j^*M}))"] \arrow[rrd, "(9)" description, phantom] &  & \chi_f(\D_\eta (\Delta_\eta^*(\D_\eta(j^*M) \boxtimes j^*M))) \arrow[d, "\comp_\chi"] \\
{\D_\sigma ( \chi_{f}(f_\eta^!\1)) (-1)[-1]} \arrow[rr, "\D_\sigma \chi_f(\varepsilon_{j^*M})(-1){[-1]}"]                                          &  & {\D_\sigma ( \chi_{f}(\Delta_\eta^*(\D_\eta(j^*M) \boxtimes j^*M ))) (-1)[-1]}       
\end{tikzcd}
\]
commutes. Consider its transposed diagram
\[
\begin{tikzcd}
\chi_{f}(\D_\eta (f_\eta^!\1)) \otimes  \chi_{f}(\Delta_\eta^*(\D_\eta(j^*M) \boxtimes j^*M ))  \arrow[d, "\chi_f(\D_\eta \varepsilon_{j^*M}) \otimes {\id}"'] \arrow[r, "\id \otimes \chi_f(\varepsilon_{j^*M})"] \arrow[rd, "(9)^t" description, phantom] & \chi_{f}(\D_\eta (f_\eta^!\1)) \otimes  \chi_{f}(f_\eta^! \1)  \arrow[d] \\
\chi_f(\D_\eta (\Delta_\eta^*(\D_\eta(j^*M) \boxtimes j^*M))) \otimes  \chi_{f}(\Delta_\eta^*(\D_\eta(j^*M) \boxtimes j^*M ))  \arrow[r]                                                                                                          & {f^!\1 (-1)[-1]}                                                        
\end{tikzcd}
\]
Here the right vertical and bottom horizontal map are defined as in \ref{noname:MonodromySequenceAndComparisonDualitySpecSystems}. It follows straight from this construction that in order to show that $(9)^t$ commutes it suffices to show that
\[
\begin{tikzcd}
\chi_{f}(\D_\eta (f_\eta^!\1) \otimes \Delta_\eta^*(\D_\eta(j^*M) \boxtimes j^*M ))  \arrow[d, "\chi_f(\D_\eta \varepsilon_{j^*M} \otimes {\id})"'] \arrow[r, "\chi_f(\id \otimes \varepsilon_{j^*M})"] & \chi_{f}(\D_\eta (f_\eta^!\1) \otimes  f_\eta^! \1)  \arrow[d, "\chi_f \id^t"] \\
\chi_f(\D_\eta (\Delta_\eta^*(\D_\eta(j^*M) \boxtimes j^*M) \otimes \Delta_\eta^*(\D_\eta(j^*M) \boxtimes j^*M ))  \arrow[r, "\chi_f \id^t"]                                                              & \chi_f(f^!\1)                                                              
\end{tikzcd}
\]
commutes. This follows from the fact that the transposed diagram of
\[
\begin{tikzcd}
\D_\eta (f_\eta^!\1) \otimes \Delta_\eta^*(\D_\eta(j^*M) \boxtimes j^*M )  \arrow[d, "\D_\eta \varepsilon_{j^*M} \otimes {\id}"'] \arrow[r, "\id \otimes \varepsilon_{j^*M}"] & \D_\eta (f_\eta^!\1) \otimes  f_\eta^! \1  \arrow[d, "\id^t"] \\
\D_\eta (\Delta_\eta^*(\D_\eta(j^*M) \boxtimes j^*M)) \otimes \Delta_\eta^*(\D_\eta(j^*M) \boxtimes j^*M )  \arrow[r, "\id^t"]                                                 & f^!\1                                                      
\end{tikzcd}
\]
is simply
\[
\begin{tikzcd}
\D_\eta  f^! \1 \arrow[d, "\id"'] \arrow[r, "\D_\eta \varepsilon_{j^*M}"] & \D_\eta (\Delta_\eta^*(\D_\eta(j^*M) \boxtimes j^*M))  \arrow[d, "\id"] \\
\D_\eta  f^! \1 \arrow[r, "\D_\eta \varepsilon_{j^*M}"]                  & \D_\eta (\Delta_\eta^*(\D_\eta(j^*M) \boxtimes j^*M)).                  
\end{tikzcd}
\] 

Since (\ref{eqn:KeyPropDiagramComparingDuality}) is the outer diagram of
\[
\begin{tikzcd}
\bullet \arrow[d] \arrow[r] \arrow[rd, "(4)" description, phantom]   & \bullet \arrow[d] \arrow[r] \arrow[rdd, "(6)" description, phantom] & \bullet \arrow[d] \arrow[r] \arrow[rd, "(5)" description, phantom]  & \bullet \arrow[d]  \\
\bullet \arrow[dd] \arrow[r] \arrow[rdd, "(7)" description, phantom] & \bullet \arrow[d]                                          & \bullet \arrow[r] \arrow[rdd, "(8)" description, phantom] \arrow[d] & \bullet \arrow[dd] \\
                                                            & \bullet \arrow[d] \arrow[r] \arrow[rd, "(9)" description, phantom]  & \bullet \arrow[d]                                          &                    \\
\bullet \arrow[r]                                           & \bullet \arrow[r]                                          & \bullet \arrow[r]                                          & \bullet           
\end{tikzcd}
\] 
this finishes the proof.
\end{proof}

\begin{prop} \label{prop:ULAImpliesCanIsEquiv}
Let $f:X \rightarrow S$ be a morphism of finite type and $M$ in $\DA_{\et}^{\cons}(X, \Lambda)$. If $M$ is $f$-ULA, then each of the canonical maps
\[
i^*M \longrightarrow \Upsilon_f(j^*M) \longrightarrow \Psi^{\tame}_f(j^*M) \longrightarrow \Psi_f(j^*M)
\]
is an equivalence. 
\end{prop}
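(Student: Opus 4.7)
The proof should exploit the dualizability of $(X,M)$ in the bicategory $\C_{S,\DA}$, which is the very definition of $f$-ULA, together with the symmetric monoidal nature of the nearby cycles functor $\Psi$ on constructible cohomological correspondences.

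Since $M$ is $f$-ULA, Proposition \ref{prop:MULAAndConstrGenImpliesConstr} ensures that $M$ is constructible. Theorem \ref{thm:PropertiesOfEtaleMotivicNearby}(1) and (4) together upgrade $\Psi \circ j^*$ to a symmetric monoidal functor of bicategories on constructible cohomological correspondences; the functor $i^*$ is symmetric monoidal essentially by construction. By Remark \ref{rem:BigRemBeforeBigProp}(4), the natural transformation $\alpha : i^* \Rightarrow \Psi \circ j^*$ is symmetric monoidal. The general bicategorical principle—that a symmetric monoidal natural transformation between symmetric monoidal functors is automatically an equivalence when evaluated on dualizable objects—then applies to $(X,M)$; this principle is supplied by Appendix \ref{app:A} (see in particular Lemma \ref{lem:IsoOfDualObjectsAndData} and the surrounding material). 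Unwinding the definition of an equivalence in $\C_{S,\DA_\sigma}$, we obtain that $\alpha_M : i^*M \to \Psi_f(j^*M)$ is an equivalence.

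To upgrade this to equivalences on each factor of the chain $i^*M \to \Upsilon_f(j^*M) \to \Psi^{\tame}_f(j^*M) \to \Psi_f(j^*M)$, reduce to the case of $\Q$-algebra coefficients via the conservative family $\rho^*_\Q, \rho^*_{\Z/\ell\Z}$ (Proposition \ref{prop:QandZ/plinearizationIsConsFam}, using the flatness of $\Lambda$ over $\Z$ imposed in the ambient theorem). For $\Lambda$ a $\Q$-algebra, Theorem \ref{thm:UpsilonRetractOfPsiTame} and Corollary \ref{cor:UpsilonRetractOfPsi} assert that the maps $\Upsilon_f(j^*M) \to \Psi^{\tame}_f(j^*M)$ and $\Psi^{\tame}_f(j^*M) \to \Psi_f(j^*M)$ are inclusions of direct summands on constructible objects; since their composite with $\beta_M : i^*M \to \Upsilon_f(j^*M)$, namely $\alpha_M$, is already an equivalence by the previous step, each summand inclusion must be an equivalence, and in turn so must $\beta_M$. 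For $\Lambda$ a $\Z/\ell^k\Z$-algebra (with $\ell$ necessarily invertible on $S$), rigidity (Theorem \ref{thm:Rigidity}) transfers the statement to the classical setting of \'etale sheaves, where the identification of all the nearby cycles functors on ULA objects is standard.

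\textbf{Main obstacle.} The principal conceptual difficulty is the bicategorical fact in the first step, which requires a careful treatment of duality data in a bicategory and of their preservation under symmetric monoidal natural transformations; this is precisely what Appendix \ref{app:A} is designed to provide. A secondary technical point lies in the change-of-coefficients reduction, which relies on the compatibility of each of $\Upsilon$, $\Psi^{\tame}$, and $\Psi$—and of the canonical comparison maps between them—with the functors $\rho^*$.
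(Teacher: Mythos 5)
Your approach takes a genuinely different route from the paper, and unfortunately it cannot establish the proposition in the generality in which it is stated, so there is a real gap. The proposition is asserted for an arbitrary coefficient ring $\Lambda$ (the section only fixes $S$ excellent strictly henselian and $\Lambda$ a ring). Your argument, however, rests on two tools that carry extra hypotheses on $\Lambda$. First, the monoidality of $\Psi$ as a functor $\C_{S,\DA_\eta} \to \C_{S,\DA_\sigma}$ and the symmetric monoidality of $\alpha : i^* \Rightarrow \Psi j^*$ (Remark \ref{rem:BigRemBeforeBigProp}(2),(4)) depend on the K\"unneth isomorphism and unitality of $\Psi$ from Theorem \ref{thm:PropertiesOfEtaleMotivicNearby}(1),(4), which require the residue characteristic of $S$ to be invertible in $\Lambda$; without that, $\Psi$ is merely lax monoidal and the ``monoidal natural transformation on dualizable objects is an equivalence'' principle does not apply. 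Second, the splitting of $\alpha_M$ into the three factors via Theorem \ref{thm:UpsilonRetractOfPsiTame} and Corollary \ref{cor:UpsilonRetractOfPsi} requires $\Lambda$ to be a $\Q$-algebra, and the reduction via $\rho_\Q^*,\rho_{\Z/\ell}^*$ requires $\Lambda$ flat over $\Z$ --- neither of which is assumed here. The proposition must hold, for instance, for $\Lambda=\Z$ as an input to Theorem \ref{thm:ULAinDAdetectedByNearby}, and your argument does not cover that case.

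The paper's proof avoids all of this by computing each functor directly from Proposition \ref{prop:NearbyCyclesViaLogAndColimits}: one writes $\Upsilon_f(j^*M) \simeq \colim_{\Delta^{\op}} i^*j_*\Hom(f_\eta^*\pi^*\mathscr{A}_S, j^*M)$, uses the termwise dualizability of $\mathscr{A}_S$ to rewrite $\Hom(-,j^*M)$ as $j^*M \otimes (\cdots)$, invokes the K\"unneth characterization of ULA (Proposition \ref{thm:EquivCharOfULA}(3)) with $h=j$ to pull the tensor factor past $j_*$, and concludes with unitality of $\Upsilon_{\id}$. Then $\Psi^{\tame}_f$ and $\Psi_f$ are handled by the same computation applied to $t_n^*M$ and $t_L^*M$, using stability of ULA under base change and Lemma \ref{lem:ULAandProperSmooth}; no coefficient hypothesis is needed and each of the three maps is shown to be an equivalence independently, rather than deduced from the composite. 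A smaller issue with your write-up: the bicategorical principle you invoke (a monoidal natural transformation is an equivalence on dualizable objects) is a standard fact but is not actually among the lemmas in Appendix \ref{app:A}, which provides tools for establishing and transporting dualizability, not for inverting monoidal natural transformations; you would need to state and prove it.
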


\begin{proof}
This is essentially \cite[4.1.7.]{JinYangMotNearbyPosChar} using the equivalent characterizations of universal local acyclicity in Proposition \ref{thm:EquivCharOfULA}. There is a chain of equivalences 
\begin{align*}
\Upsilon_f(j^*M) \simeq \; &\colim_{\Delta^{\op}} i^*j_* \Hom(f_\eta^* \pi^* \mathscr{A}_S, j^*M) \\
\overset{(1)}\simeq &\colim_{\Delta^{\op}} i^* j_* ( j^*M \otimes  f_\eta^* \Hom( \pi^*  \mathscr{A}_S, \1)) \\
\overset{(2)}\simeq &\colim_{\Delta^{\op}} i^* ( M \otimes f^* j_* \Hom( \pi^* \mathscr{A}_S, \1)) \\
\simeq \; & i^* M \otimes  f^* \colim_{\Delta^{\op}} i^*j_* \Hom( \pi^* \mathscr{A}_S, \1)) \\
\simeq \; &i^*M \otimes f^* \Upsilon_{\id}(\1) \\
 \overset{(3)}\simeq &i^* M.
\end{align*} 
Here (1) follows from the fact that the cosimplicial object $\mathscr{A}_S$ is termwise dualizable in $\DA_{\et}(\G_{m,S}, \Lambda)$ (see \ref{noname:Abullet}). (2) is a direct consequence of $M$ being $f$-ULA and the equivalent condition (3) in Proposition \ref{thm:EquivCharOfULA}. Finally (3) follows from the unitality of $\Upsilon_{\id}$ (see \cite[10.2]{AyoubRealizationEtale}). It is not hard to check that the bottom to top composition of the equivalences above is equivalent to the desired map.

Recall that $M$ being $f-$ULA implies that $t_n^*M$ is $f_n$-ULA. Hence we have by the observations above that
\begin{align*}
\Psi^{\text{tame}}_f(j^*M) \simeq \colim_{n \in (\N'^\times)^{\op}} \Upsilon_{f_n} (t_n^* j^*M)) \simeq  \colim_{n \in (\N'^\times)^{\op}} i^* t_n^* M \simeq i^*M.
\end{align*}
Similarly we have that $t_L^* M$ is $f_L$-ULA and therefore $t_L^* M$ is $t_L \circ f_L$-ULA by Lemma \ref{lem:ULAandProperSmooth}. Thus we get
\[
\Psi_f(j^*M) \simeq \colim_{L \in \Xi_\tau} \Psi^{\text{tame}}_{t_L \circ f_L}(t_L^* j^*M) \simeq  \colim_{L \in \Xi_\tau}  i^* t_L^* M \simeq i^*M. 
\]

\end{proof}

\noname From what we established so far we can deduce the following interesting criterion on extension of universal local acyclicity: 

\begin{cor} \label{cor:ULAisGoodReduction}
Let $f: X \rightarrow S$ be of finite type where $S$ is the spectrum of an excellent strictly henselian discrete valuation ring and $\Lambda$ a $\mathbb{Q}$-algebra. For an $M$ in $\DA_{\et}^{\cons}(X_\eta, \Lambda)$ the following are equivalent:
\begin{enumerate}
\item There exists an $\widetilde{M}$ in $\DA_{\et} (X, \Lambda)$ which is ULA with respect to $f$ and such that $j^*\widetilde{M} \simeq M$.
\item The canonical map $\Upsilon_f (M) \rightarrow \Psi_f(M)$ is an equivalence and the monodromy operator $N$ in the exact sequence
\begin{equation} \label{eqn:MonSeqForExtensionThm}
\chi_{f} (M) \longrightarrow \Upsilon_{f}(M) \overset{N}\longrightarrow \Upsilon_{f}(M)(-1) 
\end{equation}
is equivalent to the zero map.
\end{enumerate}
\end{cor}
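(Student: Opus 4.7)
My plan is to prove both implications by leveraging Proposition \ref{prop:ULAImpliesCanIsEquiv} and Proposition \ref{prop:InQsetupCanIsoImplULA}, with the key construction for the converse being the recollement description of $\DA_{\et}(X, \Lambda)$ recalled in Remark \ref{rem:BigRemBeforeBigProp}(5).

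For (1) $\Rightarrow$ (2), I would apply Proposition \ref{prop:ULAImpliesCanIsEquiv} to $\widetilde{M}$: the canonical maps $i^*\widetilde{M} \rightarrow \Upsilon_f(M) \rightarrow \Psi_f(M)$ are both equivalences, which immediately yields the first half of (2). The equivalence $\beta_{\widetilde{M}}: i^*\widetilde{M} \rightarrow \Upsilon_f(j^*\widetilde{M})$ factors by definition as $i^*\widetilde{M} \rightarrow \chi_f(j^*\widetilde{M}) \rightarrow \Upsilon_f(j^*\widetilde{M})$, so the natural transformation $\chi_f \rightarrow \Upsilon_f$ evaluated at $M$ admits a section. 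Since $N$ composed with this map is already nullhomotopic (as (\ref{eqn:MonSeqForExtensionThm}) is a fiber sequence), precomposing the null composition with the section shows that $N$ itself is null.

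For the converse (2) $\Rightarrow$ (1), the vanishing of $N$ in the stable $\infty$-category $\DA_{\et}(X_\sigma, \Lambda)$ implies that the fiber sequence (\ref{eqn:MonSeqForExtensionThm}) splits, producing a section $s: \Upsilon_f(M) \rightarrow \chi_f(M) = i^*j_*M$ of the canonical map $\chi_f(M) \rightarrow \Upsilon_f(M)$. Using the recollement description, the triple $(\Upsilon_f(M), M, s)$ defines an object $\widetilde{M}$ in $\DA_{\et}(X, \Lambda)$ with $j^*\widetilde{M} \simeq M$ and $i^*\widetilde{M} \simeq \Upsilon_f(M)$, and essentially by construction the associated canonical map $\beta_{\widetilde{M}}$ is the identity of $\Upsilon_f(M)$. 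Combined with the hypothesis that $\Upsilon_f(M) \rightarrow \Psi_f(M)$ is an equivalence, this shows that $\alpha_{\widetilde{M}}: i^*\widetilde{M} \rightarrow \Psi_f(j^*\widetilde{M})$ is an equivalence.

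To apply Proposition \ref{prop:InQsetupCanIsoImplULA} and conclude that $\widetilde{M}$ is $f$-ULA, I finally verify constructibility of $\widetilde{M}$: the localization fiber sequence $j_!M \rightarrow \widetilde{M} \rightarrow i_*\Upsilon_f(M)$ combined with Theorem \ref{thm:SixFunctorsPresConstr} reduces this to the constructibility of $\Upsilon_f(M) \simeq \Psi_f(M)$, which is Theorem \ref{thm:PropertiesOfEtaleMotivicNearby}(3). The main subtlety I expect is identifying the splitting section $s$ with the canonical map $\beta_{\widetilde{M}}$ via the recollement description of morphisms --- but this is automatic once the datum $(\Upsilon_f(M), M, s)$ is fixed, since $\varphi$ in the triple is precisely the unit map $i^*\widetilde{M} \rightarrow i^*j_*j^*\widetilde{M}$.
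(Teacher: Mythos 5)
Your proof is correct and follows essentially the same route as the paper: Proposition \ref{prop:ULAImpliesCanIsEquiv} plus the section of $\chi_f(M)\to\Upsilon_f(M)$ forcing $N\simeq 0$ for one direction, and for the converse the splitting induced by $N\simeq 0$, gluing $(\Upsilon_f(M)\simeq\Psi_f(M),\,M,\,s)$ via the recollement of Remark \ref{rem:BigRemBeforeBigProp}(5), and concluding with Proposition \ref{prop:InQsetupCanIsoImplULA}. Your explicit check that $\widetilde{M}$ is constructible via the localization sequence $j_!M\to\widetilde{M}\to i_*\Upsilon_f(M)$ is a small improvement over the paper, which asserts constructibility without comment.
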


\begin{proof}
Let us assume that (1) is satisfied. Then it follows from Proposition \ref{prop:ULAImpliesCanIsEquiv} that the composition
\[
\can_{\widetilde{M}}: i^*\widetilde{M} \rightarrow i^* j_* j^* \widetilde{M} \simeq i^* j_* M \rightarrow \Upsilon_{f}(M) \rightarrow \Psi_{f}(M)
\]
is an equivalence. Hence the canonical map $\Upsilon_{f}(M) \rightarrow \Psi_{f}(M)$ is an equivalence Lemma \ref{lem:AlphaEquImplBetaEqu} and as in the proof of Lemma \ref{lem:Beta_MequivImpliesBeta_DMequiv} we see that the sequence (\ref{eqn:MonSeqForExtensionThm}) splits which implies that $N \simeq 0$. 

Conversely assume that (2) holds. Then  $\Upsilon_{f}(M) \simeq \Psi_f (M)$ is a direct factor of $i^*j_*M$ and hence the inlusion of this direct factor gives a map
\begin{equation} \label{eqn:MapForRecollement}
\Psi_{f}(M) \rightarrow i^*j_* M.
\end{equation}
Considering $\DA_{\et}(X, \Lambda)$ as the recollement of $\DA_{\et}(X_\sigma, \Lambda)$ and $\DA_{\et}(X_\eta, \Lambda)$ (see Remark \ref{rem:BigRemBeforeBigProp} (5)) the map (\ref{eqn:MapForRecollement}) gives rise to an element $\widetilde{M}$ in $\DA_{\et}^{\cons}(X, \Lambda)$ such that the composition
\[
i^* \widetilde{M} \rightarrow i^*j_*j^* \widetilde{M} \rightarrow \Psi_{f}(j^* \widetilde{M}) \simeq  \Psi_{f}(M)
\]
is an equivalence. Hence $\widetilde{M}$ is $f-$ULA by Proposition \ref{prop:InQsetupCanIsoImplULA}.
\end{proof}

\begin{remark}
The monodromy operator $N$ can be interpreted as the "logarithm of the monodromy action" (see \cite[11.17]{AyoubRealizationEtale} for a precise statement after \'etale realization). With this interpretation in mind Corollary \ref{cor:ULAisGoodReduction} says that $M$ in $\DA_{\et}^{\cons}(X_\eta, \Lambda)$ can be extended to a $\widetilde{M}$ which is ULA over $f$ if and only if the monodromy action on $\Upsilon_f(M) \simeq \Psi_f(M)$ is trivial. In other words "good reduction" of the motive is determined by the monodromy of $\Psi_f(M)$. 
\end{remark} 

\begin{lem} \label{lem:SuffToCheckULAAfterTensoringWQAndZ/p}
Let $g: Y \rightarrow T$ be of finite type where $T$ is a finite dimensional noetherian scheme and $M$ in $\DA^{\cons}_{\et}(Y, \Lambda)$. Assume that $\Lambda$ is flat over $\Z$. Then $M$ is $g$-ULA if and only if $\rho_{\Q}^* M$ and  $\rho_{\Z/\ell \Z}^* M$ are $g$-ULA, where $\ell$ runs through the set of all prime numbers. (Here $\rho_{\Q}^* $ and $\rho_{\Z/\ell \Z}^* $ are defined as in \ref{noname:rhoQrholambda}.)
\end{lem}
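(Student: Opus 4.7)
The plan is to reduce both directions to the characterization of universal local acyclicity in Proposition \ref{prop:equivcharofULA}(3), and to exploit two ingredients: the conservativity of the family $\{\rho_{\Q}^*, \rho_{\Z/\ell\Z}^*\}_\ell$ from Proposition \ref{prop:QandZ/plinearizationIsConsFam}, and the compatibility of change-of-coefficients with the six operations from Proposition \ref{prop:ChangeOfCoeffCompWSixFunctors}.

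First I would collect the general compatibilities. For any ring map $\rho: \Lambda \to \Lambda'$, the symmetric monoidal functor $\rho^*$ preserves constructibility, since the generators $\Lambda(Y)(n)$ are sent to $\Lambda'(Y)(n)$ and $\rho^*$ commutes with small colimits and with splitting idempotents. Moreover, since $Y$ is of finite type over the noetherian finite-dimensional scheme $T$, both $Y$ and $Y\times_T Y$ are noetherian of finite dimension, and all the second projections involved are of finite type. Hence Proposition \ref{prop:ChangeOfCoeffCompWSixFunctors} applies and yields canonical equivalences
\[
\rho^* p_i^* \simeq p_i^*\rho^*, \qquad \rho^* p_i^! \simeq p_i^! \rho^*, \qquad \rho^*\Hom(A,B) \simeq \Hom(\rho^*A, \rho^*B)
\]
for $A$ constructible, and in particular $\rho^*\D_T(M) \simeq \D_T(\rho^* M)$.

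Next, denote by
\[
\theta_M : p_1^*\D_T(M) \otimes p_2^* M \longrightarrow \Hom(p_1^* M, p_2^! M)
\]
the canonical map in $\DA_{\et}(Y\times_T Y, \Lambda)$ appearing in Proposition \ref{prop:equivcharofULA}(3); by that proposition, $M$ is $g$-ULA if and only if $\theta_M$ is an equivalence. Unwinding the construction of $\theta_M$ and using the compatibilities above, there is a canonical identification $\rho^*\theta_M \simeq \theta_{\rho^* M}$. The forward direction is now immediate: if $M$ is $g$-ULA, then $\theta_M$ is an equivalence, so $\theta_{\rho^* M}\simeq \rho^*\theta_M$ is an equivalence for any $\rho$, hence $\rho^* M$ is $g$-ULA.

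For the backward direction, assume that $\rho_{\Q}^* M$ and each $\rho_{\Z/\ell\Z}^* M$ are $g$-ULA. Then $\theta_{\rho_{\Q}^* M}$ and each $\theta_{\rho_{\Z/\ell\Z}^* M}$ is an equivalence, i.e.\ the images of $\theta_M$ under every member of the family $\{\rho_{\Q}^*, \rho_{\Z/\ell\Z}^*\}_\ell$ are equivalences. By Proposition \ref{prop:QandZ/plinearizationIsConsFam}, applied to the finite-dimensional noetherian scheme $Y\times_T Y$, this family is conservative on $\DA_{\et}(Y\times_T Y, \Lambda)$, so $\theta_M$ is itself an equivalence, and $M$ is $g$-ULA. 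There is no real obstacle here: the only point to verify carefully is the naturality identification $\rho^*\theta_M \simeq \theta_{\rho^* M}$, which is a direct consequence of $\rho^*$ being symmetric monoidal together with the commutation isomorphisms of Proposition \ref{prop:ChangeOfCoeffCompWSixFunctors}.
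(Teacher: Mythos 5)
Your proof is correct and follows essentially the same route as the paper: commutation of $\rho^*_{\Q}$ and $\rho^*_{\Z/\ell\Z}$ with the relevant operations (Proposition \ref{prop:ChangeOfCoeffCompWSixFunctors}), conservativity of this family (Proposition \ref{prop:QandZ/plinearizationIsConsFam}), and the characterization of universal local acyclicity in Proposition \ref{prop:equivcharofULA}. The only cosmetic difference is that you invoke criterion (3) (the single self-test map $\theta_M$ on $Y\times_T Y$) where the paper cites criterion (2); both work, and your careful check that constructibility of $M$ (and $p_1^*M$) is what makes the $\Hom$- and duality-compatibilities apply is exactly the point needed.
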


\begin{proof}
Since $\rho_{\Q}^*$ and $\rho_{\Z/\ell \Z}^*$ commute with the six functors by Proposition \ref{prop:ChangeOfCoeffCompWSixFunctors}, the claim follows from Proposition \ref{prop:QandZ/plinearizationIsConsFam} and the characterisation (2) of universal local acyclicity in Proposition \ref{prop:equivcharofULA}.
\end{proof}

\begin{thm} \label{thm:ULAinDAdetectedByNearby}
Let $f: X \rightarrow S$ be of finite type where $S$ is the spectrum of an excellent strictly henselian discrete valuation ring. Let $\Lambda$ be a noetherian ring flat over $\Z$ and $M$ in $\DA_{\et}^{\cons}(X, \Lambda)$. Then $M$ is ULA with respect to $f$ if and only if the canonical map
\[
\can_M: i^*M \longrightarrow \Psi_f(j^*M)
\]
is an equivalence. 
\end{thm}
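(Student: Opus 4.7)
The direction "$M$ is $f$-ULA $\Rightarrow \can_M$ is an equivalence" is precisely Proposition~\ref{prop:ULAImpliesCanIsEquiv}, so my task is the converse: assuming $\can_M$ is an equivalence, I want to deduce that $M$ is $f$-ULA. My strategy is to use Lemma~\ref{lem:SuffToCheckULAAfterTensoringWQAndZ/p} to reduce to the $\Q$-algebra and $\Z/\ell\Z$-algebra cases, where the result will follow either from Proposition~\ref{prop:InQsetupCanIsoImplULA} (in characteristic zero), from Rigidity together with the classical theory (for $\ell$ invertible on $S$), or from a vanishing argument (for $\ell$ equal to the residue characteristic).

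The key preliminary step is to verify that change of coefficients commutes with $\Psi_f$ on constructible motives, i.e.\ that for any ring map $\rho:\Lambda \to \Lambda'$ there is a natural equivalence $\rho^* \Psi_f(j^*M) \simeq \Psi_f(j^*\rho^*M)$ under which $\rho^*\can_M$ is identified with $\can_{\rho^*M}$. Combining the colimit presentation of $\Psi_f$ in Proposition~\ref{prop:NearbyCyclesViaLogAndColimits} with the compatibility of $\rho^*$ with the six operations on constructible objects (Proposition~\ref{prop:ChangeOfCoeffCompWSixFunctors}), this reduces to commuting $\rho^*$ past colimits, $i^*$, $j_*$, and the internal $\Hom$'s against the motive $f_\eta^*\pi^*\mathscr{A}_S$, each of which is handled under our noetherian, finite-dimensional, excellent hypotheses. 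Granted this, $\can_{\rho_\Q^*M}$ and $\can_{\rho_{\Z/\ell\Z}^*M}$ are all equivalences whenever $\can_M$ is, and by Lemma~\ref{lem:SuffToCheckULAAfterTensoringWQAndZ/p} it suffices to show each of these pullbacks is $f$-ULA.

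For $\rho_\Q^*M$, the coefficient ring $\Lambda \otimes \Q$ is a $\Q$-algebra and Proposition~\ref{prop:InQsetupCanIsoImplULA} yields $f$-ULA directly. For $\rho_{\Z/\ell\Z}^*M$ with $\ell$ different from the residue characteristic $p$, the prime $\ell$ is invertible in $\mathcal{O}(X)$, so Rigidity (Theorem~\ref{thm:Rigidity}) promotes $\iota^*$ to a six-functor-compatible symmetric monoidal equivalence between the motivic and the classical étale derived categories over $X$; under this equivalence $\Psi_f$ is identified with the classical nearby cycles functor and constructibility agrees, so the hypothesis that $\can_{\rho_{\Z/\ell\Z}^*M}$ is an equivalence translates into the classical statement, which is equivalent to universal local acyclicity by the stalkwise local-acyclicity criterion recalled in the introduction together with \cite[6.6]{LuZhengDuality}. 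For $\rho_{\Z/p\Z}^*M$, Proposition~\ref{prop:pOnBaseAutomaticallyInverted} collapses the situation: in equicharacteristic $p$ we have $\DA_{\et}(X,\Lambda/p) \simeq \DA_{\et}(\emptyset,\Lambda/p) = 0$, so $\rho_{\Z/p\Z}^*M = 0$ and the claim is vacuous; in mixed characteristic $(0,p)$, $\DA_{\et}(X,\Lambda/p)\simeq \DA_{\et}(X_\eta,\Lambda/p)$ and likewise for $X\times_S X$, so Proposition~\ref{prop:equivcharofULA}(3) for $\rho_{\Z/p\Z}^*M$ reduces to ULA of $j^*\rho_{\Z/p\Z}^*M$ over $f_\eta: X_\eta \to \eta$, which is automatic by Lemma~\ref{lem:constrOverFieldIsULA} since $K$ is the residue field of a point of the strictly local noetherian scheme $S$.

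The main technical obstacle is the preliminary compatibility of $\rho^*$ with $\Psi_f$, since $\Psi_f$ is built from colimits involving right adjoints such as $j_*$ and $(\theta_f^{\mathcal{T}})_*$ that only commute with change of coefficients on constructible objects over quasi-excellent noetherian bases of finite dimension; granted this compatibility, the remaining steps are a largely formal transfer of the hypothesis into each coefficient regime.
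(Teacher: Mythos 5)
Your overall route coincides with the paper's: the forward direction is Proposition \ref{prop:ULAImpliesCanIsEquiv}, and the converse is reduced via Lemma \ref{lem:SuffToCheckULAAfterTensoringWQAndZ/p} to the three coefficient regimes, with the $\Q$-algebra case handled by Proposition \ref{prop:InQsetupCanIsoImplULA} and the $\ell=p$ case by Proposition \ref{prop:pOnBaseAutomaticallyInverted} together with Lemma \ref{lem:constrOverFieldIsULA}, exactly as in the text. Making the compatibility $\rho^*\Psi_f(j^*M)\simeq \Psi_f(j^*\rho^*M)$ and $\rho^*\can_M\simeq\can_{\rho^*M}$ explicit is a reasonable addition; the paper leaves it implicit (for $\ell\neq p$ it is contained in the compatibility of the realization with nearby cycles, \cite[10.16]{AyoubRealizationEtale}), and your sketch via the colimit description of Proposition \ref{prop:NearbyCyclesViaLogAndColimits} and Proposition \ref{prop:ChangeOfCoeffCompWSixFunctors} is the right way to check it.

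The gap is in the endgame of the $\ell\neq p$ case. After transferring to classical \'etale sheaves by rigidity, you conclude via the stalkwise criterion and \cite[6.6]{LuZhengDuality} that $\mathfrak{R}_{\modulo\ell}(\rho_{\Z/\ell\Z}^*M)$ is universally locally acyclic in the classical, stalkwise sense. But what you must produce is ULA in the sense of Definition \ref{def:ULA}, i.e.\ dualizability of $(X,\rho_{\Z/\ell\Z}^*M)$ in the correspondence bicategory $\C_{S,\DA}$: that is the notion Lemma \ref{lem:SuffToCheckULAAfterTensoringWQAndZ/p} is stated for. Classical stalkwise ULA does not formally yield dualizability — the agreement of the two notions for constructible complexes of finite tor-dimension is itself a theorem of Lu--Zheng, which you neither invoke nor replace, and you also do not explain how to carry the conclusion back to the motivic side. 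The paper closes exactly this loop: since the rigidity equivalence is symmetric monoidal, $(X,\rho_{\Z/\ell\Z}^*M)$ is dualizable in $\C_{S,\DA}$ if and only if its realization is dualizable in $\C_{S,\mathcal{D}_{\et}}$, and then \cite[2.16]{lu_zheng_2022} states precisely that this (categorical) ULA holds if and only if the realized canonical map is an equivalence; with that citation your stalkwise detour is unnecessary. As written, the $\ell\neq p$ case stops one substantial theorem short of the conclusion.
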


\begin{proof} The "only if" part follows from Proposition \ref{prop:ULAImpliesCanIsEquiv}. For the "if" part it suffices by Lemma \ref{lem:SuffToCheckULAAfterTensoringWQAndZ/p} to check the cases where $\Lambda$ is a $\mathbb{Q}$-algebra and where $\Lambda$ is a $\mathbb{Z}/\ell \mathbb{Z}$-algebra for all primes $\ell$. If $\Lambda$ is a $\mathbb{Q}$-algebra this is Proposition \ref{prop:InQsetupCanIsoImplULA} since $j^*M$ is $f_\eta$-ULA by Lemma \ref{lem:constrOverFieldIsULA}. Let $p$ denote the residue characteristic of $S$. If $\Lambda$ is a $\mathbb{Z}/ \ell \mathbb{Z}$-algebra, where $\ell=p$, then $M$ is trivially $f$-ULA: Indeed let $k: X[1/p] \rightarrow X$ denote the open immersion. Then by Proposition \ref{prop:pOnBaseAutomaticallyInverted} the functor $k^*: \DA_{\et}(X, \Lambda) \rightarrow \DA_{\et}(X[1/p], \Lambda)$ is an equivalence and thus $M$ is ULA with respect to $f$ if and only if $k^*M$ is ULA with respect to $f[1/p]: X[1/p] \rightarrow S[1/p]$. Since $S[1/p]$ is either $\eta$ or the empty scheme, any $M$ in $\DA^{\cons}_{\et}(X[1/p], \Lambda)$ is ULA with respect to $f[1/p]$ either by Lemma \ref{lem:constrOverFieldIsULA} or trivially. If $\Lambda$ is a $\mathbb{Z}/ \ell \mathbb{Z}$-algebra, where $\ell \neq p$, then the mod-$\ell$ realization functors
\[
\mathfrak{R}_{\modulo \ell}: \DA_{\et}(X_\eta, \Lambda) \overset{\sim}\longrightarrow \mathcal{D}_{\et}(X_\eta, \Lambda)
\]
and 
\[
\mathfrak{R}_{\modulo \ell}: \DA_{\et}(X_\sigma, \Lambda) \overset{\sim}\longrightarrow \mathcal{D}_{\et}(X_\sigma, \Lambda)
\]
are equivalences of categories by rigidity (see Theorem \ref{thm:Rigidity}). By \cite[10.16]{AyoubRealizationEtale} these equivalences are compatible with the formation of nearby cycles functors in the sense that 
\[
\begin{tikzcd}
{\DA_{\et}(X_\eta, \Lambda)} \arrow[r, "\mathfrak{R}_{\modulo \ell}"] \arrow[d, "\Psi_f"'] & {\mathcal{D}_{\et}(X_\eta, \Lambda)} \arrow[d, "\Psi_f"] \\
{\DA_{\et}(X_\sigma, \Lambda)} \arrow[r, "\mathfrak{R}_{\modulo \ell}"]                    & {\mathcal{D}_{\et}(X_\sigma, \Lambda)}                  
\end{tikzcd}
\]
commutes, where the $\Psi_f$ on the right side denotes the classical nearby cycles functor for \'etale sheaves. Since $\mathfrak{R}_{\modulo \ell}$ is a symmetric monoidal equivalence, $(X,M)$ is dualizable in $\C_{S, \DA_{\et}}$ if and only if $(X,\mathfrak{R}_{\modulo \ell}(M))$ is dualizable in $\C_{S, \mathcal{D}_{\et}}$. By \cite[5.5.4]{CisinskiDegliseEtale} $\mathfrak{R}_{\modulo \ell}$ restricts to a fully faithfull embedding
\[
\DA_{\et}^{\cons}(X, \Lambda) \longrightarrow \mathcal{D}^{\text{cft}}_{\et}(X, \Lambda).
\]   
Finally it is proven in \cite[2.16]{lu_zheng_2022} that $\mathfrak{R}_{\modulo \ell} M \in \mathcal{D}^{\text{cft}}_{\et}(X, \Lambda)$ is ULA with respect to $f$ if and only if $\mathfrak{R}_{\modulo \ell}(\can_M)$ is an equivalence. 
\end{proof}

\begin{remark}
Theorem \ref{thm:ULAinDAdetectedByNearby} tells us in particular that we can detect universal local acyclicity over an excellent regular 1-dimensional scheme $T$ using the motivic nearby cycles functor. 
Indeed let $\Lambda$ be a noetherian ring flat over $\Z$, $g: Y \rightarrow T$ a morphism of finite type and $M$ in $\DA_{\et}(Y, \Lambda)$. Then it follows from \cite[4.3.9]{CisinskiDegliseBook} and the characterization of universal local acyclicity in Proposition \ref{prop:equivcharofULA} that $f$-universal local acyclicity of $M$ can be checked after pulling back to strict localizations of $T$. Then we are precisely in the right situation to apply Theorem \ref{thm:ULAinDAdetectedByNearby}.
\end{remark}

\section{Application: The weak singular support of an \'etale motive}
\noname Throughout this section let us fix a field $K$. By a smooth $K$-scheme we always mean a scheme equipped with a smooth morphism of finite type to $\Spec (K)$. Let us recall the definition of weak singular support after Beilinson (see \cite{BeilinsonHolonomic}). 

\noname Let $X$ be a smooth $K$-scheme and denote by $\mathbb{T}^*X$ its cotangent bundle. A morphism $f: X \rightarrow Y$ between smooth $K$-schemes induces a map of vector bundles 
\[d f:\mathbb{T}^*Y \times_Y X \longrightarrow \mathbb{T}^*X\]
 over $X$. 
A subset $C \subset \mathbb{T}^*X$ is called \textit{conical} if it is closed under the canonical $\G_{m,K}$-action on $\mathbb{T}^*X$.

\begin{definition} Let $X$ be a smooth $K$-scheme and $C \subset \mathbb{T}^*(X/K)$ a closed conical subset.
\begin{enumerate}
\item A morphism $h: U \rightarrow X$ between smooth $K$-schemes is called $C$-transversal at a geometric point $u \rightarrow U$ if 
\[
\ker(d h_u) \cap C_{h(u)} \setminus \{0\} = \emptyset.
\]
We say $h$ is $C$-transversal if it is $C$-transversal at all geometric points of $U$.
\item A morphism $f: X \rightarrow Y$ of smooth $K$-schemes is called $C$-transversal at a geometric point $x \rightarrow X$ if 
\[
(d f_x )^{-1} (C_x) \setminus \{0\} = \emptyset.
\]
We say $f$ is $C$-transversal if it is $C$-transversal at all geometric points of $X$.
\end{enumerate}
\end{definition}

\noname A \textit{test pair} $(h,f)$ is a correspondence $X \overset{h}\leftarrow U \overset{f}\rightarrow Y$ between smooth $K$-schemes. A \textit{weak test pair} is a test pair where $f$ is of the form $f: X \rightarrow \A^1_K$ and $h$ is either
\begin{enumerate}
\item an open immersion if $K$ is infinite or
\item the composition $U = V \times_{K} K' \rightarrow V \overset{h'}\rightarrow X$, where $K'$ is a finite extension of $K$ and $h'$ an open immersion, if $K$ is finite.
\end{enumerate}

Let $C \subset \mathbb{T}^*(X/K)$ be a closed conical subset and consider a weak test pair $(h,f)$. Clearly since such an $h$ is \'etale it is $C$-transversal. We call the test pair $(h,f)$  $C$-transversal if $f$ is also $C$-transversal.

\begin{definition}
Let $X$ be a smooth $K$-scheme, $C$ a closed conical subset of $\mathbb{T}^*(X/K)$ and $M$ in $\DA_{\et}^{\cons}(X, \Lambda)$. We say $M$ is \textit{weakly micro-supported on $C$} if $h^* M$ is $f$-ULA for all $C$-transversal weak test pairs $(h,f)$. We call the smallest conical subset of $\mathbb{T}^*(X/K)$ on which $M$ is weakly micro-supported the \textit{weak singular support} of $M$ and denote it by $SS^w(M)$.
\end{definition}

\begin{remark}
For a $M$ in $\DA_{\et}(X, \Lambda)$ let us denote by $\C'(M)$ the set of all closed conical subset of $\mathbb{T}^*(X/K)$ on which $M$ is weakly micro-supported. The weak singular support is well defined: Indeed as Beilinson noted in \cite{BeilinsonHolonomic} $\C'(M)$ is closed under intersections.
\end{remark}

\noname Let us fix some notation: Let $f: U \rightarrow \A^1_K$ be a morphism of schemes and $s \rightarrow \A^1_K$ a geometric point. We write $(\A^1_K)_{(s)}$ for the strict henselisation of $\A^1_K$ in $s$ and consider the pullback square
\[
\begin{tikzcd}
U \arrow[d, "f"'] & U_{(s)} \arrow[d, "f_{(s)}"] \arrow[l] \\
\A_K^1                 & (\A^1_K)_{(s)} \arrow[l]                     
\end{tikzcd}
\]
obtained by pulling back $f$ along the canonical map $(\A^1_K)_{(s)} \rightarrow \A_K^1 $. For a $M$ in $\DA_{\et}(U, \Lambda)$ we write $M_{(s)}$ for its restriction along $U_{(s)} \rightarrow U$. $(\A^1_K)_{(s)}$ is either the spectrum of a field (in the case where $s$ maps to the generic point) or the spectrum of a strictly henselian discrete valution ring. Let us fix a geometric point $0$ which maps to the closed point of $\A_K^1 = \Spec K[X]$ given by the ideal $(X)$. For a geometric point $s$ not mapping to the generic point consider the decomposition
\[
\begin{tikzcd}
U_s \arrow[d, "f_s"'] \arrow[r, "i"] & U_{(s)} \arrow[d, "f_{(s)}"] & U_{\eta_s} \arrow[d, "f_{\eta_s}"] \arrow[l, "j"'] \\
{s} \arrow[r, "i"']              & \A^1_{(s)}                   & \eta_s, \arrow[l, "j"]                             
\end{tikzcd}
\]
where $\eta_s$ denotes the generic point of $\A^1_{(s)}$. Then we can form the nearby cycles functor $\Psi_{f_{(s)}}(\_)$ and get the canonical natural transformation
\[
\alpha: i^* \_ \longrightarrow \Psi_{f_{(s)}}(j^* \_).
\]
We let $\Phi_{f_{(s)}}(\_)$ denote the cofiber of $\alpha$. $\Phi_{f_{(s)}}(\_)$ is sometimes called the \textit{vanishing cycles functor}.

\begin{prop} \label{prop:SSviaPsi}
Assume that $K$ is perfect, let $X$ be a smooth $K$-scheme, $M$ in $\DA^{\cons}_{\et}(X, \Lambda)$ and $C$ a closed conical subset of $\mathbb{T}^*(X/K)$. Moreover assume that $\Lambda$ is a noetherian ring flat over $\Z$. The following are equivalent:
\begin{enumerate}
\item $SS^w(M) \subset C$.
\item For all $C$-transversal weak test pairs $(h,f)$ the motive $(h^*M)_{(0)}$ is ULA with respect to $f_{(0)}: U_{(0)} \rightarrow (\A^1_K)_{(0)}$.
\item For all $C$-transversal weak test pairs $(h,f)$ we have $\Phi_{f_{(0)}}((h^*M)_{(0)}) \simeq 0$.
\end{enumerate}
\end{prop}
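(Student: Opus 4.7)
The plan is to establish $(2) \Leftrightarrow (3)$ as a direct application of Theorem \ref{thm:ULAinDAdetectedByNearby}, to deduce $(1) \Rightarrow (2)$ from the base change invariance of universal local acyclicity, and to prove $(2) \Rightarrow (1)$ by combining translation invariance on $\A^1_K$ with the generic universal local acyclicity theorem and the \'etale-continuity of the ULA condition.

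For $(2) \Leftrightarrow (3)$: since $K$ is perfect, the local ring of $\A^1_K$ at the origin is an excellent DVR, and its strict henselization is again excellent, so $(\A^1_K)_{(0)}$ is the spectrum of an excellent strictly henselian DVR. The motive $(h^*M)_{(0)}$ is constructible by Theorem \ref{thm:SixFunctorsPresConstr}, so Theorem \ref{thm:ULAinDAdetectedByNearby} applied to the finite type morphism $f_{(0)}: U_{(0)} \rightarrow (\A^1_K)_{(0)}$ gives that $(h^*M)_{(0)}$ is $f_{(0)}$-ULA if and only if the canonical map $\alpha : i^*(h^*M)_{(0)} \rightarrow \Psi_{f_{(0)}}(j^*(h^*M)_{(0)})$ is an equivalence; since $\Phi_{f_{(0)}}$ is by definition the cofiber of $\alpha$, this is the same as $\Phi_{f_{(0)}}((h^*M)_{(0)}) \simeq 0$. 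For $(1) \Rightarrow (2)$: if $SS^w(M) \subset C$, then $h^*M$ is $f$-ULA for every $C$-transversal weak test pair $(h,f)$, and Proposition \ref{prop:propertiesOfULA}(1) applied to the base change $(\A^1_K)_{(0)} \rightarrow \A^1_K$ yields that $(h^*M)_{(0)}$ is $f_{(0)}$-ULA.

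For $(2) \Rightarrow (1)$, fix a $C$-transversal weak test pair $(h,f)$; we must show $h^*M$ is $f$-ULA on $\A^1_K$. Translations provide the mechanism to exploit (2) at arbitrary closed points: for any $K$-rational point $s \in \A^1_K(K)$, the automorphism $\tau_{-s}(x) = x - s$ satisfies $d\tau_{-s} = \mathrm{id}$, so $(h, \tau_{-s} \circ f)$ is again a $C$-transversal weak test pair; applying (2) yields that $(h^*M)_{(s)}$ is $f_{(s)}$-ULA. For non-$K$-rational closed points, in the finite $K$ case one uses the finite base extension clause in the definition of weak test pair directly, while in the infinite $K$ case one passes to a finite separable extension $K'/K$ over which $s$ becomes rational, using that the formation of $\Psi$ and the characterization of Theorem \ref{thm:ULAinDAdetectedByNearby} are compatible with such base changes. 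Combined with Proposition \ref{thm:GenericULA}, which provides a dense open $V \subset \A^1_K$ on which $(h^*M)|_V$ is already ULA, Lemma \ref{lem:DULAisFinitary} (applicable since $\DA_{\et}^{\cons}$ is \'etale-continuous by Theorem \ref{thm:CompDAvsDM} and continuity of $\DM_h$) lets us upgrade the ULA property at each strict henselization $(\A^1_K)_{(s)}$ to an actual \'etale neighborhood of $s$. Assembling these neighborhoods produces an \'etale cover of $\A^1_K$ over which $h^*M$ is ULA, and \'etale descent for the ULA condition (which follows from Proposition \ref{prop:propertiesOfULA}(1) combined with \'etale descent for $\DA_{\et}$) yields $h^*M$ is $f$-ULA globally.

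The main obstacle is the final step, where one must rigorously justify the passage from ULA on every strict henselization together with generic ULA to ULA on all of $\A^1_K$. Two subtleties appear: first, handling non-$K$-rational closed points (where translations are no longer available as $\A^1_K$-automorphisms), which is what the finite extension clause in the definition of weak test pair is designed to accommodate; second, making \'etale descent for the ULA property precise, which ultimately reduces to the characterization of ULA via K\"unneth-type formulas in Proposition \ref{thm:EquivCharOfULA} and the \'etale-local nature of those K\"unneth conditions.
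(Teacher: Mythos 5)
Your handling of $(2)\Leftrightarrow(3)$ and $(1)\Rightarrow(2)$ matches the paper. For $(2)\Rightarrow(1)$ you take a different route: the paper simply invokes the fact, recorded after Theorem \ref{thm:ULAinDAdetectedByNearby} (via \cite[4.3.9]{CisinskiDegliseBook} and Proposition \ref{prop:equivcharofULA}), that universal local acyclicity with respect to $f:U\rightarrow \A^1_K$ can be checked on the strict henselizations at all geometric points of $\A^1_K$, then treats the generic point by Lemma \ref{lem:constrOverFieldIsULA} and reduces closed points to the rational case by translation. Your substitute -- generic ULA (Proposition \ref{thm:GenericULA}) plus spreading out via Lemma \ref{lem:DULAisFinitary} plus \'etale descent of the ULA condition -- is heavier but plausibly workable; the \'etale-local nature of the K\"unneth criterion and the \'etale-continuity input are in line with what the paper itself uses. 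That is not where the real difficulty sits.

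The genuine gap is your treatment of closed points of $\A^1_K$ that are not $K$-rational. You propose to pass to a finite separable extension $K'/K$ making $s$ rational and then invoke hypothesis $(2)$. But $(2)$ only quantifies over weak test pairs for $X$ over $K$: when $K$ is infinite the definition forces $h$ to be an open immersion into $X$, so $U\times_K K'\rightarrow X$ is not an admissible $h$; and even when $K$ is finite (where the extension clause exists) the translated morphism $f_{K'}-a$ has target $\A^1_{K'}$ rather than $\A^1_K$, so the pair you produce is in neither case a weak test pair and $(2)$ cannot be applied to it as claimed. Moreover this cannot be absorbed by the rest of your argument: the dense open from generic ULA together with \'etale neighbourhoods of the $K$-rational closed points need not cover the finitely many non-rational closed points, and those are exactly where acyclicity could fail, so they must be handled directly. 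The paper's mechanism is to use that ULA may be checked \'etale-locally on the base $\A^1_K$ together with perfectness of $K$ to reduce to the case of a rational point before translating (the strict henselization at $s$ is identified with that of $\A^1_{k(s)}$ at a rational point, and $(\A^1_{k(s)})_{(0)}\simeq(\A^1_K)_{(0)}$ since $\A^1_{k(s)}\rightarrow\A^1_K$ is \'etale); a concrete way to stay within $K$-level test pairs is to compose $f$ with the separable minimal polynomial $p$ of $s$, which is \'etale near $s$, so that $(h|_{U''},\,p\circ f|_{U''})$ is a $C$-transversal weak test pair whose localization at $0$ recovers the localization of $f$ at $s$. Some reduction of this kind is needed and is missing from your proposal, which you yourself flag but do not resolve.
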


\begin{proof}
(2) and (3) are equivalent by Theorem \ref{thm:ULAinDAdetectedByNearby}. Clearly (1) implies (2) since universal local acyclicity is preserved under the basechange $(\A^1_K)_{(s)} \rightarrow \A^1_K$ by Proposition \ref{prop:propertiesOfULA}(1). 

Let us assume that (2) holds and let $(h,f)$ be a $C$-transversal weak test pair. By \cite[4.3.9]{CisinskiDegliseBook} and the characterization of universal local acyclicity in Proposition \ref{prop:equivcharofULA} $h^*M$ is ULA with respect to $f$ if and only if $(h^*M)_{(s)}$ is ULA with respect to $f_{(s)}$ for all geometric points $s$ of $\A^1_K$. 

By Lemma \ref{lem:constrOverFieldIsULA} $(h^*M)_{(s)}$ is ULA with respect to $f_{(s)}$ if $s$ maps to the generic point of $\A^1_K$. Hence we may assume that $s$ maps to a closed point. Since we may check universal local acyclicity \'etale locally on $\A^1_K$ and $K$ is perfect we may assume that the image of $s$ is given by the maximal ideal in $K[X]$ generated by $x-a$ for some $a \in K$. Let $-s: \A^1_K \rightarrow \A^1_K$ denote the isomorphism induced by the $K$-algebra map $K[X] \rightarrow K[X]$ mapping $x$ to $x-a$ and let us write $f-s := -s \circ f.$ The diagram
\[
\begin{tikzcd}
U \arrow[d, "f"']       & U_{(s)} \arrow[d, "f_{(s)}"] \arrow[l]     \\
\A^1_K \arrow[d, "-s"'] & (\A^1_K)_{(s)} \arrow[d, "\sim"] \arrow[l] \\
\A^1_K                  & (\A^1_K)_{(0)} \arrow[l]                  
\end{tikzcd}
\]
consists of pullback squares. Hence we see that $(h^*M)_{(s)}$ is $f_{(s)}$-ULA if and only if $(h^*M)_{(s)}$ is $(f-s)_{(0)}$-ULA. Thus we are left to show that $f-s$ is $C$-transversal. But this is clear since $d f = d (f-s)$. 
\end{proof}

\begin{remark}
Proposition \ref{prop:SSviaPsi} can be seen as the motivic analogue of \cite[8.6.4]{KashiwaraShapira}, where the complex analytic case was considered. 
\end{remark}

\appendix

\chapter{Dualizable objects in a bicategory} \label{app:A}
\numberwithin{thm}{chapter}
\noname Throughout this section let $(\C, \otimes, \1)$ be a symmetric monidal bicategory (see \cite{BenabouBicat},\cite[Chapter 2]{Schommer-PriesPhD}). Whenever we talk about a commutative diagram in a bicategory we implicitly mean the existence of an invertible 2-cell. Given two objects $X,Y$ in $\C$ we denote the (1-)category of maps between $X$ and $Y$ by $\map_{\C}(X,Y)$.

\begin{definition} \label{def:StronglyDualizable}
An object $X$ of $\C$ is called \textit{dualizable} if there exists an object $\widehat{X}$ in $\C$ together with maps
\[
\eta_X: \1 \rightarrow X \otimes \widehat{X}
\]
and
\[
\varepsilon_X: \widehat{X}\otimes X \rightarrow  \1
\]
in $\C$ such that the compositions
\[
X \overset{\eta_X \otimes X}\longrightarrow X \otimes \widehat{X} \otimes X \overset{X \otimes \varepsilon_X}\longrightarrow X
\]
and 
\[
\widehat{X} \overset{\widehat{X} \otimes \eta_X}\longrightarrow \widehat{X} \otimes X  \otimes \widehat{X} \overset{\varepsilon_X \otimes \widehat{X}}\longrightarrow \widehat{X}
\]
are equivalent to the identity. We call $\widehat{X}$ a \textit{dual of $X$} and the pair $\eta_X$ and $\varepsilon_X$ a \textit{duality datum} of $X.$
\end{definition}

\begin{lem} \label{lem:StongDualGivesAdj}
Let $X$ a dualizable object in $\C$ with dual $\widehat{X}$. Then:
\begin{enumerate}
\item $\widehat{X}$ is dualizable with dual $X$.
\item  The functor $X \otimes \_$ admits a right adjoint, namely  $\widehat{X} \otimes \_$. In particular $\widehat{X}$ is uniquely determined up to equivalence.
\end{enumerate}
\end{lem}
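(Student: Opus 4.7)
The plan is to reduce both statements to the standard manipulations with duality data in a symmetric monoidal bicategory, using only the triangle identities available from Definition \ref{def:StronglyDualizable} together with the symmetry isomorphism of $\C$.

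For (1), I would produce an explicit duality datum exhibiting $X$ as a dual of $\widehat{X}$. Let $\tau_{A,B}: A \otimes B \overset{\sim}\to B \otimes A$ denote the symmetry. Define
\[
\eta_{\widehat{X}}: \1 \overset{\eta_X}\longrightarrow X \otimes \widehat{X} \overset{\tau_{X,\widehat{X}}}\longrightarrow \widehat{X} \otimes X
\]
and
\[
\varepsilon_{\widehat{X}}: X \otimes \widehat{X} \overset{\tau_{X,\widehat{X}}}\longrightarrow \widehat{X} \otimes X \overset{\varepsilon_X}\longrightarrow \1.
\]
Then I would verify the two triangle identities for $(\eta_{\widehat{X}}, \varepsilon_{\widehat{X}})$ by pasting the coherence 2-cells for the symmetry with the original triangle identities of $(\eta_X, \varepsilon_X)$. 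The key naturality input is that $\tau_{X,\widehat{X}} \otimes X$ and $X \otimes \tau_{X,\widehat{X}}$ can be used to rewrite the triple tensor products, and the hexagon and symmetry axioms of the symmetric monoidal bicategory allow one to reduce each of the two composites to a composite of the form $X \otimes \varepsilon_X \circ \eta_X \otimes X$ (or its mirror), which is the identity by hypothesis.

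For (2), I would write down the unit and counit of the asserted adjunction $X \otimes \_ \dashv \widehat{X} \otimes \_$. For any object $Y$ in $\C$, set
\[
u_Y: Y \simeq \1 \otimes Y \overset{\eta_{\widehat{X}} \otimes Y}\longrightarrow \widehat{X} \otimes X \otimes Y,
\qquad
c_Y: X \otimes \widehat{X} \otimes Y \overset{\varepsilon_{\widehat{X}} \otimes Y}\longrightarrow \1 \otimes Y \simeq Y,
\]
using the duality datum from part (1). The two triangle identities for this adjunction are obtained by tensoring the triangle identities for $(\eta_{\widehat{X}}, \varepsilon_{\widehat{X}})$ on the right with $Y$ and invoking the associator coherence. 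This exhibits $\widehat{X} \otimes \_$ as a right adjoint to $X \otimes \_$. Uniqueness of $\widehat{X}$ up to equivalence then follows formally: if $\widehat{X}'$ is another dual of $X$, then $\widehat{X}' \otimes \_$ is also right adjoint to $X \otimes \_$, so there is a canonical equivalence of functors $\widehat{X} \otimes \_ \simeq \widehat{X}' \otimes \_$; evaluating at $\1$ yields $\widehat{X} \simeq \widehat{X}'$.

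The only subtlety is bookkeeping the invertible 2-cells coming from associativity and symmetry of $\C$, since we work in a bicategory rather than a strict monoidal category. This will be the main obstacle, but it is handled uniformly by appealing to the coherence theorem for symmetric monoidal bicategories, which lets one treat all reassociations and symmetry insertions as equalities in the pasting diagrams above.
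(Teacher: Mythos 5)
Your proposal is correct and follows essentially the same route as the paper: the paper's (very terse) proof also observes that the given duality datum, read with the roles of $X$ and $\widehat{X}$ interchanged, exhibits $\widehat{X}$ as dualizable with dual $X$, and that tensoring this datum with $Y$ supplies the unit and counit of the adjunction $X \otimes \_ \dashv \widehat{X} \otimes \_$, with the triangle identities holding by definition. Your explicit insertion of the symmetry $\tau$ and the coherence bookkeeping merely spells out what the paper leaves implicit, and your derivation of uniqueness from uniqueness of right adjoints evaluated at $\1$ matches the intended argument.
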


\begin{proof} A duality datum of $X$ with dual $\widehat{X}$ is also a duality datum for $\widehat{X}$ with dual $X$, which implies (1). For (2) note that a duality datum  gives rise to unit and counit maps for the desired adjunction which satisfy the triangle identities by definition.
\end{proof}

\noname We say that a symmetric monoidal bicategory is \textit{closed} if for any $X$ in $\C$ the functor
\[
X \otimes \_ : \C \longrightarrow \C
\]
admits a right adjoint. We denote this right adjoint by $\Hom(X, \_).$

\begin{lem} \label{lem:StrongDualAndIntHom}
Let $\C$ be a closed symmetric monidal bicategory and $X$ a dualizable object in $\C$. Then $\widehat{X} \simeq \Hom(X, \1)$ and moreover the following are equivalent:
\begin{enumerate}
\item $X$ is dualizable.
\item For any $Y$ in $\C$ the canonical morphism
\[
\widehat{X} \otimes Y \longrightarrow \Hom(X,Y)
\]
transpose to 
\[
\varepsilon_X \otimes \id_Y: X \otimes \widehat{X} \otimes Y \longrightarrow Y
\]
is an equivalence. 
\item The canonical morphism
\[
\widehat{X} \otimes X \longrightarrow \Hom(X,X)
\]
transpose to 
\[
\varepsilon_X \times \id_X: X \otimes \widehat{X} \otimes X \longrightarrow X
\]
is an equivalence. 
\end{enumerate}
\end{lem}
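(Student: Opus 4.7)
The plan is to organize everything around the uniqueness of right adjoints. The closed structure on $\C$ supplies the adjunction $X \otimes \_ \dashv \Hom(X, \_)$, and Lemma~\ref{lem:StongDualGivesAdj}(2) upgrades the dualizability of $X$ to the adjunction $X \otimes \_ \dashv \widehat X \otimes \_$, with unit and counit built from $\eta_X$ and $\varepsilon_X$ by tensoring with the ambient object. Uniqueness of right adjoints in a bicategory then produces a natural equivalence $\widehat X \otimes \_ \simeq \Hom(X, \_)$; evaluating at $\1$ gives the asserted equivalence $\widehat X \simeq \Hom(X, \1)$.

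For the implication (1) $\Rightarrow$ (2), I would trace through this uniqueness explicitly. The natural equivalence $\widehat X \otimes Y \simeq \Hom(X, Y)$ is characterized, via the adjunction $X \otimes \_ \dashv \Hom(X,\_)$, by its transpose $X \otimes \widehat X \otimes Y \to Y$; under the identification built from the triangle identities, this transpose is exactly the counit of $X \otimes \_ \dashv \widehat X \otimes \_$ evaluated at $Y$, namely $\varepsilon_X \otimes \id_Y$. So the equivalence produced abstractly coincides with the canonical map named in~(2). The implication (2) $\Rightarrow$ (3) is immediate by setting $Y = X$.

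For (3) $\Rightarrow$ (1) (read as: given a candidate dual $\widehat X$ and counit $\varepsilon_X$ such that (3) holds, construct $\eta_X$ making it a full duality datum), I would build $\eta_X: \1 \to X \otimes \widehat X$ as the image of $\id_X \in \map_\C(X,X)$ under the chain
\[
\map_\C(X,X) \simeq \map_\C(\1, \Hom(X,X)) \xleftarrow{\,\sim\,} \map_\C(\1, \widehat X \otimes X) \xrightarrow{\,\sim\,} \map_\C(\1, X \otimes \widehat X),
\]
where the middle arrow is the inverse of~(3) and the last is the braiding. Both triangle identities of Definition~\ref{def:StronglyDualizable} then reduce, after transposing through the closed adjunction, to the single statement that the canonical map $\widehat X \otimes X \to \Hom(X,X)$ sends the chosen preimage of $\id_X$ back to $\id_X$, which is tautological.

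The main obstacle is the diagrammatic bookkeeping in the last step: one has to track the braiding together with the unit/counit of the closed adjunction at the level of invertible 2-cells, and check that the transposes of the triangle identity composites really are the two occurrences of $\id_X$ (respectively $\id_{\widehat X}$) under the equivalence of~(3). In a symmetric monoidal $1$-category this is entirely classical; in the bicategorical setting the same argument goes through provided every equality of morphisms is replaced by the appropriate coherent isomorphism, which is routine but where any careless step will produce an apparent obstruction.
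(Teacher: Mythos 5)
Your first part and the implications (1)~$\Rightarrow$~(2)~$\Rightarrow$~(3) follow the paper's route in substance: the paper also gets $\widehat{X}\simeq\Hom(X,\1)$ from Lemma \ref{lem:StongDualGivesAdj} and proves (1)~$\Rightarrow$~(2) by a Yoneda argument identifying the two right adjoints of $X\otimes\_$ compatibly with the canonical map, which is exactly your ``uniqueness of right adjoints plus identification of counits.'' The genuine difference is (3)~$\Rightarrow$~(1): the paper does not prove it at all but cites \cite[1.4]{lu_zheng_2022}, whereas you give the classical direct argument (coevaluation defined as the image of the name of $\id_X$ under the inverse of the canonical map composed with the braiding, then verification of the snake identities). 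Your route buys self-containedness at the cost of the 2-categorical bookkeeping you mention; the paper's buys brevity by outsourcing precisely that bookkeeping.

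One point in your sketch is stated too quickly. Only the \emph{first} triangle identity of Definition \ref{def:StronglyDualizable} transposes literally to the tautology that the canonical map $\widehat{X}\otimes X\to\Hom(X,X)$ carries the chosen preimage of $\id_X$ back to $\id_X$. The second identity, after transposing through the closed adjunction, becomes the assertion that $\varepsilon_X\circ(T_2\otimes\id_X)\simeq\varepsilon_X$, where $T_2$ denotes the second snake composite; by the interchange law this map is $\varepsilon_X\circ(\id_{\widehat X}\otimes T_1)$ with $T_1$ the first snake composite, so it follows from the already established first identity rather than directly from the tautology. Two things are essential here and should be said explicitly: transposing maps $\widehat{X}\to\widehat{X}$ into maps $\widehat{X}\otimes X\to\1$ is an equivalence of hom-categories only because $\widehat{X}=\Hom(X,\1)$, and the counit must be the evaluation $\varepsilon_X$ (the transpose of $\id_{\widehat X}$). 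For an arbitrary ``candidate dual'' with an arbitrary pairing satisfying (3) the second identity is not automatic in this way, so your parenthetical reading of (3)~$\Rightarrow$~(1) should be fixed to this specific choice of $\widehat{X}$ and $\varepsilon_X$ (which is also the choice made in Lemma \ref{lem:NewRigidDualDataForDualizableObject} and in the applications). With that adjustment your argument is correct and completes the proof the paper leaves to the reference.
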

\begin{proof}
Lemma \ref{lem:StongDualGivesAdj} implies that $\widehat{X} \simeq \widehat{X} \otimes  \1 \simeq \Hom(X, \1)$. The implication (1) $\Rightarrow$ (2) follows from Yoneda using that for any $Z$ in $\C$
the diagram
\[
\begin{tikzcd}
{\map_{\C}(Z, \widehat{X} \otimes Y)} \arrow[rd, "\sim"', no head] \arrow[rr] &                             & {\map_{\C}(Z, \Hom(X,Y))} \arrow[ld, "\sim", no head] \\
                                                                                      & {\map_{\C}(X \otimes Z,Y))} &                                                      
\end{tikzcd}
\]
commutes by Lemma \ref{lem:StongDualGivesAdj} functorially in $Z$. (2) $\Rightarrow$ (3) is clear and (3) $\Rightarrow$ (1) is shown in \cite[1.4]{lu_zheng_2022}.  
\end{proof}

\begin{noname} \label{noname:EtaAndEpsilonIndBySymmMonFunctor} Let $F:\C \rightarrow \mathcal{D}$ be a strict monoidal functor between symmetric monoidal bicategories and consider two objects $X,Y$ of $\C$ together with morphisms 
$\eta: \1 \rightarrow X \otimes Y$ and $\varepsilon: Y \otimes X \rightarrow \1$. Then since $F$ is assumed to be strictly monoidal there are unique (up to equivalence) arrows $\eta_F$ and $\varepsilon_F$ making the diagrams 
\[
\begin{tikzcd}
F(\1) \arrow[r, "F(\eta)"] \arrow[d, "\simeq"'] & F(X \otimes Y) \arrow[d, "\simeq"] &  & F(Y \otimes X) \arrow[r, "F(\varepsilon)"] \arrow[d, "\simeq"'] & F(\1) \arrow[d, "\simeq"] \\
\1 \arrow[r, "\eta_F"']              & F(X) \otimes F(Y)        &  & F(Y) \otimes F(X) \arrow[r, "\varepsilon_F"]         & \1             
\end{tikzcd}
\]
commute. Here the vertical arrows are the canonical equivalences giving $F$ the structure of a strict monoidal functor. 
\end{noname}

\begin{prop} \label{prop:UnitCounitSymmMonFunctor}
In the situation of \ref{noname:EtaAndEpsilonIndBySymmMonFunctor} the morphisms $\eta_F$ and $\varepsilon_F$ make the diagram
\[
\begin{tikzcd}
F(X) \arrow[d, "\id"'] \arrow[r, "\simeq"] & F(\1 \otimes X) \arrow[r, "F(\eta \otimes X)"] \arrow[d, "\simeq"'] & F(X \otimes Y \otimes X) \arrow[r, "F(X \otimes \varepsilon)"]         & F(X \otimes \1) \arrow[d, "\simeq"] \arrow[r, "\simeq"] & F(X) \arrow[d, "\id"] \\
F(X) \arrow[r, "\simeq"]                   & \1 \otimes F(X) \arrow[r, "\eta_F \otimes F(X)"]                    & F(X) \otimes F(Y) \otimes F(X) \arrow[r, "F(X) \otimes \varepsilon_F"] & F(X) \otimes \1 \arrow[r, "\simeq"]                     & F(X)                 
\end{tikzcd}
\]
commute. Here the vertical maps are the canonical equivalences giving $F$ the structure of a strict symmetric monoidal functor. 
\end{prop}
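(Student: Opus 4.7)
The plan is to prove this by pasting together four sub-rectangles and showing each of them commutes, using only the coherence data of $F$ as a strong monoidal functor between bicategories and the defining equations of $\eta_F$ and $\varepsilon_F$ given in \ref{noname:EtaAndEpsilonIndBySymmMonFunctor}. Let me write $\phi_{A,B}\colon F(A)\otimes F(B)\overset{\sim}\longrightarrow F(A\otimes B)$ for the coherence 2-cell and $\phi_0\colon \1\overset{\sim}\to F(\1)$ for the unit coherence. I would cut the big rectangle vertically into four pieces:
(i) the leftmost rectangle $F(X)\simeq F(\1\otimes X)\simeq \1\otimes F(X)\simeq F(X)$;
(ii) the rectangle whose top arrow is $F(\eta\otimes X)$ and whose bottom arrow is $\eta_F\otimes F(X)$;
(iii) the rectangle whose top arrow is $F(X\otimes\varepsilon)$ and whose bottom arrow is $F(X)\otimes\varepsilon_F$;
(iv) the rightmost rectangle $F(X)\simeq F(X\otimes\1)\simeq F(X)\otimes\1\simeq F(X)$.
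Pasting (i)--(iv) yields the statement.

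The rectangles (ii) and (iii) are the core content. For (ii) I would take the defining square for $\eta_F$, tensor the whole 2-cell on the right by the identity 2-cell of $F(X)$, and then use naturality of $\phi_{-,-}$ in each variable to rewrite $\phi_{\1,X}$ and $\phi_{X\otimes Y,X}$ in terms of $\phi_\1\otimes F(X)$, $\phi_{X,Y}\otimes F(X)$, and the associator of $\mathcal{D}$. The resulting pasting diagram is exactly (ii). The same argument, with the defining square of $\varepsilon_F$ tensored on the left by $F(X)$, produces (iii).

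Rectangles (i) and (iv) commute because the axioms for a monoidal functor of bicategories include a coherence 2-cell making the left and right unit isomorphisms of $F(X)$ agree with the composite passing through $\phi_0$; equivalently, this is the triangle axiom of \cite[Chapter 2]{Schommer-PriesPhD} for the unit. The main (mild) obstacle is purely bookkeeping: one must verify that the choices of associators and unitors made when writing the three-fold tensor $F(X)\otimes F(Y)\otimes F(X)$ (which only makes sense up to coherence 2-cells in a non-strict bicategory) fit together consistently when the pieces (i)--(iv) are pasted. This is handled by the coherence theorem for monoidal bicategories, which allows one to reduce to the strict case where the verification is immediate. Since no genuinely new idea is involved beyond the definitions of $\eta_F$ and $\varepsilon_F$, I would be content to indicate the decomposition and the naturality arguments above rather than writing out the full pasting diagram.
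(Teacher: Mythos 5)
Your proposal is correct and is essentially the paper's argument: the paper simply records this as ``an elementary observation using the axioms of a strict symmetric monoidal functor,'' and your decomposition into four rectangles, with (ii) and (iii) following from the defining squares of $\eta_F$, $\varepsilon_F$ tensored with $F(X)$ plus naturality of the monoidal coherence data, and (i), (iv) from the unit coherences, is exactly that elementary verification spelled out. The only difference is that you carry the coherence 2-cells of a strong monoidal functor and invoke coherence to strictify, which subsumes the paper's (strictly formulated) setting.
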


\begin{proof}
This is an elementary observation using the axioms of a strict symmetric monoidal functor between bicategories. 
\end{proof}

\begin{cor} \label{cor:1.ImageOfDualizableObjectisDualizablw2}
Let $F:\C \rightarrow D$ be a strict monoidal functor between symmetric monoidal bicategories. If an object $X$ in $\C$ is dualizable with duality datum ($\eta, \varepsilon$), then $F(X)$ is dualizable with duality datum $(\eta_F, \varepsilon_F)$.
\end{cor}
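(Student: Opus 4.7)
The plan is to deduce this statement as an essentially immediate consequence of Proposition \ref{prop:UnitCounitSymmMonFunctor}. Recall that dualizability of $X$ with duality datum $(\eta,\varepsilon)$ and dual object $Y := \widehat{X}$ amounts to the assertion that the two compositions
\[
X \xrightarrow{\simeq} \1 \otimes X \xrightarrow{\eta \otimes X} X \otimes Y \otimes X \xrightarrow{X \otimes \varepsilon} X \otimes \1 \xrightarrow{\simeq} X
\]
and
\[
Y \xrightarrow{\simeq} Y \otimes \1 \xrightarrow{Y \otimes \eta} Y \otimes X \otimes Y \xrightarrow{\varepsilon \otimes Y} \1 \otimes Y \xrightarrow{\simeq} Y
\]
are equivalent to the identities of $X$ and $Y$ respectively. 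To verify that $F(X)$ is dualizable with dual $F(Y)$ and duality datum $(\eta_F, \varepsilon_F)$ we must establish the analogous two triangle identities in $\mathcal{D}$ for $(F(X), F(Y), \eta_F, \varepsilon_F)$.

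First I would apply $F$ to the first of the two triangle identities above. Since equivalences are preserved by functors, the resulting composition in $\mathcal{D}$ (starting and ending at $F(X)$) is equivalent to the identity $\id_{F(X)}$. By Proposition \ref{prop:UnitCounitSymmMonFunctor} this composition agrees, via the structure equivalences witnessing strict monoidality of $F$, with the composition
\[
F(X) \xrightarrow{\simeq} \1 \otimes F(X) \xrightarrow{\eta_F \otimes F(X)} F(X) \otimes F(Y) \otimes F(X) \xrightarrow{F(X) \otimes \varepsilon_F} F(X) \otimes \1 \xrightarrow{\simeq} F(X),
\]
so the latter is equivalent to $\id_{F(X)}$ as desired. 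Second, I would run the same argument with the roles of $X$ and $Y$ interchanged: Proposition \ref{prop:UnitCounitSymmMonFunctor} is formulated for one triangle identity, but by symmetry of its statement (swapping the roles of $X$ and $Y$ and interpreting $\eta$ as the unit of a duality between $Y$ and $X$) it applies equally to the second triangle identity and yields the corresponding equivalence for $F(Y)$.

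There is no real obstacle here; the entire content of the corollary is bookkeeping of the structure equivalences that make $F$ strictly monoidal, which has been done once and for all in Proposition \ref{prop:UnitCounitSymmMonFunctor}. The only minor subtlety, and the one step I would write out carefully, is to note that Proposition \ref{prop:UnitCounitSymmMonFunctor} is genuinely symmetric in $X$ and $Y$: applied to the pair $(Y, X)$ with unit $\eta$ (now viewed as producing a map $\1 \to X \otimes Y$, precomposed with the symmetry $Y \otimes X \simeq X \otimes Y$ if needed) and counit $\varepsilon$, it yields the second required triangle identity for $(\eta_F, \varepsilon_F)$. Combining both, $(\eta_F, \varepsilon_F)$ is a duality datum for $F(X)$ with dual $F(Y)$, completing the proof.
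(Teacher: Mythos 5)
Your proposal is correct and follows exactly the route the paper intends: the paper states the corollary immediately after Proposition~\ref{prop:UnitCounitSymmMonFunctor} with no written proof, treating it as an immediate consequence, and your writeup simply makes the implicit argument explicit. In particular, your observation that Proposition~\ref{prop:UnitCounitSymmMonFunctor} as stated covers only one of the two triangle identities, and that the second one is obtained by the same token after swapping the roles of $X$ and $\widehat{X}$ (using the braiding to flip $\eta$ and $\varepsilon$, which is harmless since $F$ respects the symmetric structure), is precisely the small point one needs to check; the rest is just transporting the identity $2$-cell along the structure equivalences of $F$.
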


\noname We say that a family $F_i: \C \rightarrow \mathcal{D}_i$, $i \in I$ of functors between bicategories is \textit{jointly detecting identities} if the following holds: A morphism $g: X \rightarrow X$ in $\C$ is equivalent to the identity if and only if $F_i(g)$ is equivalent to the identity for all $i \in I$. 

\begin{cor} \label{cor:2.ConservativeFamilyOfFunctorsDetectsDualizability}
Let $F_i: \C \rightarrow \mathcal{D}_i$, $i \in I$ be a family of strict monoidal functors between symmetric monoidal bicategories which is jointly detecting identities. Then an object $X$ in $\C$ is dualizable with duality data $\eta$ and $\varepsilon$ if and only if for all $i \in I $ $F_i(X)$ is dualizable with duality datum $\eta_{F_i}$ and $\varepsilon_{F_i}$. 
\end{cor}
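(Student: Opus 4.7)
The plan is to reduce both directions to the snake identities and use functoriality of the strict monoidal structure. The forward (``only if'') direction is a direct instance of Corollary~\ref{cor:1.ImageOfDualizableObjectisDualizablw2}: if $X$ is dualizable with duality datum $(\eta,\varepsilon)$, then each $F_i$, being strict monoidal, produces an associated duality datum $(\eta_{F_i},\varepsilon_{F_i})$ for $F_i(X)$ by applying that corollary to the functor $F_i$.

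For the (non-trivial) reverse direction, suppose we are given morphisms
\[
\eta\colon \1 \longrightarrow X \otimes Y, \qquad \varepsilon\colon Y\otimes X \longrightarrow \1
\]
in $\C$ such that for every $i\in I$ the induced pair $(\eta_{F_i},\varepsilon_{F_i})$ is a duality datum for $F_i(X)$. I want to show that the two zigzag compositions
\[
X \xrightarrow{\simeq} \1\otimes X \xrightarrow{\eta\otimes X} X\otimes Y \otimes X \xrightarrow{X\otimes\varepsilon} X\otimes \1 \xrightarrow{\simeq} X
\]
and the symmetric one starting from $Y$ are equivalent to the respective identities. I would name the first composition $\zeta_X$ and the second $\zeta_Y$, and show that $F_i(\zeta_X)\simeq \mathrm{id}_{F_i(X)}$ and $F_i(\zeta_Y)\simeq \mathrm{id}_{F_i(Y)}$ for every $i$. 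Since the family $\{F_i\}_{i\in I}$ jointly detects identities, this immediately forces $\zeta_X\simeq \mathrm{id}_X$ and $\zeta_Y\simeq \mathrm{id}_Y$, which is precisely the definition of dualizability for $X$ with duality datum $(\eta,\varepsilon)$.

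The identification of $F_i(\zeta_X)$ with the zigzag composition for $(\eta_{F_i},\varepsilon_{F_i})$ on $F_i(X)$ is exactly the content of Proposition~\ref{prop:UnitCounitSymmMonFunctor}: the commutative diagram there expresses $F_i$ applied to the $X$-zigzag as the zigzag built out of $\eta_{F_i}$ and $\varepsilon_{F_i}$, up to the canonical coherence equivalences of the strict monoidal structure. By hypothesis the latter zigzag is equivalent to $\mathrm{id}_{F_i(X)}$, so $F_i(\zeta_X)\simeq \mathrm{id}_{F_i(X)}$. The same argument applied on the $Y$-side gives $F_i(\zeta_Y)\simeq \mathrm{id}_{F_i(Y)}$.

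The only real subtlety, and where care is needed, is bicategorical bookkeeping: the ``equivalence'' of $\zeta_X$ with $\mathrm{id}_X$ has to be witnessed by an invertible 2-cell, and pasting the 2-cells supplied by Proposition~\ref{prop:UnitCounitSymmMonFunctor} with the 2-cell expressing $(\eta_{F_i},\varepsilon_{F_i})$ as a duality datum for $F_i(X)$ must be done coherently. However, since we adopted the convention (at the start of Appendix~\ref{app:A}) that commutativity in a bicategory simply means the existence of an invertible 2-cell, this pasting is automatic and causes no essential difficulty. This is the only place where one has to be a little careful, and it is the main obstacle to a completely trivial proof; it is dispatched by a routine diagram chase using the coherence data of the strict monoidal functors $F_i$.
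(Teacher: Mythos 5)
Your proposal is correct and follows exactly the route the paper intends: the ``only if'' direction is Corollary \ref{cor:1.ImageOfDualizableObjectisDualizablw2}, and the ``if'' direction combines Proposition \ref{prop:UnitCounitSymmMonFunctor} (and its evident $Y$-side analogue) with the hypothesis that the family $\{F_i\}$ jointly detects identities, which is why the paper states it as a corollary with no further proof. Nothing essential is missing.
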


\begin{lem} \label{lem:IsoOfDualObjectsAndData}
Let $\C$ be a symmetric monoidal bicategory and suppose there are equivalences $
\can:X \overset{\sim}\rightarrow Y$ and  $\can': X' \overset{\sim}\rightarrow Y'$ and commutative diagrams
\[
\begin{tikzcd}
\1 \arrow[r, "\eta_X"] \arrow[d, "\id"'] & X \otimes X' \arrow[d, "\can \otimes \can'"] &  & X' \otimes X \arrow[r, "\varepsilon_X"] \arrow[d, "\can' \otimes \can"'] & \1 \arrow[d, "\id"] \\
\1  \arrow[r, "\eta_Y"]                  & Y \otimes Y'                                 &  & Y' \otimes Y \arrow[r, "\varepsilon_Y"].                                  & \1                 
\end{tikzcd}
\]
Then the pair $(\eta_X, \varepsilon_X)$ is a duality datum for $X$ if and only if $(\eta_Y, \varepsilon_Y)$ is a duality datum for $Y$.
\end{lem}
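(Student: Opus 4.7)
The proof is a straightforward ``transport of structure'' argument: the triangle identities for the candidate duality datum $(\eta_Y,\varepsilon_Y)$ of $Y$ are conjugate, via the equivalences $\can$ and $\can'$, to the triangle identities for $(\eta_X,\varepsilon_X)$.

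First I would observe that the statement is symmetric: since $\can$ and $\can'$ are equivalences in $\C$, they admit quasi-inverses $\can^{-1}$ and $(\can')^{-1}$, and the pasted diagrams witnessing compatibility of the $\eta$'s and $\varepsilon$'s with $\can\otimes\can'$ can be read backwards to give analogous compatibility with $\can^{-1}\otimes(\can')^{-1}$. Thus it suffices to prove one direction; assume $(\eta_X,\varepsilon_X)$ is a duality datum for $X$ with dual $X'$, and I will show that $(\eta_Y,\varepsilon_Y)$ satisfies the two triangle identities of Definition \ref{def:StronglyDualizable}.

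For the first triangle identity, I would paste the two given commutative squares together with identities to form the diagram
\[
\begin{tikzcd}[column sep=large]
X \arrow[r,"\eta_X\otimes X"] \arrow[d,"\can"'] & X\otimes X'\otimes X \arrow[r,"X\otimes\varepsilon_X"] \arrow[d,"\can\otimes\can'\otimes\can"] & X \arrow[d,"\can"] \\
Y \arrow[r,"\eta_Y\otimes Y"'] & Y\otimes Y'\otimes Y \arrow[r,"Y\otimes\varepsilon_Y"'] & Y
\end{tikzcd}
\]
in which both inner squares commute (up to invertible $2$-cell) by hypothesis; the left square is the given compatibility of $\eta$ tensored on the right with $\can$, and the right square is the given compatibility of $\varepsilon$ tensored on the left with $\can$. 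The top composite is, by the assumption that $(\eta_X,\varepsilon_X)$ is a duality datum for $X$, equivalent to $\id_X$. Reading the outer rectangle yields an invertible $2$-cell
\[
\can \;\simeq\; \bigl((Y\otimes\varepsilon_Y)\circ(\eta_Y\otimes Y)\bigr)\circ\can,
\]
and post-composing with a quasi-inverse of $\can$ gives $(Y\otimes\varepsilon_Y)\circ(\eta_Y\otimes Y)\simeq \id_Y$, i.e.\ the first triangle identity for $Y$. The second triangle identity is obtained by the completely analogous pasting where $\can'$ plays the role of $\can$ on the outer columns.

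The main obstacle is purely notational: one must track the associators and unitors of the symmetric monoidal bicategory $\C$ and verify that the two ``inner'' compatibility squares paste coherently with the appropriate tensor factor of the identity on $X$ (resp.\ $X'$). This bookkeeping is handled by standard bicategorical coherence and does not contribute any genuine content, so the proof reduces to the schematic diagram chase above. Alternatively, one can phrase the argument by observing that a dual pair $(X,X',\eta,\varepsilon)$ in $\C$ is preserved by replacing $X$, $X'$ by equivalent objects along chosen equivalences $\can,\can'$ and conjugating $\eta,\varepsilon$ accordingly; the hypothesis is exactly that $(\eta_Y,\varepsilon_Y)$ \emph{is} this conjugate of $(\eta_X,\varepsilon_X)$, so the conclusion is immediate.
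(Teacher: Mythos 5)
Your proposal is correct and follows essentially the same argument as the paper: paste the two given squares (tensored with $\can$ on the appropriate side) into the rectangle with verticals $\can$, $\can\otimes\can'\otimes\can$, $\can$, and conclude that the top triangle-identity composite is the identity iff the bottom one is, doing the second identity analogously. The only cosmetic difference is that the paper reads the "iff" directly off the rectangle (the verticals being equivalences), whereas you prove one direction and invoke symmetry via quasi-inverses of $\can,\can'$.
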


\begin{proof}
Consider the commutative diagram 
\[
\begin{tikzcd}
X \arrow[d, "\can"'] \arrow[r, "\eta_X \otimes X"] & X \otimes X' \otimes X \arrow[r, "X \otimes \varepsilon_X"] \arrow[d, "\can \otimes \can' \otimes \can"] & X \arrow[d, "\can"] \\
Y \arrow[r, "\eta_Y \otimes Y"']                   & Y \otimes Y' \otimes Y \arrow[r, "Y \otimes \varepsilon_Y"']                                                  & Y.                  
\end{tikzcd}
\]
Clearly the top horizontal composition is equivalent to the identity if and only if the bottom horizontal composition is. Similarly the composition $(\varepsilon_X \otimes X')\circ(X' \otimes \eta_X)$ is equivalent to the identity if and only if $(\varepsilon_Y \otimes Y')\circ(Y' \otimes \eta_Y)$ is equivalent to the identity.
\end{proof}

\begin{lem} \label{lem:NewRigidDualDataForDualizableObject}
Let $\C$ be a closed symmetric monoidal bicategory and $X$ a dualizable object in $\C$. Then 
\[
\varepsilon_X: \Hom(X, \1) \otimes X \overset{\id^t}\longrightarrow \1
\]
and 
\[
\eta_X: \1  \overset{(\varepsilon_X)^t}\longrightarrow \Hom ( \Hom(X, \1) \otimes X, \1 ) \simeq X \otimes \Hom(X, \1)
\]
is a duality datum for $X$. Here in both cases the subscript $(\_)^t$ denotes the transpose with respect to the $\_ \otimes X \dashv \Hom(X, \_)$ adjunction.
\end{lem}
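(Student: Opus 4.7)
The plan is to transport the given duality datum for $X$ (with dual $\widehat{X}$) to the new duality datum $(\eta_X, \varepsilon_X)$ built from the closed structure, and then apply Lemma~\ref{lem:IsoOfDualObjectsAndData} to conclude.

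First I would invoke Lemma~\ref{lem:StrongDualAndIntHom}: since $X$ is dualizable, the canonical comparison map
\[
\phi_Y: \widehat{X} \otimes Y \longrightarrow \Hom(X,Y)
\]
is an equivalence for every $Y$ in $\C$; in particular $\phi_\1: \widehat{X} \overset{\sim}\to \Hom(X, \1)$ is an equivalence. By construction, $\phi_Y$ is the transpose of $\varepsilon_X \otimes Y: X \otimes \widehat{X} \otimes Y \to Y$ under $\_ \otimes X \dashv \Hom(X, \_)$, so by the very definition of $\varepsilon_X = \id^t$ as the counit $\Hom(X, \1) \otimes X \to \1$ at $\1$, the diagram
\[
\begin{tikzcd}
\widehat{X} \otimes X \arrow[r, "\phi_\1 \otimes X"] \arrow[rd, "\varepsilon"'] & \Hom(X,\1) \otimes X \arrow[d, "\varepsilon_X"] \\
 & \1
\end{tikzcd}
\]
commutes. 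This handles the counit half of the datum.

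For $\eta_X$, I would first identify $\Hom(\Hom(X,\1) \otimes X, \1)$ with $X \otimes \Hom(X, \1)$ using the chain
\[
\Hom(\Hom(X,\1) \otimes X, \1) \simeq \Hom(X, \Hom(\Hom(X,\1), \1)) \simeq \Hom(X, X) \simeq \widehat{X} \otimes X \simeq \Hom(X,\1) \otimes X,
\]
where the second equivalence uses that $\Hom(X, \1) \simeq \widehat{X}$ is dualizable (Lemma~\ref{lem:StongDualGivesAdj}(1)) with dual $X$, the third is Lemma~\ref{lem:StrongDualAndIntHom}(2), and the fourth is symmetry composed with $\phi_\1 \otimes X$. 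Under this identification, the definition of $\eta_X$ as the transpose of $\varepsilon_X$ becomes, after a routine unwinding using the triangle identities for $\_ \otimes X \dashv \Hom(X,\_)$ and naturality of $\phi$, the image of the original $\eta: \1 \to X \otimes \widehat{X}$ under $X \otimes \phi_\1$. More precisely, the diagram
\[
\begin{tikzcd}
\1 \arrow[r, "\eta"] \arrow[d, "\id"'] & X \otimes \widehat{X} \arrow[d, "X \otimes \phi_\1"] \\
\1 \arrow[r, "\eta_X"'] & X \otimes \Hom(X, \1)
\end{tikzcd}
\]
commutes.

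Having established both compatibility squares, I conclude by applying Lemma~\ref{lem:IsoOfDualObjectsAndData} with $\can = \id_X: X \to X$ and $\can' = \phi_\1: \widehat{X} \to \Hom(X, \1)$: since $(\eta, \varepsilon)$ is a duality datum by hypothesis, so is $(\eta_X, \varepsilon_X)$. The main obstacle I expect is bookkeeping in the identification step for $\eta_X$: one must carefully verify that the transpose of $\varepsilon_X$, pushed through the four successive equivalences identifying $\Hom(\Hom(X,\1) \otimes X, \1)$ with $X \otimes \Hom(X,\1)$, really matches $(X \otimes \phi_\1) \circ \eta$. This is a diagram chase rather than a deep fact, but it requires keeping track of the coherence data of the closed symmetric monoidal bicategory $\C$ and of the construction of $\phi_Y$ as the transpose of $\varepsilon_X \otimes Y$.
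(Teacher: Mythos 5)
Your proposal is correct and takes essentially the same route as the paper: compare $(\eta_X, \varepsilon_X)$ to a duality datum with dual $\Hom(X,\1)$ (which you obtain by transporting the given datum $(\eta,\varepsilon)$ along the comparison equivalence $\phi_\1\colon\widehat{X}\to\Hom(X,\1)$), and conclude via Lemma~\ref{lem:IsoOfDualObjectsAndData}. Your version is slightly more explicit than the paper about where the auxiliary duality datum comes from and which equivalences are fed into Lemma~\ref{lem:IsoOfDualObjectsAndData}, but the skeleton and the bookkeeping hidden in "routine unwinding" match the chain of transpositions the paper uses.
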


\begin{proof}
Let $\tilde{\eta}_X: \1 \rightarrow X \otimes \Hom(X, \1)$ and $\tilde{\varepsilon}_X: \Hom(X, \1) \otimes X \rightarrow \1$ be a duality datum. Then we claim that $\tilde{\eta}_X \simeq {\eta}_X$ and $\tilde{\varepsilon}_X \simeq {\varepsilon}_X$ from which the claim follows by Lemma \ref{lem:IsoOfDualObjectsAndData}. Indeed clearly both $\tilde{\varepsilon}_X$ and ${\varepsilon}_X$ are transpose to $\id: \Hom(X,\1) \rightarrow \Hom(X, \1)$ and thus equivalent. By construction $\eta_X$ corresponds to $\varepsilon_X$ via
\[
\map_\C(\1, X \otimes \Hom(X, \1)) \simeq \map_\C(\1, \Hom(\Hom(X, \1) \otimes X, \1 )) \simeq \map_\C(\Hom(X, \1)\otimes X, \1).
\]
Moreover $\varepsilon_X$ corresponds to the identity via
\[
 \map_\C(\Hom(X, \1)\otimes X, \1) \simeq \map_\C( X, \Hom(\Hom(X, \1),\1)) \simeq \map_\C(X,X).
\]
this implies that $\eta_X$ corresponds to the identity via 
\[
\map_\C(\1, X \otimes \Hom(X, \1)) \simeq \map_\C(X,X)
\]
and hence $\tilde{\eta}_X \simeq {\eta}_X$. 
\end{proof}

\let\appendixnameAlt\appendixname
\renewcommand{\appendixname}{}
\bibliography{references}
\bibliographystyle{alpha}
\renewcommand{\appendixname}{\appendixnameAlt}


\end{document}